\newcommand{\N}{{\mathbb N}}
\newcommand{\Z}{{\mathbb Z}}
\newcommand{\R}{{\mathbb R}}
\newcommand{\C}{{\mathbb C}}
\newcommand{\I}{{\mathcal{O}}}
\newcommand{\grad}{\mathsf{ grad}}
\newcommand{\Fi}{\mathrm{ F}}
\newcommand{\gi}{\mathrm{ g}}
\newcommand{\hi}{\mathrm{ h}}
\newcommand{\ind}{\mathrm{ ind}}
\newcommand{\Hess}{\mathrm{ Hess}}
\newcommand{\id}{\mathrm{ id}}
\newtheorem{teo}{Theorem}[section]
\newtheorem{lema}[teo]{Lemma}
\newtheorem{cor}[teo]{Corollary}
\newtheorem{prop}[teo]{Proposition}
\newtheorem{defi}{Definition}[section]
\newtheorem{rmrk}{Remark}[section]
\begin{document}

\title[McGehee blowup and instability]{McGehee blowup for Lagrangian systems and instability of equilibria}

\author{J. M. Burgos}
\address{Departamento de Ciencias exactas y naturales, Universidad Católica del Uruguay, Av. 8 de Octubre 2738, 11200, Montevideo, Uruguay.}
\email{jburgos@ucu.edu.uy}

\begin{abstract}
We prove that total instability is a generic phenomenon in the real analytic class of electromagnetic Lagrangian systems under a weak magnetism hypothesis. The main object in the proof is an adaptation of the McGehee blowup for these systems. Together with this result, new criteria for total instability are introduced for both generic and non-generic cases.
\end{abstract}

\subjclass[2020]{37J25, 70H14, 70K20, 70K50}

\keywords{McGehee blowup, Total instability, Lagrange-Dirichlet converse, Lagrangian systems}



\maketitle








\section{Introduction}

Since its appearance in \cite{McGehee}, the McGehee blowup has been extensively used, mainly in the context of celestial mechanics, where it was first applied. This paper adapts the original technique to certain Lagrangian systems and proves among other results that total instability, a stronger form of instability than Lyapunov's for non-minimum points of the potential, is a generic phenomenon for these systems. Specifically, we will consider electromagnetic systems verifying a weak magnetism hypothesis at some critical point of the potential and prove that, generically, the point is a totally unstable equilibrium.

Adaptations of the original blowup are not new. McGehee himself adapted his technique to produce an infinity manifold instead of blowing up the origin \cite{McGehee2}. In \cite{Devaney}, Devaney focused on the case of a degree $k$ isolated singularity of the potential of a mechanical system and proved that, generically in the Whitney differentiable topology, the singularity is Morse-Smale. In \cite{Lacomba}, Lacomba and Ibort adapted the technique to mechanical systems whose potential is a homogeneous polynomial. Recently, Knauf and Montgomery compactified the energy levels of the $N$-body problem by systematic application of McGehee blowups \cite{Knauf_Montgomery}. This list is by far non-exhaustive.

In adapatations of the McGehee blowup to mechanical systems in Euclidean space, the reason to consider homogeneous functions as potentials, either with a singularity or a critical point at the origin, is because the motion equations in the McGehee variables decouple from the radial one. This allows to study the global dynamical behaviour of the ambient space from the dynamics on the boundary, either the collision or the infinity manifold, resulting in a sort of holography.

In contrast to the previous paragraph, adaptations like the one in \cite{Devaney} where the potential is non-homogeneous, assumes additional dynamical structure on the singular manifold, for instance to be a Morse-Smale system, to derive results on the dynamics of the ambient space from the one on the boundary. Except for the transversality condition on the stable and unstable manifolds, this is achieved by assuming generic conditions on the first non-zero jet of the potential.


In this paper, under a weak magnetism hypothesis on the magnetic term, we adapt the McGehee blowup to an electromagnetic system with a possibly non-homogeneous function potential having some critical point $p$ in configuration space and study, from the dynamics on the critical and subcritical McGehee boundary, the original dynamics near the fixed point $(p,0)$ in phase space.

This study of the dynamics do not presuppose any additional dynamical structure on the boundary and requires a restricted notion of chain-recurrence suitable for the problem. Through this technique, we prove a mechanical tightness phenomenon with far reaching consequences. First, new non-generic criteria for total instability are derived, and second, after adding some generic dynamical structure at the critical boundary, it is established that total instability is a generic phenomenon for electromagnetic systems verifying a weak magnetism hypothesis. The mentioned hypothesis about magnetism will be stated in the following subsection.







Here, we deliberately decided to work in the real analytical class, mainly for two reasons. The first is that, for even degree values of the first non-zero jet of the potential at the critical point in question, the objects constructed here are naturally real analytic, a rich property that could eventually be exploited in future applications. The second and main reason is that the notion of genericity is much stronger with respect to the real analytic class than it is with respect to the differentiable class since the topology of the former is strictly finer than the one of the latter.

In contrast with the smooth class in which there are smooth Urysohn functions, in the real analytic class we will have to deal with real analytic germs at a point, that is, real analytic objects defined on a sufficiently small neighbourhood of the point that a priori cannot be further extended. Another consequence of the lack of these functions is that the genericity of the hypotheses in the local ring of real analytic germs, does not follow directly from general results regarding the Whitney differentiable topology, but rather from specific properties of the local ring of germs only.




A striking difference between the object built here and the classical McGehee blowup in $N\,$-\,body problem is the following. Contrary to celestial mechanics in which the configuration space is the Euclidean space minus the diagonals and therefore there is a natural set of coordinates, here the configuration space is a real analytic manifold and a local coordinate chart must be chosen to perform the constructions. However, in the appendix, the \emph{abstract McGehee blowup} will be constructed, which is an intrinsic object independent from its coordinates that naturally adopts the usual McGehee coordinates once a local chart on the configuration manifold is chosen. While the purpose of this intrinsic abstract object is to demonstrate the consistency of the constructions, it contributes nothing to our topic here, which is dynamics, and for this reason, it is relegated to the appendix.

\subsection{Statement of the results}

Consider a point $p$ in a real analytic Riemannian manifold $(E,g)$. Every real analytic function $U$ and real analytic one-form $\mu$ defined on some neighbourhood $A$ of the point determines a Lagrangian
$$L:TA\rightarrow\R,\qquad L(x,v)\,=\,g_x(v,v)/2\,+\,\mu_x(v)\,-\,U(x)$$
along with the corresponding Euler-Lagrange flow on $TA$ determined by the motion equations
$$\dot{x}\,=\, v,\qquad \nabla_{\dot{x}}\, v\,=\,-\grad_{g}\,U\,(x)\,-\,\mathcal{R}_x(\iota_{v}\,d\mu(x)).$$
Here, nabla denotes the Levi-Civita connection of the Riemannian metric $g$ and the operator $\mathcal{R}$ is the Riesz duality. This is not a Hamiltonian formulation and these are not the respective Hamilton equations of the system. However, this particular formulation is suitable for our purposes.

The function $U$ and one-form $\mu$ will be called the potential and magnetic potential respectively, while $A$ and $TA$ will be called the configuration and phase space respectively.


If $p$ is a critical point of the potential, then $(p,0)$ is a fixed point of the flow, that is, $p$ is an equilibrium point of the system. The energy function\footnote{We emphasize that this is the energy function, not the Hamiltonian of the system.} defined by
$$H:TA\rightarrow\R,\qquad H(x,v)\,=\,g_x(v,v)/2\,+\,U(x)$$
is constant along the orbits. Modifying the potential by an additive constant do not alter the flow thus, without loss of generality, we may assume $U(p)=0$ along the paper. Note that with this assumption, the energy is zero at $(p,0)$ and we will say that a point $z$ in phase space is subcritical if $H(z)<0$.

Throughout the paper, the magnetic potential will verify the following hypothesis called \emph{weak magnetism hypothesis}, namely
$$d\,-l/2\, \geq\,1,$$
where $l$ and $d$ are the degree values of the first non-zero jet of the potential and magnetic potential respectively.

A point $p$ is \emph{totally unstable} if there is a neighbourhood $B\subset A$ of the point such that for any subcritical point $z$ in $TB$, the positive orbit $I^+(z)$ is not contained in $TB$. That is, the orbit of any subcritical point in $TB$ eventually escapes from this set in the future.

Consider the local ring of real analytic germs $\I_p$ at the point $p$ and denote by $\mathfrak{m}$ its maximal ideal. The ideal $\mathcal{I}_k= \mathfrak{m}^k\lhd \I_p$ is naturally identified with the set of real analytic functions defined on some neighbourhood of $p$ whose first non-zero jet at the point has degree $k$.


\begin{teo}\label{main_theorem}
For any natural $k\geq 2$, there is an open and dense subset $\mathcal{A}$ of the ideal $\mathcal{I}_k$ such that, for every potential germ in $\mathcal{A}$ and magnetic potential verifying the weak magnetism hypothesis, the point $p$ is totally unstable.
\end{teo}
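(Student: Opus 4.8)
The plan is to reduce total instability near $(p,0)$ to a property of the flow on the McGehee boundary, to show that property holds under a nondegeneracy condition on the degree-$k$ part of the germ, and then to verify that this condition carves out an open dense subset of $\mathcal{I}_k$; accordingly $\mathcal{A}$ will depend only on the first non-zero jet. Fix a real analytic chart sending $p$ to $0\in\R^n$, $n=\dim E$, write the first non-zero jet of $U$ as a homogeneous polynomial $P$ of degree $k$, and let $S$ be the unit sphere of the chart. First I would perform the McGehee blowup of the system at $(p,0)$ with the quasi-homogeneous weights $x\sim r$, $v\sim r^{k/2}$ and the time change $dt\sim r^{1-k/2}\,d\tau$; this turns a punctured neighbourhood of $(p,0)$ into the interior of a manifold with boundary, the flow extending to the boundary $M=\{r=0\}$ — smoothly in general, real-analytically when $k$ is even. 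The boundary $M$ carries the rescaled energy $\bar H=\tfrac12|\nu|^2+P(\xi)$, hence a critical locus $\{\bar H=0\}$ and a subcritical locus $\{\bar H<0\}$, the latter nonempty exactly when $P|_S<0$ somewhere. A direct computation shows the magnetic contribution to the blown-up field carries a factor $r^{\,d-k/2}$, so the weak magnetism hypothesis $d-k/2\geq1$ (here $l=k$) forces it to vanish on $M$; thus the boundary flow is exactly the McGehee flow of the homogeneous mechanical system with potential $P$.

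On the critical-plus-subcritical part of $M$ the radial velocity $\xi\cdot\nu$ is strictly monotone off the rest points (the McGehee monotonicity lemma), so this region is gradient-like and its rest points are precisely the homothetic configurations, indexed by the critical points of $P|_S$. Next I would prove the \emph{mechanical tightness} lemma: if $z\in TB$ is subcritical and $I^+(z)\subset TB$, then in blowup coordinates the $\omega$-limit set of $z$ lies in $M$ and is invariant and chain-recurrent there for the restricted notion of chain recurrence adapted to the problem. Combined with the gradient-like structure and with hyperbolicity — hence isolatedness — of the boundary rest points, which follows from nondegeneracy of the corresponding critical points of $P|_S$, the $\omega$-limit collapses to a single rest point $q$; a normal-form analysis at $q$ then shows that its stable set meets the open subcritical region only in orbits that leave $TB$, i.e. every rest point in the closure of the subcritical locus repels the ambient subcritical flow. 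This yields the non-generic criterion used here: \emph{if $P|_S$ is Morse with $0$ a regular value and the induced boundary flow is Morse--Smale on the critical locus, then $p$ is totally unstable} (the criterion is vacuously satisfied when $P|_S\geq0$, consistent with $p$ being a stable minimum in that case).

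It then remains to run the genericity argument. Let $\mathcal{A}_0$ be the set of degree-$k$ homogeneous polynomials on $\R^n$ satisfying the hypotheses of that criterion; these are transversality conditions on the compact manifold $S$ involving finitely many derivatives, so $\mathcal{A}_0$ is open and dense in the finite-dimensional space of degree-$k$ homogeneous polynomials (including, for $k$ even, the open set on which $P|_S>0$, where the critical locus is empty). Identifying $\mathcal{I}_k=\mathfrak{m}^k$ with germs whose $k$-jet lies in that space, put $\mathcal{A}=(j^k)^{-1}(\mathcal{A}_0)$; continuity of $j^k$ makes $\mathcal{A}$ open, and every germ in it makes $p$ totally unstable by the criterion. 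For density I would show the $k$-jet map is \emph{open} for the topology of the local ring $\mathcal{O}_p$ — in the real analytic category this cannot be done with the usual cutoff trick and must be extracted from internal features of $\mathcal{O}_p$, such as density of polynomial germs and the behaviour of the $\mathfrak{m}$-adic valuation; granting it, the preimage of an open dense set under a continuous open surjection is open and dense.

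The genuine difficulties are twofold. Dynamically, the hard part is the mechanical tightness lemma together with the local analysis at the boundary rest points: the blown-up field degenerates along $\{r=0\}$, the boundary is only partially hyperbolic, and one must pin down exactly the right restricted chain recurrence, so the trapping and invariant-manifold arguments are not routine. On the genericity side, the subtle point is the openness of the $k$-jet map on $\mathcal{O}_p$, i.e. transferring open–dense statements from finite jet spaces to germs without Urysohn functions — precisely where the analytic category departs from the smooth one.
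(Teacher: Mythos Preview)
Your outline tracks the paper's strategy closely---McGehee blowup, Lyapunov structure on the boundary, a tightness statement, then genericity of the jet condition---but two points need correction.

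First, the Morse--Smale hypothesis is superfluous and it undermines your genericity step. The paper's criterion (Proposition~\ref{total_inst_criterion_0}, proved as Corollary~\ref{total_inst_criterion}) requires only that $f=P|_S$ be Morse with $0$ a regular value; no transversality of stable and unstable manifolds is invoked. The argument never follows connecting orbits on the boundary: once the fixed points of $\partial M_0$ are isolated and hyperbolic in the extended McGehee manifold (Proposition~\ref{Morse_Proposition}), Hartman--Grobman alone guarantees that the only recurrent set in a small neighborhood of each is the point itself, and that is all that is used. Adding Morse--Smale also weakens your genericity claim: transversality of invariant manifolds is a global dynamical condition on the boundary flow, not a finite-jet condition on $P$ at points of $S$, so your sentence ``these are transversality conditions on the compact manifold $S$ involving finitely many derivatives'' does not cover it, and you would owe a separate argument that Morse--Smale is open and dense in the finite-dimensional family of degree-$k$ polynomials.

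Second, and more seriously, your mechanical tightness lemma is misstated. It is simply not true that, for a fixed neighborhood $B$, every confined subcritical orbit has $\omega$-limit contained in the boundary $\{r=0\}$: a priori there could be subcritical periodic orbits sitting at positive $r$ inside $TB$. The paper's Proposition~\ref{mechanical_tightness} instead asserts the \emph{existence} of a small enough $B$, and the proof is by contradiction via a \emph{sequence}: assuming no $B$ works, one obtains points $z_k$ confined in $TB(p,1/k)$ with $(p,0)\notin\omega(z_k)$; lifting to $\zeta_k$ and selecting minimal sets $\omega_k\subset\omega(\zeta_k)$, one has $\limsup_k\omega_k\subset\partial M_{\leq 0}$ because the radii shrink, and then Proposition~\ref{near_recurrency} together with Corollary~\ref{near_recurrency_cor} forces this superior limit into the finite hyperbolic fixed-point set of $\partial M_0$. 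Hartman--Grobman now gives $\omega_k=\{\zeta\}$ for large $k$, whence $(p,0)=\pi(\zeta)\in\omega(z_k)$, a contradiction. The restricted chain recurrence you allude to is thus applied to the \emph{superior limit of a sequence of minimal sets}, not to the $\omega$-limit of a single orbit; this sequence mechanism is the structural piece your sketch is missing, and without it your ``normal-form analysis at $q$ shows its stable set meets the subcritical region only in escaping orbits'' does not connect to the conclusion.
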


Note that the definition of total instability does not require the point $p$ to be critical. Actually, every point of an inclined plane is totally unstable. Even more, a strict minimum of the potential is totally unstable since the statement is vacuously true, this being a Lyapunov stable equilibrium point. It is in those cases where the point $p$ is critical and non-minimum, that the result is non-trivial and the point verifies a much stronger instability condition than Lyapunov's implying the latter.

Denote by $f$ the restriction to the unit sphere of the first non-zero jet of the potential $U$. From a stronger result proved in section \ref{Total_inst_section}, the \emph{mechanical tightness} phenomenon, the following total instability criterion follows, from which, after establishing the genericity of the hypotheses in sections \ref{Genericity} and \ref{Genericity_II}, Theorem \ref{main_theorem} is proved.

\begin{prop}[Total instability criterion]\label{total_inst_criterion_0}
Suppose that $f$ is a Morse function and zero is a regular value. Then, the point $p$ is totally unstable.
\end{prop}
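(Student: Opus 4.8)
The plan is to deduce Proposition~\ref{total_inst_criterion_0} from the mechanical tightness phenomenon via the McGehee blowup, by showing that when $f$ is Morse with $0$ a regular value, the dynamics on the critical and subcritical McGehee boundary cannot support a recurrent orbit that fails to escape. First I would set up the blowup in a local chart around $p$: introduce polar-type McGehee coordinates $r = \sqrt{2U_l(x)}$-style radial variable (or $r=|x|$ with the first non-zero jet $U_l$ controlling the leading behaviour), rescale velocities appropriately, and rescale time so that the flow extends to the boundary $\{r=0\}$. Under the weak magnetism hypothesis $d - l/2 \ge 1$, the magnetic term is strictly subleading and contributes nothing to the boundary vector field; the boundary dynamics is therefore governed purely by the gradient-like flow of $f$ on the unit sphere together with the radial-velocity (McGehee energy) variable. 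The key structural fact I would invoke is the gradient-like character of the boundary flow: along boundary orbits an appropriate combination of $f$ and the radial velocity is monotone, so the boundary chain-recurrent set (in the restricted sense the paper uses) consists exactly of the rest points, which sit over the critical points of $f$.

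Next I would use the Morse and regular-value hypotheses to control these rest points. Since $0$ is a regular value of $f$, no critical point of $f$ lies on the zero set of $f$, so every critical point $c$ has $f(c) \ne 0$; the energy constraint then forces the corresponding boundary equilibrium to lie either strictly in the critical ($H=0$) stratum with nonzero radial velocity, or in the subcritical stratum, and in either case the linearization — computed from the Hessian of $f$ at $c$, which is nondegenerate because $f$ is Morse — has no center directions transverse to the sphere: it is hyperbolic, with the radial-velocity direction contributing a definite-sign eigenvalue whose sign is governed by $\sgn f(c)$. Consequently each boundary equilibrium is a hyperbolic saddle or source/sink, and in particular none of them is a local attractor for the subcritical region from the inside.

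Then the mechanical tightness phenomenon (the stronger result from section~\ref{Total_inst_section}, which I am entitled to assume) says, roughly, that a subcritical orbit stays trapped near $(p,0)$ only if its $\omega$-limit set lies in the boundary and is chain-recurrent there in the restricted sense; tightness upgrades non-recurrence on the boundary to genuine escape in the original system. The argument is then: suppose $p$ is not totally unstable, so for every neighbourhood $B$ there is a subcritical $z \in TB$ with $I^+(z) \subset TB$; pushing $B$ down and blowing up, the $\omega$-limit of (the blown-up) orbit is a nonempty compact invariant chain-transitive set on the boundary, hence by the gradient-like structure it is a single boundary equilibrium over some critical point $c$ of $f$; but hyperbolicity of that equilibrium together with $f(c)\neq 0$ (regular value) means the stable set of that equilibrium, intersected with the subcritical region, has empty interior / the orbit must leave along an unstable direction — contradicting $I^+(z)\subset TB$ for $B$ small. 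Hence $p$ is totally unstable.

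The main obstacle I expect is the careful bookkeeping at the boundary: verifying that the blown-up vector field genuinely extends real-analytically (or at least smoothly) to $\{r=0\}$ under only the weak magnetism inequality, and identifying the precise gradient-like Lyapunov function on the boundary that makes "chain-recurrent = equilibrium" work in the paper's restricted sense of chain-recurrence. A secondary delicate point is handling critical points $c$ of $f$ at which the boundary equilibrium sits exactly on the critical stratum $H=0$ rather than strictly subcritically: there one must check that the radial-velocity eigenvalue is nonzero (which is exactly where $0$ being a regular value of $f$ enters, ruling out $f(c)=0$) so that no non-hyperbolic boundary equilibrium can trap the orbit. Once those two technical facts are in hand, the deduction from mechanical tightness is essentially a soft dynamical-systems argument about chain-transitive sets of gradient-like flows.
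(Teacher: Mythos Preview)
The paper's proof of this proposition (restated as Corollary~\ref{total_inst_criterion}) is a single sentence: it follows immediately from mechanical tightness, Proposition~\ref{mechanical_tightness}. Tightness asserts that, under the Morse/regular-value hypothesis on $f$, there is a neighbourhood $B$ of $p$ such that any orbit with $\I^+(z)\subset TB$ and $H(z)\le 0$ has $(p,0)$ in its $\omega$-limit set, and therefore $H(z)=0$. Hence a strictly subcritical point, one with $H(z)<0$, cannot have its forward orbit contained in $TB$; that is exactly the definition of total instability.

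You mis-state what mechanical tightness gives and then do unnecessary extra work. Tightness does not merely say that a trapped subcritical orbit has $\omega$-limit lying on the boundary and chain-recurrent there; it already forces $H(z)=0$, which kills the subcritical case outright with no further argument. Everything you sketch in your first two paragraphs --- the blowup extension under the weak magnetism hypothesis, the gradient-like Lyapunov structure on $\partial M_{\le 0}$, the hyperbolicity of the boundary equilibria from the Morse and regular-value assumptions --- is the content of the \emph{proof} of tightness (Sections~\ref{blowup_definition_section}, \ref{Dynamics_criticalboundary_section}, \ref{HyperbolicSet} and the proof of Proposition~\ref{mechanical_tightness}), not of its application here. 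Your closing contradiction, that ``the stable set intersected with the subcritical region has empty interior so the orbit must leave along an unstable direction'', is both superfluous once tightness is cited and not the mechanism the paper uses inside the tightness proof either: there the contradiction is obtained by taking minimal subsets $\omega_k$ of the $\omega$-limits of the lifted trapped orbits, showing their superior limit lands in the finite hyperbolic fixed-point set of $\partial M_0$, and using Hartman--Grobman to conclude each $\omega_k$ equals a single boundary fixed point, which forces $(p,0)$ back into the original $\omega$-limit.
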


Total instability is a local property in configuration space hence it only depends on the germ of the Lagrangian at $T_p E$. In this respect, considering the fact that in the smooth class, the stability question depends non-trivially on the kinetic term, it is quite interesting that at least in the real analytic class, a sufficient condition for total instability involves the first non-zero jet of the potential only without any mention neither to higher order terms of it nor to those of the kinetic and magnetic term.

In \cite{Palamodov}, Palamodov proved that in the purely mechanical case, that is without magnetic potential, if the first non-zero jet of the potential at $p$ has no critical point other than the origin, then the point $p$ is totally unstable. The main step of the proof consists in deforming a radial vector field in configuration space such that the resulting vector field $V$ verifies $V(U)=U$.

In the purely mechanical case, the total instability criterion, Proposition \ref{total_inst_criterion_0}, is weaker than Palamodov's Theorem since it requires the generic additional hypothesis that $f$ is Morse. However, \textbf{it is the first total instability criterion that covers magnetism}, something not achievable by arguing from the configuration space only, and proves total instability for generic electromagnetic systems verifying a weak magnetism hypothesis.

The inclusion of magnetism or any other non-trivial generalization to total instability arguments may have implications in other lines or work. For instance, Allaire and Rauch proved in \cite{Ref_crucial} the Earnshaw's Theorem asserting the absence of stable equilibrium configurations of conductors and dielectrics in an external electrostatic field. It is claimed in that paper that a total instability proof, not available at that time, would imply their instability theorems among others, see section 1.7 in \cite{Ref_crucial}.

As our knowledge goes, so far all of the instability criteria either in the total or Lyapunov sense, require the generic hypothesis that zero is a regular value of the function $f$. In view of this, a non-generic instability criterion is important since it bounds the field of possibilities in which to look for a counterexample, if any, to any of the instability conjectures regarding real analytic mechanical systems.

The following is \textbf{the first criterion covering the non-generic case of a zero critical value of $f$.} It will involve higher order terms of the potential and the metric tensor coefficients as well.
Although, at least in the smooth class, its non-trivial dependence on the stability of an equilibrium point is known, \textbf{this is the first time that the kinetic term is considered in an instability criterion.}

We now proceed to state the criterion. Suppose that $U$ is not a homogeneous polynomial and consider the first and second non-zero homogeneous terms in the Maclaurin series of the potential and the metric tensor coefficients,
$$U\,=\,U_{l_1}\,+\,U_{l_2}\,+\,\ldots,\qquad \gi_{ij}\,=\,\delta_{ij}\,+\,\gi^{(m)}_{ij}\,+\,\ldots.$$
We have assumed without loss of generality, taking a linear coordinate change at the origin if necessary, that $\gi^{(0)}_{ij}=\delta_{ij}$. In what follows, if the metric tensor is the Euclidean one, that is if the Lagrangian is Newtonian, then formally we can take $m$ as infinity.

It will be assumed that the degree value $d$ of the first non-zero jet of the magnetic potential at $p$ verifies a weaker magnetism hypothesis than the one before, namely
$$d\,-l_1/2\, >\,\mu\,=\,\min\,\{l_2-l_1,\,m\},$$
and the reason is simply because otherwise the criterion will not decide.

Define the \emph{criterion function} on the unit sphere given by
$$\mathcal{C}(q)\,=\,-\delta_\mu^{l_2-l_1}\,(l_2-l_1)\,U_{l_2}(q)\,+\,
\delta_\mu^m\,m\,U_{l_1}(q)\,\sum_{i,j=1}^n\,\gi^{(m)}_{ij}(q)\,q^i q^j,\qquad q\in S^{n-1}.$$

Recall that $f$ is the restriction of $U_{l_1}$ to the unit sphere.

\begin{prop}[Non-generic total instability criterion]
If $\mathcal{C}(q)>0$ on every critical point $q$ of $f$ such that $f(q)\leq 0$, then the point $p$ is totally unstable.
\end{prop}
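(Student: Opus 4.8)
The plan is to reduce the non-generic criterion to the mechanical tightness phenomenon from Section~\ref{Total_inst_section} by performing the McGehee blowup and analysing the dynamics on the critical (zero-energy) boundary. After choosing the adapted local chart at $p$ in which $\gi^{(0)}_{ij}=\delta_{ij}$ and the Maclaurin expansions $U=U_{l_1}+U_{l_2}+\cdots$, $\gi_{ij}=\delta_{ij}+\gi^{(m)}_{ij}+\cdots$ hold, one introduces McGehee coordinates $(r,q,\cdot)$ with $r\to 0$ the collision/critical blowup and $q\in S^{n-1}$. The weak magnetism hypothesis $d-l_1/2>\mu=\min\{l_2-l_1,m\}$ guarantees that, after the appropriate time rescaling, the magnetic term contributes at strictly higher order in $r$ than the leading potential correction, so it drops out of the boundary vector field; this is exactly the role the inequality plays, as the text signals (``otherwise the criterion will not decide''). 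On the critical boundary $r=0$ the leading dynamics is governed by $\grad_{S^{n-1}} f$, and the fixed points of the boundary flow are precisely the critical points $q$ of $f$.

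The heart of the argument is then to show that, when $\mathcal{C}(q)>0$ at every critical point $q$ of $f$ with $f(q)\le 0$, there is no restricted chain-recurrent subcritical motion trapped near $(p,0)$; equivalently, the McGehee boundary admits no invariant structure obstructing total instability, so the tightness result of Section~\ref{Total_inst_section} applies and yields total instability. Concretely I would compute the first correction to the radial (or energy-type) equation along the blown-up flow: the leading term vanishes on $\{f=0\}$, and the next-order term, evaluated at a boundary fixed point $q$, is governed precisely by a combination of $(l_2-l_1)U_{l_2}(q)$ (the next potential jet) and $m\,U_{l_1}(q)\sum \gi^{(m)}_{ij}(q)q^iq^j$ (the leading metric correction fed through the kinetic energy), with the Kronecker-delta selectors $\delta_\mu^{l_2-l_1}$ and $\delta_\mu^m$ picking out whichever term is of order $\mu$. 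The sign condition $\mathcal{C}(q)>0$ then forces the radial coordinate to increase strictly along any orbit approaching such a boundary fixed point from the subcritical region, so no subcritical orbit can remain trapped; combined with the analysis at critical points with $f(q)>0$ (where $f$ itself already pushes orbits away on the boundary, as in Proposition~\ref{total_inst_criterion_0}) this rules out all trapped subcritical recurrence and lets the tightness mechanism conclude.

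The steps, in order: (i) fix the adapted chart and write the blowup in McGehee coordinates, identifying the time rescaling exponent determined by $l_1$; (ii) verify, using $d-l_1/2>\mu$, that the magnetic contribution is $o$ of the $\mu$-order term on the boundary and hence irrelevant to the reduced boundary system; (iii) identify the boundary flow at leading order with the negative gradient flow of $f$ on $S^{n-1}$ and its fixed-point set with $\mathrm{Crit}(f)$; (iv) expand the radial/energy equation to the next order, showing the obstruction to escaping through a boundary fixed point $q$ with $f(q)\le 0$ is governed by the sign of $\mathcal{C}(q)$ — this is the calculation the criterion function encodes; (v) invoke mechanical tightness from Section~\ref{Total_inst_section}: no trapped subcritical restricted chain-recurrent set implies $p$ totally unstable.

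The main obstacle I expect is step~(iv): carefully extracting the \emph{second}-order term of the radial equation after blowup and time rescaling, keeping track of how the non-homogeneity of $U$ (via $U_{l_2}$) and the non-flatness of the metric (via $\gi^{(m)}_{ij}$) enter on equal footing, and confirming that no cross terms or connection (Christoffel) contributions of the same order have been missed — in particular justifying that at a boundary fixed point, where $v$ is parallel to $q$, the kinetic correction collapses exactly to $U_{l_1}(q)\sum\gi^{(m)}_{ij}(q)q^iq^j$ with the stated coefficient $m$. A secondary subtlety is handling the degenerate tie $l_2-l_1=m$, where both selectors activate and the two contributions must be added with the correct common sign, and ensuring the criterion is still only a \emph{sufficient} condition consistent with Proposition~\ref{total_inst_criterion_0} when $\mathrm{Crit}(f)\cap\{f\le 0\}=\varnothing$.
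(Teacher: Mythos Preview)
Your outline captures the right architecture (blowup, locate the trouble at boundary fixed points with $f(q)\le 0$, use the next-order expansion governed by $\mathcal{C}$), but two concrete points would make the argument fail as written.

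First, the monotone quantity is not the radial coordinate. Since $r'=\nu r$, the sign of $r'$ is governed by $\nu$ and is not directly controlled by $\mathcal{C}$. The correct Lyapunov-type function is $\nu(r,q,y)=\langle q,y\rangle$: the expansion the paper uses is
\[
\nu'\;=\;\bigl(1+l_1/2\bigr)\,\Vert y_{tg}\Vert^{2}\;-\;l_1\,\tilde H\;+\;r^{\mu}\,F,
\]
and it is $F$ (not a radial remainder) that equals $\mathcal{C}(q)$ at every boundary fixed point $(0,q,\nu q)$. On the region $\tilde H\le 0$ the first two terms are nonnegative; the hypothesis $\mathcal{C}>0$ then makes $\nu'>0$ on a full $M$-neighbourhood of the fixed set, off the boundary. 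Your step~(iv), phrased for $r$, would not yield this conclusion. Relatedly, your step~(iii) oversimplifies: the boundary flow lives on $\partial M=\{0\}\times S^{n-1}\times\R^{n}$, not on $S^{n-1}$ alone, and its fixed points are the pairs $(0,q,\pm(-2f(q))^{1/2}q)$, not just the points of $\mathrm{Crit}(f)$.

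Second, you cannot invoke the tightness result of Section~\ref{Total_inst_section} as a black box: Proposition~\ref{mechanical_tightness} explicitly assumes $f$ Morse with zero a regular value, exactly the hypotheses absent here, and its endgame uses hyperbolicity of the boundary fixed points (Hartman--Grobman) to kill nearby recurrence. In the non-generic case one must \emph{re-run} that argument with a different endgame: assuming failure of total instability, one still obtains (via Proposition~\ref{near_recurrency} and Corollary~\ref{near_recurrency_cor}) a sequence of compact minimal sets $\omega_k\subset\tilde H^{-1}(-\infty,0]\setminus\partial M_{\le 0}$ eventually contained in the neighbourhood where $\nu'>0$. The contradiction then comes from the Lyapunov/ergodic step (take an ergodic measure on $\omega_k$; the time average of $\nu'$ is zero while the space average is strictly positive), not from hyperbolicity. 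Your plan in~(v) needs to be replaced by this argument.
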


\begin{cor}
In Newtonian mechanics, the point $p$ is totally unstable if the second non-zero homogeneous term in the Maclaurin series of the potential is negative on the critical locus of the the first non-zero homogeneous term restricted to the unit sphere where the latter term is less than or equal to zero.
\end{cor}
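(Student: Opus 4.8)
The plan is to obtain this corollary as an immediate specialization of the Non-generic total instability criterion to the case of a Euclidean kinetic term. First I would record what ``Newtonian mechanics'' means here: $g_{ij}=\delta_{ij}$ and the magnetic potential is absent, so $\mu_x\equiv 0$ and $d=\infty$; in particular the weaker magnetism hypothesis $d-l_1/2>\min\{l_2-l_1,m\}$ holds vacuously. The hypothesis that there is a \emph{second} non-zero homogeneous term in the Maclaurin series of $U$ guarantees that $U$ is not a homogeneous polynomial, which is exactly the standing assumption under which the criterion was stated.

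Next I would compute the criterion function $\mathcal{C}$ in this setting. Following the convention adopted just before the criterion, for the Euclidean metric one takes $m$ formally equal to infinity, whence $\mu=\min\{l_2-l_1,m\}=l_2-l_1$. Consequently the Kronecker-type symbols degenerate: $\delta_\mu^{l_2-l_1}=1$ while $\delta_\mu^{m}=\delta_\mu^{\infty}=0$, so the second summand of $\mathcal{C}$ — the one carrying the metric correction $g^{(m)}_{ij}$ — vanishes identically, and what survives is
$$\mathcal{C}(q)\,=\,-(l_2-l_1)\,U_{l_2}(q),\qquad q\in S^{n-1}.$$
Since $2\le l_1<l_2$, the integer $l_2-l_1$ is strictly positive, so $\mathcal{C}(q)>0$ is equivalent to $U_{l_2}(q)<0$.

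Finally I would read off the conclusion. Recalling that $f$ is the restriction of $U_{l_1}$ to the unit sphere, the hypothesis of the criterion — that $\mathcal{C}(q)>0$ at every critical point $q$ of $f$ with $f(q)\le 0$ — becomes precisely the requirement that $U_{l_2}$ be negative at every critical point of $U_{l_1}|_{S^{n-1}}$ lying in the set where $U_{l_1}|_{S^{n-1}}\le 0$. That is the hypothesis of the corollary, and the criterion then yields total instability of $p$. There is essentially no obstacle beyond bookkeeping; the only point deserving a line of care is verifying that the formal convention $m=\infty$ is genuinely the Euclidean limit of the general criterion, i.e.\ that the derivation in section~\ref{Total_inst_section} collapses to the purely Euclidean computation when no metric correction is present, which is clear from the way $\mathcal{C}$ is assembled from the second non-zero homogeneous terms of $U$ and of $g_{ij}$ — when $g_{ij}\equiv\delta_{ij}$ the latter contribution is simply not there.
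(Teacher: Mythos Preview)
Your argument is correct and is exactly the intended specialization: the paper states this corollary without proof as an immediate consequence of the non-generic total instability criterion, and your computation that in the Euclidean case $m=\infty$, $\mu=l_2-l_1$, and hence $\mathcal{C}(q)=-(l_2-l_1)\,U_{l_2}(q)$ is precisely the reduction the author has in mind. The only slip is a harmless cross-reference: the relevant derivation is in section~\ref{non_generic}, not section~\ref{Total_inst_section}.
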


Regarding the study of asymptotic orbits, restoring the original weak magnetism hypothesis, we have \textbf{without any hypothesis on the first non-zero jet of the potential} the following result for electromagnetic systems.

\begin{prop}
Consider a non-equilibrium orbit converging to $(p,0)$ in phase space. Then, it is asymptotic in configuration space to some connected component of the critical locus of $f$ where the function is lower than or equal to zero.
\end{prop}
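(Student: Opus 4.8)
The plan is to transport the whole question to the McGehee blowup constructed above. First I would fix a local chart around $p$ and pass to McGehee coordinates $(r,q,w)$, where $r=\|x\|$, $q=x/\|x\|\in S^{n-1}$, $w$ is the velocity rescaled by $r^{l/2}$, and time is rescaled by $dt/d\tau=r^{1-l/2}$. An orbit $\gamma$ converging to $(p,0)$ as $t\to+\infty$ has $r(t)\to0$, so in the blown-up space $\gamma$ accumulates on the critical boundary $\{r=0\}$; since the blown-up field extends across $\{r=0\}$ and this boundary is invariant, $\gamma$ cannot meet it in finite rescaled time (otherwise it would coincide with a boundary orbit), so $\tau\to+\infty$ along $\gamma$, and since $l\ge2$ one also has $t\to+\infty$ exactly when $\tau\to+\infty$. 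If instead $\gamma$ converges as $t\to-\infty$, reverse time: the reversed system is again electromagnetic with the same potential and the same magnetic degree $d$, hence still satisfies the weak magnetism hypothesis, and the argument below applies verbatim.

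Next I would study the $\omega$-limit set $\Omega$ of $\gamma$ in the blown-up space. It is nonempty, compact, connected, invariant, contained in $\{r=0\}$, and — being an $\omega$-limit set — it is chain transitive for the boundary flow with respect to the restricted notion of chain recurrence developed in the paper. Since $H$ is a first integral and $\gamma\to(p,0)$, we have $H\equiv0$ on $\gamma$; writing the energy in McGehee coordinates as $r^l\bigl(\tfrac12\|w\|^2+f(q)+O(r)\bigr)$ and dividing by $r^l$ gives, on $\{r=0\}$, the identity $\|w\|^2=-2f(q)$. In particular $\pi(\Omega)\subseteq\{q\in S^{n-1}:f(q)\le0\}$, where $\pi(r,q,w)=q$.

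Then I would identify the boundary flow. The magnetic force enters the rescaled equations with a factor of order $r^{d-l/2}$, which vanishes on $\{r=0\}$ precisely because $d-l/2\ge1$; hence the restriction of the blown-up flow to $\{r=0\}$ is the McGehee boundary flow of the homogeneous mechanical system with potential $U_l$, whose analysis was carried out above. From that analysis: the rescaled radial velocity $W=\langle w,q\rangle$ is a continuous function on $\{r=0\}$ that is strictly monotone along every non-equilibrium boundary orbit, and the equilibria of the boundary flow are exactly the states with zero tangential velocity lying over the critical locus $\mathrm{Crit}(f)\cap\{f\le0\}$, with radial velocity $\pm\sqrt{-2f(q)}$. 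A continuous Lyapunov function that is strictly monotone off the rest points forces the (restricted) chain-recurrent set of the boundary flow to reduce to the rest-point set, whose connected components are therefore its chain-transitive components.

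Finally, $\Omega$ being chain transitive is contained in a single such component, i.e. in one connected component $R_0$ of the rest-point set of the boundary flow; hence $\pi(\Omega)\subseteq\pi(R_0)$ is a connected subset of $\mathrm{Crit}(f)\cap\{f\le0\}$. Since $f$ is locally constant on its critical locus, $\pi(R_0)$ lies in a single connected component $K$ of $\mathrm{Crit}(f)$, and $f\le0$ on $K$. Because $\gamma(\tau)\to\Omega$ and $\tau\to+\infty$ exactly when $t\to+\infty$, the direction $x(t)/\|x(t)\|$ satisfies $\mathrm{dist}\bigl(x(t)/\|x(t)\|,\pi(\Omega)\bigr)\to0$, hence $\mathrm{dist}\bigl(x(t)/\|x(t)\|,K\bigr)\to0$: the orbit is asymptotic in configuration space to $K$. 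I expect the main obstacle to be the third step — controlling the boundary flow near its singular locus $\{f=0\}$ and near its rest points so that $W$ genuinely pins the restricted chain-recurrent set down to the rest-point set, which is exactly what the restricted chain-recurrence machinery is for — together with verifying that the blowup kills the magnetic term on $\{r=0\}$ under, and only under, the weak magnetism hypothesis.
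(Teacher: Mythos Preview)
Your argument is correct and follows the paper's route closely: lift to the McGehee blowup, observe that the $\omega$-limit $\Omega$ lies in $\partial M_{\leq 0}$, use the strict Lyapunov function $\nu=\langle q,y\rangle$ on the boundary to force $\Omega$ into $\mathrm{Fix}(\partial M_0)$, then invoke Lemma~\ref{concrete_description_fixpoint} and connectedness of $\Omega$ to land in a single component of $\mathrm{Crit}(f)\cap\{f\le0\}$.

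The one genuine difference is \emph{how} you pin $\Omega$ to the fixed set. You appeal to the standard Conley-theoretic fact that $\omega$-limit sets are internally chain transitive, then use that a strict Lyapunov function collapses the chain-recurrent set to the rest set; this is exactly the mechanism of Proposition~\ref{near_recurrency} in the paper. The paper, however, proves the asymptotic-orbit result via Proposition~\ref{near_recurrency_II}, which is a direct argument avoiding chain recurrence altogether: it exploits that $\nu$ takes only finitely many values on $\mathrm{Fix}(\partial M_0)$ (Corollary~\ref{Lyapunov_constant}) and derives a contradiction from any non-fixed point in $\Omega$ by trapping a limit of orbit segments at a $\nu$-value outside $\nu(\mathrm{Fix}(\partial M_0))$. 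Your route is cleaner and more conceptual if one is willing to import the Conley fact; the paper's route is more self-contained and makes explicit use of the real-analytic finiteness of critical values, which is why that corollary was proved. Either way the conclusion and the surrounding architecture are the same.
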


\subsection{A little history on instability}

To put the previous results into perspective, we will describe below some relevant facts in the chronology of the instability problem. Again, the list is by far non-exhaustive.

The instability problem dates back to Lyapunov \cite{Lyapunov} when he asked about the converse of the Lagrange-Dirichlet Theorem \cite{Lagrange, Dirichlet}, that is, whether a Lyapunov stable equilibrium point is necessarily a strict minimum of the potential function. However, a series of counterexamples appearing later showed that the converse is false in the smooth class of potentials and very counterintuitive phenomena can arise.

The first examples where the Lagrange-Dirichlet converse fails were due to Painlev\'e in dimension one (see Wintner's version \cite{Wi}) and Laloy \cite{Laloy1} in dimension two. In constrast with the first example which has trapping zones, the latter is particularly interesting since it has escape routes to infinity starting from the non-minimum critical point that a priori a particle could follow. However, in both examples the critical point turns out to be Lyapunov stable.

Another counterintuitive result is the non-trivial dependence of the kinetic term of the Lagrangian on the stability of an equilibrium. Indeed, Bertotti and Bolotin \cite{BB} showed that in dimension two, there exist a smooth potential $U$ together with smooth kinetic terms $Q$ and $\tilde{Q}$ such that the equilibrium point is stable for the system $L=Q-U$ but unstable for the system $\tilde{L}=\tilde{Q}-U$. Concrete examples of this phenomenon were given by Garcia, Tal and Bortolatto in \cite{GT2} and \cite{BGT}.

On the other hand, it is well known since the work of Hagedorn in \cite{Ha} that a local strict maximum of a smooth potential is Lyapunov unstable for any smooth kinetic term. This result was later improved by Taliaferro in \cite{Taliaferro} for local maxima and Sofer in \cite{Sofer} for local maxima and generalized systems whose natural framework is Finsler geometry instead of Riemannian geometry. See also the variational approach by Hagedorn and Mawhin in \cite{Hagedorn2}.

In view of the previous results, it is natural to ask for the Lagrange\,-\,Dirichlet converse in some restricted class of potentials, for example the class of real analytic potentials. In dimension two, the result follows from a collection of independent results \cite{Pa3}, \cite{Ta}, \cite{Ko}, \cite{Laloy}. There is an elegant proof given by Brunella in \cite{Br} using desingularization results for singular vector fields in three dimensional space developed by Cano in \cite{Cano1} and \cite{Cano2}.

In dimension greater than two, the problem remains open and only partial results are known. Most of these partial results prove the existence of some asymptotic orbit assuming some non-degeneracy condition on the first non-zero jet of the potential \cite{Lyapunov}, \cite{Cetaev}, \cite{Ha}, \cite{Hagedorn2}, \cite{Ref2}, \cite{Ref3}, \cite{Ref4}, \cite{Ref5}, \cite{Ref6}, \cite{Sofer}, \cite{Taliaferro}, \cite{Ref7}.

In \cite{Arnold}, Arnold proposed the problem of instability of an isolated non-minimum critical point of a real analytic potential in Newtonian mechanics. Even this simplified version of the Lagrange-Dirichlet converse remains open. The case of a non-strict minimum of the potential was treated in \cite{BMP} for Newtonian mechanics and in \cite{BMP} for Lagrangian systems.

It is worth to emphasize that all of the previous partial results are in the context of Lyapunov instability and proving the existence of an asymptotic orbit is not enough to assure total instability. As it was mentioned in the previous part, Palamodov proved total instability for real analytic potential whose first non-zero jet has no critical points other than the origin \cite{Palamodov} and this had implications for the work on the Earnshaw's Theorem by Allaire and Rauch in \cite{Ref_crucial}.

Another counterintuitive result regarding instability in the smooth class is the following. A fixed point in phase space is Siegel-Moser unstable if there is a neighbourhood of the point such that the set consisting of the solely point is the only invariant set in the neighbourhood. In the context of Newtonian mechanics in the plane, Ureña gave an example of a smooth potential whose strict maximum point $p$ verifies that $(p,0)$ is Siegel-Moser stable \cite{Urena}.

Introducing magnetism in the picture is quite subtle. Actually, even in the case of a zero potential in the plane, a transverse non-zero magnetic field makes every point in the plane Lyapunov stable. In contrast with the purely mechanical case, in the electromagnetic one the previous example leaves no room for the converse of Routh's Theorem \cite{Routh}, stating that any strict minimum of a potential function is Lyapunov stable.

In \cite{Hagedorn} and its generalizations \cite{BK}, \cite{Fu}, \cite{Ref6}, \cite{So}, the condition required on the magnetic term for the Lyapunov instability to hold is very similar to the weak magnetism hypothesis considered here. The mentioned condition necessarily implies the vanishing of the magnetic field at the point. In contrast to the previous, instability conditions in which the magnetic field is non-zero at the point were treated in \cite{BN},\cite{BN2}, \cite{BP} and \cite{So2}. Other conditions were treated in \cite{Kozlov_dissipation} under the presence of dissipative forces, something we do not assume in this paper.

As in the purely mechanical case, it is worth to notice that all of the previous results involving magnetism are in the context of Lyapunov instability. The theorem and criteria presented in this paper are the first total instability results for electromagnetic systems.

\section{McGehee blowup for Lagrangian systems}\label{blowup_definition_section}

In this section we adapt the McGehee blowup for the case of an equilibrium point of a real analytic Lagrangian system, specifically, an electromagnetic Lagrangian with a weak magnetism hypothesis with respect to the potential. These systems include the real analytic mechanical systems. The development parallels the approach taken by Montgomery in \cite{Montgomery}.

As mentioned in the introduction, in contrast with the smooth class in which there are smooth Urysohn functions, in the real analytic class we will have to deal with real analytic germs at a point, that is real analytic objects defined on a sufficiently small neighbourhood of the point that a priori cannot be further extended.

For the construction of the objects in this section, we will take coordinates from the beginning via some coordinate map. However, in appendix \ref{abstract_blowup_appendix}, the \emph{abstract McGehee manifold} will be constructed, which is an intrinsic object independent from its coordinates that naturally adopts the usual McGehee coordinates once a local chart on the configuration manifold is chosen. In that appendix, the intrinsic analogue of the object in Proposition \ref{McGehee_blowup_mechanical_system}, the \emph{abstract McGehee blowup}, will also be constructed. Therefore, although a priori it will seem that the main objects constructed in this section depend on the chosen coordinates, a posteriori this will not be the case.

In particular, for the purposes of this section, the approach taken here for the construction of the objects via the mentioned coordinate map is sufficient. This allows the reader to skip the mentioned appendix, at least on a first reading, and has the advantage of going directly to the central object of this paper which is the \emph{McGehee blowup for Lagrangian systems}.


Consider a real analytic Riemannian manifold $(E,g)$, that is a real analytic manifold $E$ with a compatible real analytic Riemannian metric $g$. Consider a point $p$ in $E$ and a real analytic function germ $[\,U]$ at the point, such that $p$ is a critical point of the germ. That is, there is a neighbourhood $A$ of $p$ in $E$ whereat the real analytic function $U$ is defined and $p$ is a critical point,
$$U:\, A\rightarrow \R,\qquad d_p\, U\,=\,0,\qquad p\in A\subset E.$$
Similarly, consider a real analytic one-form germ $[\mu]$ at the point $p$. Taking a smaller neighbourhood $A$ if necessary, we may suppose that $\mu$ is a real analytic one-form on $A$,
$$\mu\,\in\,\Omega^1(A).$$

The function will be called the \emph{potential energy} or simply the \emph{potential} \footnote{Not to be confused with the potential action which is the opposite.} and the form will be called the \emph{magnetic potential}. On the tangent bundle $TA$, consider the real analytic Lagrangian
\begin{equation}\label{Mechanical system}
L:\,TA\rightarrow\R, \qquad L=K+\mu-U
\end{equation}
where $K$ is the \emph{kinetic term}, given by
$$K(x,v)\,=\,g_x(v,v)/\,2,\qquad (x,v)\in TA.$$

Taking a small enough neighbourhood $A$, we may suppose that $(A,\eta)$ is a coordinate neighbourhood centered at the origin,
$$\eta:A\rightarrow \R^n,\qquad \eta\,(p)=0.$$
Taking a linear coordinate change if necessary, we may suppose that the pullback by $\eta$ of the usual inner product at the origin coincides with the metric at $p$, that is
$$\eta^*\,\langle\cdot,\cdot\rangle_0\,=\,g_p.$$
In terms of the metric tensor coefficients, this condition reads as follows
$$(\eta^{-1})^*g\,=\,\sum_{a,b=1}^n\,\gi_{ab}\,dx^a\otimes dx^b,\qquad \gi_{ab}(0)\,=\,\delta_{\it{ab}}.$$

Moreover, since the problem we address here is of local nature, we may even suppose without loss of generality that $A$ is the manifold $E$ itself and is identified with its image in $\R^n$ by the map $\eta$. That is, \textbf{$E$ is a neighbourhood of the origin in $\R^n$}, and we have
\begin{equation}\label{Mechanical system_2}
L:\,TE\rightarrow\R, \qquad 0\in E\subset\R^n.
\end{equation}

The space $E$ will be called the \emph{configuration space} and its tangent bundle $TE$ will be called the \emph{phase space}. In view of the previous identification, it will be convenient to distinguish notationally the origin $p=0$ in configuration space from the origin $(p,0)=\mathbf{0}$ in phase space.


The Euler-Lagrange equations resulting from the Lagrangian \eqref{Mechanical system}, are equivalent to the motion equation
\begin{equation}\label{Newton_eq}
\nabla_{\dot{x}}\, \dot{x}\,=\,-\grad_{g}\,U\,(x)\,-\,\mathcal{R}_x(\iota_{\dot{x}}\,d\mu(x))
\end{equation}
where nabla denotes the Levi-Civita connection of the Riemannian metric $g$ and $\mathcal{R}$ is the Riesz duality operator. Equivalently, equation \eqref{Newton_eq} can be written as a dynamical system in phase space given by the vector field $X$ on $TE$ defined by the equations
\begin{equation}\label{Newton_eq2}
\dot{x}\,=\, v,\qquad \nabla_{\dot{x}}\, v\,=\,-\grad_{g}\,U\,(x)\,-\,\mathcal{R}_x(\iota_{v}\,d\mu(x)).
\end{equation}
As mentioned in the introduction, this is not a Hamiltonian formulation and these are not the respective Hamilton equations of the system. However, this particular formulation is suitable for our purposes. The induced flow in phase space is the \textit{Euler-Lagrange flow}. It is clear from \eqref{Newton_eq2}, that the critical point $p$ of the potential $U$ is an equilibrium point and $(p,0)$ is a fixed point of the Euler-Lagrange flow.


Without loss of generality, we may suppose that \textbf{the potential germ is non-null and is zero at $p$},
$$[\,U]\neq 0,\qquad [\,U]\,(p)=0.$$
In particular, $U$ has a first non-zero jet $U_l$ at $p$ with degree $l$. The function $U_l$ is a homogeneous polynomial on $\R^n$ with degree $l\geq 2$, since $p$ is a critical point of the germ $[\,U]$ by hypothesis,	
$$U_l:\R^n\rightarrow\R,\qquad U_l\in S_l(\R^n),\qquad l\geq 2,$$
where $S_l(\R^n)$ denotes the space of $l$-homogeneous polynomials on $\R^n$.


Along the paper, \textbf{we will assume the weak magnetism hypothesis} on the magnetic potential $\mu$. It is defined as follows.
\begin{defi}
The magnetic potential $\mu$ verifies the weak magnetism hypothesis if the degree $d$ of its first non-zero jet at $p$ verifies
\begin{equation}\label{weak_magnetism}
\Delta\,=\,d\,-l/2\, \geq\,1.
\end{equation}
\end{defi}


Define the \emph{McGehee coordinates} $(r,\,q,\,y)\in (0,+\infty)\times S^{n-1}\times \R^{n}$ as follows
\begin{equation}\label{Coordinates_McGehee}
0\neq x\,=\,r\,q,\quad r>0,\ q\in S^{n-1},\qquad v\,=\,r^{l/2}\,y.
\end{equation}
Extending these coordinates to $r=0$, we have the continuously differentiable map
\begin{equation}\label{map_pi}
\pi:\,M\,=\,[0,+\infty)\times S^{n-1}\times \R^{n}\rightarrow \R^n\times \R^n\qquad (r,q,y)\mapsto (x,v).
\end{equation}
The \emph{McGehee manifold} $M$ is a manifold with boundary and $\pi$ is a real analytic diffeomorphism on its interior,
$$\partial M\,=\,\{0\}\times S^{n-1}\times \R^{n}\,=\,\pi^{-1}\left(\mathbf{0}\right),\qquad \pi:\,M-\partial M\xrightarrow{\cong}\,\left(\R^n-\{0\}\right)\times \R^n.$$

Note that, unless $l$ is an even number, the map $\pi$ in \eqref{map_pi} cannot be real analytically extended to the boundary. This is because of the $r^{l/2}$ factor in the definition of the rescaled velocity $y$ in \eqref{Coordinates_McGehee}.


It will be convenient to decompose the rescaled velocity $y$ in its tangent and normal component to the sphere
\begin{equation}\label{function_nu}
y\,=\,y_{\,tg}\,+\,\nu\,q,\qquad \nu:M\rightarrow\R,\qquad \nu\,(r,q,y)\,=\,\left\langle q,\,y\right\rangle.
\end{equation}
Note that $\nu$ is globally defined on $M$ and it verifies
\begin{equation}\label{y_tg_identity}
\Vert y\Vert^{\,2}\,=\,\Vert y_{\,tg}\Vert^{\,2}\,+\,\nu^{\,2}.
\end{equation}


Recall that $E$ is a neighbourhood of zero in $\R^n$. Taking a smaller neighbourhood if necessary, we may suppose without loss of generality that $E$ \textbf{is actually a centered ball at the origin with radius $r_E>0$},
$$E\,=\,B(0,\,r_E)\subset \R^n,\qquad r_E>0.$$
We will abuse of notation by restricting the domain $M$ of the map $\pi$ to $\pi^{-1}(TE)$ and preserving the names of the objects, that is
$$M\,=\,\pi^{-1}(TE)\,=\,[0,r_E)\times S^{n-1}\times \R^{n},\qquad \pi:M\rightarrow TE.$$

\begin{prop}\label{Extension_vector}
On $M-\partial M$, consider the vector field
$$\tilde{X}\,=\, r^{1-l/2}\,\left(\pi^{-1}\right)_*(X).$$
Then, $\tilde{X}$ is real analytic and extends uniquely and continuously differentiable to the manifold $M$. The boundary $\partial M$ is an invariant set of the corresponding dynamical system and the extension of $\tilde{X}$ is real analytic on this set. Moreover, if $l$ is even, then the extension is real analytic.
\end{prop}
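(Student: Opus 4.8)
The plan is to compute $\tilde X$ explicitly in the McGehee coordinates $(r,q,y)$ and read off its regularity. First I would push the vector field $X$ of \eqref{Newton_eq2} through $\pi^{-1}$. Differentiating the relations $x=rq$ and $v=r^{l/2}y$ along an orbit and using $\dot x = v$, one gets $\dot r\, q + r\,\dot q = r^{l/2} y$; projecting onto the radial and tangential directions of $S^{n-1}$ yields $\dot r = r^{l/2}\nu$ and $\dot q = r^{l/2-1} y_{tg}$, where $\nu$ and $y_{tg}$ are as in \eqref{function_nu}. For the velocity equation, differentiating $v = r^{l/2}y$ and applying the covariant derivative, the homogeneity degree $l$ of the first non-zero jet $U_l$ makes $\grad_g U(x) = \grad_g U_l(rq) + (\text{higher order}) = r^{l-1}\,\grad_{g_0} f\text{-type term} + O(r^l)$, so after multiplying by the prefactor $r^{1-l/2}$ and absorbing the rescaling $v=r^{l/2}y$, the $y$-equation becomes $\dot y = -(l/2)\,\nu\, y + (\text{analytic terms in } r,q,y) + (\text{magnetic contribution})$. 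The key bookkeeping point is that the time reparametrization encoded in the factor $r^{1-l/2}$ is exactly chosen so that each term carries a nonnegative power of $r$: the potential term contributes $r^{l-1}\cdot r^{1-l/2}\cdot r^{-l/2}=r^0$ from the leading jet and strictly positive powers of $r$ from higher jets (these are real analytic in $r$ since they come from the Taylor expansion of $U$), while the magnetic term $\mathcal R_x(\iota_v d\mu)$ carries a factor $r^{d-1}$ from the first non-zero jet of $\mu$, and combined with $v=r^{l/2}y$ and the prefactor it contributes $r^{d-1}\cdot r^{l/2}\cdot r^{1-l/2}\cdot r^{-l/2} = r^{d-l/2} = r^{\Delta}$; the weak magnetism hypothesis \eqref{weak_magnetism}, $\Delta = d-l/2\ge 1$, guarantees this power is $\ge 1$, in particular nonnegative, so the magnetic term extends continuously to $r=0$ (and vanishes there).

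Next I would check the claimed regularity. Away from the boundary, $\pi$ is a real analytic diffeomorphism, $X$ is real analytic, and $r^{1-l/2}$ is real analytic for $r>0$, so $\tilde X$ is real analytic on $M-\partial M$; this gives the first assertion. For the extension to $\partial M$: the formulas just derived express $\tilde X$ as a sum of terms each of which is (up to the Christoffel-symbol contributions, which are real analytic in $x=rq$ hence real analytic in $(r,q)$) a polynomial in $y$, a real analytic function of $q\in S^{n-1}$, and a power series in $r$ with exponents that are nonnegative integers \emph{except possibly} for half-integer powers coming from odd $l$ — concretely, the only source of non-integer powers is that $v=r^{l/2}y$ forces factors like $r^{l/2}$ to appear in intermediate expressions. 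One must verify these cancel: in the $r$- and $q$-equations the prefactor $r^{1-l/2}$ meets $v=r^{l/2}y$ giving $r$ to integer power, and in the $y$-equation the potential term is homogeneous of degree $l-1$ so again the half-integer powers cancel, leaving $r^0$; the only residual half-integer powers come from the \emph{higher} Taylor terms of $U$ and $g_{ij}$, i.e.\ from $U_{l+1}, U_{l+2},\dots$ and from $g_{ij}(x)-\delta_{ij}$, which after the prefactor produce powers $r^{1/2}, r, r^{3/2},\dots$ — these are continuously differentiable at $r=0$ but not real analytic when $l$ is odd. Since on $\partial M$ itself ($r=0$) every such term vanishes to the relevant order and its first $r$-derivative exists and is continuous, $\tilde X$ extends $C^1$ to $M$; uniqueness of the extension is automatic because $M-\partial M$ is dense in $M$ and the extension is continuous.

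For the invariance of $\partial M$: on $r=0$ the $r$-equation reads $\dot r = r^{l/2}\nu\big|_{r=0} = 0$ (as $l/2\ge 1$), so the set $\{r=0\}$ is invariant; I would note the extended field restricted to $\partial M$ has $\dot r\equiv 0$ and the $q$- and $y$-equations there involve only $U_l$, $g_0=\mathrm{id}$, and no magnetic term, hence are given by real analytic expressions in $(q,y)\in S^{n-1}\times\R^n$ — this proves the extension is real analytic on $\partial M$. Finally, if $l$ is even, then $r^{l/2}$ and $r^{1-l/2}$ are (Laurent) monomials with integer exponents, the map $\pi$ extends real analytically across the boundary, and every term in the expansion of $\tilde X$ — including all higher Taylor contributions of $U$ and $g_{ij}$ — involves only nonnegative integer powers of $r$, so $\tilde X$ is real analytic on all of $M$. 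The main obstacle is the careful power-counting in the third, most intricate component equation (the $y$-equation), making sure that the Levi-Civita / Riesz-duality terms and the magnetic term are correctly expanded in homogeneous pieces and that the weak magnetism hypothesis is used exactly where the magnetic power $r^\Delta$ could otherwise be negative; once that bookkeeping is in place, the regularity statements follow by inspection.
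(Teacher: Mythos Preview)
Your overall approach matches the paper's: compute $\tilde X$ explicitly in the coordinates $(r,q,y)$ via the time rescaling $d\tau=r^{l/2-1}\,dt$, identify the $r$-power in each term, and read off the regularity. However, the bookkeeping in your regularity discussion contains a genuine error.

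You claim that the half-integer powers of $r$ come from the higher Taylor terms $U_{l+1},U_{l+2},\dots$ of the potential and from $g_{ij}-\delta_{ij}$, producing factors $r^{1/2},r,r^{3/2},\dots$. This is wrong, and in fact your own earlier power-count contradicts it: for $U_{l+k}$ the gradient scales like $r^{l+k-1}$, so the same computation you wrote for the leading jet gives $r^{l+k-1}\cdot r^{1-l/2}\cdot r^{-l/2}=r^{k}$, an \emph{integer} power. Likewise the Christoffel and inverse-metric corrections enter through $g_{ij}(rq)$, which is real analytic in $(r,q)$, and contribute only integer powers. In the paper's equations this is packaged as the single term $r\,F_X(r,q,y)$ with $F_X$ globally real analytic on $M$; there are no $r^{1/2}$ contributions from the mechanical part at all. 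The \emph{only} source of a non-integer power is the magnetic term, which you correctly computed as $r^{\Delta}\,\mathbf F_X$.

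This matters because your argument as written would fail: you assert that $r^{1/2}$ is $C^1$ at $r=0$, but it is not (its derivative $\tfrac12 r^{-1/2}$ blows up). The reason the extension is nonetheless $C^1$ is that the mechanical part is in fact real analytic, while for the magnetic part $\Delta=d-l/2\ge 1$; when $l$ is odd $\Delta$ is a half-integer, hence necessarily $\Delta\ge 3/2$, and $r^{3/2}$ \emph{is} $C^1$ at $r=0$. With this correction the rest of your outline goes through: invariance of $\partial M$ from $r'=\nu r$, real analyticity on $\partial M$ because the boundary system involves only $U_l$ and the flat metric, and full real analyticity when $l$ is even since then $\Delta\in\N$.
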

\begin{proof}
The radial rescaling in the definition of $\tilde{X}$ can be seen as the time rescaling $$d\tau\,=\,r^{l/2-1}\,dt.$$
For the time derivative with respect to $\tau$ we will use prime instead of dot.

Using Einstein summation convention on repeated indices, equations \eqref{Newton_eq2} read as follows
\begin{equation}\label{Newton_eq3}
\dot{x}^k\,=\, v^k,\qquad \dot{v}^k\,=\,-\gi^{ka}(x)\,\partial_a U\,(x)\,-\,\Gamma^k_{ij}(x)\,v^i\,v^j\,+\gi^{ka}(x)\,\Fi_{ab}(x)\,v^b
\end{equation}
where $(\gi^{ab})=(\gi_{ab})^{-1}$, $\Gamma^k_{ij}$ are the Christoffel symbols and $\Fi_{ab}$ are the magnetic field strength coefficients
$$\mu=\sum_i\,A_i dx^i,\qquad d\mu=\sum_{i<j}\,\Fi_{ij} dx^i\wedge dx^j,\qquad \Fi_{ij}=\partial_i A_j -\partial_j A_i.$$

After a tedious but straightforward calculation, by Lemma \ref{extension_Lemma}, on $M-\partial M$ these equations read as follows
\begin{equation}\label{McGehee_eq}
\begin{array}{r@{}l}
\medskip
r'\,&=\,\nu\,r,\\
\medskip
q'\,&=\,y\,-\,\nu\,q,\\
y'\,&=\,-\nabla U_l\,(q)\,-\,\frac{l}{2}\,\nu\,y\,+\,r\,F_X(r,q,y)\,+\,r^\Delta\,\mathbf{F}_X(r,q,y),
\end{array}
\end{equation}
where nabla denotes here the usual operator in Euclidean space and $F_X$ together with $\mathbf{F}_X$ are globally defined real analytic vector-valued function on $M$. Specifically,
\begin{equation}
\begin{array}{r@{}l}
\bigskip

F_X^k(r,q,y)\,&=\,-\, V^k_{>l}\,(r,q)\,-\,\hi^{ka}(r,q)\,\partial_a U_l(q)\,-\,\Gamma^k_{ij}(r\,q)\,y^i\,y^j,\\

\mathbf{F}_X^k(r,q,y)\,&=\,\gi^{ka}(r\,q)\,\Fi_{ab,\,\geq d}(r,q)\,y^b
\end{array}
\end{equation}
where $\hi^{ka}$, $V_{>l}$ and $\Fi_{ab,\,\geq d}$ are globally defined real analytic functions on $M$ defined by the equations
$$\gi^{ka}(r\,q)\,=\,\delta^{ka}\,+\,r\,\hi^{ka}(r,q),\qquad
\nabla U(r\,q)\,=\,r^{l-1}\,\nabla U_l(q)\,+\,r^l\,V_{>l}(r,q),$$
$$\,\Fi_{ab}(r\,q)\,=\,r^{d-1}\,\Fi_{ab,\,\geq d}(r,q).$$
Since $\Delta\geq 1$ by the magnetism hypothesis \eqref{weak_magnetism} and the boundary $\partial M$ corresponds to the region $r=0$, the assertion follows immediately from \eqref{McGehee_eq} and we have the result.
\end{proof}

Note that the vector field $\tilde{X}$ in applications to the $N$-body problem is recovered from \eqref{McGehee_eq} with $l=-1$ and the $F$ corrections identically zero. However, in our case $l\geq 2$ and the $F$ corrections will be different from zero in general.

Recall the energy function on phase space
$$H(x,v)\,=\,g_x(v,\,v)/\,2\,+\,U(x),\qquad (x,v)\in TE.$$

\begin{prop}\label{Extension_energy}
On $M-\partial M$, consider the function
$$\tilde{H}\,=\, r^{-l}\,\pi^*(H)\,=\,r^{-l}\,H\circ \pi.$$
Then, $\tilde{H}$ is real analytic and extends uniquely and real analytically to $M$.
\end{prop}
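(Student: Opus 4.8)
The plan is to compute $\pi^{*}(H)$ directly in the McGehee coordinates $(r,q,y)$ and to exhibit an explicit real analytic factorization by $r^{l}$. Substituting $x=r\,q$ and $v=r^{l/2}\,y$ into $H(x,v)=g_x(v,v)/2+U(x)$, and using $g_x(v,v)=\sum_{a,b}\gi_{ab}(x)\,v^a v^b$, one gets
$$H\circ\pi\,(r,q,y)\,=\,\frac{r^{l}}{2}\,\sum_{a,b=1}^{n}\gi_{ab}(r\,q)\,y^a\,y^b\,+\,U(r\,q).$$
The kinetic part already carries the factor $r^{l}$, with the remaining factor $\tfrac12\sum_{a,b}\gi_{ab}(r\,q)\,y^a y^b$ real analytic on all of $M$: indeed $\gi_{ab}$ is real analytic on $E$ and the map $(r,q)\mapsto r\,q$ sends $M=[0,r_E)\times S^{n-1}\times\R^{n}$ into $E=B(0,r_E)$ since $\Vert r\,q\Vert=r<r_E$. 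So the whole matter reduces to factoring $r^{l}$ out of $U(r\,q)$.

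For that, I would use that $U$ is real analytic near the origin with vanishing jet of order $l-1$: writing its Maclaurin expansion $U(x)=\sum_{|\alpha|\geq l}c_\alpha\,x^\alpha$, one obtains $U(r\,q)=\sum_{|\alpha|\geq l}c_\alpha\,r^{|\alpha|}\,q^\alpha=r^{l}\,\mathcal{U}(r,q)$ with $\mathcal{U}(r,q)=\sum_{|\alpha|\geq l}c_\alpha\,r^{|\alpha|-l}\,q^\alpha$. Since $q$ ranges over the compact sphere and $r<r_E$, this series converges absolutely and uniformly on $M$, hence $\mathcal{U}$ is real analytic on $M$ (and $\mathcal{U}(0,q)=U_l(q)$). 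Equivalently, this factorization is already recorded in the proof of Proposition \ref{Extension_vector}, where $\nabla U(r\,q)=r^{l-1}\,\nabla U_l(q)+r^{l}\,V_{>l}(r,q)$ is used; I would simply quote the analogous identity $U(r\,q)=r^{l}\,\mathcal{U}(r,q)$ from that same calculation.

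Combining the two displays, on $M-\partial M$ (where $r>0$) one has
$$\tilde{H}\,=\,r^{-l}\,(H\circ\pi)\,=\,\frac12\,\sum_{a,b=1}^{n}\gi_{ab}(r\,q)\,y^a\,y^b\,+\,\mathcal{U}(r,q),$$
and the right-hand side is real analytic on the whole of $M$; this is the desired extension. Uniqueness is immediate since $M-\partial M$ is dense in $M$ and two continuous functions agreeing on a dense set coincide. I do not expect a genuine obstacle here: the only mildly delicate point is justifying that $U(r\,q)/r^{l}$ extends real analytically across $r=0$, and that is taken care of either by the convergence argument above or by citing the factorization from Proposition \ref{Extension_vector}.
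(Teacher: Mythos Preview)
Your proof is correct and follows essentially the same route as the paper: compute $H\circ\pi$, observe the kinetic term already carries the factor $r^{l}$, and factor $r^{l}$ out of $U(r\,q)$ using that the first non-zero jet of $U$ has degree $l$. The paper packages the factorization by citing Lemma~\ref{extension_Lemma} (which is exactly your Maclaurin-series argument) and additionally splits $\gi_{ab}(r\,q)=\delta_{ab}+r\,\hi_{ab}(r,q)$ and $U(r\,q)=r^{l}U_l(q)+r^{l+1}u_{>l}(r,q)$ to record the explicit form $\tilde{H}=\tfrac{1}{2}\Vert y\Vert^{2}+U_l(q)+r\,F_H(r,q,y)$, which is used in later computations; but for the statement as written your version is complete.
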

\begin{proof}
By Lemma \ref{extension_Lemma}, on $M-\partial M$ the function $\tilde{H}$ reads as follows
\begin{equation}\label{Energy_rescaled}
\tilde{H}(r,q,y)\,=\,\frac{\Vert y\Vert^2}{2}\,+\,U_l(q)\,+\,r\,F_H(r,q,y)
\end{equation}
where $F_H$ is a globally defined real analytic function on $M$. In effect,
$$F_H(r,q,y)\,=\,u_{>l}(r,q)\,+\,\frac{\hi_{ab}(r,q)}{2}\,y^a\,y^b$$
where $\hi_{ab}$ and $u_{>l}$ are globally defined real analytic functions on $M$ defined by the equations
$$\gi_{ab}(r\,q)\,=\,\delta_{ab}\,+\,r\,\hi_{ab}(r,q),\qquad
U(r\,q)\,=\,r^l\,U_l(q)\,+\,r^{l+1}\,u_{>l}(r,q).$$
This completes the proof.
\end{proof}

For every point $\zeta$ in $M$, consider the unique solution $\varphi_\zeta$ of the following ODE defined on its maximal interval,
$$\varphi_\zeta: (\omega_-(\zeta),\,\omega_+(\zeta))\rightarrow M,\qquad \frac{d\,\varphi_\zeta}{d\tau} \,=\,\tilde{X}(\varphi_\zeta).$$
These solutions constitute a flow $\varphi(\tau, \zeta)\,=\,\varphi_{\zeta}(\tau)$ where it is defined and we will refer to it as the \emph{McGehee flow}. We will also denote $\varphi_\tau(\zeta)\,=\,\varphi(\tau, \zeta)$. Also, given a smooth function $w$ on $M$, we will make the usual abuse of notation by writing
$$w'\,=\,\partial_\tau\,(w\,\circ\,\varphi)\,|_{\tau=0}.$$

Recall that, by Proposition \ref{Extension_vector}, the boundary $\partial M$ is an invariant set of the flow.

\begin{lema}\label{Energy_calculation}
On the boundary $\partial M$, we have $\tilde{H}'\,=\,-l\,\nu\,\tilde{H}$.
\end{lema}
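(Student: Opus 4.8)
The plan is to compute $\tilde H'$ directly on the boundary $\partial M = \{r=0\}$ using the explicit formulas already derived. On $\partial M$, Proposition \ref{Extension_energy} gives $\tilde H = \Vert y\Vert^2/2 + U_l(q)$ since the correction term $r\,F_H$ vanishes when $r=0$; likewise the equations of motion \eqref{McGehee_eq} restrict on the boundary to $q' = y - \nu q$ and $y' = -\nabla U_l(q) - \tfrac{l}{2}\,\nu\,y$, the $r$-corrections having dropped out (note $\Delta \geq 1$). So the whole computation reduces to differentiating a concrete polynomial expression along a concrete vector field.

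First I would write $\tilde H' = \langle y, y'\rangle + \langle \nabla U_l(q), q'\rangle$ and substitute. This yields
\[
\tilde H' \,=\, \langle y,\, -\nabla U_l(q) - \tfrac{l}{2}\nu y\rangle \,+\, \langle \nabla U_l(q),\, y - \nu q\rangle
\,=\, -\tfrac{l}{2}\,\nu\,\Vert y\Vert^2 \,-\, \nu\,\langle \nabla U_l(q), q\rangle,
\]
where the two $\langle y, \nabla U_l(q)\rangle$ terms cancel. Then I would invoke Euler's identity for the $l$-homogeneous polynomial $U_l$, namely $\langle \nabla U_l(q), q\rangle = l\,U_l(q)$, to rewrite the last term as $-l\,\nu\,U_l(q)$. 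Combining,
\[
\tilde H' \,=\, -\tfrac{l}{2}\,\nu\,\Vert y\Vert^2 \,-\, l\,\nu\,U_l(q) \,=\, -l\,\nu\left(\tfrac{\Vert y\Vert^2}{2} + U_l(q)\right) \,=\, -l\,\nu\,\tilde H,
\]
using again that $\tilde H = \Vert y\Vert^2/2 + U_l(q)$ on $\partial M$. This is the claimed identity.

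There is no real obstacle here: the only subtlety is bookkeeping — making sure that the $r$-dependent correction terms ($F_X$, $\mathbf F_X$, $F_H$) genuinely vanish on $\partial M$, which they do because they are multiplied by positive powers of $r$ (and $r^\Delta$ with $\Delta\geq 1$), and that the restriction of the flow to $\partial M$ is legitimate, which is exactly the content of Proposition \ref{Extension_vector} asserting $\partial M$ is invariant. The one external input is Euler's homogeneous function identity; everything else is the cancellation $\langle y, \nabla U_l(q)\rangle - \langle \nabla U_l(q), y\rangle = 0$, which is what makes the energy relation so clean.
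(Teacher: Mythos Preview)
Your proof is correct and is essentially identical to the paper's own argument: both compute $\tilde H' = \langle y, y'\rangle + \langle \nabla U_l(q), q'\rangle$ on $\partial M$, observe the cancellation of the $\pm\langle y,\nabla U_l(q)\rangle$ terms, and apply Euler's identity $\langle \nabla U_l(q), q\rangle = l\,U_l(q)$ to conclude. There is nothing to add.
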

\begin{proof}
Since $\partial M$ is invariant, the calculation follows from \eqref{McGehee_eq} and \eqref{Energy_rescaled} at $r=0$:
$$\tilde{H}'\,=\,\langle y,\,y'\rangle\,+\,\langle\nabla U_l(q),\,q'\rangle\,=\,-\frac{l}{2}\,\nu\,\Vert y\Vert^2\,-\,\nu\,\langle\nabla U_l(q),\,q\rangle\,=$$
$$=\,-\frac{l}{2}\,\nu\,\Vert y\Vert^2\,-\,l\,\nu\,U_l(q)\,=\,-l\,\nu\,\tilde{H}$$
where the terms $\pm\,\langle\nabla U_l(q),\,y\rangle$ cancel out and the Euler theorem on homogeneous functions gives $\langle\nabla U_l(q),\,q\rangle\,=\,l\,U_l(q)$. This concludes the calculation.
\end{proof}

\begin{cor}\label{compact_invariant_boundaries}
Relative to the boundary $\partial M$, the (possibly empty) regions defined by $\tilde{H}=0$ and $\tilde{H}\leq 0$,
$$\partial M_0\,=\,\partial M \cap \tilde{H}^{-1}(0),\qquad \partial M_{\leq 0}\,=\,\partial M \cap \tilde{H}^{-1}(-\infty,0],$$
are compact invariant sets.
\end{cor}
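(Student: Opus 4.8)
The plan is to read off the restriction of $\tilde H$ to the boundary, deduce compactness from elementary properties of the homogeneous polynomial $U_l$, and obtain invariance from Lemma \ref{Energy_calculation} by solving a scalar linear ODE.

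First, by Proposition \ref{Extension_energy} the function $\tilde H$ is real analytic on all of $M$, and setting $r=0$ in \eqref{Energy_rescaled} its restriction to $\partial M=\{0\}\times S^{n-1}\times\R^{n}$ is
$$\tilde H(0,q,y)\,=\,\frac{\Vert y\Vert^{2}}{2}\,+\,U_l(q).$$
Hence
$$\partial M_0\,=\,\{(0,q,y):\ \Vert y\Vert^{2}=-2U_l(q)\},\qquad \partial M_{\leq 0}\,=\,\{(0,q,y):\ \Vert y\Vert^{2}\leq -2U_l(q)\},$$
and both are closed in $\partial M$ as preimages of closed sets under the continuous map $\tilde H|_{\partial M}$. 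For compactness I would use that $U_l$ is continuous on the compact sphere $S^{n-1}$, so $c:=\max_{S^{n-1}}|U_l|<\infty$; on either set $\Vert y\Vert^{2}\leq 2c$, so the $\R^{n}$-component is bounded. Thus $\partial M_0$ and $\partial M_{\leq 0}$ are closed and bounded subsets of $S^{n-1}\times\R^{n}$ (identifying $\partial M$ with it), hence compact by Heine--Borel, whether or not they happen to be empty.

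For invariance, recall from Proposition \ref{Extension_vector} that $\partial M$ is invariant under the McGehee flow. Fix $\zeta\in\partial M$ and set $h(\tau)=\tilde H(\varphi_\tau(\zeta))$ on the maximal interval of $\varphi_\zeta$; since the orbit remains in $\partial M$, Lemma \ref{Energy_calculation} yields the scalar linear ODE
$$h'(\tau)\,=\,-l\,\nu(\varphi_\tau(\zeta))\,h(\tau),$$
with $\tau\mapsto\nu(\varphi_\tau(\zeta))$ continuous. By uniqueness for this equation (equivalently, the integrating-factor formula) we get $h(\tau)=h(0)\exp\!\bigl(-l\int_0^{\tau}\nu(\varphi_s(\zeta))\,ds\bigr)$, and the exponential factor is strictly positive. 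Therefore $h\equiv 0$ whenever $h(0)=0$, and $h(\tau)\leq 0$ for all $\tau$ whenever $h(0)\leq 0$; that is, $\zeta\in\partial M_0$ (resp. $\partial M_{\leq 0}$) forces $\varphi_\tau(\zeta)\in\partial M_0$ (resp. $\partial M_{\leq 0}$) for every $\tau$ in the maximal interval, which is exactly invariance.

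There is no genuine obstacle here; the statement is a direct bookkeeping consequence of Propositions \ref{Extension_vector} and \ref{Extension_energy} together with Lemma \ref{Energy_calculation}. The only points demanding a little care are to argue invariance through the explicit solution of the linear ODE for $h$ (or a Gronwall estimate) rather than from a naive ``the vector field is tangent to the zero level set of $\tilde H$'' claim, and to note that $\partial M_{\leq 0}$ (and, when $l$ is even and $U_l$ is positive definite, also $\partial M_0$) may be empty, in which case compactness and invariance hold vacuously.
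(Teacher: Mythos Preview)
Your proof is correct and follows essentially the same approach as the paper: both deduce invariance from Lemma~\ref{Energy_calculation} via uniqueness of the scalar linear ODE for $\tilde H$ along orbits (you make the integrating-factor argument explicit, the paper just cites uniqueness), and both obtain compactness by bounding $\Vert y\Vert$ via the extrema of $U_l$ on $S^{n-1}$ and applying Heine--Borel. The only cosmetic difference is that the paper uses $-\min_{S^{n-1}} U_l$ for the bound while you use $\max_{S^{n-1}}|U_l|$.
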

\begin{proof}
From Lemma \ref{Energy_calculation}, it is clear that $\partial M_0$ is invariant. Thus, by uniqueness of the solution of the ODE in the mentioned lemma, $\partial M_{\leq 0}$ is invariant as well. Since $\tilde{H}$ is continuous, both of these regions are closed sets in $\partial M$. Moreover, they are contained in the compact set
$$C\,=\,\{0\}\times S^{n-1}\times \overline{B(\mathbf{0},\, R)},\qquad R^2/2\,=\,\max\{\,-m_U,\,0\}$$
where $m_U$ is the minimum of $U_l$ on the sphere $S^{n-1}$ and the assertion follows.
\end{proof}

\begin{defi}
We will refer to $\partial M_{< 0}\,=\,\partial M_{\leq 0}\,-\,\partial M_0$ as the subcritical boundary and to $\partial M_0$ as the critical boundary.
\end{defi}


It is worth to remark that the critical boundary $\partial M_0$ contrasts with its analogous \emph{collision manifold} counterpart in $N$-body problem. Indeed, while the collision manifold is non-compact, the critical boundary is, although not a manifold in general. As an example, consider a Newtonian mechanical system in the plane with potential $U(x_1,\,x_2)\,=\,-x_1^2$ whose critical boundary of the blowup at the origin is a doubly-pinched torus, see Figure \ref{perspectiva2_degenerado}.


\begin{lema}\label{boundary_non_empty}
Suppose that the critical point $p$ of the potential $U$ is not a strict minimum. Then, $\partial M_{\leq \rm 0}$ is non-empty.
\end{lema}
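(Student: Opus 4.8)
The plan is to reduce everything to the restriction of $\tilde H$ to the boundary, where it is completely explicit. By \eqref{Energy_rescaled} evaluated at $r=0$ we have $\tilde H(0,q,y)=\Vert y\Vert^{2}/2+U_l(q)$, so a point $(0,q,y)\in\partial M$ lies in $\partial M_{\leq 0}$ exactly when $\Vert y\Vert^{2}/2+U_l(q)\leq 0$, and the cheapest way to produce such a point is to take $y=0$ at a minimizer of $U_l$ on the sphere. Writing $m_U=\min_{S^{n-1}}U_l$ (attained, since $S^{n-1}$ is compact and $U_l$ continuous, as already used in the proof of Corollary \ref{compact_invariant_boundaries}), I would first observe that $\partial M_{\leq 0}\neq\emptyset$ if and only if $m_U\leq 0$: if $m_U\leq 0$ and $q_0$ realizes it, then $(0,q_0,0)\in\partial M_{\leq 0}$; conversely, if $m_U>0$ then $\tilde H(0,q,y)=\Vert y\Vert^{2}/2+U_l(q)\geq m_U>0$ for all $(q,y)$, so $\partial M_{\leq 0}$ is empty.

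The lemma then reduces to the implication ``$m_U>0\ \Rightarrow\ p$ is a strict minimum of $U$'', and I would prove this by contraposition using the Maclaurin-type identity recorded in the proof of Proposition \ref{Extension_energy}, namely $U(rq)=r^{l}U_l(q)+r^{l+1}u_{>l}(r,q)$ with $u_{>l}$ real analytic on $[0,r_E)\times S^{n-1}$. Fixing any $r_0<r_E$ and setting $C=\sup\{\,|u_{>l}(r,q)|:0\leq r\leq r_0,\ q\in S^{n-1}\,\}<\infty$ by compactness, for $x\neq 0$ with $|x|<\min\{r_0,\ m_U/(1+C)\}$, writing $r=|x|$ and $q=x/|x|$, one gets $U(x)=r^{l}\bigl(U_l(q)+r\,u_{>l}(r,q)\bigr)\geq r^{l}(m_U-rC)>0=U(p)$, so $p$ is a strict local minimum. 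Taking the contrapositive, if $p$ is not a strict minimum then $m_U\leq 0$, and hence $\partial M_{\leq 0}\neq\emptyset$ by the first paragraph.

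I do not expect any real obstacle here: the whole argument is a direct reading of $\tilde H$ on $\partial M$ from \eqref{Energy_rescaled} together with the elementary fact that positivity of the leading homogeneous jet on $S^{n-1}$ forces a strict minimum, the latter being the only mildly nontrivial point and precisely where compactness of the sphere and the expansion from Proposition \ref{Extension_energy} are needed.
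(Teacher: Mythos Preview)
Your proof is correct and follows essentially the same route as the paper's: both arguments reduce to showing that $m_U>0$ forces $p$ to be a strict local minimum via the expansion $U(rq)=r^{l}\bigl(U_l(q)+r\,u_{>l}(r,q)\bigr)$ and a compactness bound on $u_{>l}$. The only cosmetic difference is that you first isolate the equivalence $\partial M_{\leq 0}\neq\emptyset\iff m_U\leq 0$ before running the estimate, whereas the paper argues directly by contradiction from the emptiness of $\partial M_{\leq 0}$.
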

\begin{proof}
Suppose that $\partial M_{\leq \rm 0}$ is empty. Hence, by definition $\tilde{H}>0$ on $\partial M$. In particular, $U_l>0$ on the unit sphere $S^{n-1}$ hence its minimum $m_U>0$.

By Lemma \ref{extension_Lemma}, there is a real analytic function $u_{>l}$ on $[0,r_E)\times S^{n-1}$ such that,
$$U(rq)\,=\,r^l\,\left(U_l(q)\,+\,r\, u_{>l}(r,q)\right),\qquad
u_{>l}:[0,r_E)\times S^{n-1}\rightarrow\R.$$
In particular, $\vert u_{>l} \vert$ is uniformly continuous on $[0,r_E/2]\times S^{n-1}$ and has a maximum $M_U$ there. Denote by $M_U'$ the maximum of $\{1,\,M_U\}$ and define
$$\varepsilon\,=\,\min\,\{\,r_E/2,\,m_U/2M_U'\}.$$
Then, we have
$$U(rq)\,\geq \,r^l\,\left(m_U\,-\,r\,M_U\right)\,\geq\,r^{l}\,\frac{m_U}{2},\qquad (r,q)\in [0,\varepsilon]\times S^{n-1}.$$
Therefore, $p=0$ is a strict minimum of $U$ and we have the result.
\end{proof}

The converse is false in general. Indeed, although the origin is a strict minimum of the potential $x_1^2+x_2^4$ on the plane, the first non-zero jet $x_1^2$ is zero at $(0,\pm 1)\in S^1$ hence the critical boundary is non-empty.

We are in position now to define the main object of this paper.

\begin{prop}\label{McGehee_blowup_mechanical_system}
Suppose that the critical point $p$ of the potential $U$ is not a strict minimum and $U(p)=0$. Then, the restriction of the map $\pi$ to the critical and subcritical energy regions
$$\pi_{\leq 0}:\,\tilde{H}^{-1}(-\infty,0]\rightarrow H^{-1}(-\infty,0]$$
is a surjective continuous proper map with the following properties:
\medskip

\begin{enumerate}
\item\label{item_nonempty_boundary} $\pi_{\leq 0}^{-1}(\mathbf{0})\,=\, \partial M_{\leq \rm 0}\,\neq\,\emptyset$.
\medskip

\item\label{item_homeo_McGehee_blowup} The following restriction is a homeomorphism
$$\pi_{\leq 0}:\,\tilde{H}^{-1}(-\infty,0]\,-\,\partial M_{\leq 0}\rightarrow H^{-1}(-\infty,0]\,-\,\{\mathbf{0}\}.$$

\item The following restriction is a real analytic diffeomorphism
$$\pi_{< 0}:\,\tilde{H}^{-1}(-\infty,0)\,-\,\partial M_{< 0}\,\rightarrow\, H^{-1}(-\infty,0).$$

\item Dynamically, it maps an orbit to another orbit preserving their orientations and the resulting orbit map is surjective. In particular, the map in item \ref{item_homeo_McGehee_blowup} is a topological equivalence of flows.
\end{enumerate}
\end{prop}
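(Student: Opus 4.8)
The proof will rest entirely on the identity $H\circ\pi=r^{l}\,\tilde H$ on $M-\partial M$ (Proposition~\ref{Extension_energy} before dividing by $r^{-l}$) and on the fact, already established in this section, that $\pi$ restricts to a real analytic diffeomorphism of $M-\partial M$ onto $(E-\{0\})\times\R^{n}$. I would open with two elementary observations. Since $g$ equals the Euclidean inner product at the origin, $H(0,v)=\tfrac12\|v\|^{2}$, so $H^{-1}(-\infty,0)\subset(E-\{0\})\times\R^{n}$ and $H^{-1}(-\infty,0]\setminus\{\mathbf 0\}=H^{-1}(-\infty,0]\cap((E-\{0\})\times\R^{n})$; and $\pi^{-1}(\mathbf 0)=\partial M$ because $x=rq=0$ forces $r=0$. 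As $r^{l}>0$ in the interior, $H\circ\pi$ and $\tilde H$ have the same sign there, hence $\pi$ carries $\tilde H^{-1}(-\infty,0]$ into $H^{-1}(-\infty,0]$ and $\pi_{\leq0}$ is well defined and continuous, being a restriction of the continuously differentiable map $\pi$. Moreover $\pi_{\leq0}^{-1}(\mathbf 0)=\partial M\cap\tilde H^{-1}(-\infty,0]=\partial M_{\leq0}$, which is nonempty by Lemma~\ref{boundary_non_empty} since $p$ is not a strict minimum; this is item~\ref{item_nonempty_boundary}. Combining the sign identity with the first observation, $\pi$ restricts to a bijection of $\tilde H^{-1}(-\infty,0]\setminus\partial M_{\leq0}=\tilde H^{-1}(-\infty,0]\cap(M-\partial M)$ onto $H^{-1}(-\infty,0]\setminus\{\mathbf 0\}$ and of the open set $\tilde H^{-1}(-\infty,0)\setminus\partial M_{<0}=\tilde H^{-1}(-\infty,0)\cap(M-\partial M)$ onto $H^{-1}(-\infty,0)$; as restrictions of the real analytic diffeomorphism $\pi|_{M-\partial M}$ these are respectively a homeomorphism (item~\ref{item_homeo_McGehee_blowup}) and a real analytic diffeomorphism (item~(3)). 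Together with $\pi_{\leq0}(\partial M_{\leq0})=\{\mathbf 0\}$ this also yields surjectivity onto $H^{-1}(-\infty,0]$.

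The one genuinely technical step, which I expect to be the main obstacle, is properness. Let $C\subset H^{-1}(-\infty,0]$ be compact; being a compact subset of $TE=B(0,r_{E})\times\R^{n}$ it satisfies $\|x\|\leq\rho$ on $C$ for some $\rho<r_{E}$. Then $\pi_{\leq0}^{-1}(C)$ is closed in $\tilde H^{-1}(-\infty,0]$, hence closed in $M$, and the plan is to show it is contained in a compact subset $[0,\rho]\times S^{n-1}\times\overline{B(\mathbf 0,R')}$ of $M$, which then forces it to be compact. Its part with $r=0$ lies in $\partial M_{\leq0}$, compact by Corollary~\ref{compact_invariant_boundaries}. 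On the part with $r>0$ one has $r=\|x\|\leq\rho$ automatically, and $\tilde H(r,q,y)\leq0$ is equivalent to $H(x,v)\leq0$, i.e.\ $\tfrac12 g_{x}(v,v)\leq-U(x)\leq|U(x)|$; positive definiteness of $g$ on the compact ball $\{\|x\|\leq\rho\}$ gives $g_{x}(v,v)\geq c\|v\|^{2}$ with $c>0$, while Lemma~\ref{extension_Lemma} writes $U(rq)=r^{l}(U_{l}(q)+r\,u_{>l}(r,q))$ with a right-hand factor bounded, say by $M$, on $[0,\rho]\times S^{n-1}$. Hence $\|y\|^{2}=r^{-l}\|v\|^{2}\leq 2M/c$, uniformly. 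The delicate point is precisely this estimate: a bound on $\|v\|$ alone would not control the rescaled velocity $y$ because of the factor $r^{-l/2}$, and it is the order-$l$ vanishing of $U$ at $p$ that makes the two powers of $r$ cancel.

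Finally, for item~(4): by Proposition~\ref{Extension_vector}, on $M-\partial M$ the McGehee flow is the $\pi$-pullback of the Euler--Lagrange flow on $(E-\{0\})\times\R^{n}$ up to the orientation-preserving time change $d\tau=r^{l/2-1}\,dt$, so $\pi|_{M-\partial M}$ maps orbits to orbits preserving orientation. Since $H$ is constant along Euler--Lagrange orbits, $H\circ\pi$, hence the sign of $\tilde H=r^{-l}H\circ\pi$, is constant along McGehee orbits in $M-\partial M$; therefore $\tilde H^{-1}(-\infty,0]\setminus\partial M_{\leq0}$ is invariant and the homeomorphism of item~\ref{item_homeo_McGehee_blowup} is a topological equivalence of flows. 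On the invariant set $\partial M_{\leq0}$ (Corollary~\ref{compact_invariant_boundaries}) the map collapses every orbit to the fixed point $\{\mathbf 0\}$, and conversely every Euler--Lagrange orbit in $H^{-1}(-\infty,0]$ is either $\{\mathbf 0\}$ (attained since $\partial M_{\leq0}\neq\emptyset$) or, having $H$ constant and being disjoint from the fixed point $\{\mathbf 0\}$, lies in $(E-\{0\})\times\R^{n}$ and is the $\pi$-image of its unique McGehee-orbit preimage; so the orbit map is surjective.
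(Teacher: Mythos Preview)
Your proof is correct and follows essentially the same route as the paper's: the identity $H\circ\pi=r^{l}\tilde H$ on the interior, the observation $H^{-1}(-\infty,0]\cap T_pE=\{\mathbf 0\}$, Lemma~\ref{boundary_non_empty} for nonemptiness, and Lemma~\ref{flow_equivalence} (equivalently the time change $d\tau=r^{l/2-1}dt$) for the dynamical statement. The only noticeable difference is in the properness step: the paper argues sequentially (extract convergent subsequences of $r_n,q_n,y_n$ and use the expansion of $\tilde H$ to bound $\|y_n\|$), whereas you give a direct ``closed and bounded'' argument, bounding $\|y\|^{2}=r^{-l}\|v\|^{2}$ via the uniform ellipticity of $g$ on $\{\|x\|\leq\rho\}$ together with $|U(rq)|\leq r^{l}M$. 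Your version is slightly cleaner in that it never needs to isolate the $y$-dependent part of $F_H$; both arguments are of course equivalent in substance.
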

\begin{proof}
Relative to the interior of $M$, by definition $\tilde{H}\,=\, r^{-l}\,H\circ \pi$ hence
$$\pi^{-1}(H^{-1}(-\infty,\,0])\cap \accentset{\circ}{M}\,=\,\tilde{H}^{-1}(-\infty,\,0])\cap \accentset{\circ}{M}.$$
Because, relative to the interior of $M$, the map $\pi$ is a diffeomorphism with image the complement of $T_p E$ in $TE$, we have
\begin{equation}\label{identidad_util_1}
\tilde{H}^{-1}(-\infty,\,0])\cap \accentset{\circ}{M}\,=\,\pi^{-1}\left(H^{-1}(-\infty,\,0]\cap (T_p E)^c\right).
\end{equation}
In particular, $$\pi\left(\tilde{H}^{-1}(-\infty,\,0])\cap \accentset{\circ}{M}\right)\subset H^{-1}(-\infty,0]$$
and because
$$\tilde{H}^{-1}(-\infty,\,0])\cap \partial M\subset \partial M\,\xrightarrow{\pi}\,\{\mathbf{0}\}\subset H^{-1}(-\infty, 0]$$
we conclude that the restriction $\pi_{\leq 0}$ is well defined and continuous.

Now, we prove the properties listed in the items.
\medskip

\begin{enumerate}
\item It follows directly from the definitions and Lemma \ref{boundary_non_empty},
$$\pi_{\leq 0}^{-1}(\mathbf{0})\,=\, \tilde{H}^{ -1} (-\infty,0]\cap \pi^{ -1}(\mathbf{0}) \,=\, \tilde{H}^{ -1}(-\infty,0]\cap \partial M\,=\,\partial M_{\leq  0}\,\neq\,\emptyset.$$
\bigskip

\item By \eqref{identidad_util_1} and the fact that $\pi$ is a diffeomorphism outside $\partial M$, the following map is a homeomorphism
$$\pi_{\leq 0}:\,\tilde{H}^{-1}(-\infty,0]\,\cap \,\accentset{\circ}{M}\rightarrow H^{-1}(-\infty,0]\,\cap (T_p E)^c$$
and the item follows directly from the equality
$$H^{-1}(-\infty,0]\,\cap T_p E\,=\,\{\mathbf{0}\}.$$
This is because $H(p,y)\,=\,y^2/2\,+\,U(p)\,=\,y^2/2\geq 0$.
\bigskip

\item A completely similar argument as the one before for the interval $(-\infty,0)$, shows that the following map is a real analytic diffeomorphism
$$\pi_{< 0}:\,\tilde{H}^{-1}(-\infty,0)\,\cap \,\accentset{\circ}{M}\rightarrow H^{-1}(-\infty,0)\,\cap (T_p E)^c$$
and the result immediately follows from the inclusion
$$H^{-1}(-\infty,0)\,\subset\,( T_p E)^c.$$

\smallskip

\item By item \ref{item_nonempty_boundary} and the fact that $\mathbf{0}$ is a fixed point in $TE$, every orbit in $\partial M_{\leq 0}$ is mapped to the constant orbit $\{\mathbf{0}\}$. Relative to the interior of $M$, by definition $r^{l/2-1}\,\pi_*(\tilde{X})\,=\,X$ and due to the fact that
$$\tilde{H}^{-1}(-\infty,\,0]\cap \accentset{\circ}{M}\,\subset\,M,\qquad
H^{-1}(-\infty,0]\,-\,\{\mathbf{0}\}\,\subset\,TE,$$
are invariant sets, the result follows from Lemma \ref{flow_equivalence}.
\end{enumerate}
\medskip

It rest to show that the map $\pi_{\leq 0}$ is surjective and proper. Surjectivity of the map follows directly from Lemma \ref{boundary_non_empty} and items \ref{item_nonempty_boundary} and \ref{item_homeo_McGehee_blowup}. For properness, consider a compact set $K\subset H^{-1}(-\infty,0]$ and a sequence $(\zeta_n)$ in $K'=\pi_{\leq 0}^{-1}(K)$. It will be enough to show that there is a convergent subsequence of $(\zeta_n)$ in $K'$. Denote by $(z_n)$ the sequence $z_n=\pi_{\leq 0}(\zeta_n)\in K$. Taking a subsequence if necessary, we may suppose that $z_n\rightarrow z\in K$. If $z\neq 0$, then by item \ref{item_homeo_McGehee_blowup} we have the result. Suppose instead that $z_n\rightarrow 0\in K$. In particular,
$$r_n\rightarrow 0,\qquad \zeta_n=(r_n,q_n,y_n),\qquad \tilde{H}(\zeta_n)\leq 0.$$
Because $q_n\in S^{n-1}$, taking a subsequence we may suppose that $q_n\rightarrow q_*\in S^{n-1}$. Therefore, by equation \eqref{Energy_rescaled} we have
$$0\,\leq\, \frac{\Vert y_n \Vert^2}{2}\,\leq\, -U_l(q_n)\,-\,r_n\,F_H(r_n,\,q_n)\,\rightarrow\,-U_l(q_*)$$
hence the sequence $(y_n)$ is bounded. Thus, taking a further subsequence if necessary, we may suppose that $y_n\rightarrow y_*$ and we have proved that
$$\zeta_n=(r_n,q_n,y_n)\rightarrow (0,q_*,y_*).$$
Since $K'$ is closed, $(0,q_*,y_*)\in K'$ and this finishes the proof.
\end{proof}

\begin{defi}
We will refer to the map in Proposition \ref{McGehee_blowup_mechanical_system} as the McGehee blowup for the Lagrangian system at $p$.
\end{defi}

As an example, Figure \ref{pendulum_blowup} shows the McGehee blowup for the simple pendulum at the unstable equilibrium point. The subcritical boundary is the disjoint union of two open intervals while its boundary, the critical boundary, consists of four points. Note that the blowup has replaced the unstable single equilibrium point by the disjoint union of two segments.

Dynamically, the subcritical boundary consists of two heteroclinic orbits connecting pairs of fixed points in the critical boundary. The original homoclinic orbits have become heteroclinic after the blowup. The map $\pi_{\leq 0}$ identifies the critical and subcritical boundary into a single point and recovers the original space along with its dynamics in the energy region $H\leq 0$.

\begin{figure}[htb]
\centering
\includegraphics[width=0.5\textwidth]{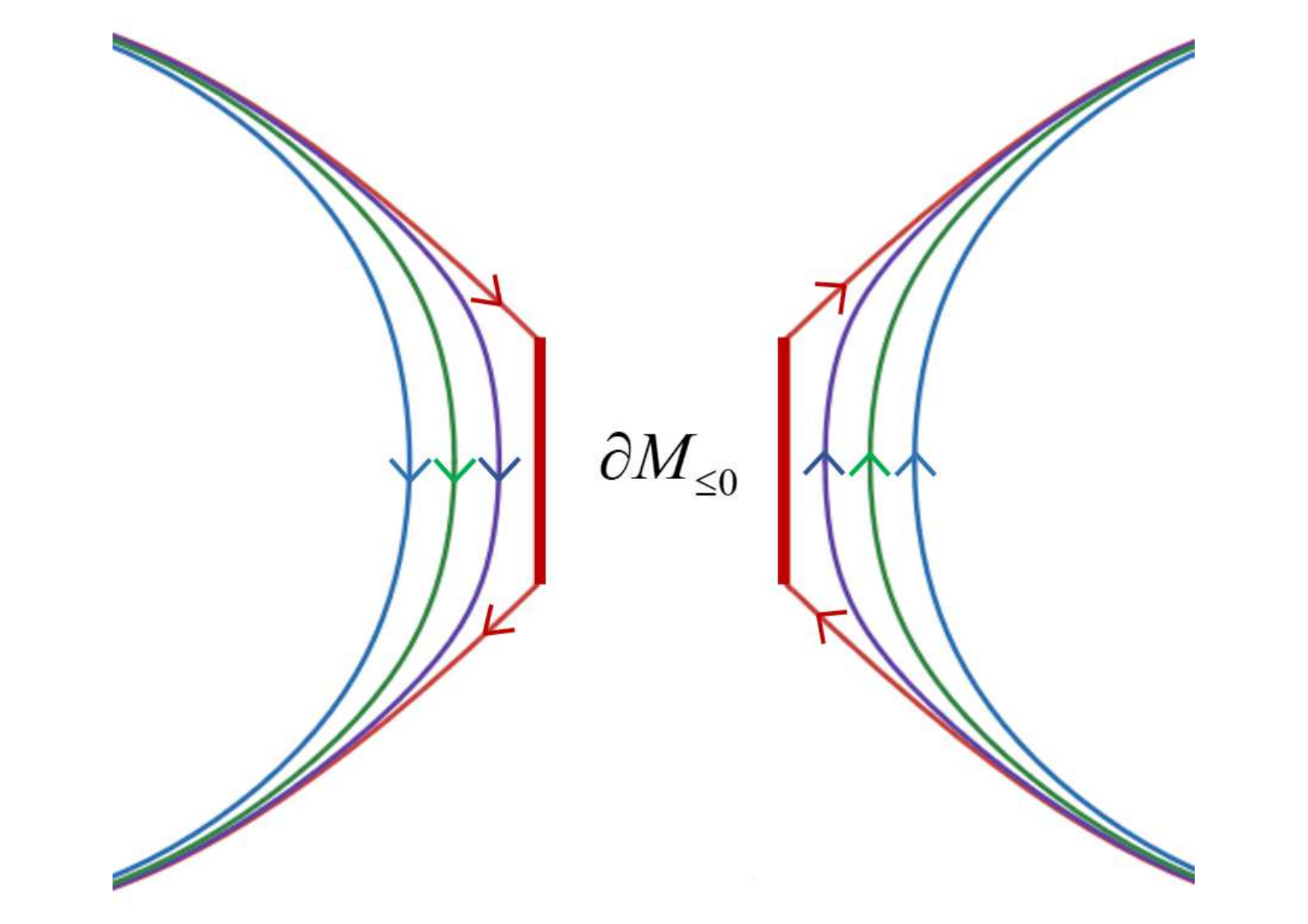}
\caption{McGehee blowup for the simple pendulum at the unstable equilibrium point. The thick red lines represent the critical and subcritical boundary.}
\label{pendulum_blowup}
\end{figure}

\section{Dynamics on the subcritical and critical boundary}\label{Dynamics_criticalboundary_section}

In this section, it will be proved that the McGehee flow on the critical and subcritical boundary is a rich dynamical object, actually, a Lyapunov dynamical system.

\begin{lema}\label{Lyapunov_calculation}
On the boundary $\partial M$, we have $\nu'\,=\,\left(1+l/2\right)\,\Vert y_{\,tg}\Vert^2\,-\,l\,\tilde{H}$.
\end{lema}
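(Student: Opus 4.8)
The plan is to compute $\nu'$ directly on $\partial M$ using the McGehee equations \eqref{McGehee_eq} specialized to $r=0$, exactly as was done for $\tilde H'$ in Lemma \ref{Energy_calculation}. Since $\nu = \langle q, y\rangle$ and $\partial M$ is invariant, I can differentiate along the flow: $\nu' = \langle q', y\rangle + \langle q, y'\rangle$, and substitute $q' = y - \nu q$ and $y' = -\nabla U_l(q) - \tfrac{l}{2}\nu y$ (the correction terms $r F_X$ and $r^\Delta \mathbf{F}_X$ vanish at $r=0$ since $\Delta \geq 1$).

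The first term gives $\langle y - \nu q, y\rangle = \|y\|^2 - \nu \langle q, y\rangle = \|y\|^2 - \nu^2$, which by the identity \eqref{y_tg_identity} equals $\|y_{tg}\|^2$. The second term gives $\langle q, -\nabla U_l(q) - \tfrac{l}{2}\nu y\rangle = -\langle q, \nabla U_l(q)\rangle - \tfrac{l}{2}\nu\langle q,y\rangle = -l\, U_l(q) - \tfrac{l}{2}\nu^2$, using Euler's theorem on homogeneous functions as in the preceding lemma. Adding the two contributions yields
$$\nu' \,=\, \|y_{tg}\|^2 - \tfrac{l}{2}\nu^2 - l\, U_l(q).$$
To finish, I rewrite the last two terms in terms of $\tilde H$. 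From \eqref{Energy_rescaled} at $r=0$ we have $\tilde H = \tfrac12\|y\|^2 + U_l(q) = \tfrac12\|y_{tg}\|^2 + \tfrac12\nu^2 + U_l(q)$, so $l\,U_l(q) + \tfrac{l}{2}\nu^2 = l\,\tilde H - \tfrac{l}{2}\|y_{tg}\|^2$. Substituting,
$$\nu' \,=\, \|y_{tg}\|^2 + \tfrac{l}{2}\|y_{tg}\|^2 - l\,\tilde H \,=\, \left(1 + \tfrac{l}{2}\right)\|y_{tg}\|^2 - l\,\tilde H,$$
which is the claimed identity.

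The computation is entirely routine; there is no real obstacle, only bookkeeping. The one point deserving a moment of care is the vanishing of the perturbation terms in $y'$ on the boundary: this is legitimate precisely because the weak magnetism hypothesis forces $\Delta \geq 1$, so both $r F_X$ and $r^\Delta \mathbf{F}_X$ are $O(r)$ and die at $r=0$, and because $\partial M = \{r=0\}$ is invariant (Proposition \ref{Extension_vector}), so the flow never leaves the set where this simplification is valid. The rest is just the Euler relation and the Pythagorean identity \eqref{y_tg_identity}.
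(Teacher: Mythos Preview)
Your proof is correct and follows essentially the same approach as the paper's own proof: both differentiate $\nu=\langle q,y\rangle$ along the flow on $\partial M$, substitute the McGehee equations \eqref{McGehee_eq} at $r=0$, and finish using Euler's theorem and the identity \eqref{y_tg_identity}. Your write-up is slightly more explicit in the final algebraic step relating $U_l(q)$ and $\nu^2$ to $\tilde H$, but there is no substantive difference.
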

\begin{proof}
Since $\partial M$ is invariant, the calculation follows from \eqref{McGehee_eq} and \eqref{Energy_rescaled} at $r=0$. Actually,
$$\nu'\,=\,\langle q',\,y\rangle\,+\,\langle q,\,y'\rangle\,=\,\Vert y\Vert^2\,-\,\nu^2\,-\,\langle q,\,\nabla U_l(q)\rangle\,-\,\frac{l}{2}\nu^2\,=$$
$$=\,\Vert y_{\,tg}\Vert^2\,-\,l\, U_l(q)\,-\,\frac{l}{2}\nu^2\,=\,\left(1+l/2\right)\,\Vert y_{\,tg}\Vert^2\,-\,l\,\tilde{H}$$
where we have used \eqref{y_tg_identity} and the Euler theorem on the homogeneous function $U_l$. This concludes the calculation.
\end{proof}

\begin{prop}\label{Lyapunov_prop}
On the compact invariant set $\partial M_{\leq 0}$, the fixed point set is contained in $\partial M_0$ and the function $\nu$ is a strict Lyapunov function outside it.
\end{prop}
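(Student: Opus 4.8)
The plan is to exploit Lemma \ref{Lyapunov_calculation} directly. On $\partial M_{\leq 0}$ we have $\tilde H\leq 0$, so the formula $\nu'=(1+l/2)\Vert y_{\,tg}\Vert^2-l\,\tilde H$ immediately gives $\nu'\geq 0$ everywhere on $\partial M_{\leq 0}$, since $l\geq 2$ makes both summands nonnegative. Thus $\nu$ is a (weak) Lyapunov function on $\partial M_{\leq 0}$, and I only need to analyze where $\nu'=0$. The equality $\nu'=0$ forces simultaneously $y_{\,tg}=0$ and $\tilde H=0$; the latter shows the zero set of $\nu'$ is contained in $\partial M_0$, which already yields that $\nu$ is a strict Lyapunov function on $\partial M_{\leq 0}\setminus \partial M_0 = \partial M_{<0}$.

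Next I would locate the fixed points. At a fixed point of the McGehee flow on $\partial M$ all of $r'$, $q'$, $y'$ vanish; from \eqref{McGehee_eq} at $r=0$ these read $\nu r=0$ (automatic on $\partial M$), $q'=y-\nu q=0$, and $y'=-\nabla U_l(q)-\tfrac{l}{2}\nu y=0$. The equation $y=\nu q$ says $y$ is purely radial, i.e. $y_{\,tg}=0$ and $\nu=\pm\Vert y\Vert$. Feeding $\nu'=0$ at such a point through Lemma \ref{Lyapunov_calculation} (using $y_{\,tg}=0$) gives $\tilde H=0$, so every fixed point lies in $\partial M_0$; this proves the first assertion. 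Conversely one should also record what the fixed points on $\partial M_0$ look like: with $y=\nu q$ and $\tilde H=0$ we get $\Vert y\Vert^2/2+U_l(q)=0$, i.e. $\nu^2=-2U_l(q)$, while $y'=0$ becomes $\nabla U_l(q)=-\tfrac{l}{2}\nu\,y=-\tfrac{l}{2}\nu^2 q$, whose tangential part forces $q$ to be a critical point of $f$ (the restriction of $U_l$ to $S^{n-1}$) and whose radial part is the Euler identity $l\,U_l(q)=-\tfrac{l}{2}\nu^2$, consistent with $\tilde H=0$. So the fixed point set is $\{(0,q,\nu q): q\in\operatorname{Crit}(f),\ f(q)\leq 0,\ \nu^2=-2U_l(q)\}\subset\partial M_0$, though for the present Proposition only the inclusion in $\partial M_0$ is needed.

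It remains to upgrade "weak Lyapunov on $\partial M_{<0}$" to "strict Lyapunov on $\partial M_{\leq 0}$ outside the fixed point set", i.e. to handle points of $\partial M_0$ that are not fixed. At such a point $\tilde H=0$, so Lemma \ref{Lyapunov_calculation} gives $\nu'=(1+l/2)\Vert y_{\,tg}\Vert^2$; this is strictly positive unless $y_{\,tg}=0$. If $y_{\,tg}=0$ at a point of $\partial M_0$, then $\nu'=0$ there, and the strict-Lyapunov property requires ruling out that such a point is non-fixed — so the main obstacle is exactly this: showing that a point $(0,q,\nu q)\in\partial M_0$ with $y_{\,tg}=0$ but $\nabla U_l(q)\neq -\tfrac{l}{2}\nu\,y$ cannot be a point where $\nu$ is stationary along the orbit to higher order, i.e. that $\nu$ still strictly increases across it. Here I would differentiate once more: along the flow, at a point with $\tilde H=0$ and $y_{\,tg}=0$ one computes $\nu''$ using $q'=y-\nu q=0$ (so $q$ is instantaneously stationary) and $y'=-\nabla U_l(q)-\tfrac l2\nu y=:w$; then $(y_{\,tg})'$ at that instant equals the tangential part of $w$, which is the tangential part of $-\nabla U_l(q)$, namely $-\nabla_{S^{n-1}}f(q)$. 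Hence $\nu''=(1+l/2)\cdot 2\langle y_{\,tg}, (y_{\,tg})'\rangle + \cdots$ — and since $y_{\,tg}=0$ the leading surviving term of $\tfrac{d}{d\tau}\nu'$ comes from $\tfrac{d}{d\tau}\Vert y_{\,tg}\Vert^2$ which vanishes to first order too, so one must go to $\nu'''$ and compare with $\Vert (y_{\,tg})'\Vert^2$. The clean way is: $\nu'=(1+l/2)\Vert y_{\,tg}\Vert^2$ identically on $\partial M_0$ (not just at a point), and $\Vert y_{\,tg}\Vert^2$ is a nonnegative function that vanishes precisely when $y=\nu q$; if its orbit stays in the zero set of $y_{\,tg}$ on an interval, then $y\equiv\nu q$ there, forcing $q'=0$, hence $q$ constant, hence (from $y'=0$ being impossible unless $\nabla U_l(q)=-\tfrac l2\nu y$) the orbit is a single fixed point. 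Therefore on any non-constant orbit segment in $\partial M_0$ the function $\Vert y_{\,tg}\Vert^2$ is not identically zero, so $\nu'=(1+l/2)\Vert y_{\,tg}\Vert^2\not\equiv 0$ along it, giving that $\nu$ is strictly increasing along every non-constant orbit in $\partial M_{\leq 0}$. Combined with $\nu'\geq 0$ everywhere, this is exactly the assertion that $\nu$ is a strict Lyapunov function on $\partial M_{\leq 0}$ outside the fixed point set, and I would phrase the final write-up around this invariance-of-the-zero-set argument rather than a higher-derivative computation.
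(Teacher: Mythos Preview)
Your argument is correct and follows the same route as the paper's: establish $\nu'\geq 0$ from Lemma~\ref{Lyapunov_calculation}, then argue by contradiction that if $\nu$ fails to strictly increase along some non-fixed orbit, then $\nu'=0$ on an interval forces $y_{tg}\equiv 0$ and $\tilde H\equiv 0$, whence $q'=0$, $q$ constant, and (since $\nu$ is also constant on that interval) $y=\nu q$ is constant, so the point is fixed---a contradiction. Your write-up would be cleaner if you drop the higher-derivative discussion entirely and make explicit the small step that $\nu'=0$ on the interval gives $\nu$ constant, hence $y=\nu q$ constant, hence $y'=0$; as written, the parenthetical ``from $y'=0$ being impossible unless\ldots'' reads as though $y'=0$ is assumed rather than derived.
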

\begin{proof}
By Lemma \ref{Lyapunov_calculation} and the fact that $l\geq 2$, the function $\nu$ is increasing on orbits hence it is a Lyapunov function. Thus, it is enough to show that the function is strict outside the fixed point set in $\partial M_0$.

Suppose it is not. Then by definition, there is a point
$$\zeta\,\in\,\partial M_{\leq 0}\,-\,\mbox{Fix}\,(\partial M_0)$$
and there is an $\varepsilon>0$ such that $\nu(\zeta)\,=\,\nu(\varphi_{\varepsilon}(\zeta))$. Because $\nu$ is increasing on orbits, $\tau\mapsto\nu(\varphi_{\tau}(\zeta))$ is constant on the interval $(0,\varepsilon)$. Therefore $\nu'$ is zero on this interval and by Lemma \ref{Lyapunov_calculation} again, we have that $y_{\,tg}$ is null and $\tilde{H}$ is zero along the orbit on the aforementioned interval. By the motion equations \eqref{McGehee_eq} at $r=0$,
$$q'\,=\,y\,-\,\nu\,q\,=\,y_{\,tg},\qquad
y'\,=\,-\nabla U_l\,(q)\,-\,\frac{l}{2}\,\nu\,y,$$
we have that $q$ is constant hence so is $y\,=\,y_{\,tg}\,+\,\nu\,q$ along the orbit on $(0,\varepsilon)$. Since the solution is unique, $\zeta$ must be a fixed point such that $\tilde{H}(\zeta)=\rm 0$, that is
$$\zeta\,\in\,\mbox{Fix}\,(\partial M_0)$$
and this is a contradiction since it was assumed not by hypothesis.
\end{proof}

In particular, the fixed point set in $\partial M_{\leq 0}$ coincides with the one in $\partial M_0$ and the $\alpha$ and $\omega$ limit of every orbit in $\partial M_{\leq 0}$ are contained in different connected components of this fixed point set. In view of this and the previous proposition, it will be useful to have a concrete description of this set.

\begin{lema}\label{concrete_description_fixpoint}
Relative to $\partial M_0$, the point $\zeta$ is a fixed point of the flow if and only if $\zeta\,=\,\rm (\,0,\it \,q,\,\nu\,q)$ such that $q$ is a critical point of $U_l$ relative to the unit sphere with Lagrange multiplier $-l\,\nu^2/2$,
$$\nabla U_l\,(q)\,=\,-\frac{l\,\nu^2}{2}\,q.$$
Equivalently, for every critical point $q$ of $U_l$ relative to the unit sphere verifying $U_l(q)\leq 0$, we have the fixed points in $\partial M_0$
\begin{equation}\label{fixedpoints_associatedto_criticalpoint}
\zeta\,=\,\rm \left(\,0,\it \,q,\,\rm \pm(-2\,\it U_l(q) \rm )^{1/2}\, \it q\right)
\end{equation}
and these are all.
\end{lema}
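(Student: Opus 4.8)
The plan is to work directly with the McGehee motion equations \eqref{McGehee_eq} restricted to $r=0$, using the decomposition $y = y_{tg} + \nu q$ from \eqref{function_nu}. On $\partial M$ these equations read
\[
q'\,=\,y_{tg},\qquad y'\,=\,-\nabla U_l(q)\,-\,\frac{l}{2}\,\nu\,y,
\]
together with the constraint that $\zeta \in \partial M_0$ means $\tilde H(\zeta) = \Vert y\Vert^2/2 + U_l(q) = 0$, i.e. $\Vert y\Vert^2 = -2U_l(q)$ (which forces $U_l(q)\le 0$).

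First I would establish the forward implication. If $\zeta = (0,q,y)$ is a fixed point, then $q' = 0$ gives $y_{tg} = 0$, so $y = \nu q$ with $\nu = \langle q,y\rangle$. Substituting into $y' = 0$ yields $\nabla U_l(q) = -\frac{l}{2}\nu y = -\frac{l}{2}\nu^2 q$, which is exactly the stated Lagrange-multiplier condition: $q$ is a critical point of $U_l$ constrained to $S^{n-1}$ with multiplier $-l\nu^2/2$. Pairing this equation with $q$ and invoking Euler's theorem on the homogeneous function $U_l$ gives $l\,U_l(q) = \langle \nabla U_l(q), q\rangle = -\frac{l}{2}\nu^2$, hence $\nu^2 = -2U_l(q)$; this is consistent with $\zeta\in\partial M_0$ (indeed $\Vert y\Vert^2 = \nu^2 = -2U_l(q)$) and it shows $U_l(q)\le 0$, as well as $\nu = \pm(-2U_l(q))^{1/2}$. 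For the converse, given a constrained critical point $q$ of $U_l$ on $S^{n-1}$ with $U_l(q)\le 0$, set $\nu = \pm(-2U_l(q))^{1/2}$ and $y = \nu q$ and run the argument backwards: the constrained-criticality of $q$ means $\nabla U_l(q)$ is parallel to $q$, and Euler's theorem pins the multiplier to $-l\nu^2/2$, giving $\nabla U_l(q) = -\frac{l}{2}\nu^2 q = -\frac{l}{2}\nu y$, so $y' = 0$; and $y_{tg} = 0$ gives $q' = 0$, while $r' = \nu r = 0$ on $\partial M$. Finally $\tilde H(\zeta) = \Vert y\Vert^2/2 + U_l(q) = \nu^2/2 + U_l(q) = 0$, so $\zeta\in\partial M_0$. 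This establishes \eqref{fixedpoints_associatedto_criticalpoint} and that these exhaust the fixed points.

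The only genuinely delicate point is the bookkeeping between the two characterizations: the first statement phrases the fixed point condition as $\nabla U_l(q) = -\frac{l\nu^2}{2}q$ with $\nu$ a free parameter, while the second eliminates $\nu$ in favor of $q$ via $\nu = \pm(-2U_l(q))^{1/2}$. The equivalence of the two hinges entirely on Euler's theorem forcing the multiplier, so the bulk of the work is simply noting that a constrained critical point of a homogeneous polynomial has its multiplier determined by the value of the polynomial there. I expect no real obstacle; the main care needed is to track the sign ambiguity in $\nu$ and to handle the degenerate case $U_l(q) = 0$, where $\nu = 0$ and the two sign choices coincide, giving the single fixed point $(0,q,0)$.
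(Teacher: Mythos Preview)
Your proof is correct and follows precisely the approach the paper intends: the paper's own proof is a one-liner stating that the result ``follows immediately from equations \eqref{McGehee_eq} and \eqref{Energy_rescaled} at $r=0$ and the fact that $U_l$ is homogeneous,'' and you have simply unpacked those three ingredients with the expected details.
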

\begin{proof}
This follows immediately from equations \eqref{McGehee_eq} and \eqref{Energy_rescaled} at $r=0$ and the fact that $U_l$ is homogeneous.
\end{proof}

In general, the Lyapunov function of a Lyapunov dynamical system does not need to be constant on the connected components of the chain-recurrent set since it could take different values on different chain-recurrent classes. As an easy example, consider the flow $\phi_t(x,\,y)\,=\,(x,\,e^t\,y)$ on the plane and the complete Lyapunov function $(x,y)\mapsto x+y^2$.

\begin{cor}\label{Lyapunov_constant}
Restricted to the fixed point set in $\partial M_0$, the function $\nu$ is locally constant and has only a finite set of values.
\end{cor}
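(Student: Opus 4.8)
The plan is to show that the fixed point set $\mathrm{Fix}(\partial M_0)$ decomposes into finitely many compact connected pieces on each of which $\nu$ takes a single value, which immediately gives that $\nu$ is locally constant and has only finitely many values on this set. First I would invoke the explicit parametrization from Lemma \ref{concrete_description_fixpoint}: the fixed points are exactly the pairs $(0, q, \pm(-2U_l(q))^{1/2}q)$ where $q$ ranges over critical points of $U_l|_{S^{n-1}}$ with $U_l(q)\le 0$. On such a fixed point, $\nu = \pm(-2U_l(q))^{1/2}$, so the value of $\nu$ is a real-analytic function of $q$ along each of the two sheets; the problem reduces to understanding the structure of the constrained critical locus $Z = \{q\in S^{n-1} : \nabla U_l(q) \parallel q\}$ together with the values of $U_l$ on it.

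The key point is that $U_l$ is a polynomial, hence $Z$ is a real algebraic (in particular semianalytic) subset of the compact manifold $S^{n-1}$. By standard stratification or triangulation theory for semianalytic sets, $Z$ has finitely many connected components and each is itself a compact semianalytic set. I would then argue that $U_l$ is \emph{constant} on each connected component of $Z$: indeed, if $\gamma(t)$ is any smooth path in $Z$, then $\frac{d}{dt}U_l(\gamma(t)) = \langle \nabla U_l(\gamma(t)), \gamma'(t)\rangle$, and since $\gamma(t)\in S^{n-1}$ we have $\gamma'(t)\perp \gamma(t)$, while $\nabla U_l(\gamma(t))$ is parallel to $\gamma(t)$ by the defining condition of $Z$; hence the derivative vanishes. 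Since connected semianalytic sets are path-connected (via their finite triangulation), $U_l$ is constant on each component of $Z$. Consequently $\nu = \pm(-2U_l(q))^{1/2}$ is constant on each of the (finitely many) connected components of each of the two sheets of $\mathrm{Fix}(\partial M_0)$ lying over a component of $Z$, and there are only finitely many such components, which is the claim.

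There is a minor subtlety I would address: on the subset of $Z$ where $U_l(q)=0$, the two sheets $\nu = +(-2U_l(q))^{1/2}$ and $\nu = -(-2U_l(q))^{1/2}$ coincide (both give $\nu=0$), so the fixed-point set is really a quotient of two copies of $\{q\in Z: U_l(q)\le 0\}$ glued along $\{U_l = 0\}$; but gluing two finite-component compact sets along a closed subset still yields a space with finitely many connected components, and $\nu$ (being $0$ exactly on the gluing locus and of constant sign on each sheet) remains locally constant. The main obstacle is thus not any hard analysis but rather invoking the correct finiteness input for semianalytic sets; once one has finitely many connected components of $Z$ and constancy of $U_l$ on each (which follows from the Euler-type computation above, exactly as in Lemma \ref{Energy_calculation}), the corollary is immediate. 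Alternatively, one could bypass stratification theory by noting that $\nu$ is a continuous function on the compact set $\mathrm{Fix}(\partial M_0)$ which, by Proposition \ref{Lyapunov_prop}, is locally constant there (since it is a strict Lyapunov function that fails to be strict only at fixed points, it must be constant near any fixed point); a locally constant function on a compact space has finite image, completing the proof.
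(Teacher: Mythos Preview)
Your main argument has a genuine gap. You compute that $\frac{d}{dt}U_l(\gamma(t)) = 0$ along any \emph{smooth} path $\gamma$ lying in $Z$, but then invoke path-connectedness of connected components of $Z$ ``via their finite triangulation.'' Triangulations yield piecewise-linear paths, not smooth ones, so your derivative computation does not apply to them; the critical locus $Z$ may well have singular points through which no smooth curve in $Z$ passes, and you have not explained how to propagate the constancy of $f$ across such singularities. The paper sidesteps this by invoking the \L{}ojasiewicz inequality, which gives directly that a real analytic function is locally constant on its critical locus, and cites Sou\v{c}ek--Sou\v{c}ek (alternatively the curve selection lemma) for the finiteness of critical values. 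An elementary repair of your route is available: by Sard's theorem the set $f(Z)$ of critical values has Lebesgue measure zero, so each connected component of $Z$ (finitely many, since $Z$ is real algebraic) is mapped by the continuous $f$ onto a connected measure-zero subset of $\R$, hence a single point.

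Your alternative argument at the end is simply wrong. Proposition~\ref{Lyapunov_prop} asserts that $\nu$ is a strict Lyapunov function on $\partial M_{\le 0}$ outside the fixed set, meaning $\nu$ strictly increases along non-constant orbits and is (trivially) constant along each constant orbit. This is a statement about behaviour \emph{along} orbits, not \emph{across} them: two nearby fixed points may a priori carry different values of $\nu$, and nothing in the Lyapunov property forbids it. Local constancy of $\nu$ on $\mathrm{Fix}(\partial M_0)$ is exactly what the corollary is claiming and genuinely requires the real-analytic input about $f$; it does not follow from the dynamics alone.
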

\begin{proof}
Denote by $f$ the restriction of $U_l$ to the sphere $S^{n-1}$. Because $f$ is a real analytic function on the compact real analytic manifold $S^{n-1}$, by general theory the function $f$ is locally constant on its critical locus and has a finite set of critical values. From the fact that $\nu$ is continuous and the previous lemma we have
$$\nu(0,\,q,\,y)\,=\,\pm(-2\, f(q) )^{1/2},\qquad (0,\,q,\,y)\,\in\,\mbox{Fix}\,(\partial M_0),$$
the result follows.
\end{proof}

A few words about the previous proof. The fact that a real analytic function is locally constant on its critical locus is an immediate consequence of the non-trivial Lojasiewicz inequality. It was proved in \cite{Soucek} that a real analytic function on a compact set takes a finite set of values and since the sphere can be covered by a finite amount of precompact real analytic charts, the result holds for the sphere as well. An alternative proof using the curve selection lemma can be found in \cite{LeDungTrang}.

In Lemma \ref{concrete_description_fixpoint}, we will call the critical boundary fixed points in \eqref{fixedpoints_associatedto_criticalpoint} by the name of \emph{fixed points associated to the critical point q}.

\begin{lema}\label{heteroclinic_orbit}
For every critical point $q$ of $U_l$ relative to the unit sphere with $U_l(q)< 0$, there is a subcritical heteroclinic orbit connecting its associated fixed points. Concretely, the function
$$\tau\,\mapsto\,(0,\,q,\,\mathscr{Y}(\tau)\,q),\qquad \mathscr{Y}'=-l\,\left(\mathscr{Y}^2/2\,+\,U_l\,(q)\right)$$
is a heteroclinic orbit.
\end{lema}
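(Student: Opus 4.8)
The plan is to verify directly that the proposed curve is a solution of the McGehee equations \eqref{McGehee_eq} restricted to the boundary $r=0$, and then to check the asymptotics at both ends. First I would substitute $\zeta(\tau)=(0,\,q,\,\mathscr{Y}(\tau)\,q)$ into the boundary equations
$$r'=0,\qquad q'=y-\nu q,\qquad y'=-\nabla U_l(q)-\frac{l}{2}\,\nu\,y,$$
which are \eqref{McGehee_eq} at $r=0$. Since $q$ is a critical point of $U_l$ on the sphere, Lemma \ref{concrete_description_fixpoint} (or Euler's theorem) gives $\nabla U_l(q)=\langle\nabla U_l(q),q\rangle\,q=l\,U_l(q)\,q$, so the ambient gradient is radial along this curve. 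With $y=\mathscr{Y}\,q$ we get $\nu=\langle q,y\rangle=\mathscr{Y}$, hence $q'=y-\nu q=\mathscr{Y}q-\mathscr{Y}q=0$, consistent with $q$ being constant. For the velocity component, $y'=\mathscr{Y}'\,q$ must equal $-l\,U_l(q)\,q-\frac{l}{2}\,\mathscr{Y}\,y=-l\,U_l(q)\,q-\frac{l}{2}\,\mathscr{Y}^2\,q=-l\bigl(\mathscr{Y}^2/2+U_l(q)\bigr)q$, which is exactly the stated scalar ODE $\mathscr{Y}'=-l\bigl(\mathscr{Y}^2/2+U_l(q)\bigr)$. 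So the curve is an orbit of the McGehee flow on $\partial M$, and since $\tilde H(\zeta(\tau))=\|y\|^2/2+U_l(q)=\mathscr{Y}^2/2+U_l(q)$ along it, the condition $\tilde H=0$ would force $\mathscr{Y}^2=-2U_l(q)$ to be constant; for generic initial data $\tilde H<0$, placing the orbit in the subcritical boundary $\partial M_{<0}$.

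Next I would analyze the scalar equation $\mathscr{Y}'=-l\bigl(\mathscr{Y}^2/2+U_l(q)\bigr)$. Writing $a^2=-2U_l(q)>0$ (using the hypothesis $U_l(q)<0$), this is $\mathscr{Y}'=-\frac{l}{2}(\mathscr{Y}^2-a^2)=-\frac{l}{2}(\mathscr{Y}-a)(\mathscr{Y}+a)$, an autonomous logistic-type equation with two equilibria $\mathscr{Y}=\pm a$. On the interval $(-a,a)$ the right-hand side is negative, so a solution started there is strictly decreasing, bounded between the two equilibria, and therefore defined for all $\tau\in\R$ with $\mathscr{Y}(\tau)\to a$ as $\tau\to-\infty$ and $\mathscr{Y}(\tau)\to-a$ as $\tau\to+\infty$. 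Correspondingly $\zeta(\tau)=(0,q,\mathscr{Y}(\tau)q)$ has $\alpha$-limit $(0,q,a\,q)=(0,q,(-2U_l(q))^{1/2}q)$ and $\omega$-limit $(0,q,-a\,q)=(0,q,-(-2U_l(q))^{1/2}q)$, which by \eqref{fixedpoints_associatedto_criticalpoint} in Lemma \ref{concrete_description_fixpoint} are precisely the two fixed points associated to $q$. Hence the orbit is a subcritical heteroclinic connecting them, as claimed.

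There is essentially no hard obstacle here; this is a direct substitution plus an elementary phase-line analysis of a one-dimensional ODE. The only point requiring a little care is to make sure the curve genuinely lies in $\partial M_{<0}$ and not in $\partial M_0$: this is where the strict inequality $U_l(q)<0$ is used, since then $a>0$ and a solution with $\mathscr{Y}(\tau_0)\in(-a,a)$ has $\tilde H=\mathscr{Y}^2/2+U_l(q)<a^2/2-a^2/2=0$ wait, more precisely $\tilde H=\frac12(\mathscr{Y}^2-a^2)<0$ throughout, so the orbit is strictly subcritical. One should also note that the two equilibria $\mathscr{Y}=\pm a$ are exactly the fixed points of Lemma \ref{concrete_description_fixpoint}, so the heteroclinic connection is between the correct pair of points and the statement is complete.
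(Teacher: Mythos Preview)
Your proof is correct and follows exactly the paper's approach, which is simply ``direct substitution from equations \eqref{McGehee_eq} at $r=0$''; you have merely fleshed out the details and added the phase-line analysis that the paper leaves implicit. One minor slip: on $(-a,a)$ the right-hand side $-\tfrac{l}{2}(\mathscr{Y}^2-a^2)$ is \emph{positive}, so $\mathscr{Y}$ is increasing from $-a$ to $a$ (not decreasing), but this does not affect the conclusion that the orbit is a subcritical heteroclinic connecting the two associated fixed points.
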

\begin{proof}
This follows by direct substitution from equations \eqref{McGehee_eq} at $r=0$.
\end{proof}

\subsection{Example in the plane}\label{example_plane}

Consider a Newtonian Lagrangian on $T\R^2$ with potential $U:\R^2\rightarrow\R$ such that the origin is a critical point of the latter. Suppose that the potential is zero at the origin and its first non-zero jet at this point has degree two. Consider the energy region $H\leq 0$ in phase space $T\R^2$.

The McGehee blowup of this mechanical system at the origin consists of replacing the origin in the mentioned energy region with one of the critical and subcritical boundaries shown below.

Denote by $f:S^1\rightarrow\R$ the restriction to the unit circle of the first non-zero jet $U_2$ of the potential at the origin. Up to some rotation on the circle, without loss of generality we may suppose that
$$f(x_1,x_2)\,=\,a\, x_1^2\,+\,b\, x_2^2,\qquad (x_1,x_2)\in S^1.$$

Up to interchanging $x_1$ with $x_2$, there are six cases to distinguish. The case $0<a\leq b$ has empty critical and subcritical boundary while this set consist of two single fixed points in the case $0=a< b$. There are four remaining cases, two of which are non-degenerated in the sense that $f$ is a Morse function with zero regular value.

Because the degree of the first non-zero jet is even, by equations \eqref{McGehee_eq} at $r=0$, the dynamical system on the critical and subcritical boundary has the $\Z_2$ symmetry given by $(x,y)\mapsto \pm (x,y)$.

The case $a<0<b$ is non-degenerated in the sense explained above and its critical and subcritical boundary is a disjoint union of two compact balls. The dynamics on the critical boundary is conjugated to a gradient system on each of the spherical boundaries, with only one source and one sink on each of the spheres. This is illustrated in Figure \ref{perspectiva2_disjunto}. Now, besides the fixed points, the orbits in the critical boundary are heteroclinic orbits connecting associated fixed points (blue).

\begin{figure}[htb]
\centering
\includegraphics[width=0.425\textwidth]{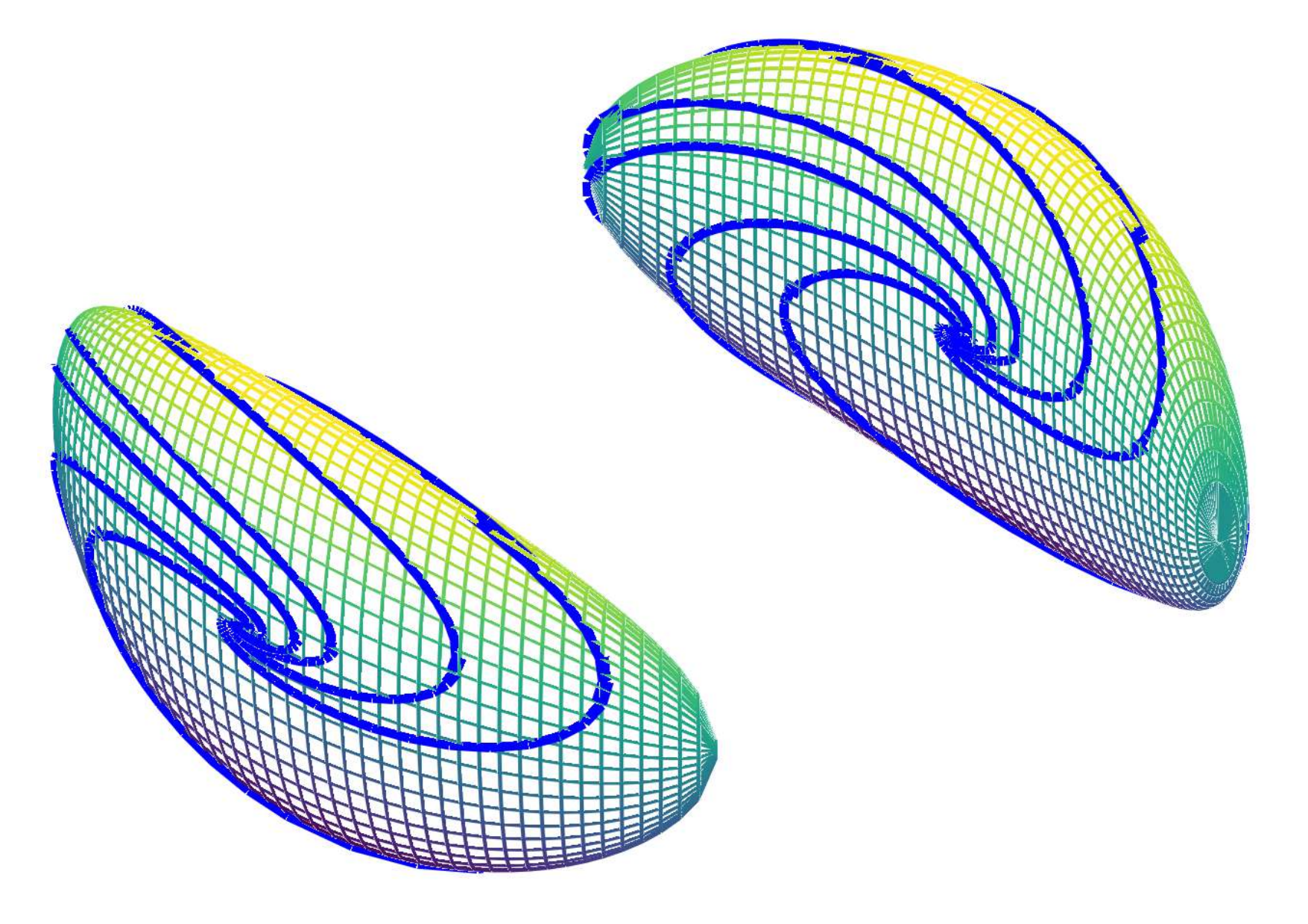}
\caption{Critical boundary. The blue orbits are heteroclinic orbits connecting associated fixed points.}
\label{perspectiva2_disjunto}
\end{figure}

The case $a<b=0$ is degenerated and its critical and subcritical boundary is a solid doubly pinched torus. There are regions clearly delimited by separatrices and these regions are filled with heteroclinic orbits connecting associated fixed points. The separatrices are the only heteroclinic orbits connecting non-associated fixed points. This is illustrated in Figure \ref{perspectiva2_degenerado}.

\begin{figure}[htb]
\centering
\includegraphics[width=0.4\textwidth, height=0.28\textwidth]{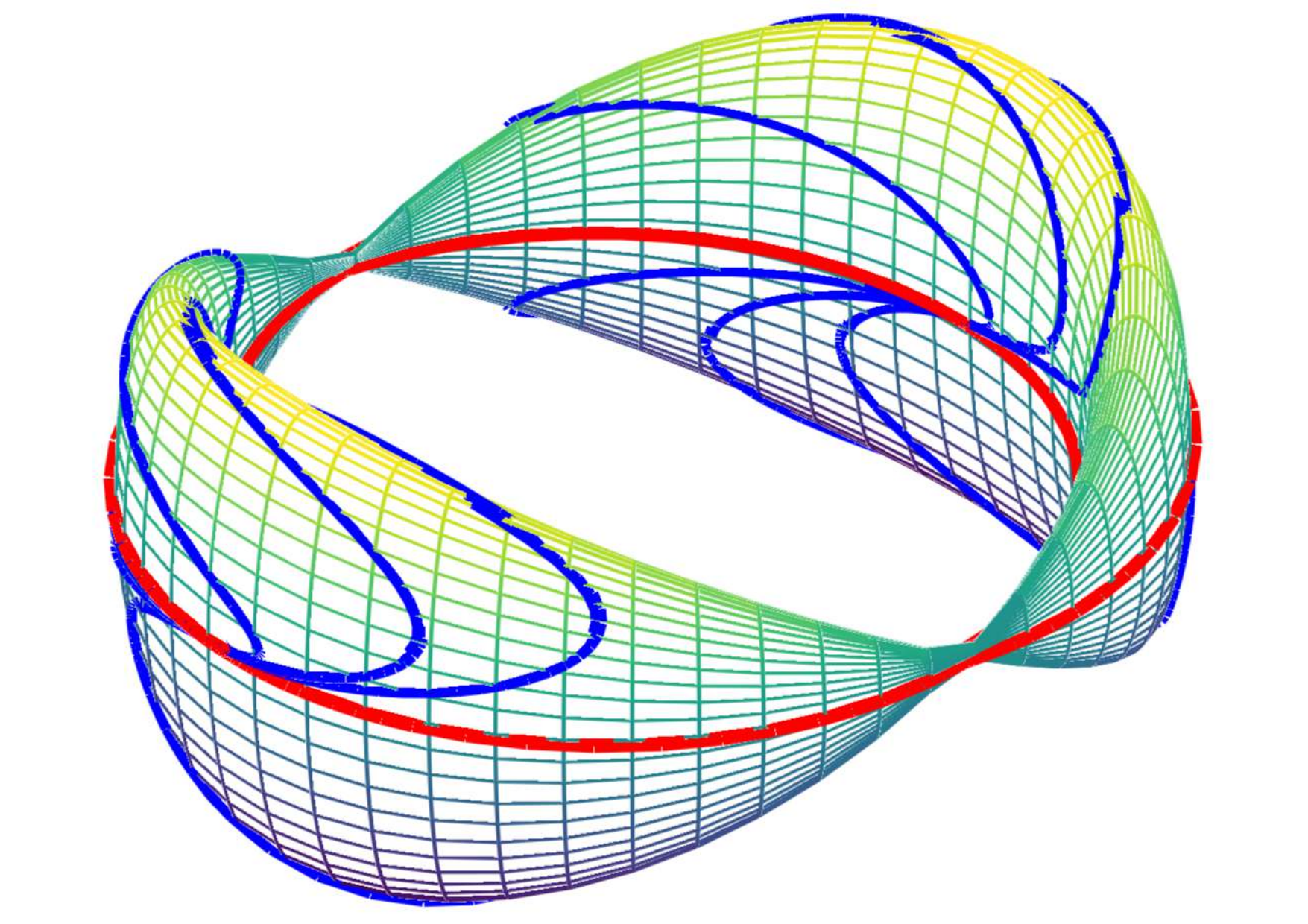}
\caption{Critical boundary. The red orbits are separatrices and the blue orbits are heteroclinic orbits connecting associated fixed points.}
\label{perspectiva2_degenerado}
\end{figure}

The case $a<b<0$ is also non-degenerated in the sense explained before and the critical and subcritical boundary is a solid torus. The dynamics on the critical boundary is conjugated to a double cover of the typical example of Morse-Smale system on the torus. This is illustrated in Figure \ref{perspectiva2_toro}. As in the previous case, there are regions clearly delimited by separatrices and these regions are filled with heteroclinic orbits. Up to the $\Z_2$ symmetry described before, a couple of these regions contains heteroclinic orbits connecting associated fixed points while the other couple contains heteroclinic orbits connecting non-associated fixed points.

\begin{figure}[htb]
\centering
\includegraphics[width=0.4\textwidth]{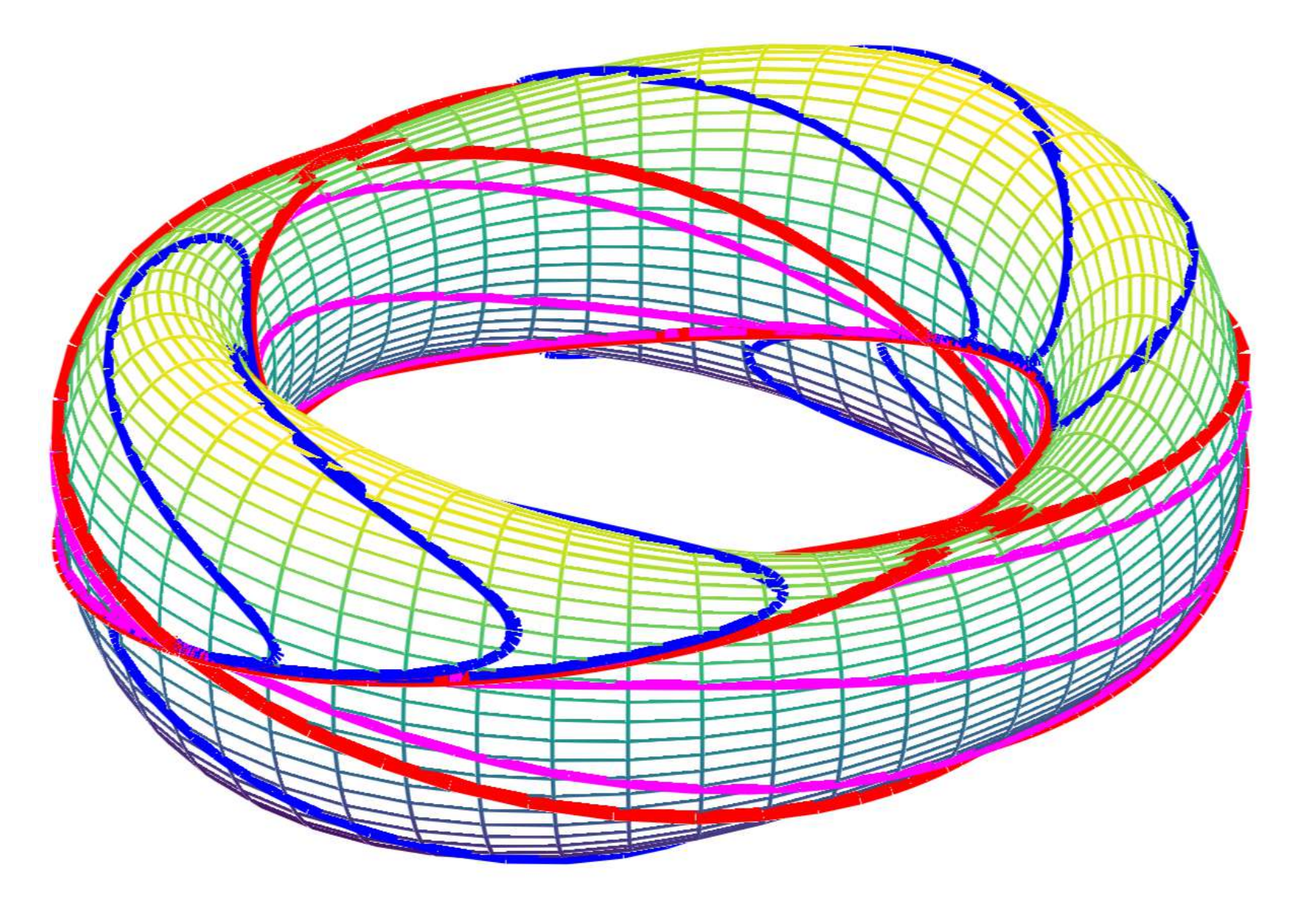}
\caption{Critical boundary. The red orbits are separatrices, the blue orbits are heteroclinic orbits connecting associated fixed points and the magenta orbits are heteroclinic orbits connecting non-associated fixed points.}
\label{perspectiva2_toro}
\end{figure}

Finally, the case $a=b<0$ is degenerated and its critical and subcritical boundary is a solid torus. Now, there is a pair of continua of equilibrium points, each one an embedded copy of the circle, both connected by heteroclinic orbits. This is illustrated in Figure \ref{perspectiva2_continuo}.

\begin{figure}[htb]
\centering
\includegraphics[width=0.43\textwidth, height=0.21\textwidth]{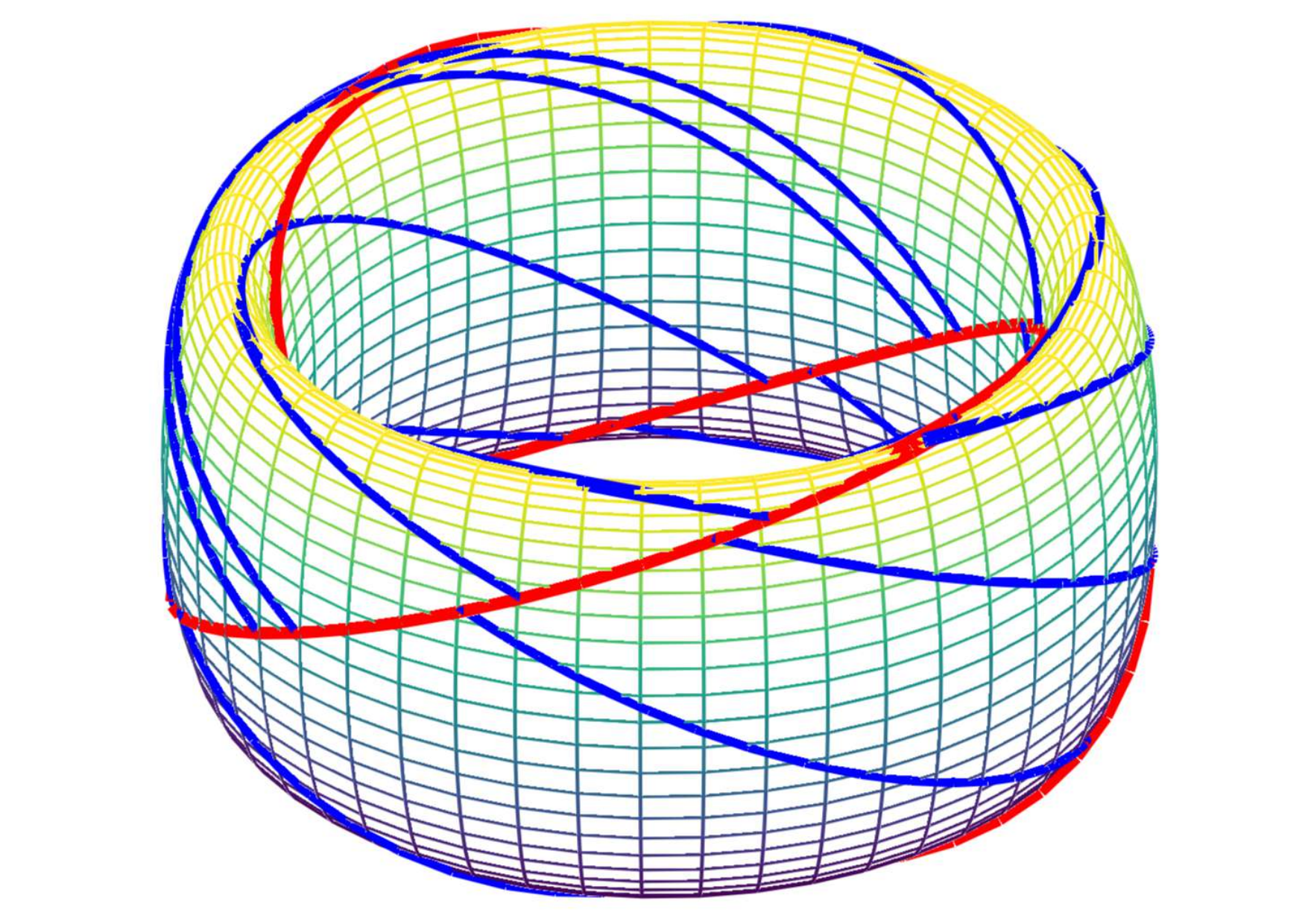}
\caption{Critical boundary. The red lines are fixed points continua and the blue orbits are heteroclinic orbits connecting these.}
\label{perspectiva2_continuo}
\end{figure}

\section{Superior limit of sets: topological preliminaries}\label{limsup}

In what follows, we will abuse of notation and define the superior limit of a sequence of subsets $(A_n)_{n\in \N}$ of a topological space as
\begin{equation}\label{limsup_definition_eq}
\limsup_n\,A_n\,=\,\bigcap_{k\in\N}\,\overline{\bigcup_{n\geq k}\,A_n}.
\end{equation}
The difference with the usual notion is taking the closure before the intersection.

This section is preliminary, and the results presented here are well-known in the literature, see \cite{Cech}. However, some of these results may be difficult to trace, either because they are found in different sources or because they are embedded in very general formalisms. Since the object in question, the superior limit, plays a key role in the proof of the main theorem, except for the standard one regarding the Hausdorff metric, the results presented here will be proved. The presentation is self-contained and suitable for our purposes.

\begin{lema}\label{general_properties}
Consider a sequence of subsets $(A_n)_{n\in \N}$ of a metric space $(X,\,d)$ and its superior limit
$$A\,=\,\limsup_n\,A_n.$$
Then, the following assertions hold.
\medskip

\begin{enumerate}
\item $A$ is a closed set.
\medskip

\item\label{subseq} $x$ belongs to $A$ if and only if there is a strictly increasing sequence $(n_k)_{k\in\N}$ such that for every natural $k$ there is $x_k$ in $A_{n_k}$ such that $x_k\rightarrow x$.
Equivalently, $A$ consists of all the limit points of all the sequences $(x_n)_{n\in\N}$ such that $x_n\in A_n$ for every natural $n$.
\medskip

\item\label{nonempty_compact} If there is a compact set $K$ in $X$ containing every set $A_n$ and every $A_n$ is non-empty, then $A$ is a non-empty compact set in $K$.
\medskip

\item\label{connected} If there is a compact set $K$ in $X$ containing every set $A_n$, every $A_n$ is connected and there is a converging sequence $(x_n)_{n\in\N}$ such that $x_n\in A_n$ for every natural $n$, then $A$ is connected.
\end{enumerate}
\end{lema}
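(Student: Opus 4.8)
The plan is to prove the four assertions about $A=\limsup_n A_n$ more or less in the order stated, since each builds on the previous one; the characterization in item \ref{subseq} is the workhorse and should be established first.

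\medskip

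\textbf{Item (1) and item \ref{subseq}.} First I would observe that $A$ is closed because it is an intersection of closed sets: each $\overline{\bigcup_{n\geq k}A_n}$ is closed, and an arbitrary intersection of closed sets is closed. For the characterization, suppose $x\in A$. Then for every $k$, $x\in\overline{\bigcup_{n\geq k}A_n}$, so there is a point $z_k$ with $z_k\in A_{m_k}$ for some $m_k\geq k$ and $d(x,z_k)<1/k$. Passing to a strictly increasing subsequence of the indices $m_k$ (possible since $m_k\to\infty$) gives the required sequence $(n_j)$ and points $x_j\to x$. Conversely, if $x_j\to x$ with $x_j\in A_{n_j}$ and $(n_j)$ strictly increasing, then for each fixed $k$ all but finitely many $x_j$ lie in $\bigcup_{n\geq k}A_n$, so $x$ is in its closure; hence $x\in A$. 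The "equivalently" clause follows: any limit point of a sequence $(x_n)$ with $x_n\in A_n$ comes from a subsequence, which is of the form just described; conversely given a subsequential description one can interpolate arbitrary choices $x_n\in A_n$ at the missing indices (using $A_n\neq\emptyset$, or simply noting the statement is about sequences that happen to have $x_n\in A_n$) to realize it as a genuine sequence.

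\medskip

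\textbf{Item \ref{nonempty_compact}.} Since $A$ is closed and $A\subset K$ (indeed $A\subset \overline{\bigcup_{n\geq 1}A_n}\subset \overline{K}=K$), $A$ is compact. For non-emptiness, pick $x_n\in A_n$ for each $n$; by compactness of $K$ the sequence $(x_n)$ has a convergent subsequence $x_{n_j}\to x\in K$, and by item \ref{subseq} this forces $x\in A$.

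\medskip

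\textbf{Item \ref{connected}.} This is the part I expect to be the main obstacle. The idea: $A$ is compact (by the argument of item \ref{nonempty_compact}, using that $(x_n)$ converges, hence is bounded, and the $A_n$ lie in $K$), and $A$ is non-empty (it contains $\lim x_n$). Suppose for contradiction $A=F_1\sqcup F_2$ with $F_1,F_2$ non-empty, closed, disjoint. In a metric space compact disjoint sets are at positive distance, so $\delta=d(F_1,F_2)>0$; let $V_i$ be the open $\delta/3$-neighborhood of $F_i$, so $V_1\cap V_2=\emptyset$ and $A\subset V_1\cup V_2$. The key sublemma is that for all sufficiently large $n$, $A_n\subset V_1\cup V_2$: otherwise there is a subsequence and points $z_{n_j}\in A_{n_j}\setminus(V_1\cup V_2)$, and by compactness of $K$ a further subsequence converges to some point of $A$ lying outside $V_1\cup V_2$, a contradiction. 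Having fixed such a large $n$, connectedness of $A_n$ forces $A_n\subset V_1$ or $A_n\subset V_2$ (it cannot meet both $V_i$, which are disjoint open sets covering it). Now the given convergent sequence $x_n\in A_n$ has its limit $x\in A$, say $x\in F_1\subset V_1$; then $x_n\in V_1$ for large $n$, so the alternative above must be $A_n\subset V_1$ for all large $n$. But then $\bigcup_{n\geq k}A_n\subset V_1$ for large $k$, hence $A=\limsup_n A_n\subset \overline{V_1}$, and $\overline{V_1}$ is disjoint from $F_2$ (since $d(F_1,F_2)=\delta>\delta/3\geq$ the radius, more carefully $\overline{V_1}$ stays within distance $\delta/3$ of $F_1$, hence at distance $\geq \delta/3>0$ from $F_2$), contradicting $F_2\neq\emptyset$. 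Therefore $A$ is connected. The one technical point to handle with care is the separation estimate guaranteeing $\overline{V_1}\cap F_2=\emptyset$; choosing the neighborhood radius to be $\delta/3$ rather than $\delta/2$ makes this immediate.
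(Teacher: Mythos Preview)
Your proof is correct and follows essentially the same strategy as the paper for items (1)--(3): closedness via intersection of closed sets, the subsequence characterization via a diagonal construction, and non-emptiness/compactness via sequential compactness of $K$.

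For item \ref{connected} the overall plan is the same (separate $A$ by open sets, use connectedness of the $A_n$ together with compactness of $K$ and item \ref{subseq} to derive a contradiction), but the execution differs slightly. The paper argues that since each $A_{n_k}$ meets both $V_1$ and $V_2$ and is connected, it must contain a point $x''_k$ \emph{outside} $V_1\cup V_2$; a subsequential limit of these $x''_k$ then lands in $A\setminus(V_1\cup V_2)$, which is empty. You instead first show $A_n\subset V_1\cup V_2$ eventually, then use connectedness to force $A_n\subset V_1$ eventually, and conclude $A\subset\overline{V_1}$ misses $F_2$. Both arguments are standard and equally short; the paper's version avoids the closure estimate on $\overline{V_1}$, while yours makes the role of the convergent sequence $(x_n)$ slightly more transparent (it pins down which side of the separation the $A_n$ eventually fall into).
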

\begin{proof}
\begin{enumerate}
\item $A$ is intersection of closed sets.
\bigskip

\item This is a diagonal argument. Indeed, suppose that $x\in A$, that is for every natural $j$ the point $x$ is an accumulation point of $\bigcup_{n\geq j}\,A_n$. Let $B_m=B(x,\,1/m)$. We will construct the sequence by induction.

\noindent For $m=1$, there is
$$x_1\,\in\, B_1\cap\left(\bigcup_{n\geq 1}\,A_n\right)\,=\,\bigcup_{n\geq 1}\,(B_1\cap A_n)$$
hence there is a natural $n_1$ such that $x_1\in B_1\cap A_{n_1}$.

\noindent Now suppose that we have constructed the first $m$ terms. There is
$$x_{m+1}\,\in\, B_{m+1}\cap\left(\bigcup_{n\geq n_{m}+1}\,A_n\right)\,=\,\bigcup_{n\geq n_{m}+1}\,(B_{m+1}\cap A_n)$$
hence there is a natural $n_{m+1}>n_{m}$ such that $x_{m+1}\in B_{m+1}\cap A_{n_{m+1}}$.

\noindent It is clear that $x_k\in A_{n_k}$ for every natural $k$ and $x_k\rightarrow x$.
\smallskip

\noindent For the converse we have that for every natural $L$,
$$x\in\overline{\{x_k\ |\ k\geq L\}}\subset\overline{\bigcup_{k\geq L}\,A_{n_k}}\subset \overline{\bigcup_{n\geq n_{L}}\,A_{n}}$$
hence since $(n_L)$ is increasing, we have that $x$ is in $A$.
\bigskip

\item Since the metric space is Hausdorff, $K$ is closed hence it clearly contains the set $A$. Since $A$ is closed and contained in a compact set, $A$ is compact as well. Take any sequence $(x_n)$ such that $x_n\in A_n$. Since the sequence is contained in the compact $K$, it has a limit point that belongs to $A$ by the previous item hence it is non-empty.
\bigskip

\item Suppose that $A$ is not connected. Then, there are open sets $V_1$ and $V_2$ in $X$ whose union contains $A$ and their intersection with $A$ is non-empty. Without loss of generality, we may suppose that the converging sequence $(x_n)$ in the hypotheses converges to some point $x$ in $A\cap V_1$. There is a point $x'$ in $A\cap V_2$ since it is non-empty. By item \ref{subseq}, there is a strictly increasing sequence $(n_k)$ such that for every natural $k$ there is $x'_k$ in $A_{n_k}$ such that $x'_k\rightarrow x'$. Taking a subsequence if necessary, we may suppose that $x_{n_k}$ is in $V_1$ and $x'_k$ is in $V_2$ for every natural $k$. In particular, the intersections $A_{n_k}\cap V_1$ and $A_{n_k}\cap V_2$ are non-empty for every natural $k$. Thus, since $A_{n_k}$ is connected, for every natural $k$ there is a point $x''_k$ in $A_{n_k}-(V_1\cup V_2)$. Because the sequence $(x''_k)$ is in the compact $K$, taking a further subsequence if necessary we may suppose that the sequence $(x''_k)$ converges to some point $x''$ necessarily contained in $A-(V_1\cup V_2)$ and this is absurd. This completes the proof.
\end{enumerate}
\end{proof}

Denote by $X^*$ the set of non-empty compact sets of $X$ and recall the Hausdorff distance
$$d_H\,(C_1,\,C_2)\,=\,\max\left\lbrace\,\max_{x\in C_1}\,d(x,C_2),\ \max_{y\in C_2}\,d(y,C_1)\,\right\rbrace,\qquad C_1,\,C_2\,\in\,X^*.$$
Here, $d(x,\,C)$ denotes the minimum of the distances from $x$ to some point in the compact set $C$. The following proposition is a standard result in the theory and we omit the proof.

\begin{prop}\label{Hausdorff_metric}
The space $(X^*,\,d_H)$ is a metric space. If $X$ is complete, then so is $X^*$. Moreover, if $X$ is compact, then so is $X^*$.
\end{prop}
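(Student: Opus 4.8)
The statement to prove is Proposition \ref{Hausdorff_metric}: that $(X^*, d_H)$ is a metric space; if $X$ is complete then $X^*$ is complete; and if $X$ is compact then $X^*$ is compact.

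The plan is to verify the three assertions in turn, using only elementary point-set topology together with the characterisation of the superior limit from Lemma \ref{general_properties}, since the latter is the natural tool for describing limits in the Hausdorff metric.

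\textbf{Metric axioms.} First I would check that $d_H$ is a genuine metric on $X^*$. Symmetry is immediate from the symmetric form of the definition. For non-negativity and the identity of indiscernibles, note $d_H(C_1,C_2)=0$ forces $d(x,C_2)=0$ for all $x\in C_1$, hence $C_1\subseteq \overline{C_2}=C_2$ (using that $C_2$ is compact, hence closed in the Hausdorff space $X$), and symmetrically $C_2\subseteq C_1$. Finiteness of $d_H$ follows because a continuous function attains its maximum on the compact sets $C_1,C_2$. The triangle inequality is the only point requiring a short argument: for $x\in C_1$ and any $z\in C_2$ one has $d(x,C_3)\le d(x,z)+d(z,C_3)\le d(x,z)+d_H(C_2,C_3)$; minimising over $z\in C_2$ gives $d(x,C_3)\le d(x,C_2)+d_H(C_2,C_3)\le d_H(C_1,C_2)+d_H(C_2,C_3)$, and taking the max over $x\in C_1$, together with the symmetric estimate, yields $d_H(C_1,C_3)\le d_H(C_1,C_2)+d_H(C_2,C_3)$.

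\textbf{Completeness.} Assume $X$ is complete and let $(C_n)$ be Cauchy in $X^*$. The candidate limit is $C=\limsup_n C_n$ in the sense of \eqref{limsup_definition_eq}. Since $(C_n)$ is Cauchy it is bounded in $d_H$, so all $C_n$ lie in a common closed bounded set; one checks this set is contained in a finite union of closed balls, and by completeness plus total boundedness considerations one reduces to the case where there is a compact $K\supseteq C_n$ for all $n$ — more directly, by item \ref{nonempty_compact} of Lemma \ref{general_properties} applied after first establishing the bound, $C$ is non-empty and closed; boundedness of $C$ is inherited, so $C\in X^*$ once we know $C$ is compact, which follows since a closed bounded set need not be compact in general, so here I would instead argue directly: $C$ is closed by Lemma \ref{general_properties}(1), and $C$ is totally bounded because, given $\varepsilon>0$, pick $N$ with $d_H(C_m,C_n)<\varepsilon$ for $m,n\ge N$; a finite $\varepsilon$-net of $C_N$ is then a $3\varepsilon$-net of $C$, using the characterisation of $C$ via convergent sequences of points $x_n\in C_n$; a totally bounded closed subset of a complete metric space is compact, so $C\in X^*$. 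Finally I would show $d_H(C_n,C)\to 0$: given $\varepsilon>0$ choose $N$ with $d_H(C_m,C_n)<\varepsilon$ for $m,n\ge N$; for $n\ge N$ and $x\in C_n$, building a sequence $x=x_n, x_{n+1},\dots$ with $x_{k+1}$ close to $x_k$ and lying in $C_{k+1}$, its limit lies in $C$ within $2\varepsilon$ of $x$, giving $\max_{x\in C_n} d(x,C)\le 2\varepsilon$; conversely each point of $C$ is a limit of points $x_{n_k}\in C_{n_k}$, hence within $\varepsilon$ of $C_n$ for large $n$; so $d_H(C_n,C)\le 2\varepsilon$ for $n$ large.

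\textbf{Compactness.} Assume $X$ is compact; then $X$ is complete, so by the previous part $X^*$ is complete, and it remains to show $X^*$ is totally bounded. Given $\varepsilon>0$, take a finite $\varepsilon$-net $F=\{p_1,\dots,p_m\}$ of $X$. For each non-empty subset $S\subseteq F$ the set $S$ is a (finite, hence compact) element of $X^*$; I claim the finitely many sets $\{\,S : \emptyset\ne S\subseteq F\,\}$ form an $\varepsilon$-net of $X^*$. Indeed, given $C\in X^*$, let $S=\{\,p_i : d(p_i,C)<\varepsilon\,\}$; this is non-empty since $F$ is an $\varepsilon$-net of $X$ and $C\ne\emptyset$, every point of $C$ lies within $\varepsilon$ of some $p_i\in S$, and every $p_i\in S$ lies within $\varepsilon$ of $C$ by construction, so $d_H(C,S)\le\varepsilon$. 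Hence $X^*$ is totally bounded and complete, therefore compact.

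The main obstacle is the completeness step: one has to correctly identify the limit as the $\limsup$ of the sequence and carefully run the two-sided estimate $d_H(C_n,C)\to 0$, exploiting the subsequence/convergent-sequence characterisation in Lemma \ref{general_properties}(\ref{subseq}) to produce, for each tail point, an actual point of the limit set nearby. The metric axioms and the total-boundedness half of compactness are routine.
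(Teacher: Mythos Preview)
The paper does not prove this proposition at all: immediately after the statement it reads ``The following proposition is a standard result in the theory and we omit the proof.'' So there is nothing to compare your approach against.

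Your argument is the standard one and is essentially correct. Two small remarks on the completeness step, which is the only place where the write-up wobbles. First, the opening sentence about reducing to a common compact $K$ is a false start (as you seem to notice mid-paragraph): in a general complete metric space a $d_H$-bounded family of compacta need not sit inside a single compact set, so Lemma~\ref{general_properties}(\ref{nonempty_compact}) is not directly available; your subsequent direct argument via total boundedness of $C$ is the right fix and should simply replace that sentence. Second, in ``building a sequence $x=x_n,x_{n+1},\dots$ with $x_{k+1}$ close to $x_k$'', having $d_H(C_k,C_{k+1})<\varepsilon$ for all $k\ge N$ only gives $d(x_k,x_m)<(m-k)\varepsilon$, which is not Cauchy; you need the usual summable-errors refinement (pass to indices $n_1<n_2<\cdots$ with $d_H(C_{n_j},C_{n_{j+1}})<2^{-j}$, build the sequence along that subsequence, and then use the Cauchy condition once more to control $d(x,C)$ for every $n\ge N$). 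With that adjustment the two-sided estimate $d_H(C_n,C)\to 0$ goes through, and non-emptiness of $C$ comes for free from the same construction. The metric axioms and the compactness-via-total-boundedness argument are fine as written.
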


\begin{lema}\label{Hausdorff}
Suppose that $X$ is compact. If $(C_n)_{n\in\N}$ is a sequence in $X^*$ converging to $C$ under the Hausdorff distance, then $C$ coincides with the set of limit points of converging sequences $(x_n)_{n\in\N}$ such that $x_n\in C_n$ for every natural $n$.
\end{lema}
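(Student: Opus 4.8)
The plan is to show a two-way inclusion between $C$ and the set $\ell$ of limit points of sequences $(x_n)$ with $x_n \in C_n$. For the inclusion $\ell \subset C$, suppose $x_{n_k} \to x$ with $x_{n_k} \in C_{n_k}$ along a subsequence; I would estimate $d(x, C) \leq d(x, x_{n_k}) + d(x_{n_k}, C)$ and bound the second term by $d_H(C_{n_k}, C) \to 0$, while the first term goes to $0$ by hypothesis. Since $C$ is closed (it is compact in the Hausdorff space $X$), $d(x,C)=0$ forces $x \in C$. Actually I should be slightly careful: the sequence $(x_n)$ need not itself converge, only have $x$ as a limit point, so I pass to the convergent subsequence first and then run the estimate along it.

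For the reverse inclusion $C \subset \ell$, fix $x \in C$. For each $n$, since $C_n$ is a non-empty compact set, there is a point $x_n \in C_n$ realizing $d(x, C_n) = d(x, x_n)$; and $d(x, C_n) \leq d_H(C_n, C) \to 0$ because $x \in C$ (so $d(x,C)=0$, and the definition of $d_H$ controls $\max_{y \in C} d(y, C_n)$, hence $d(x,C_n)$, by $d_H(C_n,C)$). Therefore $x_n \to x$ with $x_n \in C_n$ for every $n$, which exhibits $x$ as the limit of such a sequence, so $x \in \ell$. This direction is where compactness of each $C_n$ is genuinely used, to guarantee the minimizing point $x_n$ exists.

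I expect no serious obstacle here; the only subtlety is bookkeeping the direction of the two Hausdorff-distance estimates (using the ``$\max_{x \in C_1} d(x, C_2)$'' half for one inclusion and the ``$\max_{y \in C_2} d(y, C_1)$'' half for the other), together with the distinction between ``limit'' and ``limit point'' of the sequence $(x_n)$. Note also that the hypothesis $X$ compact is not strictly needed for this argument beyond ensuring $X^*$ is well-behaved and $C$ is closed; completeness of $X$ would suffice for closedness of $C$, but since the statement assumes compactness I will just invoke it. If one wants, the first inclusion can alternatively be derived from Lemma \ref{general_properties}\eqref{subseq} by observing that Hausdorff convergence implies $C = \limsup_n C_n$, but I prefer the direct distance estimate as it is shorter and self-contained.
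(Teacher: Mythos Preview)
Your argument is correct. The inclusion $C\subset\ell$ is handled exactly as in the paper: pick $x_n\in C_n$ realizing $d(x,C_n)$ and use $d(x,C_n)\le d_H(C,C_n)\to 0$.

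For the other inclusion $\ell\subset C$ you take a genuinely different and shorter route. The paper first shows $L$ is closed, then that $C\subset L$, and only then argues $L\subset C$ by contradiction: assuming $d_H(C,L)=d>0$, it locates a point $x_*\in L$ with $d(x_*,C)=d$ and shows $d(x_*,C_n)>d/2$ eventually, which forces $x_*\notin L$. Your direct triangle-inequality estimate $d(x,C)\le d(x,x_{n_k})+d(x_{n_k},C)\le d(x,x_{n_k})+d_H(C_{n_k},C)\to 0$ accomplishes the same thing in one line and, as you observe, does not require the ambient compactness of $X$ (which the paper uses only to ensure $L\in X^*$ so that $d_H(C,L)$ makes sense). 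Your version is the cleaner of the two.
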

\begin{proof}
Denote by $L$ the set of limit points of converging sequences $(x_n)$ such that $x_n\in C_n$ for every natural $n$. The set $L$ is closed since $y_m\xrightarrow{d} y$ with $y_m$ in $L$ implies that $y$ is also in $L$ by a diagonal argument. Since $L$ is a subset of $X$ and the last is compact by hypothesis, $L$ is compact as well.

We claim that $C\subset L$. In particular, $L$ is a non-empty compact set hence it is a point in $X^*$. Let $x$ in $C$. Because each $C_n$ is a non-empty compact set, for every $n$ there is a point $x_n$ in $C_n$ such that $d(x,x_n)\,=\,d(x,C_n)$. By definition,
$$d(x,\,x_n)\,=\,d(x,\,C_n)\,\leq\,d_H(C,\,C_n)\xrightarrow{n}\, 0$$
hence the sequence $(x_n)$ converges to $x$ and we conclude that $x$ belongs to $L$. This proves the claim.

Suppose that $L\neq C$. Then, $d\,=\,d_H(C,L)>0$ and there is a natural $n_0$ such that $d_H(C,C_n)<d/2$ for every $n\geq n_0$. Because $C\subset L$, we have that
$$d\,=\,d_H(C,L)\,=\,\max_{x\in L}\,d(x,C)\,=\,d(x_*,C)$$
for some point $x_*$ in $L$. For every natural $n$ there are points $y_n\,\in\,C_n$ and $z_n\,\in\,C$ such that $d(x_*,C_n)\,=\,d(x_*,y_n)$ and $d(y_n,C)\,=\,d(y_n,z_n)$ hence

$$d(x_*,C_n)\,=\,d(x_*,y_n)\,\geq\,d(x_*,z_n)\,-\,d(y_n,z_n)\,\geq\,d(x_*,C)\,-\,d(y_n,C)$$
$$\,\geq\,d(x_*,C)\,-\,d_H(C_n,C)\,>\,d/2$$
for every $n\geq n_0$. In particular, by definition of $L$, the point $x_*$ does not belong to $L$ and this is a contradiction.
\end{proof}

\section{Dynamics on the superior limit of sets}

Suppose that we have a flow $\varphi_t$ of homeomorphisms on the metric space $(X,d)$. In this section, all of the dynamical statements will be with respect to this flow. As defined in section \ref{limsup}, consider a sequence of subsets $(A_n)_{n\in \N}$ of $X$ and denote by $A$ its superior limit,
$$A\,=\,\limsup_n\,A_n.$$

In this section, we will be concerned with the dynamical properties the superior limit $A$ inherits from those of the sets $A_n$.

\begin{lema}\label{invariant}
If every $A_n$ is a (positive) invariant set, then so is $A$.
\end{lema}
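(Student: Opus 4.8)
The plan is to reduce everything to the sequential description of the superior limit proved in Lemma~\ref{general_properties}: a point $x$ lies in $A=\limsup_n A_n$ if and only if there is a strictly increasing sequence $(n_k)_{k\in\N}$ and points $x_k\in A_{n_k}$ with $x_k\to x$. Combined with the continuity of each time-$t$ map $\varphi_t$ (a homeomorphism, hence continuous), this makes the invariance of $A$ almost immediate.

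First I would handle the positive invariant case. Fix $t\geq 0$ and a point $x\in A$, and pick $(n_k)$ and $x_k\in A_{n_k}$ with $x_k\to x$ as above. Since $\varphi_t$ is continuous, $\varphi_t(x_k)\to\varphi_t(x)$; and since each $A_{n_k}$ is positive invariant and $t\geq 0$, we have $\varphi_t(x_k)\in A_{n_k}$ for every $k$. Hence $\varphi_t(x)$ is the limit of a sequence whose $k$-th term lies in $A_{n_k}$, so by Lemma~\ref{general_properties} again $\varphi_t(x)\in A$. As $x\in A$ and $t\geq 0$ were arbitrary, $\varphi_t(A)\subseteq A$ for all $t\geq 0$, that is, $A$ is positive invariant.

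For the invariant case the same computation applies for every $t\in\R$ (now using $\varphi_t(A_{n_k})=A_{n_k}$), giving $\varphi_t(A)\subseteq A$ for all $t\in\R$; applying this inclusion with $-t$ in place of $t$ and then pushing forward by $\varphi_t$ yields $A=\varphi_t(\varphi_{-t}(A))\subseteq\varphi_t(A)$, so $\varphi_t(A)=A$ and $A$ is invariant.

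There is essentially no obstacle here: the entire content of the lemma is carried by the sequential characterization of $\limsup$ from the previous section, together with continuity of the flow maps. The only point requiring a little care is that, in the positive invariant case, one can only conclude $\varphi_t(x_k)\in A_{n_k}$ for $t\geq 0$, which is precisely why the conclusion is then positive invariance rather than full invariance.
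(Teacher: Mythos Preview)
Your proof is correct and follows essentially the same approach as the paper: both reduce to the sequential characterization of the superior limit (item~\ref{subseq} of Lemma~\ref{general_properties}), apply the continuity of $\varphi_t$ to push $x_k\to x$ forward to $\varphi_t(x_k)\to\varphi_t(x)$, and use (positive) invariance of each $A_{n_k}$ to conclude. Your additional remark that $\varphi_t(A)\subseteq A$ for all $t\in\R$ upgrades to $\varphi_t(A)=A$ is a nice touch the paper leaves implicit.
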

\begin{proof}
Let $x$ in $A$. By item \ref{subseq} in Lemma \ref{general_properties}, there is a strictly increasing sequence $(n_k)$ such that for every natural $k$ there is $x_k$ in $A_{n_k}$ such that $x_k\rightarrow x$. Therefore, for every (positive) real number $t$ we have that $\varphi_t(x_k)$ belongs to $A_{n_k}$ and $\varphi_t(x_k)\rightarrow \varphi_t(x)$. Because of the aforementioned item, we conclude that $\varphi_t(x)$ belongs to $A$.
\end{proof}

Even if every set $A_n$ in Lemma \ref{invariant} were recurrent, the superior limit $A$ does not need to be so. For instance, take a sequence of subcritical periodic orbits in the simple pendulum phase space whose sequence of energies converge to the critical Mañé value and note that the superior limit is the union of the unstable equilibrium point with the pair of homoclinic orbits and this is clearly not a recurrent set.

However, if every set $A_n$ is recurrent, then $A$ is chain-recurrent. We say that an invariant set is chain-recurrent if every one of its points is so.

\begin{lema}[Weak statement]\label{weak_statement}
If every $A_n$ is recurrent, then $A$ is chain-recurrent.
\end{lema}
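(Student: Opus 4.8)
The plan is to show that each point $x\in A$ is chain-recurrent for the flow $\varphi_t$, i.e.\ for every $\varepsilon>0$ and every $T>0$ there is an $(\varepsilon,T)$-chain from $x$ to itself. First I would fix $\varepsilon>0$ and $T>0$, and invoke the uniform continuity of the time-$T$ map: since the flow is continuous, on a suitable neighbourhood of the relevant compact region (or working directly with the modulus of continuity of $\varphi_t$ for $t\in[0,T]$ near the orbit of $x$) there is $\delta\in(0,\varepsilon)$ such that $d(a,b)<\delta$ implies $d(\varphi_t(a),\varphi_t(b))<\varepsilon$ for all $t\in[0,T]$.

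Next I would use item~\ref{subseq} of Lemma~\ref{general_properties}: pick $n$ large and a point $x_n\in A_n$ with $d(x_n,x)<\delta$. Since $A_n$ is recurrent, the point $x_n$ is recurrent, so its positive orbit returns $\delta$-close to $x_n$ after some time $t_0\geq T$; concretely there is $t_0\geq T$ with $d(\varphi_{t_0}(x_n),x_n)<\delta$. Now build the $(\varepsilon,T)$-chain from $x$ to $x$ as follows: jump from $x$ to $x_n$ (a jump of size $<\delta<\varepsilon$), then follow the flow for the time $t_0$ — but subdivide $[0,t_0]$ into pieces each of length in $[T,2T)$ (possible since $t_0\geq T$), so that every flow segment has duration $\geq T$; at the end we are at $\varphi_{t_0}(x_n)$, which is within $\delta$ of $x_n$, and finally jump from $\varphi_{t_0}(x_n)$ to $x$. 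This last jump has size at most $d(\varphi_{t_0}(x_n),x_n)+d(x_n,x)<2\delta<\varepsilon$ after possibly shrinking $\delta$; the intermediate jumps at the subdivision points are size zero since the trajectory is genuine orbit of $x_n$. Thus we exhibit an $(\varepsilon,T)$-chain from $x$ to itself, and since $A$ is invariant by Lemma~\ref{invariant}, $x$'s chain lies in $A$ in the sense required. Hence every point of $A$ is chain-recurrent, so $A$ is chain-recurrent.

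The main obstacle I anticipate is being careful about the uniformity of continuity: the flow $\varphi_t$ is only assumed to be a flow of homeomorphisms on a general metric space $X$, not necessarily uniformly continuous globally, so I must localize — the chain from $x$ only involves the orbit segment of a single point $x_n$ over a bounded time interval $[0,t_0]$, but $t_0$ itself depends on $x_n$ and hence could be large, which would force control of $\varphi_t$ for $t$ up to $t_0$. The clean fix is to only ever need the $\delta$-versus-$\varepsilon$ estimate for the single jump at the very end (comparing $\varphi_{t_0}(x_n)$ with $x$), which requires no uniformity at all: the flow segments in the chain are exact orbit arcs of $x_n$, so they contribute no error, and the only approximation used is $d(x_n,x)<\delta$ together with $d(\varphi_{t_0}(x_n),x_n)<\delta$, giving a terminal jump $<2\delta$. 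So in fact one does not need uniform continuity of the flow at all; choosing $\delta<\varepsilon/2$ suffices, and the recurrence of $A_n$ does all the work. The only mild subtlety is ensuring each flow segment has length $\geq T$, handled by the subdivision of $[0,t_0]$ described above, which is possible precisely because recurrence lets us take the return time $t_0$ as large as we like.
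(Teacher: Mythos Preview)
Your argument has the right ingredients but the chain you describe does not conform to the definition of an $(\varepsilon,T)$-chain: such a chain is a sequence $x=x_0,x_1,\ldots,x_k=x$ with times $t_0,\ldots,t_{k-1}\geq T$ satisfying $d(\varphi_{t_i}(x_i),x_{i+1})<\varepsilon$, so every step is \emph{flow first, then jump}. You cannot begin by ``jumping from $x$ to $x_n$''. The very first step must flow from $x$ for some $t_0\geq T$ and only then jump, so you need $d(\varphi_{t_0}(x),x_1)<\varepsilon$ for some $x_1$ on the orbit of $x_n$ --- which is precisely the continuity estimate your second paragraph claims to dispense with. Thus the assertion that ``one does not need uniform continuity of the flow at all; choosing $\delta<\varepsilon/2$ suffices'' is incorrect: without any continuity you have no control over the first step.

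The fix is cheap and is essentially what the paper does. Since $A$ is invariant, $\varphi_T(x)\in A$, so by item~\ref{subseq} of Lemma~\ref{general_properties} one may pick $y\in A_n$ with $d(\varphi_T(x),y)$ small. The chain is then just two steps: flow from $x$ for time $T$ and jump to $y$; then flow from $y$ for time $t_*-T$ (taking a recurrence time $t_*\geq 2T$ for $y$) and jump back to $x$. The only continuity used is pointwise continuity of $\varphi_{-T}$ at the single point $\varphi_T(x)$, to ensure that $\varphi_{t_*-T}(y)=\varphi_{-T}(\varphi_{t_*}(y))$ is close to $\varphi_{-T}(\varphi_T(x))=x$ once $\varphi_{t_*}(y)$ is close to $\varphi_T(x)$; this is immediate since $\varphi_{-T}$ is a homeomorphism. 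Note also that no subdivision of the orbit is needed: flow times in a chain have no upper bound, so a single long segment is a legitimate step.
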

\begin{proof}
Let $x$ in $A$ and consider arbitrary $\varepsilon,\,T\,>0$. Since $\varphi_{-T}$ is continuous, there is $0<\varepsilon_2\leq 2\,\varepsilon$ such that $\varphi_{-T}\left(B(\varphi_T(x),\,\varepsilon_2)\right)\subset B(x,\,\varepsilon)$. By item \ref{subseq} in Lemma \ref{general_properties}, there is a point $y$ in $A_n$ for some natural $n$ such that $d(\varphi_T(x),\, y)\,<\,\varepsilon_2/2$. Because $A_n$ is recurrent, there is $t_*\geq 2\,T$ such that $d(y,\,\varphi_{t*}(y))\,<\,\varepsilon_2/2$. In particular, $\varphi_{t*}(y)\,\in\,B(\varphi_T(x),\,\varepsilon_2)$ hence
$$\varphi_{t*-T}(y)\,=\,\varphi_{-T}\left(\varphi_{t*}(y)\right)\,\in\,\varphi_{-T}\left(B(\varphi_T(x),\,\varepsilon_2)\right)\subset B(x,\,\varepsilon).$$
We have proved that $(x,\,y,\,x;\,T,\,t*-T)$ is an $(\varepsilon,\,T)$-chain from $x$ to itself. Since $x$ was arbitrary, we conclude that every point in $A$ is chain-recurrent.
\end{proof}

We will need the superior limit to verify an additional property. Denote by $\mathcal{R}(\varphi_t)$ the chain-recurrence set of the flow $\varphi_t$, that is the set of all the chain-recurrent points of the flow.

\begin{defi}
An invariant set $Z\subset X$ is chain-recurrent with the restriction property if $\mathcal{R}\left(\varphi_t|_Z\right)\,=\,Z$.
\end{defi}

It is well known that the chain-recurrence set of a flow is chain-recurrent with the restriction property. However, there are chain-recurrent sets not verifying the restriction property. For instance, consider a pair of points $a,\,b$ and a pair of heteroclinic orbits, one going from $a$ to $b$ and the other going from $b$ to $a$. Now, consider the invariant subset consisting of the union of $a$ and $b$ together with one of the heteroclinic orbits. Clearly the invariant subset is chain-recurrent since every one of its points is so. Nevertheless, it is not chain-recurrent as a dynamical system itself, that is it does not verify the restriction property.

\begin{lema}\label{restricted_chain_recurrence_limit}
If there is a compact set $K$ in $X$ containing every set $A_n$ and every $A_n$ is chain-recurrent with the restriction property, then so is $A$.
\end{lema}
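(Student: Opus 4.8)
The plan is to show that $A$ is chain-recurrent with the restriction property, i.e. that $\mathcal{R}(\varphi_t|_A) = A$. Since $A$ is invariant by Lemma \ref{invariant} and compact by item \ref{nonempty_compact} of Lemma \ref{general_properties} (it sits inside $K$), and since restricted chain recurrence trivially gives $\mathcal{R}(\varphi_t|_A)\subset A$, the content is the reverse inclusion: every point $x\in A$ is chain-recurrent \emph{for the restricted flow on $A$}. The weak statement, Lemma \ref{weak_statement}, only gives chain recurrence of $x$ using $\varepsilon$-chains in the ambient space $X$; here the chains are allowed to wander off $A$. The whole point of the restriction property is that we must produce $(\varepsilon,T)$-chains from $x$ to itself that stay inside $A$, and this is where the extra hypothesis — each $A_n$ chain-recurrent \emph{with the restriction property} inside the common compact set $K$ — has to be used.

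First I would fix $x\in A$ and arbitrary $\varepsilon, T > 0$. By item \ref{subseq} of Lemma \ref{general_properties} there is a strictly increasing sequence $(n_k)$ and points $x_k\in A_{n_k}$ with $x_k\to x$. The idea is to transport an $(\varepsilon/2, T)$-chain for $x_k$ inside $A_{n_k}$ — which exists because $A_{n_k}$ is chain-recurrent with the restriction property, so the restricted flow on $A_{n_k}$ has $x_k$ as a chain-recurrent point — over to an $(\varepsilon, T)$-chain for $x$ inside $A$. The transport works because all the sets live in the fixed compact $K$: an $(\varepsilon/2,T)$-chain $(x_k = z_0, z_1, \dots, z_m = x_k; t_0,\dots,t_{m-1})$ with every $z_j\in A_{n_k}$ has each $z_j$ within $K$, hence (passing to the subsequence indexed by $k$) the finitely many "legs" can be controlled. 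The delicate bookkeeping is that the \emph{number} of legs $m$ and the times $t_j$ depend on $k$; so instead I would run the argument the other way: given $\varepsilon, T$, use continuity of $\varphi_{t}$ on compacta to fix $\delta>0$ so that $d(z,z')<\delta$ forces $d(\varphi_s(z),\varphi_s(z'))<\varepsilon/3$ for all $s\in[T, 2T]$ (say), then choose $k$ large enough that $d(x,x_k)<\delta$ and $d(\varphi_t(x),\varphi_t(x_k))<\varepsilon/3$ for the relevant times, pick an $(\varepsilon/3, T)$-chain for $x_k$ inside $A_{n_k}$, and replace its first and last points $x_k$ by $x$; the perturbed sequence is then an $(\varepsilon, T)$-chain from $x$ to itself whose \emph{interior} points still lie in $A_{n_k}\subset \overline{\bigcup_{n\ge n_k} A_n}$, hence — since $n_k$ can be taken arbitrarily large — one shows these interior points, being limits as we let $k\to\infty$ along a further subsequence of the diagonal, actually accumulate onto points of $A$.

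The honest subtlety, and the step I expect to be the main obstacle, is precisely that last clause: the interior points of the chain for $x_k$ lie in $A_{n_k}$ but \emph{a priori not in $A$}, and $A$ is only the superior limit, not any single $A_n$. To genuinely land inside $A$ one must take $n_k\to\infty$ and use item \ref{subseq} of Lemma \ref{general_properties} to pass each chain vertex to a limit point in $A$; but different vertices need different subsequences, so one needs a simultaneous (finite) diagonal extraction over the vertices of a \emph{single} chain, and one must check that the chain inequalities survive the limit — which they do, being strict inequalities with a small slack built in by working with $\varepsilon/3$ rather than $\varepsilon$, and using uniform continuity of $\varphi_s$ for $s$ in the bounded time-window on the compact $K$. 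Once this limiting step is carried out, for each $k$ one obtains a genuine $(\varepsilon,T)$-chain from $x$ to $x$ entirely within $A$; since $\varepsilon, T$ were arbitrary, $x\in\mathcal{R}(\varphi_t|_A)$, and as $x\in A$ was arbitrary, $A = \mathcal{R}(\varphi_t|_A)$, i.e. $A$ is chain-recurrent with the restriction property. I would also double-check the edge case where some $A_n$ reduces to a single fixed point (the chain is trivial) and where the chain has length one, so that $x_k$ appears as both endpoints and the only constraint after perturbation is $d(\varphi_{t_0}(x),x)<\varepsilon$, handled directly by continuity and $x_k\to x$.
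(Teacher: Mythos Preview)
Your overall strategy is right and matches the paper's: pick $x\in A$, approximate by $x_k\in A_{n_k}$, take a short chain for $x_k$ inside $A_{n_k}$, and then transfer the chain into $A$. The gap is in that transfer step. Your proposed ``simultaneous (finite) diagonal extraction over the vertices of a single chain'' does not make sense as written: the chain for $x_k$ has length $j_k$ that depends on $k$, so there is no fixed finite list of vertex positions along which to extract subsequences as $k\to\infty$; and for a \emph{fixed} $k$ there is nothing to extract. Item~\ref{subseq} of Lemma~\ref{general_properties} tells you that limits of sequences $z_k\in A_{n_k}$ lie in $A$, but it does \emph{not} tell you that an individual point of an individual $A_{n_k}$ is close to $A$, which is what you actually need to project a single chain over.

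The paper resolves this with a Hausdorff-metric argument. First it reduces to the time-one map $\varphi=\varphi_1$ (a nontrivial but standard equivalence), so chains have unit time steps and the bounded-time-window issue disappears. For each $k$ it takes a $1/k$-chain for $x_k$ in $A_{n_k}$ and regards the set $C_k$ of its vertices as a point of the hyperspace $K^*$ of nonempty compact subsets of $K$. By Proposition~\ref{Hausdorff_metric} the space $K^*$ is compact, so after passing to a subsequence $C_k\to C$ in the Hausdorff metric; Lemma~\ref{Hausdorff} then gives $x\in C$ and $C\subset A$. Now fix $\varepsilon$, pick a \emph{single} large $m$ with $d_H(C_m,C)$ and $1/m$ small, and replace each vertex of the $m$-th chain by a nearby point of $C$: uniform continuity of $\varphi$ on $K$ makes this an $\varepsilon$-chain from $x$ to itself lying entirely in $C\subset A$. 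No diagonal extraction is needed; the Hausdorff limit absorbs the variable chain length in one stroke. (An equivalent, more elementary route --- closer to what you seem to be reaching for --- is to prove directly from compactness of $K$ and item~\ref{subseq} that for every $\delta>0$ the sets $A_n$ are eventually contained in the $\delta$-neighbourhood of the closed set $A$, and then project the vertices of one $A_{n_k}$-chain onto $A$. But you must state and use that fact; ``diagonal extraction'' over vertices whose number is unbounded does not do it.)
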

\begin{proof}
It is a non-trivial fact that a point $x$ is chain-recurrent for the flow $\varphi_t$ if and only it is so for the associated time one map $\varphi\,=\,\varphi_1$. Therefore, it will be enough to prove the result for the time one map $\varphi$.

Let $x$ in $A$. By item \ref{subseq} in Lemma \ref{general_properties}, there is a strictly increasing sequence $(n_k)$ such that for every natural $k$ there is $x_k$ in $A_{n_k}$ such that $x_k\rightarrow x$. For each natural $k$, there is a $k^{-1}$-chain from $x_k$ to itself contained in $A_{n_k}$,
\begin{equation}\label{chain}
x_k\,=\,x_{k,0},\,x_{k,1},\,\ldots,\,x_{k,j_k-1},\,x_{k,j_k}\,=\,x_k.
\end{equation}
Denote by $C_k$ the set whose elements are those of the chain \eqref{chain}. These are non-empty compact sets contained in the compact set $K$ hence by Proposition \ref{Hausdorff_metric}, taking a subsequence if necessary, there is a non-empty compact set $C\subset K$ which is the limit of the sequence $(C_k)$ under the Hausdorff metric.

By Lemma \ref{Hausdorff}, we have that $x\in C$ and $C\subset A$. Thus, it will be enough to prove that for every $\varepsilon>0$ there is an $\varepsilon$-chain from $x$ to itself in $C$.

Let $\varepsilon>0$. Since $\varphi$ is uniformly continuous on $K$, there is $0<\delta<\varepsilon/3$ such that $d(\,\varphi(y_1),\,\varphi(y_2))<\varepsilon/3$ for every pair of points $y_1,\,y_2$ in $K$ verifying $d(y_1,\,y_2)<\delta$.
Consider a natural $m\,>\,3/\varepsilon$ such that $d_H(\,C_m,\,C)\,<\delta$ and $d(x_m,\,x)\,<\,\delta$.

For each $1\leq\,i\,\leq j_m-1$, there is a point $y_i\in C$ such that $d(x_{m,i},\,y_i)<\delta$. We claim that
$$x\,=\,y_{0},\,y_{1},\,\ldots,\,y_{j_m-1},\,y_{j_m}\,=\,x$$
is an $\varepsilon$-chain. In effect, for every $0\leq i\leq j_m-1$ we have that
\begin{equation}\label{cuenta1}
d\left(\varphi(y_i),\,\varphi(x_{m,i})\right)\,<\,\frac{\varepsilon}{3}
\end{equation}
because $d(x_{m,i},\,y_i)<\delta$ by construction. We also have that
\begin{equation}\label{cuenta2}
d\left(\,\varphi(x_{m,i}),\,x_{m,i+1}\right)\,<\,\frac{1}{m}\,<\,\frac{\varepsilon}{3}
\end{equation}
because \eqref{chain} is an $m^{-1}$-chain when $k=m$. Finally, we have that
\begin{equation}\label{cuenta3}
d\left(x_{m,i+1},\,y_{i+1}\right)\,<\,\delta\,<\,\frac{\varepsilon}{3}
\end{equation}
by construction again. Summing up \eqref{cuenta1}, \eqref{cuenta2} and \eqref{cuenta3} we conclude that
$$d\left(\varphi(y_{i}),\,y_{i+1}\right)\,<\,\varepsilon$$
and this proves the claim. Since the choice of $x\in A$ was arbitrary, we have the result.
\end{proof}

Taking the constant sequence, the previous example also shows that if $A_n$ does not verify the restriction property, then in general neither does the set $A$.

Because of the fact that every recurrent set is chain-recurrent with the restriction property, we immediately have the strong version of Lemma \ref{weak_statement}.

\begin{cor}\label{chain_rec_restr_property}
If there is a compact set $K$ in $X$ containing every set $A_n$ and every $A_n$ is recurrent, then $A$ is chain-recurrent with the restriction property.
\end{cor}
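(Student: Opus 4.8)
The plan is to deduce this corollary directly from Lemma \ref{restricted_chain_recurrence_limit} together with the elementary observation that every recurrent invariant set is automatically chain-recurrent with the restriction property. Thus the only thing that genuinely needs to be checked is that observation; once it is in hand, the sets $A_n$ satisfy the hypotheses of Lemma \ref{restricted_chain_recurrence_limit} verbatim (they are chain-recurrent with the restriction property and all lie in the compact set $K$), and the conclusion follows immediately.

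To see that a recurrent invariant set $Z$ is chain-recurrent with the restriction property, first note that $Z$ being invariant, the flow $\varphi_t$ restricts to a flow $\varphi_t|_Z$ on $Z$, and a point $x\in Z$ that is recurrent for $\varphi_t$ is still recurrent for $\varphi_t|_Z$, since the recurrence times $t_k\to+\infty$ with $\varphi_{t_k}(x)\to x$ involve only orbit points $\varphi_{t_k}(x)\in Z$. Next, a recurrent point is trivially chain-recurrent: given $\varepsilon,T>0$, choose $t_*\ge T$ with $d(\varphi_{t_*}(x),x)<\varepsilon$; then $(x,x;t_*)$ is an $(\varepsilon,T)$-chain from $x$ to itself lying entirely in the orbit of $x$, hence in $Z$. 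Therefore every point of $Z$ is chain-recurrent for $\varphi_t|_Z$, i.e.\ $\mathcal{R}(\varphi_t|_Z)=Z$, which is precisely the restriction property.

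Applying this to each $A_n$ shows that every $A_n$ is chain-recurrent with the restriction property and is contained in the compact set $K$. Lemma \ref{restricted_chain_recurrence_limit} then yields that $A=\limsup_n A_n$ is chain-recurrent with the restriction property, which is the assertion.

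There is essentially no obstacle here: the corollary is a packaging of Lemma \ref{restricted_chain_recurrence_limit}. The only point requiring a word of care is the passage from ``recurrent'' to ``chain-recurrent with the restriction property'', where one must use invariance of $Z$ to ensure that recurrence survives restriction of the flow, and must recall that a genuine recurrence is a (degenerate) $(\varepsilon,T)$-chain for every $\varepsilon,T>0$; both are immediate from the definitions.
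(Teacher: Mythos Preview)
Your proposal is correct and follows exactly the paper's approach: the paper's proof consists of the single sentence that every recurrent set is chain-recurrent with the restriction property, so the corollary follows from Lemma \ref{restricted_chain_recurrence_limit}. You have simply spelled out the elementary verification of that fact, which the paper leaves implicit.
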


\section{Dynamics near the subcritical and critical boundary}

In this section, it will be shown that if the McGehee flow on $M$ has recurrent sets arbitrarily close to the subcritical and critical boundary, then these sets must be localized in very particular regions. All of the dynamical statements will be with respect to this flow. It is worth noting that \textbf{no hyperbolicity} or any other dynamical structure on the McGehee flow \textbf{is assumed}.


\begin{prop}\label{near_recurrency}
Consider a compact set $K$ in $M$ containing $\partial M_{\leq 0}$ and a sequence $(\omega_n)_{n\in \N}$ of non-empty chain-recurrent sets with the restriction property contained in $K$, whose superior limit $\omega$ is contained in $\partial M_{\leq 0}$. Then, $\omega$ is a non-empty compact set contained in the fixed point set of $\partial M_{0}$.
\end{prop}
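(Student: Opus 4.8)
The plan is to combine the ``soft'' behaviour of superior limits from Section~\ref{limsup} with the Lyapunov structure of the McGehee flow on $\partial M_{\leq 0}$ from Section~\ref{Dynamics_criticalboundary_section}, and then to reduce matters to a statement about Conley attractors. First I would collect the elementary properties of $\omega$: since $K$ is compact, contains every $\omega_n$, and each $\omega_n$ is non-empty, item~\ref{nonempty_compact} of Lemma~\ref{general_properties} gives that $\omega$ is a non-empty compact subset of $K$; each $\omega_n$ is invariant (being chain-recurrent), so $\omega$ is invariant by Lemma~\ref{invariant}; and since each $\omega_n$ is chain-recurrent with the restriction property and they all lie in $K$, Lemma~\ref{restricted_chain_recurrence_limit} gives $\mathcal{R}(\varphi_t|_\omega)=\omega$. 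Finally $\omega\subset\partial M_{\leq 0}$ by hypothesis, so by Proposition~\ref{Lyapunov_prop} the continuous function $\nu|_\omega$ is non-decreasing along orbits and strictly increasing along non-constant orbits, with fixed-point set $\mathrm{Fix}(\partial M_0)\cap\omega$, on which it assumes only finitely many values $v_1<\dots<v_N$ by Corollary~\ref{Lyapunov_constant}. Hence the proposition reduces to the purely dynamical claim: a flow on a compact metric space carrying a continuous function that is non-decreasing along orbits, strictly increasing along non-constant orbits, and finite-valued on the fixed-point set, has chain-recurrent set equal to its fixed-point set.

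To prove the reduced claim I would argue by contradiction. Suppose $x\in\omega$ is not a fixed point. Since $\nu$ is bounded and monotone along the orbit of $x$, the limit sets $\alpha(x),\omega(x)$ are non-empty, compact and invariant, and $\nu$ is constant on each; strictness then forces $\alpha(x),\omega(x)\subset\mathrm{Fix}(\partial M_0)\cap\omega$ and, since $\nu$ strictly increases along the orbit, $\nu(\alpha(x))=v_a<\nu(x)<v_b=\nu(\omega(x))$ with $v_a,v_b$ among the $v_i$. Choose a level $c$ with $v_a<c<\nu(x)$ and $c\notin\{v_1,\dots,v_N\}$, say $v_m<c<v_{m+1}$, and let $A_c$ be the maximal invariant subset of the closed positively invariant set $\{\nu\geq c\}\cap\omega$.

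The key point is that, using finiteness, $\nu\geq v_{m+1}$ on $A_c$ (the $\alpha$-limit of any of its points lies in $\mathrm{Fix}(\partial M_0)$ at a level $\geq c$, hence $\geq v_{m+1}$), so that $\{\nu>c\}\cap\omega$ is an \emph{attracting} neighbourhood of $A_c$: any point with $\nu>c$ has its $\omega$-limit in $\mathrm{Fix}(\partial M_0)$ at level $\geq v_{m+1}>c$, hence inside $A_c$. Thus $A_c$ is an attractor of $\varphi_t|_\omega$; let $A_c^{*}$ be its dual repeller. Now $x\notin A_c$ because the backward orbit of $x$ descends below level $c$ (it tends to $v_a<c$), whereas $\omega(x)\subset A_c$ because it is invariant and sits at level $v_b>c$. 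By Conley's fundamental theorem, $\mathcal{R}(\varphi_t|_\omega)\subset A_c\cup A_c^{*}$; since $x\in\omega=\mathcal{R}(\varphi_t|_\omega)$ and $x\notin A_c$ we get $x\in A_c^{*}$, i.e. $\omega(x)\cap A_c=\emptyset$, contradicting $\omega(x)\subset A_c$. Hence $\omega$ contains no non-fixed point, so $\omega=\mathrm{Fix}(\partial M_0)\cap\omega\subset\mathrm{Fix}(\partial M_0)$, and it is a non-empty compact set as already observed.

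The main obstacle is precisely the attractor step: for an arbitrary monotone continuous Lyapunov function the superlevel sets need not be attractors, so the argument would collapse; what rescues it is the finiteness of the critical values of $f$ (Corollary~\ref{Lyapunov_constant}), which makes each level $c$ strictly between two consecutive values ``regular'' and forces every forward orbit that enters $\{\nu\geq c\}$ to converge onto the fixed points at level $\geq v_{m+1}$ --- exactly what upgrades $\{\nu\geq c\}\cap\omega$ to an attracting block. A secondary point to check is that Proposition~\ref{Lyapunov_prop}'s ``strict outside the fixed-point set'' really yields strict increase of $\nu$ along non-constant orbits, and that constancy of $\nu$ on an $\alpha$- or $\omega$-limit set forces that set into $\mathrm{Fix}(\partial M_0)$; both are immediate from the proof of that proposition.
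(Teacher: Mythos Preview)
Your proof is correct and follows the same plan as the paper's: non-emptiness and compactness of $\omega$ from Lemma~\ref{general_properties}, chain-recurrence with the restriction property from Lemma~\ref{restricted_chain_recurrence_limit}, and then $\omega\subset\mathrm{Fix}(\partial M_0)$ from the Lyapunov structure of Proposition~\ref{Lyapunov_prop}. The only difference is that the paper compresses the last step into the single chain $\omega=\mathcal{R}(\varphi_\tau|_\omega)\subset\mathcal{R}(\varphi_\tau|_{\partial M_{\leq 0}})=\mathrm{Fix}(\partial M_0)$, invoking as a black box that a strict Lyapunov function forces the chain-recurrent set to equal the rest set, whereas you unpack this on $\omega$ directly via an explicit Conley attractor--repeller argument, using the finiteness of Corollary~\ref{Lyapunov_constant} to manufacture a regular level $c$ and hence a genuine attractor $A_c$.
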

\begin{proof}
By item \ref{nonempty_compact} in Lemma \ref{general_properties}, $\omega$ is a non-empty compact set. By Lemma \ref{restricted_chain_recurrence_limit}, the set $\omega$ is chain-recurrent with the restriction property. Therefore, since the critical and subcritical boundary is a Lyapunov system with strict Lyapunov function outside its fixed point set, by Proposition \ref{Lyapunov_prop} we have
$$\omega\,=\,\mathcal{R}\left(\varphi_\tau|_\omega\right)\,\subset\,\mathcal{R}\left(\varphi_\tau|_{\partial M_{\leq 0}}\right)\,=\,\mbox{Fix}\,(\partial M_0),$$
and this proves the result.
\end{proof}

\begin{cor}\label{near_recurrency_cor}
Under the hypotheses of Proposition \ref{near_recurrency}, we have:
\medskip

\begin{enumerate}
\item For every neighbourhood in $M$ of the fixed point set of $\partial M_{0}$, the sequence $(\omega_n)_{n\in \N}$ is eventually contained in the neighbourhood.
\medskip

\item If additionally every $\omega_n$ is minimal, then for any connected component $C$ of the fixed point set of $\partial M_{0}$ such that $C\cap \omega$ is non-empty and any neighbourhood of $C$ in $M$, there is a subsequence of $(\omega_n)_{n\in \N}$ contained in the neighbourhood.
\end{enumerate}
\end{cor}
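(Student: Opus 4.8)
The plan is to deduce both assertions from Proposition~\ref{near_recurrency}, which already gives that $\omega:=\limsup_n\omega_n$ is a non-empty compact subset of $\Phi:=\mbox{Fix}\,(\partial M_0)$, together with the $\limsup$-characterisation of item~\ref{subseq} in Lemma~\ref{general_properties}. Since a neighbourhood always contains an open one, and eventual (resp.\ subsequential) containment in the smaller set implies it in the larger, I would assume throughout that the neighbourhoods in question are open.

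For the first assertion I would argue by contradiction. Let $V$ be an open neighbourhood of $\Phi$ in $M$, and suppose $(\omega_n)$ is not eventually contained in $V$; then there is a strictly increasing sequence $(n_k)$ and points $\zeta_{n_k}\in\omega_{n_k}\setminus V$. Since every $\omega_{n_k}$ lies in the compact set $K$, after passing to a subsequence we have $\zeta_{n_k}\to\zeta_*$, and $\zeta_*\in M\setminus V$ because $M\setminus V$ is closed; in particular $\zeta_*\notin\Phi$. But by item~\ref{subseq} in Lemma~\ref{general_properties}, $\zeta_*\in\omega$, and $\omega\subseteq\Phi$ by Proposition~\ref{near_recurrency}, a contradiction.

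For the second assertion, fix a connected component $C$ of $\Phi$ with $C\cap\omega\neq\emptyset$ and an open neighbourhood $W$ of $C$ in $M$. The two ingredients I would combine are: (a) in the compact Hausdorff space $\Phi$ the component $C$ has a neighbourhood base of clopen sets, so there is $U$ clopen in $\Phi$ with $C\subseteq U\subseteq W\cap\Phi$ (alternatively one may invoke that $\Phi$ has only finitely many components, via Lemma~\ref{concrete_description_fixpoint} and the finiteness of the set of components of the critical locus of the real analytic function $f$); since $U$ and $\Phi\setminus U$ are disjoint compact subsets of the metric space $M$, I can enclose them in disjoint open sets $W'\supseteq U$ and $W''\supseteq\Phi\setminus U$, choosing the radii small enough that $W'\subseteq W$; and (b) each $\omega_n$, being a minimal set of the McGehee \emph{flow}, is connected, since it is the closure of the orbit of any of its points and such an orbit is a continuous image of $\R$. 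By the first assertion, $\omega_n\subseteq W'\cup W''$ for all large $n$, and connectedness then forces each such $\omega_n$ into $W'$ or into $W''$. Choosing $x_*\in C\cap\omega$ and, via item~\ref{subseq} in Lemma~\ref{general_properties}, a strictly increasing $(n_k)$ with $x_{n_k}\in\omega_{n_k}$ and $x_{n_k}\to x_*\in C\subseteq U\subseteq W'$, one has $x_{n_k}\in W'$ for all large $k$; since $x_{n_k}\in\omega_{n_k}$ and $W'\cap W''=\emptyset$, these $\omega_{n_k}$ lie in $W'\subseteq W$, which is the desired subsequence.

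The two contradiction/compactness arguments are routine. The only points that genuinely need justification are the clopen-neighbourhood-base property of components in a compact Hausdorff space (standard) and the connectedness of minimal sets of a flow; the latter is precisely where the extra hypothesis ``every $\omega_n$ is minimal'' enters, and it is the main, though mild, obstacle, because without connectedness a minimal set could straddle two components of $\Phi$ and fall partly in $W'$ and partly in $W''$, breaking the argument.
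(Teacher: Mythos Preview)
Your argument is correct. Part (1) is essentially identical to the paper's proof: both argue by contradiction, extract a convergent subsequence in $K$, and obtain a limit point in $\omega\setminus V$, contradicting $\omega\subset\mbox{Fix}(\partial M_0)$.

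For part (2) you take a genuinely different route. The paper first picks $\zeta\in C\cap\omega$ and a subsequence $(\omega_{n_k})$ with $\zeta_k\in\omega_{n_k}$, $\zeta_k\to\zeta$, then forms $\omega'=\limsup_k\omega_{n_k}$ and invokes item~\ref{connected} of Lemma~\ref{general_properties} (using the convergent sequence $(\zeta_k)$ and the connectedness of the minimal sets) to conclude that $\omega'$ is connected; since $\omega'\subset\mbox{Fix}(\partial M_0)$ and meets $C$, it lies in $C$, and then the part (1) argument applied to the subsequence finishes. You instead separate the component $C$ from the rest of $\mbox{Fix}(\partial M_0)$ using a clopen set (via the standard fact that components have clopen neighbourhood bases in compact Hausdorff spaces), thicken to disjoint open sets $W'\subset W$ and $W''$, and then use connectedness of each individual $\omega_n$ to force it entirely into one side. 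Both approaches hinge on the connectedness of minimal sets of a flow, but the paper packages the limiting step through the already-developed Lemma~\ref{general_properties}, while your argument trades that for an external (though standard) topological fact. The paper's version is slightly more self-contained relative to the tools built in the article; yours is perhaps more transparent about why a single component suffices to trap a whole subsequence.
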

\begin{proof}
\begin{enumerate}
\item Consider a neighbourhood $V$ in $M$ of the fixed point set $F$ of $\partial M_{0}$. Suppose that $(\omega_n)_{n\in \N}$ is not eventually contained in $V$. Then, there is a strictly increasing sequence $(n_k)$ such that for every natural $k$ there is $\zeta_{\,k}$ in $\omega_{n_k}-V$. Taking a further subsequence if necessary, we may suppose that the sequence $(\zeta_{\,k})$ converges to some point $\zeta$ necessarily in $\omega-V$ which is absurd.
\bigskip

\item Let $\zeta$ in $C\cap \omega$. By item \ref{subseq} in Lemma \ref{general_properties}, there is a strictly increasing sequence $(n_k)$ such that for every natural $k$ there is $\zeta_{\,k}$ in $\omega_{n_k}$ such that $\zeta_{\,k}\rightarrow \zeta$. Consider
$$\omega' \,=\,\limsup_k\,\omega_{n_k}.$$
It is clear that $\omega'\subset \omega$ hence by the previous proposition $\omega'\subset F$. Since every $\omega_{n_k}$ is minimal, they are connected. Thus, by item \ref{connected} in Lemma \ref{general_properties}, the set $\omega'$ is also connected and contains the point $\zeta\in C$. We conclude that $\omega'\subset C$.
Consider a neighbourhood $V'$ in $M$ of $C$. A similar argument as the one in the previous item shows that the sequence $(\omega_{n_k})$ is eventually in $V'$ and this proves the result.
\end{enumerate}
\end{proof}

\begin{prop}\label{near_recurrency_II}
Suppose that there is a point $\zeta_*$ in $M$ whose omega-limit set $\omega$ is contained in the critical and subcritical boundary $\partial M_{\leq 0}$. Then, $\omega$ is a non-empty compact connected set contained in the fixed point set of $\partial M_{0}$.
\end{prop}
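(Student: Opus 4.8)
The plan is to follow the line of Proposition \ref{near_recurrency}, with the sequence of recurrent sets replaced by the single forward semiorbit of $\zeta_*$, and with Lemma \ref{restricted_chain_recurrence_limit} replaced by the classical fact that the omega-limit set of a precompact forward semiorbit is chain-recurrent with the restriction property. Thus the argument splits into two parts: (i) showing that the forward semiorbit $\{\varphi_\tau(\zeta_*):\tau\ge 0\}$ is eventually contained in a compact subset of $M$; and (ii) deducing the statement from this. For part (ii): a precompact semiorbit has a non-empty, compact, connected, invariant omega-limit set $\omega$, which moreover is internally chain transitive, hence in particular chain-recurrent with the restriction property. Since $\omega\subseteq\partial M_{\le 0}$ by hypothesis, Proposition \ref{Lyapunov_prop} says that on the compact invariant set $\partial M_{\le 0}$ the function $\nu$ is a strict Lyapunov function outside its fixed point set, which equals $\mbox{Fix}\,(\partial M_0)$; hence $\mathcal{R}(\varphi_\tau|_{\partial M_{\le 0}})=\mbox{Fix}\,(\partial M_0)$, and exactly as in the proof of Proposition \ref{near_recurrency},
$$\omega\,=\,\mathcal{R}(\varphi_\tau|_\omega)\,\subseteq\,\mathcal{R}(\varphi_\tau|_{\partial M_{\le 0}})\,=\,\mbox{Fix}\,(\partial M_0),$$
which is the assertion, and the remaining topological claims (non-emptiness, compactness, connectedness of $\omega$) are the standard properties of omega-limit sets of precompact semiorbits.

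For the compactness in part (i) I would use conservation of energy. The map $\pi$ conjugates the Euler-Lagrange flow and the McGehee flow on $M-\partial M$ up to a time reparametrization, and $H$ is a first integral of the former, so $H\circ\pi$ is constant along McGehee orbits; by Proposition \ref{Extension_energy}, $\tilde H=r^{-l}(H\circ\pi)$ there, so $\tilde H(\varphi_\tau(\zeta_*))\,r(\varphi_\tau(\zeta_*))^{\,l}$ is constant along the semiorbit, equal to $E_0:=H(\pi(\zeta_*))$ (the case $\zeta_*\in\partial M$, where the semiorbit stays in $\partial M$, is reduced to the compact set $\partial M_{\le 0}$ once one checks from the sign-preservation $\tilde H'=-l\,\nu\,\tilde H$ of Lemma \ref{Lyapunov_calculation} that $\tilde H(\zeta_*)\le 0$). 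From $\omega\subseteq\partial M_{\le 0}\subseteq\{r=0\}$, and $\omega$ being non-empty, one gets a sequence $\tau_k\to\infty$ with $r(\varphi_{\tau_k}(\zeta_*))\to 0$ and $\tilde H(\varphi_{\tau_k}(\zeta_*))$ bounded (it converges to $\tilde H$ of a point of $\omega$), which forces $E_0=0$; hence $\tilde H\equiv 0$ along the whole semiorbit. Expression \eqref{Energy_rescaled} then reads $\Vert y\Vert^{2}/2=-U_l(q)-r\,F_H(r,q,y)$, so — shrinking the ball $E$ at the outset so that the metric coefficients are uniformly positive definite and $U,F_H$ are bounded on $\{r\le r_E/2\}$ — the rescaled velocity $y$ stays uniformly bounded along the semiorbit; together with $q\in S^{n-1}$, the only way the semiorbit could leave every compact subset of $M$ is by having $r(\varphi_\tau(\zeta_*))\to r_E$.

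Ruling out this last possibility — so that in fact $r(\varphi_\tau(\zeta_*))\to 0$ and the semiorbit is eventually inside the compact set $\{\tilde H=0,\ r\le r_E/2\}$, giving the precompactness used in part (i) — is the step that requires the most care and is the main obstacle in the proof; it is where the freedom in the choice of the ball $E$ at the beginning of Section \ref{blowup_definition_section} must be exploited. Granted this, the reduction to Proposition \ref{Lyapunov_prop} in the first paragraph is routine.
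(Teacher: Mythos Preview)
Your part (ii) is correct and is a legitimate alternative to what the paper does. The paper does \emph{not} invoke internal chain transitivity of omega-limit sets; instead it argues directly: assuming some $\zeta\in\omega\setminus\mbox{Fix}(\partial M_0)$, it uses the Lyapunov structure (Proposition \ref{Lyapunov_prop}) and the finiteness of $\nu(\mbox{Fix}(\partial M_0))$ (Corollary \ref{Lyapunov_constant}) to pick a value $\nu_2-\delta\notin\nu(\mbox{Fix}(\partial M_0))$ strictly between the $\alpha$- and $\omega$-values of $\nu$ along $\I(\zeta)$, and then tracks the forward orbit of $\zeta_*$ across this $\nu$-level repeatedly to manufacture an accumulation point in $\partial M_{\le 0}$ at that level, which must be a fixed point --- a contradiction. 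Your route via the Conley-theoretic fact that the omega-limit of a precompact semiorbit is internally chain transitive is cleaner and more conceptual; the paper's route is self-contained and in fact (as the paper remarks right after the proof) gives an independent argument for Proposition \ref{near_recurrency} that avoids chain-recurrence machinery entirely.

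Where you go wrong is part (i): you have the difficulty exactly inverted. The precompactness of the forward semiorbit is \emph{not} the main obstacle; the paper dispatches it in one sentence. Since $\partial M_{\le 0}$ is compact (Corollary \ref{compact_invariant_boundaries}) and $\omega\subset\partial M_{\le 0}$, take any compact neighbourhood $K$ of $\partial M_{\le 0}$ in $M$. If the forward orbit were not eventually in $K$, then (using that $\omega\neq\emptyset$, which is forced since a finite-time maximal solution has empty omega-limit) it would cross $\partial K$ along a sequence of times going to infinity, producing an accumulation point in $\omega\cap\partial K$; but $\omega\subset\partial M_{\le 0}\subset K^\circ$, contradiction. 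Your energy-conservation detour is unnecessary, and in the form written it does not quite close: $F_H$ is itself quadratic in $y$, so $\tilde H=0$ does not directly bound $\|y\|$ without first controlling $r$; and ``shrinking $E$ at the outset'' changes the manifold $M$ in the statement. None of that is needed once you use the compactness of $\partial M_{\le 0}$ directly.
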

\begin{proof}
Since $\partial M_{\leq 0}$ is compact by Corollary \ref{compact_invariant_boundaries}, the orbit $\I^+(\zeta_*)$ is eventually contained in a compact set hence, by general theory, the maximal interval of definition of $(\tau\mapsto\varphi_{\zeta_*}(\tau))$ contains $[0,\,+\infty)$ and also $\omega(\zeta_*)$ is a non-empty compact connected invariant set.

Consider the fixed point set $F$ of $\partial M_{0}$ and denote by $\omega_n$ the tail of the $\zeta_*$-orbit starting at time $\tau=n$,
$$F\,=\,\mbox{Fix}(\partial M_0),\qquad \omega_n\,=\,\varphi_{\zeta_*}\,[n,+\infty).$$
Note of course that $\omega$ is the superior limit of $\omega_n$.

Suppose that there is a point $\zeta$ in $\omega\,-\,F$. As an immediate corollary of Proposition \ref{Lyapunov_prop}, there are different connected components $C_1$ and $C_2$ of $F$ containing the alpha and omega-limit sets of $\zeta$ respectively. By Corollary \ref{Lyapunov_constant}, the Lyapunov function $\nu$ is constant on these components and takes the values $\nu_1$ on $C_1$ and $\nu_2$ on $C_2$. Note that $\nu_1<\nu_2$. Again by Corollary \ref{Lyapunov_constant}, there is $\delta>0$ such that $\delta\,<\,(\nu_2-\nu_1)/3$ and $\nu_2-\delta\notin \nu(F)$.

Consider the following neighbourhoods in $M$ of $C_1$ and $C_2$ respectively
$$V_1\,=\,\nu^{-1}\left(B(\nu_1,\,\delta)\right),\qquad V_2\,=\,\nu^{-1}\left(B(\nu_2,\,\delta)\right),$$
and note that their closures are disjoint. By item \ref{subseq} in Lemma \ref{general_properties}, there is a strictly increasing sequence $(n_k)$ such that for every natural $k$ there is $\zeta_{\,k}\in\omega_{n_k}$ such that $\zeta_{\,k}\rightarrow \zeta$. Taking a further subsequence if necessary, we may suppose that $\I^+(\zeta_{\,k})\,\cap\,V_i\,\neq\,\emptyset$ for every natural $k$ and $i=1,\,2$.

For every natural $k$ there is an interval $(\tau_{k2},\,\tau_{k1})$ such that
$$\nu\left(\varphi_{\zeta_{\,k}}(\tau_{k2})\right)\,=\,\nu_2-\delta,\qquad \nu\left(\varphi_{\zeta_{\,k}}(\tau_{k1})\right)\,=\,\nu_1+\delta,$$
$$\nu\left(\varphi_{\zeta_{\,k}}\left(\tau_{k2},\,\tau_{k1})\right)\right)\,\subset\,(\nu_1+\delta,\,\nu_2-\delta).$$

\noindent \emph{Claim:} There is $c>0$ independent of $k$ such that $\tau_{k1}\,-\,\tau_{k2}\geq c$.
\begin{proof}[Proof of the claim]
Indeed, by the motion equations \eqref{McGehee_eq}, a similar calculation as the one in Lemma \ref{Lyapunov_calculation} gives a $C^1$ function $\Xi$ such that \footnote{The function $\Xi$ is defined explicitly by the formula
\begin{equation*}
\begin{array}{r@{}l}
\medskip
\Xi(r,q,y)\,=\,\left(1+l/2\right)\,&(\Vert y\Vert^2\,-\,\nu(q,y)^2)\,-\,l\,\tilde{H}(r,q,y)\,\\
\medskip
&+\,r\,\left(\langle q,\,F_X(r,q,y)\rangle\,+\,l\,F_H(r,q,y)\right)
+\,r^\Delta\,\langle q,\,\mathbf{F}_X(r,q,y)\rangle.
\end{array}
\end{equation*}
}
$$\nu'\,=\,\partial_\tau\,(\nu\,\circ\,\varphi)\,|_{\tau=0}\,=\,\Xi,\qquad \Xi:M\rightarrow \R.$$
Consider the maximum $M_{\Xi}$ of the function $\vert\Xi\vert$ on the compact set $K$ and note that it is greater than zero since otherwise $\nu$ would be constant along $\I(\zeta)$ which implies, by Proposition \ref{Lyapunov_prop}, that $\zeta$ is a fixed point which is absurd by hypothesis.

By Lagrange Theorem, there is $\tau_*\in (\tau_{k2},\,\tau_{k1})$ such that
$$\nu_1-\nu_2+2\delta\,=\, \nu'\left(\varphi_{\zeta_{\,k}}(\tau_*)\right)\,(\tau_{k1}\,-\,\tau_{k2}).$$
Thus, taking absolute values on both sides we have
$$\delta\,<\,\nu_2-\nu_1-2\delta\,=\,\vert \nu'\left(\varphi_{\zeta_{\,k}}(\tau_*)\right)\vert\,(\tau_{k1}\,-\,\tau_{k2})\,\leq\,M_\Xi\,(\tau_{k1}\,-\,\tau_{k2})$$
hence $\tau_{k1}\,-\,\tau_{k2}\,>\,M_\Xi^{-1}\,\delta\,>\,0$ and taking $c=M_\Xi^{-1}\,\delta$ the claim is proved.
\end{proof}
\medskip

Now, we continue with the proof. Define $\zeta'_{\,k}\,=\,\varphi_{\zeta_{\,k}}(\tau_{k2})$ and note that for every natural $k$ we have $\nu(\zeta'_{\,k})\,=\,\nu_2-\delta$ and 
$\nu\left(\varphi_{\zeta'_{\,k}}(\varepsilon)\right)\,<\,\nu_2-\delta$ for every $\varepsilon\in (0,c)$. Taking a further subsequence if necessary, we may suppose that the sequence $(\zeta'_{\,k})$ converges to some point $\zeta'$ necessarily in $\partial M_{\leq 0}$. In particular, we have $\nu(\zeta')\,=\,\nu_2-\delta$ and 
$\nu\left(\varphi_{\zeta'}(\varepsilon)\right)\,\leq\,\nu_2-\delta$ for every $\varepsilon\in (0,c)$. Since $\nu$ is increasing on $\partial M_{\leq 0}$ and this is an invariant set, we conclude that $\nu\left(\varphi_{\zeta'}(\varepsilon)\right)\,=\,\nu_2-\delta$, constant for $\varepsilon\in (0,c)$. Therefore, $\zeta'$ is a fixed point such that $\nu(\zeta')\,=\,\nu_2-\delta$ which is absurd since $\nu_2-\delta\notin \nu(F)$ by hypothesis. This finishes the proof.
\end{proof}

A verbatim argument as the one before, proves Proposition \ref{near_recurrency} avoiding chain-recurrence theory at the cost of assuming a sequence $(\omega_n)$ of recurrent sets instead.

\section{Hyperbolicity at the critical boundary}\label{HyperbolicSet}

Recall from section \ref{blowup_definition_section} that, except for even values of the degree $l$, the map $\pi$ in expression \eqref{map_pi} cannot be even smoothly extended to the boundary $\partial M$ and it does so only continuously. However, the McGehee manifold does admit a real analytic extension even for negative radial parameter. We will refer to this extension as the \emph{extended McGehee manifold},
$$\hat{M}\,=\, (-r_E,\,r_E)\times S^{n-1}\times \R^n.$$

A verbatim argument as in the proof of Proposition \ref{Extension_vector} shows that, for even values of $l$, the result holds for the extended McGehee manifold as well. Instead, if the value $l$ is odd, then $\Delta$ is no longer an integer and because of the $r^\Delta$ factor, equations \eqref{McGehee_eq} do not even real analytically extend to the boundary. Nevertheless, replacing this factor simply by $|r|^\Delta$ results in a continuously differentiable term since necessarily $\Delta>1$ because of the weak magnetism hypothesis \eqref{weak_magnetism}. Similarly as before, the resulting equations define a continuously differentiable vector field whose flow restricts to the original McGehee flow on $M$.

We will abuse of notation and refer to the flow on $\hat{M}$ also as the McGehee flow. The advantage of this new manifold over $M$ is that $\hat{M}$ is boundaryless and $\partial M$ is a submanifold.

As it was mentioned in section \ref{blowup_definition_section}, the critical boundary $\partial M_0$ is not a manifold in general and we provided an example whose critical boundary was a doubly-pinched torus. It was shown in section \ref{Dynamics_criticalboundary_section} that the dynamics on the subcritical and critical boundary is a Lyapunov system whose chain-recurrent set is the fixed point set. However, without additional structure on the fixed point set, nothing can be said about its relation with the dynamics on the McGehee manifold.

In this section, conditions are given to assure that the fixed point set is a hyperbolic subset, not only of the critical boundary, but of the extended McGehee manifold as well.

As before, denote by $f$ the restriction to the unit sphere of the first non-zero jet $U_l$ of the potential $U$ at the critical point $p$. Note that $U_l$ is a degree $l$ homogeneous polynomial on $\R^n$ while $f$ is a smooth function on the sphere. 


\begin{lema}\label{Techincal_lemma}
If $q_*$ is a critical point of $f$, then $\langle\, q_* \rangle\,\oplus\, T_{\,q_*}S^{n-1}$ is an orthogonal invariant subspace decomposition of $\Hess_{\,q*}( U_l)$. Moreover, with respect to the induced Euclidean metric on the sphere, we have
$$\Hess_{\,q_*}(f)\,=\,\Hess_{\,q*}( U_l)\,-\,l\,U_l\,(q_*)\,\id _{n-1},$$
where the right hand side is restricted to $T_{\,q_*}S^{n-1}$.
\end{lema}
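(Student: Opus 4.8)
The plan is to work in the ambient Euclidean coordinates on $\R^n$ and compute the Hessian of $U_l$ at $q_*$, then restrict everything to the sphere. First I would record the Euler relations for the $l$-homogeneous polynomial $U_l$: differentiating $\langle\nabla U_l(x),x\rangle = l\,U_l(x)$ gives $\Hess_x(U_l)\,x = (l-1)\,\nabla U_l(x)$, and at the critical point $q_*$ the constrained-critical-point condition (Lemma \ref{concrete_description_fixpoint}) says $\nabla U_l(q_*) = \lambda\, q_*$ with $\lambda = l\,U_l(q_*)$ by pairing with $q_*$ and using Euler again. Substituting, $\Hess_{q_*}(U_l)\,q_* = (l-1)\,l\,U_l(q_*)\,q_*$, so $q_*$ is an eigenvector of the symmetric bilinear form $\Hess_{q_*}(U_l)$. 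Since a symmetric operator preserves the orthogonal complement of any eigenvector, $T_{q_*}S^{n-1} = \langle q_*\rangle^\perp$ is invariant as well, which gives the orthogonal invariant decomposition $\langle q_*\rangle \oplus T_{q_*}S^{n-1}$.

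Next I would derive the Hessian comparison formula. The restriction $f$ of $U_l$ to $S^{n-1}$ has, at a critical point $q_*$, a well-defined Hessian as a bilinear form on $T_{q_*}S^{n-1}$, independent of the choice of connection; using the induced metric and the second fundamental form of the sphere, for tangent vectors $u,w \in T_{q_*}S^{n-1}$ one has
$$\Hess_{q_*}(f)(u,w) \;=\; \Hess_{q_*}(U_l)(u,w) \;+\; \langle \nabla U_l(q_*),\, \mathrm{II}(u,w)\rangle.$$
For the unit sphere the second fundamental form with respect to the outward normal $q_*$ is $\mathrm{II}(u,w) = -\langle u,w\rangle\, q_*$ (the sphere is totally umbilic with principal curvatures $1$), so $\langle \nabla U_l(q_*), \mathrm{II}(u,w)\rangle = -\langle \lambda\, q_*,\, \langle u,w\rangle q_*\rangle = -\lambda\langle u,w\rangle = -l\,U_l(q_*)\langle u,w\rangle$. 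Hence $\Hess_{q_*}(f) = \Hess_{q_*}(U_l) - l\,U_l(q_*)\,\id_{n-1}$ on $T_{q_*}S^{n-1}$, which is exactly the claimed identity.

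The only genuinely delicate point is the sign and normalization in the second fundamental form term — i.e.\ being careful about which normal one uses and getting the $+l\,U_l(q_*)$ versus $-l\,U_l(q_*)$ right — together with justifying that $\Hess_{q_*}(f)$ is connection-independent at a critical point so that the formula above is legitimate. Both are standard facts from submanifold geometry; I would either cite the Gauss-type formula for the Hessian of a restricted function at a critical point, or, to keep the argument self-contained, verify the identity by extending $u,w$ to vector fields tangent to the sphere near $q_*$, applying them twice to $U_l$, and using that at a critical point the terms involving the ambient derivatives of the extensions collapse to $\langle\nabla U_l(q_*),\nabla_u w\rangle$, whose normal component contributes the umbilic term. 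Everything else is the elementary bookkeeping with Euler's identity already sketched above.
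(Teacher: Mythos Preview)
Your proposal is correct and follows essentially the same line as the paper: both use Euler's identity differentiated once to get the invariant orthogonal splitting (you show $q_*$ is an eigenvector first, the paper shows $T_{q_*}S^{n-1}$ is invariant first---these are the two sides of the same symmetry argument), and both obtain the Hessian comparison by tracking the normal contribution of $\nabla U_l(q_*)=l\,U_l(q_*)\,q_*$. The only cosmetic difference is that you invoke the Gauss formula $\Hess_{q_*}(f)=\Hess_{q_*}(U_l)+\langle\nabla U_l(q_*),\mathrm{II}(\cdot,\cdot)\rangle$ with $\mathrm{II}(u,w)=-\langle u,w\rangle q_*$, whereas the paper carries out the equivalent computation by hand, differentiating $\nabla U_l(q)=l\,U_l(q)\,q+\grad_q f$ along $v\in T_{q_*}S^{n-1}$ and using the already-established invariance to identify $D_v\grad f$ with $\nabla_v\grad f$.
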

\begin{proof}
Since $U_l$ is a degree $l$ homogeneous function, by Euler's theorem
\begin{equation}\label{Euler_Theorem}
\langle\,\nabla U_l (x),\,x\rangle\,=\,l\,U_l(x)
\end{equation}
for every $x$ in $\R^n$. The directional derivative of the above expression along a vector $v$ in $\R^n$ reads as
$$\langle\,\Hess_{\,x}(U_l)(v),\,x\rangle\,+\,\langle\,\nabla U_l (x),\,v\rangle\,=\,l\,\langle\,\nabla U_l (x),\,v\rangle.$$
Suppose that $q_*$ is a critical point of $f$. Then, $\nabla U_l (q_*)\,=\,\lambda\,q_*$ for some real number $\lambda$, the Lagrange multiplier. In particular, grouping the terms of the previous expression, for every vector $v$ in $T_{\,q_*}S^{n-1}$ we have
$$\langle\,\Hess_{\,q_*}(U_l)(v),\,q_*\rangle\,=\,(l-1)\,\langle\,\nabla U_l (q_*),\,v\rangle\,=\,(l-1)\,\lambda\,\langle\,q_*,\,v\rangle\,=\,0$$
and we conclude that $T_{\,q_*}S^{n-1}$ is an invariant subspace of $\Hess_{\,q_*}(U_l)$. Because $\Hess_{\,q_*}(U_l)$ is a symmetric operator, for every vector $v$ in $T_{\,q_*}S^{n-1}$ we have
$$\langle\,v,\,\Hess_{\,q_*}(U_l)(q_*)\rangle\,=\,\langle\,\Hess_{\,q_*}(U_l)(v),\,q_*\rangle\,=\,0$$
therefore $\langle\, q_* \rangle$ is an invariant subspace as well.

The gradient of $f$ at $q\in S^{n-1}$ with respect to the induced Euclidean metric on the sphere, is the orthogonal projection on $T_{\,q}S^{n-1}$ of the usual gradient of $U_l$ at $q$. Therefore, by Euler's theorem \eqref{Euler_Theorem} we have
$$\nabla U_l (q)\,=\,l\,U_l(q)\,q\,+\,\grad_q(f),\qquad q\in S^{n-1}.$$

The Levi-Civita connection on $S^{n-1}$ with respect to the induced Euclidean metric, is the orthogonal projection on the corresponding tangent space of the usual derivative. Thus, deriving the last expression along a vector $v$ in $T_{\,q_*}S^{n-1}$ we have
\begin{eqnarray*}
\Hess_{\,q_*}(U_l)(v)\,&=&\,l\,U_l(q_*)\,v\,+\,l\,\langle\,\nabla U_l (q_*),\,v\rangle\, q_*\,+\,D_v\,\grad(f) \\
&=&\,\,l\,U_l(q_*)\,v\,+\,l\,\lambda\,\langle\, q_*,\,v\rangle\, q_*\,+\,D_v\,\grad(f) \\
&=&\,\,l\,U_l(q_*)\,v\,+\,D_v\,\grad(f).
\end{eqnarray*}
Because $T_{\,q_*}S^{n-1}$ is an invariant subspace of $\Hess_{\,q_*}(U_l)$, the derivative $D_v\,\grad(f)$ coincides with its orthogonal projection on the tangent space hence, for every $v$ in $T_{\,q_*}S^{n-1}$,
$$\Hess_{\,q_*}(f)(v)\,=\,\nabla_v\,\grad(f)\,=\,D_v\,\grad(f)\,=\,\Hess_{\,q*}( U_l)(v)\,-\,l\,U_l\,(q_*)\,v$$
where $\nabla_v$ denotes the Levi-Civita connection along the vector $v$ and we have the result.
\end{proof}

\begin{prop}\label{Morse_Proposition}
Suppose that $f$ is a Morse function and zero is a regular value. Then, the following assertions hold.
\medskip

\begin{enumerate}
\item $\partial M_{\leq 0}$ is a manifold with boundary $\partial M_{0}$.
\medskip

\item The fixed point set
$$Fix(\,\partial M_{\leq 0}\,)\,=\,Fix(\,\partial M_0\,)$$
is a hyperbolic subset of the extended McGehee manifold $\hat{M}$.
\medskip

\item If $\zeta\,=\,(0,\,q,\,\nu\,q)$ is one of the fixed points in the previous item and $\nu<0$ $(\nu<0)$, then there is an orbit in the interior of $M$ converging to $\zeta$ as time goes to $+\infty$ $(-\infty)$ and the function $\tilde{H}$ is zero along the orbit.
\medskip

\item In the previous item, if additionally $q$ is a local minimum of $f$, then the unstable (stable) manifold of $\zeta$ is an open set in $\partial M_{\leq 0}$ while the stable (unstable) one is an immersed one-submanifold and these intersect transversally at $\zeta$.
\end{enumerate}
\end{prop}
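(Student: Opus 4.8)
The plan is to handle the four items in turn, the substantive one being the linearization of $\tilde X$ at the fixed points in item (2). For item (1), note that by \eqref{Energy_rescaled} the restriction of $\tilde H$ to $\partial M$ is the real analytic function $g(q,y)=\Vert y\Vert^2/2+f(q)$ on $S^{n-1}\times\R^n$. Its differential vanishes exactly at the points $(q,0)$ with $q$ a critical point of $f$, where $g=f(q)\neq 0$ because zero is a regular value of $f$; hence zero is a regular value of $g$, and by the preimage theorem for manifolds with boundary, $\partial M_{\leq 0}=g^{-1}((-\infty,0])$ is a manifold (compact, by Corollary~\ref{compact_invariant_boundaries}) with boundary $g^{-1}(0)=\partial M_0$.

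For item (2), since $f$ is Morse it has finitely many critical points, so by Lemma~\ref{concrete_description_fixpoint} the fixed point set is the finite set of points $\zeta=(0,q_*,\nu q_*)$ with $q_*$ a nondegenerate critical point of $f$ and $\nu^2=-2f(q_*)$; as zero is a regular value of $f$ one has $f(q_*)\neq 0$, hence $\nu\neq 0$. A finite invariant set is a hyperbolic subset iff each of its points is a hyperbolic fixed point, so I would compute $D\tilde X(\zeta)$ in the coordinates $(r,q,y)$ of $\hat M$ and check that it has no eigenvalue on the imaginary axis. Linearizing \eqref{McGehee_eq} at $\zeta$, writing a variation of $y$ as $\eta_{\,tg}+s\,q_*$ with $\eta_{\,tg}\in T_{q_*}S^{n-1}$, and noting that the $r\,F_X$ term contributes $F_X(0,q_*,\nu q_*)$ at $r=0$ while the $r^\Delta\,\mathbf{F}_X$ term contributes nothing there since $\Delta>1$ (so the computation and conclusion are the same whether or not $l$ is even, and only $C^1$ regularity is used), one gets a block–diagonal linear map: the $\delta r$ block has eigenvalue $\nu$, the $s$ block has eigenvalue $-l\nu$, and on $T_{q_*}S^{n-1}\oplus T_{q_*}S^{n-1}$ the linearization reads
$$(\delta q)'=-\nu\,\delta q+\eta_{\,tg},\qquad (\eta_{\,tg})'=-\Hess_{q_*}(U_l)(\delta q)-\frac{l}{2}\,\nu\,\eta_{\,tg}.$$
By Lemma~\ref{Techincal_lemma} and the value $U_l(q_*)=f(q_*)=-\nu^2/2$, on $T_{q_*}S^{n-1}$ one has $\Hess_{q_*}(U_l)=\Hess_{q_*}(f)-(l\nu^2/2)\,\id$; diagonalizing $\Hess_{q_*}(f)$ with eigenvalues $\mu_1,\dots,\mu_{n-1}$ (all nonzero, by nondegeneracy) splits this block into $2\times2$ blocks with trace $-\nu(1+l/2)$ and determinant $\mu_i$. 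The trace is nonzero because $\nu\neq 0$ and $l\geq 2$; if $\mu_i<0$ the determinant is negative, giving two real eigenvalues of opposite sign, and if $\mu_i>0$ the eigenvalues have real part $-\nu(1+l/2)/2\neq 0$. Together with $\nu\neq 0$ and $-l\nu\neq 0$, this gives hyperbolicity of $D\tilde X(\zeta)$, and hence of the finite fixed point set in $\hat M$. The bookkeeping in this linearization — in particular correctly using the splitting of $\Hess U_l$ from Lemma~\ref{Techincal_lemma} — is the main obstacle.

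For item (3) (reading the parenthetical in the statement as $\nu>0$ and $\tau\to-\infty$), the stable eigenspace $E^s$ of $\zeta$ contains the $\delta r$ axis whenever $\nu<0$, since that axis is an eigendirection with eigenvalue $\nu$. Hence $W^s(\zeta)$, being tangent at $\zeta$ to a subspace containing $\partial_r$, meets $\{r>0\}$; as $r'=\nu r$ keeps $\{0<r<r_E\}$ invariant near $\zeta$, the forward orbit of such a point stays in the interior of $M$ and converges to $\zeta$ as $\tau\to+\infty$. Along that orbit $H\circ\pi$ is constant, since the McGehee flow is a time reparametrization of the energy–preserving Euler–Lagrange flow; and since $r\to 0$ while $\tilde H=r^{-l}(H\circ\pi)$ is continuous with $\tilde H(\zeta)=0$, the constant must vanish, so $\tilde H\equiv 0$ on the orbit. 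The case $\nu>0$ is identical, with $W^u(\zeta)$ and $\tau\to-\infty$.

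For item (4), if moreover $q$ is a local minimum of $f$ then all $\mu_i>0$; for $\nu<0$ each $2\times2$ block then has positive trace, so those $2(n-1)$ eigenvalues together with $-l\nu$ are all unstable, and $\nu$ is the only stable eigenvalue (and $\nu$ is not also a $2\times2$-block eigenvalue, since that would force some $\mu_i<0$). Therefore $E^s$ is the $\delta r$ axis and $W^s(\zeta)$ is the immersed one–dimensional submanifold of item (3), while $E^u\subset T_\zeta\partial M$, so $W^u(\zeta)\subset\partial M$ and $\dim W^u(\zeta)=2n-1=\dim\partial M_{\leq 0}$; since $\zeta$ is then a source for the flow restricted to $\partial M$ and $\partial M_{\leq 0}$ is invariant, $W^u(\zeta)$ contains — hence equals — a relatively open subset of $\partial M_{\leq 0}$. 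Transversality of $W^s(\zeta)$ and $W^u(\zeta)$ at $\zeta$ is automatic from $E^s\oplus E^u=T_\zeta\hat M$. For $\nu>0$ the roles of $W^s$ and $W^u$ are exchanged.
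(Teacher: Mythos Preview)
Your proof follows essentially the same strategy as the paper's: linearize $\tilde X$ at each fixed point using the splitting $\R\oplus T_{q_*}S^{n-1}\oplus T_{q_*}S^{n-1}\oplus\R$, reduce via Lemma~\ref{Techincal_lemma} to $2\times 2$ blocks governed by the eigenvalues of $\Hess_{q_*}(f)$, and read off hyperbolicity and the stable/unstable dimensions. The conclusions are correct, but two pieces of the bookkeeping need repair.

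First, the linearization is block \emph{lower-triangular}, not block-diagonal. The term $r\,F_X$ contributes $F_X(\zeta)$ to the $\partial/\partial r$ column of the $y'$ rows; moreover the weak magnetism hypothesis is $\Delta\geq 1$, not $\Delta>1$, and when $\Delta=1$ the magnetic term $r^{\Delta}\mathbf{F}_X$ contributes $\mathbf{F}_X(\zeta)$ there as well (the paper records this with the Kronecker $\delta^{\Delta}_{1}$ in its matrix~\eqref{Linear_part}). None of this affects the eigenvalues, so your hyperbolicity computation survives intact.

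Second, and as a consequence, the $\delta r$ axis is \emph{not} an eigendirection in general: the $\nu$-eigenvector has the form $(1,*,*,*)$, obtained by adjusting $(1,0,0,0)$ by a suitable combination of the other generalized eigenvectors (this is exactly what the paper does). So in items (3) and (4) the correct statement is that $E^s$ contains a vector with nonzero $r$-component, not that $E^s$ contains the $\delta r$ axis. That weaker fact is all you need: it still forces $W^s(\zeta)$ to enter $\{r>0\}$, and the transversality in item (4) still follows. Your remaining claims --- that the other eigenvectors have zero $r$-component, hence $E^u\subset T_\zeta\partial M$ and $W^u(\zeta)\subset\partial M$ with $\dim W^u(\zeta)=2n-1$ --- are unaffected. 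With these corrections your argument coincides with the paper's.
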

\begin{proof}
Suppose that $f$ is a Morse function without zero critical values. Consider the smooth function \eqref{Energy_rescaled} restricted to the McGehee boundary,
$$\tilde{H}_{\partial M}:\,\partial M\rightarrow \R,\qquad \tilde{H}_{\partial M}(0,q,y)\,=\,\frac{\Vert y\Vert^2}{2}\,+\,f(q).$$
Its critical points are of the form $(0,q_*,0)$ such that $q_*$ is a critical point of $f$. Thus, zero is a regular value of $\tilde{H}_{\partial M}$ hence $\tilde{H}_{\partial M}^{-1}(-\infty,0]\,=\,\partial M_{\leq 0}$ is a manifold with boundary $\tilde{H}_{\partial M}^{-1}(0)\,=\,\partial M_{0}$. This proves the first item.
\medskip

Consider a fixed point $\zeta$ in $\partial M_{0}$. By Lemma \ref{concrete_description_fixpoint},
$$\zeta\,=\,(0,\,q_*,\,\nu_*\,q_*),\qquad \nu_*\,=\,\pm(-2\,\it f(q_*) \rm )^{1/2}$$
such that $q_*$ is a critical point of $f$ verifying $f(q_*)\leq 0$. Because zero is a regular value of $f$, we actually have $f(q_*)< 0$ and $\nu_*\neq 0$. By definition of the extended McGehee manifold, the tangent space of $\hat{M}$ at $\zeta$ splits as follows
\begin{equation}\label{Splitting1}
T_\zeta\,\hat{M}\,=\,T_0\,\R\,\oplus\,T_{q_*}S^{n-1}\,\oplus\,T_{\nu_* q_*}\R^n,\qquad v\,=\,v_r\,+\,v_q\,+\,v_y,
\end{equation}
where the first factor is naturally isomorphic to $\R$. We can further decompose the last factor as follows
\begin{equation}\label{Splitting2}
T_{\nu_* q_*}\R^n\,=\,\R^n\,=\,\ker\,(\partial_y\,\nu\,|_\zeta)\,\oplus \langle\,q_*\rangle,\qquad \partial_y\,\nu\,|_\zeta\, (v_y)\,=\,\langle q_*,\, v_y\rangle.
\end{equation}
In view of the latter expression, it is natural to identify the first factor in \eqref{Splitting2} with $T_{q_*}S^{n-1}$ and the second with $\R$. Thus, we have the splitting
\begin{equation}\label{Splitting3}
T_\zeta\,\hat{M}\,=\,\R\,\oplus\,T_{q_*}S^{n-1}\,\oplus\,T_{q_*}S^{n-1}\,\oplus\,\R.
\end{equation}

Recall the extension of the vector field $\tilde{X}=(\tilde{X}_r,\,\tilde{X}_q,\,\tilde{X}_y)$ to the extended McGehee manifold $\hat{M}$ discussed at the beginning of the section,
\begin{equation}\label{extended_X_tilde}
\begin{array}{r@{}l}
\bigskip

\tilde{X}_r(r,q,y)\,&=\,\nu\,r, \\
\bigskip

\tilde{X}_q(r,q,y)\,&=\,y\,-\,\nu\,q, \\

\tilde{X}_y(r,q,y)\,&=\,-\nabla U_l\,(q)\,-\,\frac{l}{2}\,\nu\,y\,+\,r\,F_X(r,q,y)\,+\,\mathcal{F}_X(r,q,y).
\end{array}
\end{equation}
such that the new magnetic term $\mathcal{F}_X$ is given by
$$\mathcal{F}_X(r,q,y)=
\begin{cases}
\bigskip

    \quad r^\Delta\,\mathbf{F}_X(r,q,y) & \text{if $\Delta\in\N$,} \\
    \quad |r|^\Delta\,\mathbf{F}_X(r,q,y) & \text{if $\Delta-1/2\in\N$,}
  \end{cases}.$$
As it was mentioned at the beginning of the section, \eqref{extended_X_tilde} is continuously differentiable. With respect to the splitting \eqref{Splitting3}, the differential of $\tilde{X}$ at $\zeta$ reads as follows
\medskip

\begin{equation}\label{Linear_part}
d_\zeta\,\tilde{X}\,=\,\begin{pmatrix}
\nu_* && 0 && 0 && 0\\
&& && && \\
0 && -\nu_*\,\id_{n-1} && \id_{n-1} && 0\\
&& && && \\

F_X^{tg}\,(\zeta)+\delta^\Delta_1\,\mathbf{F}_X^{tg}\,(\zeta) && -\Hess_{q_*}\, (U_l) && -\frac{l}{2}\nu_*\,\id_{n-1} && 0 \\
&& && && \\

F_X^{||}\,(\zeta)+\delta^\Delta_1\,\mathbf{F}_X^{||}\,(\zeta) && 0 && 0 && -l\,\nu_*
\end{pmatrix}.
\end{equation} 
\medskip

Again, with respect to the splitting \eqref{Splitting3}, it is clear that $(0,0,0,1)$ is an eigenvector of $d_\zeta\,\tilde{X}$ with non-zero eigenvalue $-l\,\nu_*$.

Because $\Hess_{q_*}\, (U_l)$ is a symmetric operator, by Lemma \ref{Techincal_lemma} there is an orthonormal basis $\{v_i\}$ of $T_{q_*}S^{n-1}$ consisting of eigenvectors of the operator $\Hess_{q_*}\, (U_l)$ such that
$$\Hess_{q_*}\, (U_l)\,(v_i)\,=\,\lambda_i\,v_i,\qquad i\,=\,1,\,2,\,\ldots,\,n-1.$$
By direct inspection, the vector $(0,v_i,\alpha_i\,v_i,0)$ is an eigenvector of $d_\zeta\,\tilde{X}$ with eigenvalue $k_i$ such that
$$\alpha_i\,-\,\nu_*\,=\,k_i,\qquad \alpha_i^2\,+\,\nu_*\left(\frac{l}{2}-1\right)\,\alpha_i\,+\,\lambda_i\,=\,0.$$
In particular, by Lemma \ref{Techincal_lemma} again, we have the following expression for the eigenvalues of $d_\zeta\,\tilde{X}$ in terms of those of $\Hess_{\,q_*}(f)$,
\begin{equation}\label{eigenvalue_expression}
k_i\,=\,-\frac{1}{2}\nu_*\left(\frac{l}{2}+1\right)\,\pm\,\frac{1}{2}\left(\nu_*^2\left(\frac{l}{2}+1\right)^2-4\hat{\lambda}_i\right)^{1/2}
\end{equation}
where we have denoted by $\hat{\lambda}_i$ the eigenvalues of the Hessian operator of $f$ at $q_*$,
\begin{equation}\label{eigenvalue_expression_2}
\Hess_{q_*}\, (f)\,(v_i)\,=\,\hat{\lambda}_i\,v_i.
\end{equation}
Since $\hat{\lambda}_i\neq 0$  due to the fact that $f$ is a Morse function, if
$$\nu_*^2\left(\frac{l}{2}+1\right)^2\,\neq\,4\hat{\lambda}_i,$$
then there are two linearly independent eigenvectors with different eigenvalues whose real part is non-zero. Otherwise, if
$$\nu_*^2\left(\frac{l}{2}+1\right)^2\,=\,4\hat{\lambda}_i,$$
then $\{\,(0,0,v_i,0),\,(0,v_i,\alpha_i\,v_i,0)\,\}$ is a Jordan basis of an invariant subspace of $d_\zeta\,\tilde{X}$, with respect to which the operator is the Jordan block
\begin{equation}
\begin{pmatrix}
k_i & 0 \\
1 & k_i
\end{pmatrix},\qquad k_i\,=\,-\frac{1}{2}\nu_*\left(\frac{l}{2}+1\right)\neq 0.
\end{equation}

It is clear from the orthonormality of the basis $\{v_i\}$ that the union of all of the previous eigenvectors and Jordan bases is a linearly independent set $S$. It is also clear that adding an adequate linear combination of the vectors in $S$ to the vector $(1,0,0,0)$, results in a linearly independent eigenvector of $d_\zeta\,\tilde{X}$ with eigenvalue $\nu_*\neq 0$. The union of this new vector with $S$ constitutes a Jordan basis for the operator $d_\zeta\,\tilde{X}$.

We have proved that all of the eigenvalues of the Jordan canonical form of the operator $d_\zeta\,\tilde{X}$ have non-zero real part. In particular, the operator $d_\zeta\,\tilde{X}$ is hyperbolic. This proves the second item.
\medskip

Consider the resulting invariant subspace decomposition of $d_\zeta\,\tilde{X}$,
\begin{equation}\label{Splitting5}
T_\zeta\,\hat{M}\,=\,\R\,\oplus\,T_\zeta\left(\partial M_{\leq 0}\right)\,=\,\R\,\oplus\,T_\zeta\left(\partial M_0\right)\,\oplus\,\R.
\end{equation}
Suppose that $\nu_*<0$. Then, it follows immediately from \eqref{Linear_part} that the first $\R$ factor in \eqref{Splitting5} is a stable invariant subspace of the operator while the last $\R$ factor is an unstable one. An analogue statement holds for $\nu_*>0$ inverting the roles of the stable and unstable invariant subspaces. A direct application of the Hartman-Grobman Theorem gives the desired orbit. By Corollary \ref{Corollary_omegalimit_constraint}, the function $\tilde{H}$ is zero along the orbit and this proves the third item.
\medskip

Finally, if $q_*$ is a local minimum of $f$, then every eigenvalue $\hat{\lambda}_i$ in \eqref{eigenvalue_expression_2} is greater than zero, thus by \eqref{eigenvalue_expression} we have that $T_\zeta\left(\partial M_0\right)$ is an unstable invariant subspace. If $\nu_*>0$, then inverting the roles of the stable and unstable invariant subspaces a similar result holds. The fourth item follows then as a direct application of the Hartman-Grobman Theorem and this completes the proof.
\end{proof}

\begin{rmrk}
The eigenspace with eigenvalue $-l\,\nu_*$ of the operator $d_\zeta\,\tilde{X}$ in the previous proof, corresponds to the heteroclinic orbit in Lemma \ref{heteroclinic_orbit}.
\end{rmrk}

\section{Genericity of the Hypothesis I}\label{Genericity}

In this section it will be proved that the hypothesis in Proposition \ref{Morse_Proposition} is generic. Moreover, it will be proved that it is an open and dense subset of the degree $l$ homogeneous polynomial space. The difficulty and also the reason why general results in the theory cannot be directly applied here, lies in the impossibility of using Urysohn functions in the argument.

Consider the space $S_l(\R^n)$ of $l$-homogeneous polynomials on $\R^n$ with the compact-open topology and denote by $\mathcal{H}$ the subset of functions whose restriction to the unit sphere $S^{n-1}$ is a Morse function such that zero is not a critical value. The following is the main result of this section.

\begin{prop}
The set $\mathcal{H}$ is an open and dense subset of $S_l(\R^n)$.
\end{prop}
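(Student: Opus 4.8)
The plan is to prove openness and density separately, working throughout with the finite-dimensional vector space $S_l(\R^n)$ equipped with the compact-open topology (which here coincides with any norm topology, since the space is finite-dimensional). The key reduction is to pass from a homogeneous polynomial $P\in S_l(\R^n)$ to the pair $(f, \nabla f)$ where $f=P|_{S^{n-1}}$, using the identity from Lemma \ref{Techincal_lemma} relating $\Hess_{q_*}(f)$ to $\Hess_{q_*}(U_l)$ at critical points; the map $P\mapsto f$ is linear and continuous from $S_l(\R^n)$ into $C^\infty(S^{n-1})$. Membership in $\mathcal{H}$ is the conjunction of two conditions: (a) $f$ is Morse on the compact manifold $S^{n-1}$, and (b) $0$ is not a critical value, i.e. $f(q_*)\neq 0$ at every critical point $q_*$.

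For \textbf{openness}, I would argue as follows. Fix $P_0\in\mathcal{H}$ with $f_0=P_0|_{S^{n-1}}$. Since $S^{n-1}$ is compact and $f_0$ is Morse, its critical points are finite in number and nondegenerate; by the implicit function theorem applied to $\grad f = 0$, for $P$ near $P_0$ the critical points of $f=P|_{S^{n-1}}$ vary continuously and remain finite in number, nondegenerate (the Hessian depends continuously on $P$ and nondegeneracy is an open condition on the finite set of Hessians), and the critical values $f(q_*)$ vary continuously, hence stay away from $0$. More carefully: choose a compact neighborhood of the critical set on which $\|\grad f_0\|$ is bounded below, so no new critical points appear there for nearby $P$; outside that neighborhood $\|\grad f_0\|>0$ uniformly, so no critical points appear there either. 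This is a routine compactness-plus-implicit-function-theorem argument and I expect it to go through with no obstacle; the real-analytic subtlety mentioned in the section intro does not bite for openness because we are perturbing \emph{within} the finite-dimensional space $S_l(\R^n)$, not within the ring of germs.

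For \textbf{density}, the natural tool is a parametric transversality (Sard–Thom) argument, but one must be careful: we cannot use Urysohn bump functions to localize perturbations, so the perturbation must come from the global finite-dimensional family $S_l(\R^n)$ itself. The key point is that $S_l(\R^n)$ is \emph{rich enough} to see all relevant jets on $S^{n-1}$: evaluation and first/second derivatives at any finite set of points of $S^{n-1}$ are surjective linear functionals on $S_l(\R^n)$ once $l\geq 2$, because restrictions of $l$-homogeneous polynomials to the sphere span a space containing all spherical harmonics of degree $\leq l$ of the correct parity, and in particular separate $2$-jets at finitely many points. Concretely, I would consider the map $F\colon S^{n-1}\times S_l(\R^n)\to T^*S^{n-1}$, $(q,P)\mapsto d_q(P|_{S^{n-1}})$, show it is a submersion (this is exactly the jet-surjectivity statement above, together with control of the second derivative to kill the value $0$ simultaneously), and then apply the parametric transversality theorem: for a full-measure, hence dense, set of $P$, the section $q\mapsto d_q(P|_{S^{n-1}})$ is transverse to the zero section, i.e. $f=P|_{S^{n-1}}$ is Morse. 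To additionally arrange $0\notin f(\mathrm{Crit}f)$, observe that for a Morse $f$ the finitely many critical values depend real-analytically — in fact polynomially — on $P$ in a neighborhood, so the set where some critical value equals $0$ is contained in a proper real-analytic subset, whose complement is open and dense; alternatively, add a small multiple of a fixed positive-definite element (e.g. $(x_1^2+\dots+x_n^2)^{l/2}$ when $l$ is even, or $\|x\|^{l}$ suitably interpreted) to shift all critical values off $0$ without destroying the Morse property, which is cleaner when $l$ is even.

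The step I expect to be the main obstacle is verifying that $F$ is genuinely a submersion, i.e. that the finite-dimensional family $S_l(\R^n)$ supplies enough independent deformations of the $1$-jet of $f$ at a point of $S^{n-1}$ — and that one can do this uniformly/jointly so as to control nondegeneracy of the Hessian and nonvanishing of the critical value at the same time. This is where the absence of Urysohn functions is felt, and it must be replaced by an explicit linear-algebra computation with spherical harmonics (or with the quadratic forms $\langle q,\cdot\rangle$-weighted monomials): one shows that $P\mapsto\bigl(P(q_0),\,\nabla_{S^{n-1}}(P|_{S^{n-1}})(q_0),\,\Hess_{q_0}(P|_{S^{n-1}})\bigr)$ is onto $\R\times T_{q_0}S^{n-1}\times\mathrm{Sym}(T_{q_0}S^{n-1})$, using $l\geq 2$. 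Once this local surjectivity is in hand, the parametric transversality theorem and a standard Baire/measure argument close the proof; the remaining estimates (continuity of critical points and values, compactness of $S^{n-1}$) are routine.
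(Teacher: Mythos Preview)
Your proposal is correct in outline and would close; the route differs from the paper's in two places worth noting.

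For the Morse part of density, the paper avoids the submersion verification you flag as the obstacle by composing with the Veronese map $\mathcal{V}:S^{n-1}\to\R^N$, $N=\binom{n+d}{n}$: every $P\in S_l(\R^n)$ restricted to the sphere is $L\circ\mathcal{V}$ for a unique linear $L\in(\R^N)^*$, so the question reduces to the elementary Guillemin--Pollack lemma that for a full-measure set of $L$ the function $L|_{\mathcal{V}(S^{n-1})}$ is Morse. This sidesteps any spherical-harmonic bookkeeping; your direct parametric-transversality argument also works (the submersion in the $P$-direction is immediate from $P(x)=\langle x,a\rangle\langle x,q_0\rangle^{l-1}$), but the Veronese trick is shorter.

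For the zero-regular-value part, your observation that for even $l$ one shifts by $\varepsilon\|x\|^l$ matches the paper, but for odd $l$ there is no positive element of $S_l(\R^n)$, so ``$\|x\|^l$ suitably interpreted'' cannot be made to work. Your fallback (the zero-critical-value locus is a proper real-analytic subset near any Morse $P_0$, since the derivative of the critical value in direction $Q$ is $Q(q_0)$) is valid. The paper instead gives a concrete one-parameter perturbation: if $p_1$ is a zero-value critical point, add $t\langle x,p_1\rangle^l$; then $p_1$ remains critical with value $t$, and a filtration $\mathcal{H}_j=\{\text{at most }j\text{ zero-value critical points}\}$ reduces the count one step at a time. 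The paper's argument is more explicit; yours is more conceptual --- both buy the same density.
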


For the proof, define the set $\mathcal{E}$ consisting of those $l$-homogeneous polynomials whose restriction to the unit sphere $S^{n-1}$ is a Morse function. In particular,
$$\mathcal{H}\,\subset\,\mathcal{E}\,\subset\,S_l(\R^n).$$
First, we will prove that $\mathcal{H}$ is an open and dense subset of $\mathcal{E}$ and then we will do the same for the other inclusion. The conclusion will follow immediately from Propositions \ref{Prop1_generic} and \ref{Prop2_generic}.

\begin{lema}
Consider a compact manifold $M$ and a $C^2$ function
$$f:\,M\times B_m(0,\,\varepsilon)\,\rightarrow\,\R,\qquad B_m(0,\,\varepsilon)\subset \R^m$$
such that $f(\cdot,\,0)$ is a Morse function with critical points $p_1,\,p_2,\,\ldots,\,p_r$. Then, there is a real number $\delta>0$ together with $C^1$ functions
$$g_i:\,B_m(0,\,\delta)\rightarrow M,\qquad g_i(0)\,=\,p_i,\qquad i=1,2,\ldots,r$$
such that for each $a\in B_m(0,\,\delta)$, the function $f(\cdot,\,a)$ is Morse with critical points $g_1(a),\,g_2(a),\,\ldots,\,g_r(a)$ preserving the respective Morse indices; that is
$$\ind\ g_i(a)\,=\,\ind\ p_i,\qquad a\,\in\,B_m(0,\,\delta),\qquad i=1,2,\ldots,r.$$
\end{lema}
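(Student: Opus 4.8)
The plan is to deduce this from the implicit function theorem applied to the gradient map of $f$ in the fibre directions. First I would fix a point $p_i$ and work in a coordinate chart $\psi_i:V_i\to\R^n$ on $M$ centred at $p_i$ (here $n=\dim M$); writing $f$ in these coordinates, the condition that $p_i$ is a nondegenerate critical point of $f(\cdot,0)$ means that the map
$$G_i:\,V_i\times B_m(0,\varepsilon)\,\rightarrow\,\R^n,\qquad G_i(x,a)\,=\,\nabla_x\,f(x,a)$$
satisfies $G_i(p_i,0)=0$ and $\partial_x G_i(p_i,0)=\Hess_{p_i}f(\cdot,0)$, which is invertible precisely because $p_i$ is nondegenerate. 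Since $f$ is $C^2$, the map $G_i$ is $C^1$, so the implicit function theorem gives a radius $\delta_i>0$ and a $C^1$ map $g_i:B_m(0,\delta_i)\to V_i$ with $g_i(0)=p_i$ and $G_i(g_i(a),a)=0$ for all $a\in B_m(0,\delta_i)$, and moreover $g_i(a)$ is the unique critical point of $f(\cdot,a)$ in a fixed neighbourhood $W_i\ni p_i$, after shrinking $\delta_i$.

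Next I would handle nondegeneracy and the Morse index. The Hessian $\Hess_{g_i(a)}f(\cdot,a)$ depends continuously on $a$ (again using that $f$ is $C^2$ and $g_i$ is $C^1$), and at $a=0$ it is invertible; invertibility is an open condition and the signature of a continuously varying family of invertible symmetric matrices is locally constant, so after shrinking $\delta_i$ once more we get that $g_i(a)$ is a nondegenerate critical point with $\ind\,g_i(a)=\ind\,p_i$ for all $a\in B_m(0,\delta_i)$.

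The remaining point is to rule out critical points escaping the neighbourhoods $W_i$, i.e. to show that for small $a$ every critical point of $f(\cdot,a)$ lies in $\bigcup_i W_i$. This is where compactness of $M$ enters: the set $M\setminus\bigcup_i W_i$ is compact and $\nabla f(\cdot,0)$ is nonvanishing there, so $\|\nabla_x f(x,0)\|\geq c>0$ on it; by uniform continuity of $(x,a)\mapsto\nabla_x f(x,a)$ on the compact set $(M\setminus\bigcup_i W_i)\times\overline{B_m(0,\varepsilon/2)}$, there is $\delta_0>0$ with $\|\nabla_x f(x,a)\|\geq c/2$ whenever $x\notin\bigcup_i W_i$ and $|a|<\delta_0$, so $f(\cdot,a)$ has no critical points outside $\bigcup_i W_i$. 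Taking $\delta=\min\{\delta_0,\delta_1,\ldots,\delta_r\}$ then yields exactly the statement, with the $g_i$ restricted to $B_m(0,\delta)$. The only mild subtlety — the main obstacle such as it is — is the bookkeeping needed to make the neighbourhoods $W_i$ pairwise disjoint and small enough that each contains a \emph{unique} critical point for all small $a$; this is arranged by first choosing the $W_i$ disjoint (possible since the $p_i$ are distinct) and then shrinking the $\delta_i$ coming from the implicit function theorem so that the unique-zero conclusion holds on all of $W_i$.
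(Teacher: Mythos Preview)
Your proof is correct and follows essentially the same architecture as the paper's: local charts at each $p_i$, the implicit function theorem applied to the fibrewise gradient to produce the $g_i$, and a compactness argument via a lower bound on $\|\nabla_x f(\cdot,0)\|$ on $M\setminus\bigcup_i W_i$ to exclude stray critical points. The one substantive difference is how the Morse index is handled: the paper observes that $f$ is Morse--Bott on $M\times B_m(0,\delta)$ and invokes the Morse--Bott Lemma to conclude that the index is constant along each critical submanifold $\{(g_i(a),a)\}$, whereas you argue directly that $a\mapsto\Hess_{g_i(a)}f(\cdot,a)$ is a continuous path of invertible symmetric matrices and hence has constant signature. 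Your route is more elementary and self-contained; the paper's appeal to Morse--Bott is cleaner to state but imports more machinery than is really needed for this conclusion.
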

\begin{proof}
For each $p_i$ take a coordinate chart $(U_i,\,\eta_i)$ centered at $p_i$ with $U_i$ small enough such that $U_i\cap U_j$ is empty if $i\neq j$. Define
\begin{align*}
f_i:&\,\eta_i(U_i)\times B_m(0,\,\varepsilon)\rightarrow \R,\qquad &f_i(y,a)\,=&\,f(\eta_i^{-1}(y),a), \\
h_i:&\,\eta_i(U_i)\times B_m(0,\,\varepsilon)\rightarrow \R^n,\qquad &h_i(y,a)\,=&\,\left(\partial_1 f_i(y,a),\,\ldots,\,\partial_n f_i(y,a)\right),
\end{align*}
where $n$ is the dimension of the manifold $M$.
The functions $h_i$ are $C^1$ and verify
$$h_i(0,0)=0,\qquad D_y h_i\,(0,0)=\left(\partial^2_{\alpha\beta} f_i\,(0,0)\right)_{1\leq \alpha,\beta\leq n},\qquad i=1,2,\ldots,r$$
where the latter differential is non singular since $f_i(\cdot,\,0)$ are Morse functions.

By the implicit function theorem, taking $U_i$ small enough, for every $i=1,\ldots,r$ there is $\delta_i>0$ and a unique function 
$$\hat{g}_i:\,B_m(0,\,\delta_i)\rightarrow \eta_i(U_i),$$
verifying $h_i(\,\hat{g}_i(a),\,a)\,=\,0$ for every $a$. Moreover, these functions are $C^1$ and the differential $D_y h_i\,(\,\hat{g}_i(a),\,a)$ is non-singular for every $a$.
Due to the uniqueness of these functions, we have that $\hat{g}_i(0)\,=\,0$ for $h_i(0,0)=0$ and $\hat{g}_i(a)$ is the only critical point of $f_i(\cdot,\,a)$ which is non-degenerate because
$$\left(\partial^2_{\alpha\beta}\, f_i\,(\,\hat{g}_i(a),\,a)\,\right)_{1\leq \alpha,\beta\leq n}\,=\,D_y h_i\,(\,\hat{g}_i(a),\,a)$$
and the latter was non-singular.

Take any Riemannian metric on $M$ and consider the smooth function
$$s:\,M\times B_m(0,\,\varepsilon)\,\rightarrow\,\R,\qquad s(x,a)\,=\,\Vert\,\grad_x\,f(\cdot,\,a)\,\Vert^2.$$
Because $s(\cdot,\,0)>0$ on the compact set $M-\bigcup_i\,U_i$, it has a minimum $s_*>0$ on that set. In particular, there is $\delta_0>0$ such that $s\geq s_*/2$ on $\left(M-\bigcup_i\,U_i\right)\times B_m(0,\,\delta_0)$. We have proved that for $a$ in $B_m(0,\,\delta_0)$, there are no other critical points of $f_i(\cdot,\,a)$ except for those contained in the sets $U_i$. Defining $\delta$ as the minimum of $\delta_0,\,\delta_1,\,\ldots\,,\delta_r$ and $g_i\,=\,\eta_i^{-1}\circ \hat{g}_i$, we have the first part of the result.

Finally, based on the above we have that $f$ is a Morse-Bott function on the manifold $M\times B_m(0,\,\delta)$ and a direct application of the Morse-Bott Lemma \cite{Morse-Bott_Lemma} gives that the Morse indexes of the critical points are preserved, that is
$$\ind\ g_i(a)\,=\,\ind\ p_i$$
for every $a$ in $B_m(0,\,\delta)$ and every $i=1,\ldots,r$. This finishes the proof.
\end{proof}

The proof that the inclusion $\mathcal{H}\subset\mathcal{E}$ is dense in the case where $l$ is even is trivial. Indeed, given any function in $\mathcal{E}$, adding the $l$-homogeneous polynomial
$$\varepsilon\,\Vert\,x\,\Vert^{l}\,=\,\varepsilon\,\left(x_1^2\,+\,x_2^2\,+\,\ldots\,+\,x_n^2\right)^{l/2}$$
to the function has the effect of adding the constant $\varepsilon$ to the restriction to the sphere $S^{n-1}$. Since the set of critical values of the function is finite, the new function will be in $\mathcal{H}$ for an arbitrary small $\varepsilon$.

This is no longer possible in the case where $l$ is odd. Actually, in this case every function in $\mathcal{E}$ has a continuum of zeros in the sphere $S^{n-1}$ and a priori there is no reason why perturbing this function in $\mathcal{E}$ will not preserve the zero value of a critical point or create new ones with this property along the process.

Note that, since the locally uniformly convergence in $S_l(\R^n)$ coincides with the coefficient-wise one, the space $S_l(\R^n)$ with the compact-open topology is naturally homeomorphic to an Euclidean space. This fact will be used in the next proposition.

\begin{prop}\label{Prop1_generic}
The set $\mathcal{H}$ is an open and dense subset of $\mathcal{E}$.
\end{prop}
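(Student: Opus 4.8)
The plan is to prove openness and density separately, treating the $l$ even and $l$ odd cases with the same argument where possible and isolating the genuine difficulty, which is density when $l$ is odd.

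For \emph{openness}, I would argue as follows. The set $\mathcal{E}$ is already (known to be, or will be proved to be) open in $S_l(\R^n)$, so it suffices to show $\mathcal{H}$ is open in $\mathcal{E}$. Take $U_l\in\mathcal{H}$, so $f=U_l|_{S^{n-1}}$ is Morse and $0$ is not a critical value; let $p_1,\dots,p_r$ be its critical points. Apply the previous lemma with $M=S^{n-1}$ and the parameter space an open set around $U_l$ in $S_l(\R^n)$ (viewing $S_l(\R^n)$ as Euclidean, as the paper notes): there is a neighbourhood of $U_l$ on which the perturbed restriction is still Morse with critical points $g_i(a)$ depending continuously on the parameter $a$ and with constant Morse indices. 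Since $a\mapsto f_a(g_i(a))$ is continuous and $f_0(p_i)=f(p_i)\neq 0$ for all $i$, shrinking the neighbourhood keeps all critical values away from $0$. Hence a whole neighbourhood of $U_l$ lies in $\mathcal{H}$, so $\mathcal{H}$ is open in $\mathcal{E}$, hence open in $S_l(\R^n)$.

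For \emph{density}, as the excerpt observes, the case $l$ even is immediate: add $\varepsilon\Vert x\Vert^l$ to shift the restriction to the sphere by the constant $\varepsilon$, and since the (finite) set of critical values misses $0$ for all but finitely many $\varepsilon$, we land in $\mathcal{H}$. The real work is $l$ odd, where no such global constant shift exists. Here I would use a transversality/parametrized-Sard argument. Fix $U_l\in\mathcal{E}$ with critical points $p_1,\dots,p_r$ of $f$ on $S^{n-1}$. By the previous lemma, on a small parameter ball $B_m(0,\delta)$ the perturbed polynomials $U_l+v$ (with $v$ ranging over a finite-dimensional slice transverse enough to control the critical points — one may simply let $v$ range over all of $S_l(\R^n)$ near $0$) have exactly $r$ critical points $g_1(v),\dots,g_r(v)$, each depending $C^1$ on $v$, with constant indices. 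Consider the $C^1$ evaluation map
$$\Phi:\,B_m(0,\delta)\,\rightarrow\,\R^r,\qquad \Phi(v)\,=\,\big((U_l+v)(g_1(v)),\,\dots,\,(U_l+v)(g_r(v))\big).$$
The key computation is that $\Phi$ is a submersion at $0$: moving $v$ by a homogeneous polynomial that at the single point $g_i(0)=p_i$ takes a prescribed value while vanishing (to first order in the relevant sense) near the other $p_j$ changes the $i$-th coordinate of $\Phi$ independently of the others — concretely, because the $p_i$ are distinct points of the sphere one can choose, for each $i$, an $l$-homogeneous polynomial $w_i$ with $w_i(p_i)=1$ and $w_i(p_j)=0$ for $j\neq i$ (e.g. built from linear forms separating the points), and then $\partial_t\big|_{t=0}\Phi(t\,w_i)$ has nonzero $i$-th component and, after a triangular change of basis, one gets full rank. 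Since $\Phi$ is a submersion at $0$, its image contains a neighbourhood of $\Phi(0)=(f(p_1),\dots,f(p_r))$ in $\R^r$; in particular there exist $v$ arbitrarily close to $0$ with all coordinates of $\Phi(v)$ nonzero, i.e. $U_l+v\in\mathcal{H}$. Hence $\mathcal{H}$ is dense in $\mathcal{E}$.

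Combining, $\mathcal{H}$ is open and dense in $\mathcal{E}$; together with Proposition \ref{Prop2_generic} (that $\mathcal{E}$ is open and dense in $S_l(\R^n)$) this yields that $\mathcal{H}$ is open and dense in $S_l(\R^n)$. The main obstacle, as flagged above, is the density step for $l$ odd: one must verify carefully that perturbing within $S_l(\R^n)$ genuinely allows independent control of the values of $U_l$ at its finitely many spherical critical points without the critical points themselves drifting in a way that defeats the argument — this is exactly what the preceding lemma (stability and $C^1$-dependence of the critical points) is designed to supply, and the submersion computation is where the distinctness of the critical points on $S^{n-1}$ is used essentially.
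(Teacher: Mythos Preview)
Your openness argument is correct and is essentially the paper's: both invoke the preceding lemma to track the critical points $C^1$ in the parameter and then observe that the (finite) critical values stay away from zero on a small neighbourhood --- the paper packages this as the nonvanishing of the product $\mu(h)=\prod_i \hat f(g_i(h),h)$.

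There is a genuine error in your density argument for $l$ odd. The claim that $\Phi$ is a submersion at $0$ is false. Because $U_l$ is odd-homogeneous, the critical points of $f=U_l|_{S^{n-1}}$ occur in antipodal pairs: if $p_i$ is critical then so is $-p_i$, and every $w\in S_l(\R^n)$ satisfies $w(-p_i)=-w(p_i)$. Hence if $p_j=-p_i$ appears among the $p_1,\dots,p_r$ (it always does), the identity $\Phi_i(v)+\Phi_j(v)=0$ holds for all $v$, so $D\Phi(0)$ lands in a proper subspace of $\R^r$. In particular your proposed interpolants $w_i$ with $w_i(p_i)=1$ and $w_i(-p_i)=0$ cannot exist. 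The argument is salvageable --- each single coordinate $\Phi_i$ \emph{is} a submersion (take $w=\langle\cdot,p_i\rangle^l$), so each zero set $\{\Phi_i=0\}$ is a $C^1$ hypersurface near $0$ and one avoids the finite union --- but as written the key step fails.

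The paper's route to density avoids this entirely. It filters $\mathcal{E}$ by the sets $\mathcal{H}_j$ of functions with at most $j$ zero-value critical points and shows each inclusion $\mathcal{H}_{j-1}\subset\mathcal{H}_j$ is dense via the one-parameter family $t\mapsto f+t\,\langle\cdot,p_1\rangle^l$, where $p_1$ is a zero-value critical point. The point of this specific perturbation is that $p_1$ remains an \emph{exact} critical point for every $t$ (so $\tilde g_1(t)\equiv p_1$) and its value becomes $t$, while the previously nonzero critical values stay nonzero for small $t$; thus the number of zero-value critical points drops by at least one. This inductive peeling needs no interpolation or rank computation and works uniformly in the parity of $l$.
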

\begin{proof}
Let $f$ be in $\mathcal{H}$. We claim that $f$ is an interior point of $\mathcal{H}$. In effect, consider the smooth function
$$\hat{f}:\,S^{n-1}\times S_l(\R^n)\,\rightarrow\,\R,\qquad \hat{f}(x,\,h)\,=\,f(x)\,+\,h(x).$$
By definition of $\mathcal{H}$, the function $f$ is Morse with critical points $p_1,\,p_2,\,\ldots,\,p_r$ and $f$ is non-zero on any of them. By the previous lemma, there is $\delta>0$ together with $C^1$ functions
$$g_i:\,B_N(0,\,\delta)\rightarrow S^{n-1},\qquad i=1,2,\ldots,r$$
such that for each $h\in B_N(0,\,\delta)$, the function $\hat{f}(\cdot,\,h)$ is Morse with critical points $g_1(h),\,g_2(h),\,\ldots,\,g_r(h)$. Define the $C^1$ function
$$\mu:\,B_N(0,\,\delta)\,\rightarrow\,\R,\qquad \mu(h)\,=\,\hat{f}(g_1(h)\,\,h)\,\ldots\, \hat{f}(g_r(h)\,\,h).$$
Because $\mu(0)\,=\,f(p_1)\,\ldots\,f(p_r)\,\neq\,0$, there is $0<\delta_1 <\delta$ such that $\mu\neq 0$ on $B_N(0,\,\delta_1)$. We conclude that $f\,+\,B_N(0,\,\delta_1)$ is contained in $\mathcal{H}$ and this proves the claim.

To prove that $\mathcal{H}$ is actually dense in $\mathcal{E}$, define the set $\mathcal{H}_j$ consisting of those functions in $\mathcal{E}$ whose restriction to $S^{n-1}$ has at most $j$ critical points with zero value. Thus, we have a chain of inclusions
$$\mathcal{H}\,=\,\mathcal{H}_0\,\subset\, \mathcal{H}_1\,\subset\,\ldots\,\subset\,\bigcup_{j\in\N}\,\mathcal{H}_j\,=\,\mathcal{E}.$$
Therefore, it will be enough to prove that for every $j\geq 1$, the inclusion $\mathcal{H}_{j-1}\subset\mathcal{H}_j$ is dense.

Let $f$ be in $\mathcal{H}_{j-1}-\mathcal{H}_j$. Its restriction to $S^{n-1}$ has critical points $p_1,\,p_2,\,\ldots,\,p_r$ and without loss of generality we may suppose that
$$f(p_1)\,=\,\ldots\,=\,f(p_j)\,=\,0,\qquad f(p_{j+1})\neq 0,\,\ldots,\,f(p_{r})\neq 0.$$
For every $k=j+1,\,\ldots,\,r$, there is a neighbourhood $V_k$ of $p_k$ in $S^{n-1}$ such that $\vert f\vert\,>\,\vert f(p_k)\vert/2\,>\,0$ on the neighbourhood. Consider the function
$$\tilde{f}:\,S^{n-1}\times \R\rightarrow\R,\qquad \tilde{f}(x,t)\,=\,f(x)\,+\,t\,\langle\, x,\,p_1\rangle^l.$$
By the previous lemma, there is $\delta_2>0$ together with $C^1$ functions
$$\tilde{g}_i:\,(-\delta_2,\,\delta_2)\rightarrow S^{n-1},\qquad \tilde{g}_i(0)\,=\,p_i,\qquad i=1,2,\ldots,r$$
such that for each $t\in (-\delta_2,\,\delta_2)$, the function $\tilde{f}(\cdot,\,t)$ is Morse with critical points $\tilde{g}_1(t),\,\tilde{g}_2(t),\,\ldots,\,\tilde{g}_r(t)$. Take $\delta_2>0$ small enough such that, for any of the indices $k=j+1,\,\ldots,\,r$, we have the inclusion $\tilde{g}_k(-\delta_2,\,\delta_2)\,\subset\, V_k$. Note that the function $\tilde{g}_1$ is constant and equal to $p_1$ hence
$$\tilde{f}(\tilde{g}_1(t),\,t)\,=\,\tilde{f}(p_1,\,t)\,=\,t.$$
Therefore, for every $t\in (-\delta_2,\,\delta_2)-\{0\}$, the function $f\,+\,t\,\langle\, \cdot,\,p_1\rangle^l$ belongs to $\mathcal{H}_{j-1}$ and we have the result.
\end{proof}

The following is a well-known elementary result in Morse theory and its proof can be found in \cite{GuilleminPollack}, section 1.7.

\begin{lema}\label{Morse_linear}
Consider an embedded manifold $X\subset \R^N$ (not necessarily compact) and a smooth real valued function $f$ on $X$. Then, the set of linear functionals $L$ in $(\R^N)^*$ such that $f+L$ is a Morse function on $X$ is a full Lebesgue measure set.
\end{lema}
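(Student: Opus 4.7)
The plan is to reduce this to Sard's theorem by exhibiting a parametric family of one-jets that is transverse to the zero section of $T^*X$. For each $L\in (\R^N)^*$, write $f_L=f+L|_X$ and let $s_L:X\to T^*X$ be the section $s_L(x)=d_x f_L = d_x f + L|_{T_x X}$. A standard computation shows that $f_L$ is Morse precisely when $s_L$ is transverse to the zero section $Z\subset T^*X$: at a zero $x$ of $s_L$ (i.e.\ a critical point of $f_L$), transversality in the fiber direction amounts to non-degeneracy of the Hessian of $f_L$ at $x$.

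Now assemble the sections into a single smooth map
\begin{equation*}
S:X\times (\R^N)^*\;\longrightarrow\; T^*X,\qquad S(x,L)\,=\,d_x f\,+\,L|_{T_x X},
\end{equation*}
and show that $S$ is transverse to $Z$. This is the only real computation: at any $(x,L)$ with $S(x,L)\in Z$, the partial derivative in the $L$ variable alone is the linear map $(\R^N)^*\to T_x^*X$ given by $\delta L\mapsto \delta L|_{T_x X}$, which is surjective because $T_xX\hookrightarrow \R^N$ is injective. Hence the image of $dS_{(x,L)}$ already covers the fiber $T_x^*X$, and combined with the obvious $T_xX$ tangent directions it surjects onto $T_{S(x,L)}T^*X = T_xX\oplus T_x^*X$.

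Set $W=S^{-1}(Z)$. By transversality $W$ is a smooth submanifold of $X\times(\R^N)^*$, and a dimension count gives $\dim W = (\dim X + N)-\dim X = N$. Let $\pi:W\to (\R^N)^*$ be the projection. It is a standard consequence of the transversality criterion that $L$ is a regular value of $\pi$ if and only if the slice $s_L$ is transverse to $Z$, equivalently $f_L$ is Morse. Applying Sard's theorem to $\pi$, whose source and target have equal finite dimension $N$, the set of critical values of $\pi$ has Lebesgue measure zero in $(\R^N)^*$. Its complement is therefore a full-measure set of $L$ for which $f+L$ is Morse on $X$, which is exactly the claim.

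The main obstacle is essentially bookkeeping rather than substance: one must be careful that $X$ need not be compact, so $W$ may be non-compact, but Sard's theorem applies without compactness (if desired, one can exhaust $X$ by countably many relatively compact open sets and take the countable union of the measure-zero exceptional sets). The transversality of $S$ is the only real input, and it follows immediately from the surjectivity of the restriction $(\R^N)^*\to T_x^*X$.
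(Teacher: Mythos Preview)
Your argument is correct and is precisely the standard parametric transversality proof: the paper does not supply its own proof but simply cites Guillemin--Pollack, \S1.7, where essentially this same argument (transversality of the family $S(x,L)=d_xf+L|_{T_xX}$ to the zero section, followed by Sard applied to the projection) is carried out. Nothing needs to be added.
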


\begin{prop}\label{Prop2_generic}
The set $\mathcal{E}$ is an open and dense subset of $S_l(\R^n)$.
\end{prop}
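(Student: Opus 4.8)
The plan is to prove openness and density separately, using the identification of $S_l(\R^n)$ with a finite-dimensional Euclidean space (so that "generic" can be made precise via full Lebesgue measure or via open dense sets).

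\medskip

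\emph{Openness.} First I would show that $\mathcal{E}$ is open. Given $U_l\in\mathcal{E}$, its restriction $f=U_l|_{S^{n-1}}$ is Morse, with finitely many critical points $p_1,\dots,p_r$ on the compact sphere. Apply the Lemma (the one producing the $C^1$ families $g_i$) to the smooth map $\hat f:S^{n-1}\times S_l(\R^n)\to\R$, $\hat f(x,h)=U_l(x)+h(x)$: there is a neighbourhood $B_N(0,\delta)$ of $0$ in $S_l(\R^n)$ such that for every $h\in B_N(0,\delta)$ the perturbed restriction $\hat f(\cdot,h)$ is again Morse with critical points $g_i(h)$. Hence $U_l+B_N(0,\delta)\subset\mathcal{E}$, proving $\mathcal{E}$ is open.

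\medskip

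\emph{Density.} This is the main point and the place where I expect the real work. Fix an arbitrary $U_l\in S_l(\R^n)$. The restriction $f=U_l|_{S^{n-1}}$ need not be Morse. The idea is to perturb $U_l$ \emph{within the homogeneous polynomial space} by adding a small multiple of a well-chosen polynomial whose restriction to the sphere is a generic linear functional on $\R^n$ — this is exactly the content of Lemma \ref{Morse_linear}. Concretely, for any $L\in(\R^n)^*$ one has the identity $L(x)\,\|x\|^{\,l-1}\big|_{S^{n-1}}=L|_{S^{n-1}}$, and $x\mapsto L(x)\|x\|^{\,l-1}$ is... well, it is homogeneous of degree $l$ but \emph{not a polynomial} when $l$ is even (because of $\|x\|^{l-1}$ with odd exponent $l-1$); this is the obstacle. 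The clean fix, and the route I would take, is: when $l$ is odd, $L(x)\|x\|^{\,l-1}$ \emph{is} an honest degree-$l$ homogeneous polynomial, so adding $\varepsilon\,L(x)\|x\|^{l-1}$ with $L$ chosen by Lemma \ref{Morse_linear} (applied to $X=S^{n-1}\subset\R^n$ and the function $f$) makes $f+\varepsilon L|_{S^{n-1}}$ Morse; taking $\varepsilon\to0$ gives density. When $l$ is even, instead I would add $\varepsilon\,L(x)\,\|x\|^{\,l-1}$'s polynomial avatar is unavailable, so use $\varepsilon\,L(x)^{}\cdot(\text{a linear form})$—more precisely, perturb by the degree-$l$ polynomial $\varepsilon\sum_i c_i x_i\,\|x\|^{l-2}\cdot x_1$... this is getting ad hoc. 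The robust argument for \emph{both} parities: the restriction map $S_l(\R^n)\to C^\infty(S^{n-1})$ has image containing all restrictions of linear forms \emph{times} $\|x\|^{l-1}$ only for odd $l$, but it always contains restrictions of degree-$l$ homogeneous polynomials, and the subspace of $C^\infty(S^{n-1})$ so obtained separates points and contains enough functions to apply a parametric transversality (Sard–Smale / Thom) argument: consider $\Phi:S^{n-1}\times S_l(\R^n)\to T^*S^{n-1}$, $(x,h)\mapsto d_x(h|_{S^{n-1}})$ (fibrewise differential), show $\Phi$ is transverse to the zero section because the $l$-jet map $S_l(\R^n)\to J^2_x$ is surjective at every $x\in S^{n-1}$ (homogeneous polynomials of degree $l\ge2$ realise arbitrary $2$-jets on the sphere), and conclude by the parametric transversality theorem that for almost every $h$ the function $(U_l+h)|_{S^{n-1}}$ is Morse. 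Since $S_l(\R^n)$ is a Euclidean space, a full-measure set is dense, giving density of $\mathcal{E}$.

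\medskip

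\emph{Conclusion.} Combining openness and density of $\mathcal{E}$ in $S_l(\R^n)$ with Proposition \ref{Prop1_generic} (which gives that $\mathcal{H}$ is open and dense in $\mathcal{E}$), the set $\mathcal{H}$ is open and dense in $S_l(\R^n)$, which is the assertion. The step I expect to be the genuine obstacle is the density of $\mathcal{E}$: one must check that adding homogeneous polynomials of the \emph{fixed} degree $l$ already suffices to make the restriction Morse, i.e.\ that the finite-dimensional family $S_l(\R^n)$ is "rich enough" to realise arbitrary second-order perturbations of $f$ along the sphere at every point simultaneously — which amounts to the surjectivity of the $2$-jet evaluation $S_l(\R^n)\to J^2_x(S^{n-1})$ for each $x\in S^{n-1}$, a fact that holds precisely because $l\ge2$.
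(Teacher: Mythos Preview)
Your argument is essentially correct, but it takes a different route from the paper, and there is one overstatement worth flagging.

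\textbf{Openness.} You invoke the implicit-function-type Lemma to track the critical points under perturbation. This works. The paper gives a one-line argument instead: Morse functions form an open set in $C^\infty(S^{n-1})$ (with respect to the $C^2$ topology, say), and $\mathcal{E}$ is the preimage of this open set under the continuous restriction map $S_l(\R^n)\to C^\infty(S^{n-1})$.

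\textbf{Density.} Here the approaches genuinely diverge. You set up a parametric transversality argument on the family $\Phi:S^{n-1}\times S_l(\R^n)\to T^*S^{n-1}$, $(x,h)\mapsto d_x(h|_{S^{n-1}})$. The paper instead uses the Veronese map $\mathcal{V}:S^{n-1}\to\R^N$, $N=\binom{n+d}{n}$, which is an immersion, and observes that every element of $S_l(\R^n)$ restricted to the sphere is of the form $L\circ\mathcal{V}$ for a unique $L\in(\R^N)^*$. This reduces density directly to Lemma~\ref{Morse_linear} (applied on a finite cover where $\mathcal{V}$ is an embedding): the set of $L$ for which $L\circ\mathcal{V}$ is Morse has full Lebesgue measure. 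The Veronese trick is cleaner because it bypasses the need to verify any transversality condition by hand --- it converts the problem into the classical ``$f+L$ is Morse for almost every linear $L$''.

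\textbf{A minor correction.} In your transversality argument you assert that what is needed is surjectivity of the $2$-jet map $S_l(\R^n)\to J^2_x(S^{n-1})$. This is more than required: for $\Phi$ to be transverse to the zero section it suffices that, for each fixed $x$, the linear map $h\mapsto d_x(h|_{S^{n-1}})$ surjects onto the fibre $T^*_xS^{n-1}$. That weaker claim is easy to verify (e.g.\ $h(y)=L(y)\langle y,x\rangle^{l-1}$ for a linear form $L$ realises any prescribed covector at $x$), so your argument goes through --- but the justification you gave is not the right one.

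Finally, your ``Conclusion'' paragraph proves the statement about $\mathcal{H}$, not about $\mathcal{E}$; the proposition at hand is only the latter.
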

\begin{proof}
The set $\mathcal{E}$ is the intersection of the set of Morse functions with the finite dimensional subspace of $l$-homogeneous polynomials $S_l(\R^n)$. Since the set of Morse functions is open in the space of smooth functions, it immediately follows that $\mathcal{E}$ is an open subset of $S_l(\R^n)$.

To prove that the inclusion $\mathcal{E}\subset S_l(\R^n)$ is actually dense, consider the Veronese map
\begin{equation}\label{Veronese}
\mathcal{V}: S^{n-1}\rightarrow \R^N,\qquad N\,=\,\binom {n+d} n.
\end{equation}
It is a well known result that this map factors through the projective space and it is an embedding there. A careful inspection shows that the same proof proves that the Veronese map \eqref{Veronese} is an immersion.

Take a finite covering $\{\,U_j\ |\ j\in J\}$ of $S^{n-1}$ by small enough open sets such that the Veronese map \eqref{Veronese} is an embedding on any of these sets. This is possible since the map is an immersion and the sphere is compact. By Lemma \ref{Morse_linear}, the subset $Z_j$ of linear functionals in $(\R^N)^*$ which are not Morse functions restricted to $\mathcal{V}(U_j)$ is a Lebesgue measure zero set. In particular, the set of linear functionals in $(\R^N)^*$ whose composition with the Veronese map is not a Morse function on $S^{n-1}$, is the union $Z\,=\,\bigcup_{j\in J}\,Z_j$ which is a Lebesgue measure zero set since $J$ is finite.

Because every function in $S_l(\R^n)$ is uniquely expressed as the composition of a linear functional in $(\R^N)^*$ with the Veronese map, we have proved that $\mathcal{E}$, the set of functions in $S_l(\R^n)$ whose restriction to the sphere $S^{n-1}$ is a Morse function, is a full Lebesgue measure set hence dense since the measure is a Borel measure. This concludes the proof.
\end{proof}

\section{Genericity of the Hypothesis II}\label{Genericity_II}

This section is a direct continuation of section \ref{Genericity}. Here instead, we will be concerned with the genericity of the hypothesis with respect to the stalk of real analytic germs $\I_p$ at the point $p$ in $\R^n$. In contrast with section \ref{Genericity}, the results here will follow almost directly by the previous genericity results and the definition of the objects involved.

The ring $\I_p$ is a local ring with maximal ideal $\mathfrak{m}$ and denote by $\mathcal{I}_k$ the ideal of its $k$ powers, that is
$$\mathcal{I}_k\,=\,\mathfrak{m}^k,\qquad \mathfrak{m}\,\lhd\,\I_p.$$
The ring $\I_p$ can be naturally identified with the ring of convergent power series at the origin, converging on some polydisc with positive radius of convergence. Without loss of generality, we have assumed that $p$ is the origin in the previous identification. Under this identification, the ideal $\mathcal{I}_k$ is the one generated by the $k$-powers of the linear terms $x_1,\,x_2,\,\ldots\,x_n$,
$$\mathcal{I}_k\,=\,\langle\,x_1,\,x_2,\,\ldots\,x_n\,\rangle^k.$$

Consider the canonical epimorphism $\rho_k$ onto the space of $k$-covectors at $p$,
\begin{equation}\label{canonical_projection}
\rho_k:\,\mathcal{I}_k\,\rightarrow\,\mathcal{I}_k\,/\,\mathcal{I}_{k+1}\,\cong\,S_k(\R^n).
\end{equation}
Under the previous identification with the convergent power series ring, the morphism reads explicitly as follows,
\begin{equation}\label{canonical_projection_II}
\rho_k:\,\sum_{\vert \alpha\vert\geq k}\,c_\alpha\,x^\alpha\,\mapsto\,\sum_{\vert \alpha\vert= k}\,c_\alpha\,x^\alpha,\qquad \alpha\in \N_0^n,
\end{equation}
where we have used multi-index notation in the previous expression.

Recall the open and dense subset $\mathcal{H}$ of $S_l(\R^n)$ in section \ref{Genericity}. The following is the main result of this section.

\begin{prop}\label{generic_prop_II}
The set $\rho_l^{-1}(\mathcal{H})$ is an open and dense subset of $\mathcal{I}_l$.
\end{prop}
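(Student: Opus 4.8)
The plan is to deduce both openness and density of $\rho_l^{-1}(\mathcal{H})$ directly from the corresponding properties of $\mathcal{H}\subset S_l(\R^n)$ established in Section~\ref{Genericity}, using only two facts about the topology of $\I_p$: that the canonical projection $\rho_l$ of \eqref{canonical_projection}--\eqref{canonical_projection_II} is continuous, and that $\mathcal{I}_l=\mathfrak{m}^l$ inherits the structure of a topological vector space in which translations are homeomorphisms and into which $S_l(\R^n)$ embeds continuously as a space of (germs of) homogeneous polynomials.

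\emph{Openness.} Recall $\mathcal{H}$ is open in $S_l(\R^n)$. The map $\rho_l$ is continuous: on each stage $\mathcal{O}(A)$ of the directed system of analytic functions on shrinking neighbourhoods $A$ of $p$, extracting the degree-$l$ part of the Maclaurin series is a fixed finite combination of derivatives evaluated at $p$, hence a continuous linear map onto the finite-dimensional space $S_l(\R^n)$; so the induced map on the direct limit, restricted to $\mathcal{I}_l$, is continuous. Therefore $\rho_l^{-1}(\mathcal{H})$ is open in $\mathcal{I}_l$.

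\emph{Density.} Fix a germ $[U]\in\mathcal{I}_l$, put $U_l=\rho_l([U])\in S_l(\R^n)$, and let $W$ be any neighbourhood of $[U]$ in $\mathcal{I}_l$. Since $S_l(\R^n)$ is a finite-dimensional space of entire functions, the assignment $\sigma:S_l(\R^n)\to\mathcal{I}_l$, $P\mapsto[U]+[P]$, is continuous: the linear inclusion $P\mapsto[P]$ lands in a fixed stage $\mathcal{O}(A_0)$, hence is continuous into $\mathcal{I}_l=\langle x_1,\dots,x_n\rangle^l$, and translation by $[U]$ is a homeomorphism. Thus $\sigma^{-1}(W)$ is a neighbourhood of $0$ in $S_l(\R^n)$. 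Because $\mathcal{H}$ is dense in $S_l(\R^n)$, so is the translate $\mathcal{H}-U_l$, and hence there is some $P\in\sigma^{-1}(W)\cap(\mathcal{H}-U_l)$. For this $P$ we have $[U]+[P]\in W$, and since $P$ is homogeneous of degree $l$, $\rho_l([P])=P$ by \eqref{canonical_projection_II}, so that
$$\rho_l\big([U]+[P]\big)\,=\,U_l+P\,\in\,\mathcal{H}.$$
Hence $[U]+[P]\in\rho_l^{-1}(\mathcal{H})\cap W$. As $[U]\in W$ and $W$ were arbitrary, $\rho_l^{-1}(\mathcal{H})$ is dense in $\mathcal{I}_l$.

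The only point requiring care beyond this bookkeeping is the pair of continuity statements for $\rho_l$ and for $P\mapsto[P]$; these rest on the description of the topology of the local ring $\I_p$ as a direct limit of spaces of analytic functions on shrinking neighbourhoods of $p$ and on $\mathcal{I}_l$ inheriting a topological vector space structure from it. Everything else is immediate from the explicit form \eqref{canonical_projection_II} of $\rho_l$ and the open-dense property of $\mathcal{H}$ proved in Section~\ref{Genericity}.
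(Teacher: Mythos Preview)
Your proof is correct and takes essentially the same approach as the paper: openness via continuity of $\rho_l$, and density by perturbing the degree-$l$ homogeneous part of a given germ into $\mathcal{H}$ while leaving the higher-order tail unchanged. The only cosmetic difference is that the paper phrases the density step sequentially---constructing $f_m=s_m+f_{>l}$ with $s_m\in\mathcal{H}$, $s_m\to f_l$, and verifying $[f_m]\to[f]$ directly via \eqref{germs_convergence}---rather than through neighbourhoods and continuity of translation, but both arguments rest on exactly the two continuity facts (for $\rho_l$ and for the inclusion $S_l(\R^n)\hookrightarrow\mathcal{I}_l$) that the paper isolates in Lemma~\ref{rho_continuous_lemma}.
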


Consider the usual coordinate-wise embedding $\R^n\subset \C^n$. Now, the point $p$ can be regarded as a point in $\C^n$ whereat the stalk of holomorphic germs contains the one of real analytic germs under the induced monomorphism,
\begin{equation}\label{germs_isomorphism}
\begin{tikzcd}        
    \I_{p,\,\R^n} \arrow[hook]{r} & \I_{p,\,\C^n}^{\,hol}
\end{tikzcd}.
\end{equation}
Indeed, under the identification with the convergent power series ring, the induced morphism is actually the identity.

By definition, the ring of holomorphic germs at $p$ is the direct limit of the rings of holomorphic functions on neighbourhoods of $p$ in $\C^n$,
$$\I_{p,\,\C^n}^{\,hol}\,=\,\varinjlim_{p\in A}\,H(A)$$
where $H(A)$ is the space of holomorphic functions on $A$. The spaces $H(A)$ have the compact-open topology and the local ring $\I_{p,\,\C^n}^{\,hol}$ inherits the direct limit topology. The topology on the ring $\I_{p,\,\R^n}$ is defined as the initial one induced by the monomorphism \eqref{germs_isomorphism}, with which \eqref{germs_isomorphism} becomes an embedding. The topology on every ideal $\mathcal{I}_k$ is the relative one.

Concretely, a sequence of germs $([f_n])_{n\in\N}$ in $\I_{p,\,\R^n}$ converges to the germ $[f]$ if there is a neighbourhood $A$ of $p$ in $\C^n$ whereat the unique extensions $\tilde{f}_n$ and $\tilde{f}$ of the respective functions $f_n$ and $f$ are defined and the sequence $(\tilde{f}_n)_{n\in\N}$ converges locally uniformly to $\tilde{f}$ in $H(A)$, that is
\begin{equation}\label{germs_convergence}
\xymatrix{ [f_n] \ar[r]^{\I_{p,\R^n}} & [f]}
\qquad {\rm if}\qquad
\xymatrix{ \tilde{f}_n \ar[r]^{H(A)} & \tilde{f}},\quad p\in A\subset \C^n.
\end{equation}

In what follows, since the extension of a real polynomial to the complex domain is unique, we will abuse of notation and identify the polynomial with its extension. In particular,
$$S_k(\R^n)\,\subset\,H(\C^n).$$

\begin{lema}\label{convergencias_lema}
A sequence of homogeneous polynomials in $S_k(\R^n)$ locally uniformly converges if and only if it does so in $H(\C^n)$.
\end{lema}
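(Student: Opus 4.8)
The plan is to reduce both notions of convergence to convergence in a norm on the finite dimensional real vector space $S_k(\R^n)$, and then invoke equivalence of norms on a finite dimensional space. The reverse implication is immediate: if the extensions converge uniformly on compact subsets of $\C^n$, then restricting to compact subsets of $\R^n\subset\C^n$ yields locally uniform convergence there.

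For the forward implication, the first step is to exploit homogeneity. For $P\in S_k(\R^n)$ and a real number $R>0$ one has $P(R\,w)=R^k\,P(w)$, hence
$$\sup_{\Vert x\Vert\leq R}\,\vert P(x)\vert\,=\,R^k\,\sup_{\Vert x\Vert\leq 1}\,\vert P(x)\vert,\qquad \sup_{\Vert z\Vert\leq R}\,\vert P(z)\vert\,=\,R^k\,\sup_{\Vert z\Vert\leq 1}\,\vert P(z)\vert,$$
where $x$ ranges over $\R^n$ in the first expression and $z$ over $\C^n$ in the second. Consequently, locally uniform convergence on $\R^n$ of a sequence in $S_k(\R^n)$ is equivalent to convergence of the quantities $\Vert P\Vert_{\R}=\sup_{\Vert x\Vert\leq 1}\vert P(x)\vert$, and locally uniform convergence of the extensions on $\C^n$ is equivalent to convergence of $\Vert P\Vert_{\C}=\sup_{\Vert z\Vert\leq 1}\vert P(z)\vert$. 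The second step is to observe that $\Vert\cdot\Vert_{\R}$ and $\Vert\cdot\Vert_{\C}$ are genuine norms on $S_k(\R^n)$: if either vanishes on a polynomial $P$, then $P$ vanishes on a set with nonempty interior (in $\R^n$, respectively $\C^n$) and is therefore identically zero. Since $S_k(\R^n)$ is finite dimensional, the two norms are equivalent, so a sequence converges with respect to one if and only if it converges with respect to the other; combined with the first step this gives the stated equivalence.

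There is essentially no obstacle here; the only point requiring a moment's care is that $\Vert\cdot\Vert_{\R}$ is a norm and not merely a seminorm, which is exactly what licenses the finite dimensional norm-equivalence argument. Alternatively, one may route through coefficient-wise convergence — locally uniform convergence on $\R^n$ forces the coefficients to converge, and coefficient-wise convergence of polynomials of a fixed degree implies uniform convergence on every compact subset of $\C^n$ — but the norm-equivalence formulation simply packages this.
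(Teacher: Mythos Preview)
Your proof is correct and is essentially the same as the paper's, which simply observes that both convergences coincide with coefficient-wise convergence in the finite dimensional space $S_k(\R^n)$. Your norm-equivalence formulation is just a repackaging of that observation, as you yourself note in the final paragraph.
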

\begin{proof}
This immediately follows from the observation that both convergences are equivalent to the coefficient-wise one in $S_k(\R^n)$.
\end{proof}

\begin{lema}\label{rho_continuous_lemma}
In expression \eqref{canonical_projection}, the quotient topology on the space of homogeneous polynomials $S_k(\R^n)$ is the compact-open topology and the canonical projection $\rho_k$ is continuous.
\end{lema}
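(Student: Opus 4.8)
The plan is to exhibit the topology on $\mathcal{I}_k$ concretely via the embedding \eqref{germs_isomorphism} and the convergence criterion \eqref{germs_convergence}, and then to compare this with the compact–open topology on $S_k(\R^n)$ through the explicit formula \eqref{canonical_projection_II} for $\rho_k$. The key observation is that the quotient topology on $S_k(\R^n)$ is, by definition, the finest topology making $\rho_k$ continuous, so there are two things to prove: first that $\rho_k$ is continuous when $S_k(\R^n)$ carries the compact–open topology (equivalently, the coefficient-wise topology, by the remark preceding Proposition \ref{Prop1_generic} and by Lemma \ref{convergencias_lema}); and second that the compact–open topology is \emph{already} the quotient topology, i.e. that any set whose preimage under $\rho_k$ is open is itself open. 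Since all the spaces in sight are (homeomorphic to) finite-dimensional real vector spaces, it suffices to work with sequences.

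First I would prove continuity of $\rho_k$. Let $([f_n])_{n\in\N}$ be a sequence in $\mathcal{I}_k$ converging to $[f]\in\mathcal{I}_k$. By \eqref{germs_convergence} there is a neighbourhood $A$ of $p$ in $\C^n$ and holomorphic extensions $\tilde f_n,\tilde f$ on $A$ with $\tilde f_n\to\tilde f$ locally uniformly on $A$. By the Cauchy estimates, locally uniform convergence of holomorphic functions implies convergence of all Taylor coefficients at $p$; in particular the degree-$k$ Taylor coefficients $c_\alpha^{(n)}$ of $\tilde f_n$ converge to those of $\tilde f$. By \eqref{canonical_projection_II}, $\rho_k([f_n])$ is exactly the homogeneous polynomial with these degree-$k$ coefficients, so $\rho_k([f_n])\to\rho_k([f])$ coefficient-wise, hence in the compact–open topology of $S_k(\R^n)$ (using Lemma \ref{convergencias_lema}, or simply that on a finite-dimensional space coefficient-wise convergence equals locally uniform convergence). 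Thus $\rho_k$ is continuous into the compact–open topology, so the quotient topology is at least as fine as the compact–open one.

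For the reverse comparison I would produce, for each homogeneous polynomial $P\in S_k(\R^n)$, a continuous section-like map realizing $P$ as $\rho_k$ of a germ, thereby showing the quotient topology is no finer than the compact–open one. Concretely, the inclusion $S_k(\R^n)\hookrightarrow\I_{p,\R^n}$ sending $P$ to its own germ is a right inverse of $\rho_k$, and it is continuous from the compact–open topology to the germ topology: if $P_n\to P$ coefficient-wise then the polynomials $P_n$ converge to $P$ locally uniformly on all of $\C^n$, hence in $H(A)$ for any $A$, which by \eqref{germs_convergence} is exactly convergence of germs. Now suppose $V\subset S_k(\R^n)$ has $\rho_k^{-1}(V)$ open in $\mathcal{I}_k$; pulling back along this continuous section $s$ gives $s^{-1}(\rho_k^{-1}(V)) = (\rho_k\circ s)^{-1}(V) = V$ open in the compact–open topology. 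Hence every quotient-open set is compact-open-open, so the two topologies coincide, and $\rho_k$ is continuous for the compact–open topology as claimed.

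The only mild subtlety — and the step I would be most careful about — is the passage from locally uniform convergence of holomorphic germs to convergence of a \emph{fixed finite batch} of Taylor coefficients: one must note that the extensions $\tilde f_n$ are all defined on a \emph{common} neighbourhood $A$ (this is guaranteed by the definition \eqref{germs_convergence} of germ convergence, not automatic), so that a single polydisc $\overline{D}\subset A$ centred at $p$ can be chosen on which Cauchy's integral formula for the coefficients applies uniformly in $n$. Everything else is the routine identification of the compact–open topology on a finite-dimensional polynomial space with the Euclidean topology on coefficients, which is the content of the paragraph preceding Proposition \ref{Prop1_generic} together with Lemma \ref{convergencias_lema}.
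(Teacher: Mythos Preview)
Your proposal is correct and follows essentially the same route as the paper's proof: both establish continuity of $\rho_k$ into the compact--open topology via Cauchy's formula for Taylor coefficients, and both obtain the reverse inclusion by exhibiting the natural inclusion $S_k(\R^n)\hookrightarrow\mathcal{I}_k$, $P\mapsto[P]$, as a continuous section of $\rho_k$ and pulling back along it. The only cosmetic difference is that the paper phrases the final step as an inclusion of topologies $\mathcal{T}_q\subset\iota_k^{-1}(\mathcal{T}_\I)\subset\mathcal{T}_{co}$, whereas you argue directly on open sets; your explicit remark about the common neighbourhood $A$ needed for the Cauchy estimates is a point the paper leaves implicit.
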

\begin{proof}
Because of the Cauchy integral formula in the context of several complex variables, the uniqueness of the extension and expression \eqref{germs_convergence}, the coefficients of the power series representation of a germ are continuous. Indeed, if $[f_n] \xrightarrow{\I_{p}} [f]$, then $\tilde{f_n} \xrightarrow{H(A)} \tilde{f}$ for some neighbourhood $A$ of $p$ in $\C^n$, hence
$$c_\alpha(f_n)\,=\,c_\alpha(\tilde{f}_n)\,\rightarrow\,c_\alpha(\tilde{f})\,=\,c_\alpha(f),\qquad \forall\ \alpha\in\N_0^n.$$
Considering those multi-indexes with $\vert \alpha\vert\,=\,k$, by expression \eqref{canonical_projection_II} we have that $\rho_k$ is continuous with respect to the compact-open topology $\mathcal{T}_{co}$ on $S_k(\R^n)$.

Since by definition, the quotient topology $\mathcal{T}_q$ on $S_k(\R^n)$ is the finest one such that $\rho_k$ is continuous, we have $\mathcal{T}_{co}\subset \mathcal{T}_{q}$.

It rest to show the opposite inclusion. To this end, consider the canonical monomorphism
$$\begin{tikzcd}        
\iota_k:\,S_k(\R^n) \arrow[hook]{r} & \I_{p}
\end{tikzcd},\qquad f\,\mapsto\,[f].$$
We claim that this morphism is continuous with respect to the compact-open topology on $S_k(\R^n)$. In effect, consider a sequence $(f_n)$ in $S_k(\R^n)$. If $f_n\xrightarrow{\mathcal{T}_{co}} f$, then by Lemma \ref{convergencias_lema} we have $f_n\xrightarrow{H(\C^n)} f$, thus by expression \eqref{germs_convergence} we conclude
$$\xymatrix{\iota_k(f_n)\,=\,[f_n] \ar[r]^{\I_{p}} & [f]\,=\,\iota_k(f)}$$
and this proves the claim. In particular,
\begin{equation}\label{inclusion_lemma_topology}
\iota_k^{-1}(\mathcal{T}_{\I})\,\subset\,\mathcal{T}_{co}
\end{equation}
where we have denoted by $\mathcal{T}_{\I}$ the topology on $\I_p$. Since by definition $\rho_k$ is continuous with respect to the quotient topology on $S_k(\R^n)$ and $\iota_k$ is a section of $\rho_k$, that is $\rho_k\circ\iota_k=id$, we have
\begin{equation}\label{inclusion_lemma_topology_II}
\mathcal{T}_{q}\,=\,\iota_k^{-1}\left(\rho_k^{-1}(\mathcal{T}_{q})\right)\,\subset\,\iota_k^{-1}(\mathcal{T}_{\I}).
\end{equation}
Because of inclusions \eqref{inclusion_lemma_topology} and \eqref{inclusion_lemma_topology_II}, we conclude that $\mathcal{T}_{q}\,\subset\,\mathcal{T}_{co}$ and we have the result.
\end{proof}
\bigskip

\begin{proof}[Proof of Proposition \ref{generic_prop_II}]
By Lemma \ref{rho_continuous_lemma} and the fact that $\mathcal{H}$ is an open subset of $S_l(\R^n)$, we have that $\rho_l^{-1}(\mathcal{H})$ is an open subset of $\mathcal{I}_l$. It rest to show that it is dense.

For this purpose, consider $[f]$ in $\mathcal{I}_l$. As a power series, $f$ has a radius of convergence $R>0$ and a unique splitting
$$f\,=\,f_l\,+\,f_{>l},\qquad f_l\,=\,\rho_l([f]).$$
Since $\mathcal{H}$ is dense in $S_l(\R^n)$, there is a sequence $(s_m)$ in $\mathcal{H}$ converging to $f_l$ locally uniformly. Consider the following sequence,
$$f_m\,=\,s_m+f_{>l},\qquad m\in\N.$$
By Cauchy-Hadamard formula, every $f_m$ has radius of convergence $R>0$.

Although the left hand side of the expression
$$f\,-\,f_m\,=\,f_l\,-\,s_m$$
is defined on the polydisc with radius $R$, because of the fact that the right hand side is a polynomial, it uniquely extends to the whole $\R^n$ and the sequence converges locally uniformly to zero. By Lemma \ref{convergencias_lema}, it does so on $H(\C^n)$ and taking the respective germs, because of expression \eqref{germs_convergence} we conclude
$$\xymatrix{[f_m] \ar[r]^{\I_{p}} & [f]},\qquad [f_m]\in\mathcal{I}_l,\quad\rho_l([f_m])=s_m\,\in\,\mathcal{H}.$$
Since the choice of $[f]$ was arbitrary, this finishes the proof.
\end{proof}

\section{Asymptotic and boomerang orbits}\label{Asymtotic_section}

Consider an orbit $\alpha$ in phase space $TE$ converging to $\mathbf{0}$ as time increases. Since the orbit is eventually contained in a compact set, by general theory it is defined on the interval $[t_0,\,+\infty)$ for some initial time $t_0$.

Suppose that $\alpha$ is not the constant orbit. In particular, by uniqueness of the solution, every point in the $\alpha$ orbit is different from $\mathbf{0}$, that is
\begin{equation}\label{planteo_alpha}
\alpha(t)\rightarrow \mathbf{0}\quad \mathrm{as} \quad t\rightarrow +\infty,\qquad\alpha(t)\,\neq\,\mathbf{0},\ \forall\,t\in [t_{\rm 0},\,+\infty).
\end{equation}
Because the energy is a continuous function which is constant along the orbit and $\mathbf{0}$ has zero energy, the orbit $\alpha$ has zero energy also and $p$ is not a strict minimum of the potential $U$. Thus, statement \eqref{planteo_alpha} holds for the configuration space trajectory as well,
\begin{equation}\label{planteo_x}
x(t)\rightarrow 0\quad \mathrm{as}\quad t\rightarrow +\infty,\qquad x(t)\,\neq\,0,\ \forall\,t\in [t_{0},\,+\infty),
\end{equation}
where we have denoted $\alpha(t)\,=\,(x(t),\,v(t))$.

\begin{defi}
The orbit $\alpha$ is asymptotic to $\Sigma\,\subset\, S^{n-1}$ in configuration space if the omega-limit of $(t\mapsto x(t)/\Vert x(t) \Vert)$ is contained in $\Sigma$,
$$\omega\left(\, x(t)/\Vert x(t)\Vert\,\right)\,\subset\,\Sigma.$$
\end{defi}

As before, denote by $f$ the restriction to the unit sphere of the first non-zero jet of the potential $U$ at the critical point $p$.

In this section, it will be proved that an orbit converging to $\mathbf 0$ in phase space is asymptotic to some component of the critical locus of $f$ in configuration space. Conversely, if additionally $f$ is Morse and zero is a regular value, then for every critical point of $f$ there is such an orbit. Moreover in the purely mechanical case, under the mentioned additional hypotheses on $f$, if the asymptote is a local minimum of $f$, then the orbit is \emph{boomerang} (see Definition \ref{Boomerang} below).

\begin{prop}\label{Prop_asymptote_criticalpoint}
The orbit $\alpha$ is asymptotic to some connected component of the region of the critical locus of $f$ where the function is less than or equal to zero.
\end{prop}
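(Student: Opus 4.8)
The plan is to transport $\alpha$ to the McGehee manifold, locate the $\omega$-limit of the lifted orbit inside $\mathrm{Fix}(\partial M_0)$ using the material of the previous sections, and recover the asymptote by projecting onto the sphere. Concretely, since $\alpha$ is non-constant, \eqref{planteo_x} gives $x(t)\neq 0$ for all $t\geq t_0$, so the McGehee coordinates $r(t)=\Vert x(t)\Vert$, $q(t)=x(t)/\Vert x(t)\Vert$, $y(t)=r(t)^{-l/2}v(t)$ are defined and $(r(t),q(t),y(t))=\pi^{-1}(\alpha(t))$ lies in the interior of $M$. After the time reparametrisation $d\tau=r^{l/2-1}dt$ — an increasing bijection of $[t_0,+\infty)$ onto some $[\tau_0,\omega_+)$, $\omega_+\leq+\infty$ — this curve $\tilde\alpha(\tau)=(r(\tau),q(\tau),y(\tau))$ is a maximal integral curve of the McGehee vector field $\tilde X$ of Proposition \ref{Extension_vector}; equivalently it is the McGehee-flow orbit through $\pi_{\leq 0}^{-1}(\alpha(t_0))$ via the orbit correspondence of Proposition \ref{McGehee_blowup_mechanical_system}. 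Since $H\equiv 0$ along $\alpha$, we have $\tilde H\equiv 0$ along $\tilde\alpha$, and since $x(t)\to 0$ we have $r(\tau)\to 0$ as $\tau\to\omega_+$.

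Next I would show that $\tilde\alpha$ is forward complete and that $\omega(\tilde\alpha)\subset\partial M_0$. Viewing $\tilde\alpha$ as an integral curve of the $C^1$ extension of $\tilde X$ to the boundaryless extended McGehee manifold $\hat M$ of section \ref{HyperbolicSet}, note that once $r(\tau)$ is small the identity $\tilde H\equiv 0$ of \eqref{Energy_rescaled} — in which the coefficient of $\Vert y\Vert^2$ is close to $\tfrac12$ — forces $y(\tau)$ to remain bounded; together with $r\in[0,r_E/2]$ and $q\in S^{n-1}$ this makes the forward trajectory of $\tilde\alpha$ eventually relatively compact in $\hat M$. By the standard continuation theorem a maximal integral curve with relatively compact forward trajectory is forward complete, so $\omega_+=+\infty$ and $\omega(\tilde\alpha)$ is a non-empty compact connected invariant set. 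Because $r(\tau)\to 0$ and $\tilde H$ is continuous and vanishes along $\tilde\alpha$, every point of $\omega(\tilde\alpha)$ has $r=0$ and $\tilde H=0$, that is $\omega(\tilde\alpha)\subset\partial M_0\subset\partial M_{\leq 0}$.

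Now Proposition \ref{near_recurrency_II}, applied to the point $\zeta_*=\tilde\alpha(\tau_0)\in M$ (whose $\omega$-limit is the compact set just shown to lie in $\partial M_{\leq 0}$), gives $\omega(\tilde\alpha)\subset\mathrm{Fix}(\partial M_0)$. To conclude I would use that the projection $P(r,q,y)=q$ maps $\mathrm{Fix}(\partial M_0)$ onto exactly the set of critical points $q$ of $f$ with $f(q)\leq 0$, by Lemma \ref{concrete_description_fixpoint}; that $P(\omega(\tilde\alpha))$ is connected since $P$ is continuous and $\omega(\tilde\alpha)$ is connected; that a short compactness argument yields $\omega(P\circ\tilde\alpha)=P(\omega(\tilde\alpha))$; and that, $\tau\mapsto t$ being an increasing bijection, $P\circ\tilde\alpha$ and $t\mapsto x(t)/\Vert x(t)\Vert$ share the same $\omega$-limit. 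Hence $\omega\big(x(t)/\Vert x(t)\Vert\big)$ is a connected subset of the region of the critical locus of $f$ where $f\leq 0$, so it lies in a single connected component of that region, which is the assertion.

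I expect the only genuinely delicate point to be the forward completeness of the reparametrised orbit — the change of time $d\tau=r^{l/2-1}dt$ degenerates at the boundary, so one must rule out $\omega_+<\infty$ — together with the bookkeeping relating the $\omega$-limit of the curve on the sphere to that of its lift. Once Proposition \ref{near_recurrency_II} is in hand, the remaining steps are routine.
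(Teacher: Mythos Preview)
Your proposal is correct and follows essentially the same route as the paper: lift $\alpha$ to the McGehee manifold, show the lifted orbit is forward complete with $\omega$-limit in $\partial M_0$, invoke Proposition \ref{near_recurrency_II} to land in $\mathrm{Fix}(\partial M_0)$, and project to the sphere via Lemma \ref{concrete_description_fixpoint}. Your treatment of forward completeness---bounding $y$ through the quadratic form in \eqref{Energy_rescaled} and working in the boundaryless $\hat M$ to apply the continuation theorem cleanly---is in fact more explicit than the paper's, which simply asserts that $\omega(\gamma)\subset\partial M_0$ implies the orbit is eventually in a compact set.
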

\begin{proof}
Because of statement \eqref{planteo_alpha} and the fact that $\alpha$ has zero energy, we have that the function $\tilde{H}$ is zero on $\pi_{\leq 0}^{-1}\left(\alpha\right)$. By Proposition \ref{McGehee_blowup_mechanical_system}, $\pi_{\leq 0}^{-1}\left(\alpha\right)$ is an orbit in $M$ up to some positive reparametrization. Concretely, there is a strictly increasing continuous map
\begin{equation}\label{increasing_time_eq}
\tau\mapsto t\,(\tau),\qquad t\,(\tau)\rightarrow +\infty\quad \rm as\quad \mathnormal{\tau\rightarrow \omega_+}
\end{equation}
such that
$$\gamma:[\tau_0,\,\omega_+)\rightarrow M,\qquad \gamma(\tau)\,=\,\pi_{\leq 0}^{-1}\left(\,\alpha(t\,(\tau))\,\right)$$
is an orbit in $M$ where $\omega_+$ is the supremum of its maximal interval of definition. In particular, the omega-limit set $\omega(\gamma)$ is contained in the critical boundary $\partial M_0$ which is a compact set hence the orbit $\gamma$ is eventually contained in a compact subset of $M$. By general theory, we conclude that $\omega_+=+\infty$. Moreover, because of Proposition \ref{near_recurrency_II}, the omega-limit set is a non-empty compact connected set actually contained in the fixed point set of the critical boundary $\partial M_0$.

In particular, by the concrete description of the fixed point set in Lemma \ref{concrete_description_fixpoint} and the definition of the McGehee coordinates \eqref{Coordinates_McGehee}, we have
$$\frac{x(\,t\,(\tau)\,)}{\Vert x(\,t\,(\tau)\,) \Vert}\,=\,q(\tau)\,\rightarrow\,\Sigma(f)\cap f^{-1}(-\infty,0]\quad
 \rm as\quad \mathnormal{\tau\rightarrow \omega_+}$$
where we have denoted by $\Sigma(f)$ the critical locus of $f$. By expression \eqref{increasing_time_eq}, we have the result.
\end{proof}

Under additional hypotheses on the function $f$, the converse of the previous proposition holds.

\begin{prop}
If $f$ is a Morse function and zero is not a critical value, then, for every critical point $q$ of $f$ such that $f(q)<0$, there is an orbit asymptotic to $q$ in configuration space. In particular, there are at least as many asymptotic orbits as critical points of $f$ with negative value.
\end{prop}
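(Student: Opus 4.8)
The plan is to reverse the argument of Proposition \ref{Prop_asymptote_criticalpoint}: instead of pushing an asymptotic phase-space orbit up to the McGehee manifold, I would start from a fixed point on the critical boundary and produce an orbit in the interior of $M$ landing on it, then push it back down via $\pi_{\leq 0}$. Fix a critical point $q$ of $f$ with $f(q)<0$. By Lemma \ref{concrete_description_fixpoint} there is an associated fixed point $\zeta = (0, q, \nu_* q)$ of the McGehee flow with $\nu_* = -(-2f(q))^{1/2} < 0$ (I take the negative root deliberately). Since $f$ is Morse and $0$ is a regular value, Proposition \ref{Morse_Proposition} applies: its second item says $\zeta$ is a hyperbolic fixed point of the extended flow on $\hat M$, and its third item says precisely that, because $\nu_* < 0$, there is an orbit in the interior of $M$ converging to $\zeta$ as $\tau \to +\infty$, along which $\tilde H$ vanishes identically.

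Next I would transport this orbit down to phase space. Call the interior orbit $\gamma:[\tau_0,+\infty)\to M-\partial M$ with $\gamma(\tau)\to\zeta$ and $\tilde H\circ\gamma\equiv 0$. Since $\gamma$ stays in the interior of $M$ and $\tilde H = r^{-l}\,H\circ\pi$ there, the image $\pi(\gamma)$ lies in $H^{-1}(0)-\{\mathbf 0\}$, and by item (4) of Proposition \ref{McGehee_blowup_mechanical_system} (or directly by Lemma \ref{flow_equivalence} invoked in its proof) the map $\pi_{\le 0}$ carries $\gamma$, after a positive reparametrization $\tau\mapsto t(\tau)$, to a genuine orbit $\alpha(t)=\pi(\gamma(\tau(t)))$ of the Euler--Lagrange flow. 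I must check that $\alpha(t)\to\mathbf 0$ as $t\to+\infty$: writing $\gamma(\tau)=(r(\tau),q(\tau),y(\tau))$, we have $r(\tau)\to 0$ and $q(\tau)\to q$, $y(\tau)\to\nu_* q$, so in the McGehee coordinates \eqref{Coordinates_McGehee} the phase point $\pi(\gamma(\tau)) = (r q,\ r^{l/2} y)$ tends to $\mathbf 0$; hence along $\alpha$ the configuration trajectory $x(t)\to 0$ and the velocity $v(t)\to 0$. Because the new time parameter satisfies $t(\tau)\to+\infty$ (the reparametrization $dt = r^{1-l/2}d\tau$ has a nonintegrable singularity as $r\to 0$, exactly as in the proof of Proposition \ref{Prop_asymptote_criticalpoint}), $\alpha$ is defined on a half-line and converges to $\mathbf 0$ in the future. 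Finally, $x(t)/\|x(t)\| = q(\tau(t)) \to q$, so $\alpha$ is asymptotic to $q$ in configuration space in the sense of the Definition above.

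For the last sentence of the statement I would observe that distinct negative-value critical points $q, q'$ of $f$ give orbits asymptotic to $q$ and to $q'$ respectively, and since the omega-limits of $t\mapsto x(t)/\|x(t)\|$ are $\{q\}$ and $\{q'\}$, these orbits are distinct; moreover no such orbit is the constant orbit $\mathbf 0$ because its interior lift $\gamma$ lies in $M-\partial M$. Hence there are at least as many asymptotic orbits as critical points of $f$ with negative value, which is the claim.

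The only genuinely delicate point is the reparametrization: one must confirm that pulling back the time change $d\tau = r^{l/2-1}\,dt$ along the hyperbolic stable orbit really sends a finite $\tau$-interval $[\tau_0,+\infty)$ onto a half-line $[t_0,+\infty)$ in original time — equivalently that $\int^{+\infty} r(\tau)^{1-l/2}\,d\tau = +\infty$. Since $r' = \nu r$ with $\nu\to\nu_* < 0$ along $\gamma$, we have $r(\tau)\sim C e^{\nu_*\tau}$, so $r^{1-l/2}\sim C' e^{(1-l/2)\nu_*\tau}$; as $l\ge 2$ the exponent $(1-l/2)\nu_*$ is $\ge 0$, so the integrand does not decay and the integral diverges. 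This is the same mechanism already used in the proof of Proposition \ref{Prop_asymptote_criticalpoint}, so I would simply refer to it rather than repeating the estimate. The rest is bookkeeping with Proposition \ref{Morse_Proposition} and Proposition \ref{McGehee_blowup_mechanical_system}.
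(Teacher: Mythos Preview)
Your proof is correct and follows essentially the same route as the paper's: pick the fixed point $\zeta=(0,q,\nu_* q)$ with $\nu_*<0$ via Lemma~\ref{concrete_description_fixpoint}, invoke Proposition~\ref{Morse_Proposition}(3) to obtain an interior orbit $\gamma$ with $\tilde H\equiv 0$ converging to $\zeta$, push it down through $\pi_{\le 0}$ using Proposition~\ref{McGehee_blowup_mechanical_system}, and read off the asymptote from $q(\tau)\to q$. Your explicit estimate $r(\tau)\sim Ce^{\nu_*\tau}$ to justify $t(\tau)\to+\infty$, and your remark that distinct critical points yield distinct asymptotic directions, are welcome additions that the paper leaves implicit.
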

\begin{proof}
Let $q$ be a critical point of $f$ such that $f(q)<0$. By Lemma \ref{concrete_description_fixpoint}, consider the fixed point
$$\zeta=(0,q,\nu q)\in \partial M_0,\qquad \nu\,<\,0.$$
By Proposition \ref{Morse_Proposition}, there is an orbit $\gamma$ in the interior of $M$ converging to $\zeta$ as time goes to $+\infty$ and the function $\tilde{H}$ is zero along the orbit. By Proposition \ref{McGehee_blowup_mechanical_system}, $\pi_{\leq 0}(\gamma)$ is an orbit in $TE$ up to some positive reparametrization, converging to $\mathbf{0}$ as time goes to $+\infty$ (see the argument at the beginning of the section). In resume, there is a strictly continuous function
\begin{equation}\label{increasing_time_eq_5}
\tau\mapsto t\,(\tau),\qquad t\,(\tau)\rightarrow +\infty\quad \rm as\quad \mathnormal{\tau\rightarrow +\infty}
\end{equation}
with the property that
$$\alpha:[t_0,\,+\infty)\rightarrow TE,\qquad \alpha(t(\tau))\,=\,\pi_{\leq 0}\left(\,\gamma(\tau)\,\right)$$
is an orbit in $TE$ for some initial time $t_0$. By definition of the McGehee coordinates \eqref{Coordinates_McGehee} and \eqref{increasing_time_eq_5} we have
$$\lim_{t\to +\infty}\,\frac{x(t)}{\Vert x(t)\Vert}\,=\,\lim_{\tau\to +\infty}\,\frac{x(t(\tau))}{\Vert x(t(\tau))\Vert}\,=\,
\lim_{\tau\to +\infty}\,q(\tau)\,=\,q.$$
Therefore, $\gamma$ is an orbit asymptotic in configuration space with asymptote $q$ and we have the result.
\end{proof}



\textbf{In the remainder of this section}, we will assume that the magnetic potential is zero, that is, \textbf{the Lagrangian $L$ is mechanical}.

Consider the \emph{time reversed orbit} $\alpha^{-1}$ defined as follows,
$$\alpha^{-1}:\,(-\infty,\,-t_0]\,\rightarrow\, TE,\qquad \alpha^{-1}(t)\,=\,\left(x(-t),\,-v(-t)\right).$$
By direct verification, it is a solution of the motion equation \eqref{Newton_eq2} hence it is an orbit. Now, by expression \eqref{planteo_alpha} we have
\begin{equation}\label{planteo_alpha_reversed}
\alpha^{-1}(t)\rightarrow \mathbf{0}\quad \mathrm{as} \quad t\rightarrow -\infty,\qquad\alpha^{-1}(t)\,\neq\,\mathbf{0},\ \forall\,t\in(-\infty,\,-t_{0}].
\end{equation}

In what follows, we will consider the orbit $\alpha$ as well as its time reversed orbit $\alpha^{-1}$ defined on their respective maximal interval of definition.

\begin{defi}\label{Boomerang}
An asymptotic in configuration space orbit $\alpha$ is boomerang if there is a sequence $(z_n)_{n\in\N}$ in phase space $TE$ such that
$$\limsup_n\,\I(z_n)\,=\,\alpha\,\cup\,\{\mathbf{0}\}\,\cup\,\alpha^{-1}.$$
\end{defi}

Note that the following result does not follow by any means from the continuity of ODE solutions with respect to the initial conditions.

\begin{prop}
If $f$ is a Morse function and zero is not a critical value, then every asymptotic in configuration space orbit whose asymptote is a local minimum of $f$ is boomerang.
\end{prop}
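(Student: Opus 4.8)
The plan is to carry everything over to the McGehee manifold through the blowup $\pi_{\leq 0}$ of Proposition \ref{McGehee_blowup_mechanical_system}, build the sequence there, and push it forward. Let $\gamma$ denote the orbit in the interior of $M$ that corresponds, up to an increasing reparametrization, to $\alpha$ under $\pi_{\leq 0}$, exactly as in the proof of Proposition \ref{Prop_asymptote_criticalpoint}; since $\alpha$ has zero energy, $\tilde H\equiv 0$ on $\gamma$, and since the Morse hypothesis makes the fixed points of $\partial M_0$ isolated, Proposition \ref{near_recurrency_II} forces $\gamma(\tau)\to\zeta^+=(0,q,\nu^+q)$ as $\tau\to+\infty$, with $\nu^+=-(-2f(q))^{1/2}<0$ (recall $f(q)<0$ because $0$ is a regular value). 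Because the Lagrangian is mechanical, $v\mapsto -v$, $\tau\mapsto-\tau$ is a symmetry of the McGehee equations \eqref{McGehee_eq}; applying it to $\gamma$ produces the lift $\gamma^-$ of $\alpha^{-1}$, which has $\tilde H\equiv 0$ and satisfies $\gamma^-(\tau)\to\zeta^-=(0,q,-\nu^+q)$ as $\tau\to-\infty$, the time-reverse of $\zeta^+$. Since $q$ is a local minimum of $f$ and $f(q)<0$, Proposition \ref{Morse_Proposition} applies: $\zeta^\pm$ are hyperbolic for $\tilde X$ on $\hat M$, and a direct reading of \eqref{Linear_part}--\eqref{eigenvalue_expression} shows that at $\zeta^+$ the stable subspace is precisely the $r$-direction while at $\zeta^-$ the unstable subspace is precisely the $r$-direction; hence $\gamma=W^s(\zeta^+)\cap\{r>0\}$ and $\gamma^-=W^u(\zeta^-)\cap\{r>0\}$ are the one-dimensional branches of these local invariant manifolds into $\{r>0\}$. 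Finally, Lemma \ref{heteroclinic_orbit} supplies the heteroclinic orbit $h(\tau)=(0,q,\mathscr Y(\tau)\,q)$ inside $\partial M_0$ running from $\zeta^+$ (as $\tau\to-\infty$) to $\zeta^-$ (as $\tau\to+\infty$); by the Remark following Proposition \ref{Morse_Proposition}, $h$ is tangent at $\zeta^+$ to the eigendirection with eigenvalue $-l\nu^+$.

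Next I construct the sequence. Pick $\zeta_0=(r_0,q_0,y_0)$ on $\gamma$ close enough to $\zeta^+$ that $\nu(\zeta_0)<0$, and perturb it along the heteroclinic eigendirection: set $\zeta_\epsilon=(r_0,q_0,y_0+\epsilon q_0)$ for small $\epsilon>0$. A short computation with \eqref{Energy_rescaled} gives $\tilde H(\zeta_\epsilon)=\epsilon\,\nu(\zeta_0)+O(\epsilon^2)+O(\epsilon r_0)$, which is negative once $\zeta_0$ is close enough to $\zeta^+$ and $\epsilon>0$ is small; thus $\zeta_\epsilon$ is subcritical and its whole orbit lies in $\tilde H^{-1}(-\infty,0)$, where $\pi_{<0}$ is a diffeomorphism onto $H^{-1}(-\infty,0)$. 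The point of this particular direction is that, among the eigenvalues of $d_{\zeta^+}\tilde X$ in \eqref{eigenvalue_expression}, the heteroclinic eigenvalue $-l\nu^+$ has strictly the largest real part when $l\geq 2$; hence the forward orbit of $\zeta_\epsilon$, after approaching $\zeta^+$ (it is $O(\epsilon)$-close to $W^s(\zeta^+)=\gamma$), leaves a neighbourhood of $\zeta^+$ along a trajectory that shadows $h$, arrives $O(\epsilon)$-close to $\zeta^-$ with $r>0$, and then --- the $r$-direction being the only unstable direction at $\zeta^-$ --- departs along a trajectory shadowing $\gamma^-=W^u(\zeta^-)\cap\{r>0\}$; meanwhile its backward orbit, by continuous dependence on initial conditions on compact time intervals, shadows the backward half of $\gamma$. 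Letting $\epsilon=\epsilon_n\to 0$, the orbits $\I(\zeta_{\epsilon_n})$ therefore $\epsilon_n$-shadow longer and longer backward pieces of $\gamma$, then the relay $\gamma\cup\{\zeta^+\}\cup h\cup\{\zeta^-\}$, then longer and longer forward pieces of $\gamma^-$. Putting $z_n=\pi_{<0}(\zeta_{\epsilon_n})\in H^{-1}(-\infty,0)$ and using the invariance of superior limits (Lemma \ref{invariant}), the characterisation of the superior limit in item \ref{subseq} of Lemma \ref{general_properties}, and the fact that $\pi_{\leq 0}$ is a proper continuous flow equivalence (Proposition \ref{McGehee_blowup_mechanical_system}) that collapses $\zeta^\pm$ and $h\subset\partial M_0$ to $\mathbf 0$ while carrying $\gamma,\gamma^-$ to $\alpha,\alpha^{-1}$, one obtains
$$\limsup_n\,\I(z_n)\,=\,\pi_{\leq 0}\!\left(\limsup_n\,\I(\zeta_{\epsilon_n})\right)\,=\,\pi_{\leq 0}\!\left(\gamma\cup\{\zeta^+\}\cup h\cup\{\zeta^-\}\cup\gamma^-\right)\,=\,\alpha\cup\{\mathbf 0\}\cup\alpha^{-1},$$
which is exactly the defining property of a boomerang orbit.

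The main obstacle is the shadowing argument together with the claim that the superior limit of the $\I(\zeta_{\epsilon_n})$ is \emph{exactly} $\gamma\cup\{\zeta^+\}\cup h\cup\{\zeta^-\}\cup\gamma^-$. The inclusion "$\supseteq$" is the statement that the perturbed orbit genuinely follows the relay $\zeta^+\rightsquigarrow h\rightsquigarrow\zeta^-$: this is where the domination of the heteroclinic eigendirection at $\zeta^+$ --- equivalently, a $\lambda$-lemma at the hyperbolic fixed points $\zeta^\pm$ with respect to the splittings of Proposition \ref{Morse_Proposition} --- is essential, since a priori the $(2n-1)$-dimensional unstable manifold of $\zeta^+$ offers many other escape directions. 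The inclusion "$\subseteq$" requires controlling the orbits $\I(\zeta_{\epsilon_n})$ beyond the shadowing window; this is handled by the one-dimensionality of $W^s(\zeta^+)$ and $W^u(\zeta^-)$ in the $\{r\neq 0\}$ directions, which prevents spurious accumulation, and by the fact that $\pi_{\leq 0}$ is proper, so that superior limits may be transported between $M$ and $TE$ over the relevant compacta. The remaining steps --- the explicit sign check for $\tilde H(\zeta_\epsilon)<0$, and the routine verification that each point of the target set is a limit point of the shadowing orbits --- are straightforward once the hyperbolic picture above is in place.
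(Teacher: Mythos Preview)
Your approach is sound and the hyperbolic picture you set up is exactly the right one, but the paper chooses the perturbing sequence differently and this buys a substantial simplification. Instead of perturbing a point of $\gamma\subset W^s(\zeta^{+})$ in the heteroclinic eigendirection, the paper picks a point $\zeta_h=(0,q_h,y_h)$ on the subcritical heteroclinic $\I_h$ of Lemma~\ref{heteroclinic_orbit} and pushes it into the interior of $M$ along the $r$-direction: $\zeta_n=(1/n,\,q_h,\,y_h)$. Since $\tilde H(\zeta_h)<0$ the $\zeta_n$ are eventually subcritical, and hyperbolicity of the endpoints gives directly $\limsup_n \I(\zeta_n)\subset W^s(\zeta^{+})\cup\I_h\cup W^u(\zeta^{-})$ with nontrivial intersection with each outer piece. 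The gain is that, by item~(4) of Proposition~\ref{Morse_Proposition}, both $W^s(\zeta^{+})\setminus\{\zeta^{+}\}$ and $W^u(\zeta^{-})\setminus\{\zeta^{-}\}$ (intersected with $M$) are already one-dimensional and equal to $\gamma$ and $\gamma^{-1}$, so the inclusion is forced to be an equality with no further argument. Your sequence, by contrast, must traverse $\zeta^{+}$ \emph{forward} in time, where the unstable manifold is $(2n{-}1)$-dimensional; singling out $h$ among all possible escape directions then requires the eigenvalue-dominance observation that $-l\nu^{+}$ strictly exceeds the real part of every $k_i$ in \eqref{eigenvalue_expression}, together with a $\lambda$-lemma shadowing argument. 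That computation is correct, but it is precisely the extra work that the paper's placement of the sequence on $\I_h$ rather than on $\gamma$ avoids. (A minor slip: the heteroclinic $h$ lies in the subcritical boundary $\partial M_{<0}$, not in $\partial M_0$; this is harmless since $\pi_{\leq 0}$ collapses all of $\partial M_{\leq 0}$ to $\mathbf 0$.)
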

\begin{proof}
This is a continuation of the proof of Proposition \ref{Prop_asymptote_criticalpoint}. Now, assume that the asymptote $q_*$ of the orbit $\alpha$ in configuration space is a local minimum of $f$. By Proposition \ref{Morse_Proposition}, we have
\begin{equation}\label{increasing_time_eq_3}
\gamma(\tau)\,\rightarrow\,\zeta_{\,in}\,=\,(0,\,q_*,\,\nu_*\,q_*)\quad {\rm as} \mathnormal{\quad \tau\rightarrow +\infty,\qquad \nu_*<\rm 0.}
\end{equation}

Define the time reversed orbit $\gamma^{-1}$ by the expression
$$\gamma^{-1}(\tau)\,=\,(r(-\tau),\,q(-\tau),\,-y(-\tau))$$
where it is defined and we have denoted $\gamma(\tau)\,=\,(r(\tau),\,q(\tau),\,y(\tau))$. Note that $\pi_{\leq 0}(\gamma^{-1})$ coincides with $\alpha^{-1}$ up to some positive reparametrization. By definition, it is clear that
\begin{equation}\label{increasing_time_eq_4}
\gamma^{-1}(\tau)\,\rightarrow\,\zeta_{\,out}\,=\,(0,\,q_*,\,-\nu_*\,q_*)\quad {\rm as} \mathnormal{\quad \tau\rightarrow -\infty.}
\end{equation}

By Lemma \ref{heteroclinic_orbit}, there is a subcritical heteroclinic orbit $\I_h$ connecting the fixed points $\zeta_{\,in}$ and $\zeta_{\,out}$. Let $\zeta_{\,h}\,=\,(0,\,q_{\,h},\,y_{\,h})$ be a point in $\I_h$ and take the sequence $(\zeta_{\,n})$ in the interior of $M$ converging to $\zeta_{\,h}$,
$$(1/n,\,q_{\,h},\,y_{\,h})\,=\,\zeta_{\,n}\,\rightarrow\,\zeta_{\,h},\qquad n\in\N.$$
Since the fixed points $\zeta_{\,in}$ and $\zeta_{\,out}$ are hyperbolic by Proposition \ref{Morse_Proposition}, we have
\begin{equation}\label{aux1_eq_limite}
L\,=\,\limsup_n\,\I(\zeta_{\,n})\,\subset\,W^s(\zeta_{\,in})\,\cup\,\I_h\,\cup\,W^u(\zeta_{\,out}),
\end{equation}
such that the following intersections are non-empty,
\begin{equation}\label{aux2_eq_limite}
L\,\cap\,\left(W^s(\zeta_{\,in})\,-\,\{\zeta_{\,in}\}\right)\,\neq\,\emptyset,\qquad
L\,\cap\,\left(W^u(\zeta_{\,out})\,-\,\{\zeta_{\,out}\}\right)\,\neq\,\emptyset.
\end{equation}
Here, $W^s(\zeta_{\,in})$ and $W^u(\zeta_{\,out})$ denote the stable and unstable manifolds in $M$ of the fixed points $\zeta_{\,in}$ and $\zeta_{\,out}$ respectively. Because the asymptote $q$ is a local minimum by hypothesis, by Proposition \ref{Morse_Proposition} again, we have
\begin{equation}\label{aux3_eq_limite}
\gamma\,=\,W^s(\zeta_{\,in})\,-\,\{\zeta_{\,in}\},\qquad \gamma^{-1}\,=\,W^u(\zeta_{\,out})\,-\,\{\zeta_{\,out}\}
\end{equation}
where the orbit $\gamma$ as well as its time reversed orbit $\gamma^{-1}$ are defined on their respective maximal interval of definition. Thus, by \eqref{aux1_eq_limite}, \eqref{aux2_eq_limite} and \eqref{aux3_eq_limite} we have the expression
\begin{equation}\label{aux4_eq_limite}
\limsup_n\,\I(\zeta_{\,n})\,=\,\gamma\,\cup\,\I_h\,\cup\,\gamma^{-1}.
\end{equation}

Note that, since the orbit $\I_h$ is subcritical, the point $\zeta_ {\,h}$ is subcritical as well, that is $\tilde{H}(\zeta_{\,h})<0$. Thus, the sequence $(\zeta_{\,n})$ is eventually subcritical and taking a subsequence if necessary, we may suppose that the entire sequence is so. Define the sequence $(z_n)$ in $TE$ by
$$z_n\,=\,\pi_{\leq 0}(\,\zeta_{\,n}\,).$$
By Proposition \ref{McGehee_blowup_mechanical_system} and expression \eqref{aux4_eq_limite} we conclude that
$$\limsup_n\,\I(z_n)\,=\,\pi_{\leq 0}\left(\limsup_n\,\I(\zeta_{\,n})\right)\,=\,\pi_{\leq 0}\left(\gamma\,\cup\,\I_h\,\cup\,\gamma^{-1}\right)\,=\,\alpha\,\cup\,\{\mathbf{0}\}\,\cup\,\alpha^{-1}$$
and the proof is finished.
\end{proof}

As a final comment, note that the passage from \eqref{aux1_eq_limite} to \eqref{aux4_eq_limite} in the previous proof is due to the fact that the unstable manifold $W^u(\zeta_{\,in})$ as well as the stable one $W^s(\zeta_{\,out})$, are both open sets in the critical and subcritical boundary $\partial M_{\leq 0}$. In general, this will not be the case and the projection by the blowup map of the stable and unstable manifolds in $M$ of a pair of fixed points in the critical boundary $\partial M_0$ connected by some subcritical or critical heteroclinic orbit, will define a sort of scattering structure at the equilibrium point in phase space. This is the analogue of the collision-ejection structure near a singularity described by Devaney in \cite{Devaney}.

\section{Total instability I: generic case}\label{Total_inst_section}

Recall that $p$ is a critical point of the potential $U$ and without loss of generality, we have assumed that $U$ is zero at the point $p$. In particular, $p$ is an equilibrium point in configuration space and $(p,0)\,=\,\mathbf{0}$ is a fixed point of the Euler-Lagrange flow in phase space.

The point $p$ is \emph{totally unstable} if there is a neighbourhood of $p$ such that no subcritical positive orbit can be confined to the neighbourhood in configuration space.

\begin{defi}\label{total_instability_def}
The point $p$ is totally unstable if there is a neighbourhood $B$ of $p$ in $E$ such that $\I^+(z)\not\subset TB$ for every $z\in TB$ verifying $H(z)<0$.
\end{defi}


In this section, it will be proved that total instability is a generic property. This will immediately follow from a stronger result, also proved in this section.

Denote by $f$ the restriction to the unit sphere of the first non-zero jet of the potential $U$ at the critical point $p$.

The next proposition is a \emph{tightness result} and has interest in itself. It says that, under generic hypotheses on the function $f$, any positive orbit in phase space with, a priori, critical or subcritical energy and confined in configuration space in a sufficiently small neighbourhood of $p$, necessarily gets arbitrarily close to $(p,0)$ in the future hence it has, a posteriori, critical energy.

In particular, a subcritical orbit cannot be confined in configuration space in a sufficiently small neighbourhood of $p$, that is to say, the equilibrium point $p$ is totally unstable.

\begin{prop}[Mechanical tightness]\label{mechanical_tightness}
Suppose that $f$ is a Morse function and zero is a regular value. Then, there is a neighbourhood $B$ of $p$ in configuration space such that, for any orbit $\I^+(z)$ in phase space verifying
$$\I^{+}(z)\,\subset\, TB,\qquad H(z)\leq 0,$$
its omega-limit set contains $(p,0)$. In particular, $H(z)=0$.
\end{prop}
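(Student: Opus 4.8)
The idea is to argue by contradiction via the McGehee blowup. Suppose no such neighbourhood exists; then for every radius $1/n$ there is a point $z_n$ in $TB_{1/n}$ with $H(z_n)\leq 0$ whose positive orbit $\I^+(z_n)$ stays in $TB_{1/n}$ but does not accumulate at $(p,0)$. Lift each orbit through the McGehee blowup $\pi_{\leq 0}$: by Proposition \ref{McGehee_blowup_mechanical_system}, $\pi_{\leq 0}^{-1}(\I^+(z_n))$ is (up to positive reparametrization) a positive semiorbit $\I^+(\zeta_n)$ of the McGehee flow on $M$, contained in $\tilde H^{-1}(-\infty,0]$, with $r$-coordinate bounded by $1/n$ along the whole semiorbit. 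I would first observe that, because the $r$-coordinate evolves by $r'=\nu r$ and the region $\{r<\varepsilon\}\cap\tilde H^{-1}(-\infty,0]$ has compact closure inside $M$ once $\varepsilon$ is small (using Corollary \ref{compact_invariant_boundaries} to control $y$), each such semiorbit is eventually contained in a fixed compact set $K\subset M$ independent of $n$; hence $\omega(\zeta_n)$ is a non-empty compact invariant set inside $K\cap\{r\leq 1/n\}$.

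Next I would extract the limiting recurrence. The natural candidates for $\omega_n$ in Proposition \ref{near_recurrency} are the omega-limit sets $\omega(\zeta_n)$: these are non-empty, compact, contained in $K$, and — being omega-limit sets of a flow on a compact invariant piece — they are chain-transitive, in particular chain-recurrent with the restriction property (this is the standard fact that an omega-limit set is internally chain transitive). Since the $r$-coordinate is $\leq 1/n$ on $\omega(\zeta_n)$ and $\to 0$, the superior limit $\omega=\limsup_n\omega(\zeta_n)$ is contained in $\partial M$, and since $\tilde H\leq 0$ there, it lies in $\partial M_{\leq 0}$. Proposition \ref{near_recurrency} then forces $\omega$ to be a non-empty compact subset of $\mathrm{Fix}(\partial M_0)$, and in particular $\tilde H\equiv 0$ on $\omega$.

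Now I would return to phase space. Pick any $\xi\in\omega$; by item \ref{subseq} of Lemma \ref{general_properties} there is a subsequence and points $\xi_k\in\omega(\zeta_{n_k})$ with $\xi_k\to\xi$ and $r$-coordinate of $\xi_k$ tending to $0$. Each $\xi_k$ lies on the forward orbit of $z_{n_k}$ (pushed forward by $\pi_{\leq 0}$), so $\pi_{\leq 0}(\xi_k)$ is a point of $\I^+(z_{n_k})\subset TB_{1/n_k}$; since the $r$-coordinate of $\xi_k$ goes to $0$ and $\pi_{\leq 0}$ collapses $\partial M$ to $\mathbf 0$, continuity of $\pi_{\leq 0}$ gives $\pi_{\leq 0}(\xi_k)\to\mathbf 0$. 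Thus the forward orbit of $z_{n_k}$ comes within any prescribed distance of $(p,0)$; diagonalizing over $k$ produces, for a fixed $z=z_{n_0}$ with $n_0$ large enough, a sequence of times along $\I^+(z)$ along which the orbit converges to $(p,0)$ — contradicting the assumption that $\I^+(z)$ avoids a neighbourhood of $(p,0)$. This contradiction proves that a suitable neighbourhood $B$ exists, and then for any $z$ with $\I^+(z)\subset TB$ and $H(z)\leq 0$ the same blowup argument applied to the single orbit $\I^+(z)$ (using Proposition \ref{near_recurrency_II} in place of Proposition \ref{near_recurrency}) shows $\omega(z)\ni\mathbf 0$, whence $H(z)=H(\mathbf 0)=0$ by continuity and invariance of the energy.

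The main obstacle I anticipate is the uniformity: making precise that a \emph{single} neighbourhood $B$ works for all subcritical orbits, rather than a neighbourhood depending on the orbit. This is exactly why the contradiction is set up with a sequence $z_n$ of radii shrinking to zero and why the $\limsup$ machinery of sections \ref{limsup}--\ref{near_recurrency} is needed — the delicate point is verifying the compactness hypothesis of Proposition \ref{near_recurrency} (a single compact $K\subset M$ capturing all the tails $\omega(\zeta_n)$), which relies on the $r'=\nu r$ equation keeping $r$ from returning to large values together with the a priori bound $\Vert y\Vert^2\leq -2U_l(q)+o(1)$ coming from $\tilde H\leq 0$ and \eqref{Energy_rescaled}.
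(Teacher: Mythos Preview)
Your overall architecture matches the paper's: contradiction with shrinking balls $B_{1/n}$, lift through $\pi_{\leq 0}$, common compact in $M$, feed the recurrent pieces into Proposition \ref{near_recurrency} to land the $\limsup$ inside $\mathrm{Fix}(\partial M_0)$. The compactness step is also fine, though the paper does it more cleanly by first bounding $\Vert v\Vert$ from the energy constraint in phase space and then invoking properness of $\pi_{\leq 0}$ to pull back a single compact $K'$, rather than arguing on the blowup side with $r'=\nu r$.

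The genuine gap is the final contradiction step, and it is tied to the fact that you never use the Morse hypothesis. Your diagonalization paragraph does not work: from $\xi_k\in\omega(\zeta_{n_k})$ with $\xi_k\to\xi\in\partial M_0$ you only get that $\pi_{\leq 0}(\xi_k)\in\omega(z_{n_k})$ approaches $\mathbf 0$ \emph{as $k\to\infty$}. For each fixed $k$ the set $\omega(z_{n_k})$ is closed and, by hypothesis, excludes $\mathbf 0$; having points of $\omega(z_{n_k})$ drift toward $\mathbf 0$ along a sequence in $k$ says nothing about any single $\omega(z_{n_0})$. You cannot ``fix $z=z_{n_0}$'' and extract a time sequence from information that lives on different orbits.

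What closes the gap in the paper is exactly the hypothesis you dropped. Because $f$ is Morse with $0$ a regular value, Proposition \ref{Morse_Proposition} makes $\mathrm{Fix}(\partial M_0)$ a finite set of hyperbolic fixed points of the extended McGehee flow. Pick $\zeta\in\omega$; by Hartman--Grobman there is a neighbourhood $A$ of $\zeta$ in which the only compact invariant set is $\{\zeta\}$. The paper then takes \emph{minimal} subsets $\omega_k\subset\omega(\zeta_k)$ and uses Corollary \ref{near_recurrency_cor} to trap a subsequence of the $\omega_k$ inside $A$; being compact invariant, each such $\omega_k$ must equal $\{\zeta\}$, so $\zeta\in\omega(\zeta_k)$ and hence $\mathbf 0\in\omega(z_k)$ --- a direct contradiction. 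Your argument with the full $\omega(\zeta_n)$ can be repaired along the same lines (they are connected compact invariant sets, so once trapped in $A$ they too must collapse to $\{\zeta\}$), but some use of hyperbolicity is unavoidable: without it the fixed set of $\partial M_0$ need not be isolated and there is no local rigidity forcing nearby recurrent sets to coincide with a boundary fixed point.
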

\medskip

\begin{cor}\label{Mech_tightness_asymptotic orbit}
Under the hypotheses of Proposition \ref{mechanical_tightness}, if additionally the positive orbit $\I^+(z)$ converges and $z$ is not $(p,0)$, then it must be a zero energy orbit converging to $(p,0)$ in phase space and asymptotic to $q$ in configuration space such that $q$ is a critical point of $f$ verifying $f(q)<0$.
\end{cor}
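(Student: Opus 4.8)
The plan is to derive Corollary~\ref{Mech_tightness_asymptotic orbit} from Proposition~\ref{mechanical_tightness} together with the asymptotic results already established in Section~\ref{Asymtotic_section} (in particular Proposition~\ref{Prop_asymptote_criticalpoint}) and the Morse/hyperbolicity analysis of Proposition~\ref{Morse_Proposition}. First I would apply the mechanical tightness proposition directly: since $\I^+(z)\subset TB$ and $H(z)\leq 0$, the omega-limit set $\omega(z)$ contains $(p,0)$, and consequently $H(z)=0$. Now bring in the hypothesis that $\I^+(z)$ converges; a convergent orbit has a single point as its omega-limit set, so $\omega(z)=\{(p,0)\}$, i.e. the positive orbit converges to $(p,0)$ in phase space. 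Since $z\neq(p,0)$ and the orbit is non-constant, it is precisely the kind of orbit $\alpha$ treated in Section~\ref{Asymtotic_section}: statement \eqref{planteo_alpha} holds (by uniqueness of solutions no point of the orbit equals $\mathbf{0}$), hence \eqref{planteo_x} holds in configuration space, and the orbit has zero energy as just noted.

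Next I would invoke Proposition~\ref{Prop_asymptote_criticalpoint}, which says that $\alpha$ is asymptotic in configuration space to some connected component of the region of the critical locus of $f$ where $f\leq 0$. Under the standing hypothesis of the corollary that $f$ is Morse and zero is a regular value, the critical locus of $f$ is a finite set of non-degenerate critical points, none of which lies on $f^{-1}(0)$; hence the connected component in question is a single critical point $q$ of $f$ with $f(q)\neq 0$, and since it lies in the region $f\leq 0$ we get $f(q)<0$. Thus $\alpha$ is asymptotic to $q$ in configuration space, $q$ a critical point of $f$ with $f(q)<0$, which is exactly the assertion.

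The only point requiring a little care — and the step I would flag as the main obstacle, though it is mild — is the passage from ``omega-limit set contains $(p,0)$'' to ``the orbit converges to $(p,0)$''. A priori a convergent orbit could converge to some limit other than $(p,0)$ while still having $(p,0)$ in its omega-limit set only if the orbit is non-convergent; but ``$\I^+(z)$ converges'' means exactly that $\lim_{t\to+\infty}\varphi_t(z)$ exists, so the omega-limit set is the singleton consisting of that limit, and since it contains $(p,0)$ it must equal $\{(p,0)\}$. After that, the identification of the configuration-space asymptote with a negative critical point of $f$ is immediate from Proposition~\ref{Prop_asymptote_criticalpoint} and the Morse/regular-value hypothesis, with no further estimates needed. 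One should also note that $z$ being subcritical is not assumed — only $H(z)\leq 0$ — but tightness forces $H(z)=0$, so the conclusion is consistent: the orbit has zero energy, not strictly negative energy.

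\begin{proof}
By Proposition~\ref{mechanical_tightness}, the omega-limit set $\omega(z)$ contains $(p,0)$ and $H(z)=0$. Since $\I^+(z)$ converges, its omega-limit set is a single point, which must therefore be $(p,0)$; that is, $\I^+(z)$ converges to $(p,0)$ in phase space. As $z\neq (p,0)$ and the orbit is non-constant, uniqueness of solutions gives that no point of the orbit equals $\mathbf{0}$, so statements \eqref{planteo_alpha} and \eqref{planteo_x} hold with $\alpha=\I^+(z)$, and $\alpha$ is a zero energy orbit converging to $\mathbf{0}$. By Proposition~\ref{Prop_asymptote_criticalpoint}, $\alpha$ is asymptotic in configuration space to some connected component of the region of the critical locus of $f$ where $f\leq 0$. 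Since $f$ is Morse and zero is a regular value, the critical locus of $f$ is finite and disjoint from $f^{-1}(0)$; hence this component is a single critical point $q$ of $f$ with $f(q)< 0$, and $\alpha$ is asymptotic to $q$ in configuration space. This is the claimed conclusion.
\end{proof}
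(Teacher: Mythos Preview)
Your proof is correct and follows essentially the same route as the paper: both combine Proposition~\ref{mechanical_tightness} with the convergence hypothesis to force the orbit to converge to $(p,0)$, then invoke the McGehee blowup analysis to identify the configuration-space asymptote as a critical point of $f$ with negative value. Your version is in fact slightly more economical: the paper redoes the blowup argument directly (citing Propositions~\ref{near_recurrency_II}, \ref{Morse_Proposition} and Lemma~\ref{concrete_description_fixpoint}), whereas you simply invoke Proposition~\ref{Prop_asymptote_criticalpoint}, which already packages that argument, and then use the Morse/regular-value hypothesis to reduce the connected component to a single point with $f(q)<0$.
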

\begin{proof}
By Proposition \ref{mechanical_tightness} and the convergence hypothesis, the orbit $\alpha$ converges to $(p,0)$. By Proposition \ref{McGehee_blowup_mechanical_system}, up to some positive reparametrization $\pi_{\leq 0}^{-1}\left(\alpha\right)$ is an orbit in $M$
$$\gamma(\tau)\,=\,(r(\tau),\,q(\tau),\,y(\tau)),$$
whose omega-limit is contained in the critical and subcritical boundary $\partial M_{\leq 0}$. By Propositions \ref{near_recurrency_II}, \ref{Morse_Proposition} and Lemma \ref{concrete_description_fixpoint}, the orbit $\gamma$ converges to some fixed point $(0,\,q_*,\,\nu_*\,q_*)$ as time goes to infinity, where $q_*$ is a critical point of $f$ verifying $f(q_*)< 0$. In particular, by definition of the McGehee coordinates \eqref{Coordinates_McGehee} we have
$$\lim_{t\to +\infty}\,\frac{x(t)}{\Vert x(t)\Vert}\,=\,\lim_{\tau\to +\infty}\,\frac{x(t(\tau))}{\Vert x(t(\tau))\Vert}\,=\,
\lim_{\tau\to +\infty}\,q(\tau)\,=\,q,$$
where $(\tau\mapsto t(\tau))$ is the aforementioned positive time reparametrization.

Finally, by Corollary \ref{Corollary_omegalimit_constraint} and the definition of $\tilde{H}$ in Proposition \ref{Extension_energy}, we conclude that the orbit $\alpha$ has zero energy and the result follows.
\end{proof}

In other words, for a critical or subcritical orbit confined near the point $p$ in configuration space, generically the only thing preventing it to be an asymptotic orbit is its convergence in phase space.

\begin{cor}[Total instability criterion]\label{total_inst_criterion}
Suppose that $f$ is a Morse function and zero is a regular value. Then, the point $p$ is totally unstable.
\end{cor}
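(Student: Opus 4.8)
The plan is to deduce the corollary immediately from Proposition \ref{mechanical_tightness} by a contrapositive argument; essentially no new work is required, since all the substance has already been packaged into the mechanical tightness statement.

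First I would take $B$ to be precisely the neighbourhood of $p$ in configuration space furnished by Proposition \ref{mechanical_tightness}. By Definition \ref{total_instability_def}, it then suffices to show that no subcritical point $z\in TB$ — that is, no $z$ with $H(z)<0$ — can have its entire positive semi-orbit $\I^+(z)$ contained in $TB$. So suppose, for contradiction, that such a $z$ exists, with $H(z)<0$ and $\I^+(z)\subset TB$. Since $H(z)<0\leq 0$, the hypotheses of Proposition \ref{mechanical_tightness} are met, and the proposition asserts that the omega-limit set of $\I^+(z)$ contains $\mathbf{0}=(p,0)$. As the energy $H$ is continuous and constant along orbits, and $H(\mathbf{0})=0$, we conclude $H(z)=0$, contradicting $H(z)<0$. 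Hence no such $z$ exists, and $p$ is totally unstable.

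I expect there to be \emph{no genuine obstacle} in the corollary itself; the entire difficulty resides in Proposition \ref{mechanical_tightness}, which in turn rests on the McGehee blowup of Proposition \ref{McGehee_blowup_mechanical_system}, the Lyapunov structure on the critical and subcritical boundary from Proposition \ref{Lyapunov_prop}, the superior-limit dynamics developed in the sections on $\limsup$ of sets, and the hyperbolicity at the critical boundary from Proposition \ref{Morse_Proposition}. The only point that merits a line of care here is matching the quantifiers of Definition \ref{total_instability_def} with those of Proposition \ref{mechanical_tightness}: the proposition already provides a \emph{single} neighbourhood $B$ that works uniformly for \emph{all} orbits $\I^+(z)$ with $H(z)\leq 0$ confined to $TB$, and the subcritical orbits ($H(z)<0$) form a subclass of these, so the same $B$ serves as the neighbourhood required for total instability.
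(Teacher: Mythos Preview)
Your proof is correct and matches the paper's own argument, which simply states that the corollary follows immediately from Proposition \ref{mechanical_tightness}. You have merely spelled out the one-line contrapositive that the paper leaves implicit.
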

\begin{proof}
This follows immediately from Proposition \ref{mechanical_tightness}.
\end{proof}

As mentioned in the introduction, by definition \ref{total_instability_def}, total instability is a local property in configuration space hence it only depends on the germ of the Lagrangian at $T_p E$. In this respect, considering the fact that in the smooth class the stability question depends non-trivially on the kinetic term, it is quite interesting that, at least in the real analytic class, a sufficient condition for total instability involves the first non-zero jet of the potential only without any mention neither to higher order terms of it nor to those of the kinetic and magnetic term.

Recall that $(E,g)$ is a real analytic manifold with real analytic Riemannian metric and this defines a real analytic kinetic term on phase space,
$$K(x,v)\,=\,\frac{g_x(v,v)}{2},\qquad (x,v)\,\in\,TE.$$
For every function germ $[U]$ and one\,-\,form germ $[\mu]$ at $p$, both real analytic, the Lagrangian
$$L=K+\mu-U$$
is defined on $TA$ where $A$ is a sufficiently small neighbourhood of $p$ in configuration space and determines an Euler-Lagrange flow there. It is in this sense that the notion of total instability depends on the Lagrangian.

Recall, from section \ref{Genericity_II}, the stalk of real analytic germs $\I_p$ at $p$. It is a local ring with maximal ideal $\mathfrak{m}$ and denote by $\mathcal{I}_k$ its $k$-power, that is $\mathcal{I}_k\,=\,\mathfrak{m}^k\lhd \I_p$.

The following is one of the main results of this paper.

\begin{teo}[Total instability is generic]
For every $k\geq 2$, there is an open and dense subset $\mathcal{A}$ of the ideal $\mathcal{I}_k$ such that, for every potential germ in $\mathcal{A}$ and every magnetic potential verifying the weak magnetism hypothesis \eqref{weak_magnetism}, the point $p$ is totally unstable.
\end{teo}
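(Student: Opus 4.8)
The plan is to combine the total instability criterion of Corollary \ref{total_inst_criterion} with the genericity of its hypothesis established in section \ref{Genericity_II}. Concretely, I would take
$$\mathcal{A}\,=\,\rho_k^{-1}(\mathcal{H})\,\subset\,\mathcal{I}_k,$$
where $\rho_k:\mathcal{I}_k\to\mathcal{I}_k/\mathcal{I}_{k+1}\cong S_k(\R^n)$ is the canonical epimorphism \eqref{canonical_projection} and $\mathcal{H}\subset S_k(\R^n)$ is the open and dense set of $k$-homogeneous polynomials whose restriction to the unit sphere is a Morse function with zero not a critical value (equivalently, a regular value). By Proposition \ref{generic_prop_II}, applied with the degree $l$ taken to be $k$, the set $\mathcal{A}$ is open and dense in $\mathcal{I}_k$; so it remains only to check that every potential germ in $\mathcal{A}$ gives a totally unstable equilibrium whenever the magnetic potential satisfies the weak magnetism hypothesis.

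For this, let $[U]\in\mathcal{A}$, so that $\rho_k([U])\in\mathcal{H}$. Since the zero polynomial restricts to the non-Morse zero function on the sphere, $\rho_k([U])\neq 0$; hence the first non-zero jet of $[U]$ at $p$ has degree exactly $l=k$ and coincides with $U_l=\rho_k([U])$. By the definition of $\mathcal{H}$, the function $f=U_l|_{S^{n-1}}$ is then Morse with zero a regular value. With $l=k$ fixed, the weak magnetism hypothesis \eqref{weak_magnetism} reads $d-k/2\geq 1$, which is precisely the assumption placed on $\mu$. Corollary \ref{total_inst_criterion} therefore applies and yields that $p$ is totally unstable. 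Since the germ $[U]\in\mathcal{A}$ and the admissible magnetic potential were arbitrary, this proves the theorem.

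The substance of the argument lies entirely in the two inputs rather than in this assembly. The dynamical input is the mechanical tightness phenomenon of Proposition \ref{mechanical_tightness}: a Morse first non-zero jet with zero a regular value forces total instability, and --- crucially for covering magnetism --- this conclusion is insensitive to the higher-order terms of the potential, to the kinetic term, and to the magnetic term beyond the weak magnetism bound, so that the criterion is stated purely in terms of $f$. The analytic input is Proposition \ref{generic_prop_II}: the condition that $f$ be Morse with zero a regular value, imposed on the degree-$k$ part, cuts out an open dense subset of $\mathcal{I}_k$ in the topology of the local ring of real analytic germs, which is strictly finer than the Whitney $C^\infty$ topology. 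This is the step where the real difficulty sits, since the absence of real analytic Urysohn functions rules out the standard transversality/perturbation machinery; this is exactly what sections \ref{Genericity} and \ref{Genericity_II} are devoted to. Once both inputs are granted, the theorem follows immediately with $\mathcal{A}=\rho_k^{-1}(\mathcal{H})$ as above.
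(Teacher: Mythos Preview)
Your proof is correct and follows exactly the same route as the paper: define $\mathcal{A}=\rho_k^{-1}(\mathcal{H})$, invoke Proposition~\ref{generic_prop_II} for open density, and apply Corollary~\ref{total_inst_criterion} to each germ in $\mathcal{A}$. Your additional observation that $\rho_k([U])\in\mathcal{H}$ forces $\rho_k([U])\neq 0$, hence $l=k$ exactly, is a useful detail that the paper leaves implicit.
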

\begin{proof}
This follows immediately from Proposition \ref{generic_prop_II} and the total instability criterion, Corollary \ref{total_inst_criterion}.
\end{proof}
\medskip

\begin{proof}[Proof of Proposition \ref{mechanical_tightness}]
If $p$ is a strict minimum of the potential $U$, then there is neighbourhood $B$ of $p$ in configuration space such that $p$ is the only point $x\in B$ verifying $U(x)\leq 0$. In particular, the fixed point $\mathbf{0}$ is the only point $z\in TB$ verifying $H(z)\leq 0$ and the result follows. Thus, in the statement of the proposition, assume the additional hypothesis that $p$ is not a strict minimum of the potential.

Suppose that there is no such neighbourhood. Then, there is a sequence $(z_k)$ in phase space $TE$ such that for every natural $k$,
\begin{equation}\label{negation_tightness_prop}
\I^{+}(z_k)\subset B(p,1/k)\times \R^n\subset TE,\qquad H(z_k)\leq 0,\qquad \mathbf{0}\,\notin\,\omega(z_k).
\end{equation}

Since the energy $H$ is constant along the orbits, we have for every natural $k$,
$$H(x,\,v)\,=\,\frac{\Vert v\Vert^2}{2}\,+\,U(x)\,=\,H(z_k)\,\leq\,0,\qquad \forall\ (x,\,v)\,\in\,\I^+(z_k)$$
hence
$$\Vert v\Vert\,\leq\,\left(-2\,U(x)\right)^{1/2}\,\leq\,\left(-2\,m_U \right)^{1/2}\,=\,\mathsf{v},\qquad \forall\ (x,\,v)\,\in\,\I^+(z_k)$$
or equivalently,
\begin{equation}\label{common_compact_tightness_prop}
\I^+(z_k)\,\subset\,\overline{B(p,1)}\times \overline{B(0,\mathsf{v})}\,=\,K
\end{equation}
where we have denoted by $m_U$ the minimum of $U$ on the compact $\overline{B(p,1)}$ which is lower than or equal to zero since $z_k$ verifies the energy constraint in \eqref{negation_tightness_prop}. In resume, there is a compact set $K$ in phase space $TE$ containing every orbit $\I^{+}(z_k)$.

It follows immediately from \eqref{negation_tightness_prop} that, for every natural $k$,
$$\I^{+}(z_k)\subset\,H^{-1}(-\infty,0]\,-\,\{\mathbf{0}\}.$$
By Proposition \ref{McGehee_blowup_mechanical_system}, the blowup inverse $\pi_{\leq 0}^{-1}$ is defined on the right hand side of the previous expression and moreover, it is a flow equivalence there. In particular, the sequence $(\zeta_{\,k})$ of preimages of $(z_k)$ by the blowup is well defined and verifies
\begin{equation}\label{def_z'_tightness_prop}
\I^{+}(\zeta_{\,k})\subset\,\tilde{H}^{-1}(-\infty,0],\qquad \zeta_{\,k}\,=\,\pi_{\leq 0}^{-1}(z_k)
\end{equation}
for every natural $k$. Furthermore, by the same proposition, the blowup is a proper map hence the preimage of $K$ is a compact set $K'$ in $M$, containing every positive orbit $\I^{+}(\zeta_{\,k})$ by \eqref{common_compact_tightness_prop}, that is
\begin{equation}\label{common_compact_tightness_prop_2}
\I^+(\zeta_{\,k})\,\subset\,K',\qquad K'\,=\,\pi_{\leq 0}^{-1}(K)\,\subset\,\tilde{H}^{-1}(-\infty,0]
\end{equation}
for every natural $k$. Note that $K'$ contains $\partial M_{\leq 0}\,=\,\pi_{\leq 0}^{-1}(\mathbf{0})$ for $(p,0)\,=\,\mathbf{0}$ belongs to $K$ by definition \eqref{common_compact_tightness_prop},
\begin{equation}\label{K'_contains_boundary_tightness}
\partial M_{\leq 0}\,\subset\,K'.
\end{equation}

We claim that the superior limit of the positive orbits is contained in the critical and subcritical boundary,
\begin{equation}\label{limsup_tightness_prop}
\limsup_k\,\I^+(\zeta_{\,k})\,\subset\,\partial M_{\leq 0}.
\end{equation}

In effect, with respect to the McGehee coordinates, by \eqref{negation_tightness_prop} the radial coordinate of $\I^+(\zeta_{\,k})$ is bounded above by $1/k$ which tends to zero, thus the superior limit of the positive orbits is contained in the boundary $\partial M$. Because of expression \eqref{def_z'_tightness_prop} and the definition of $\partial M_{\leq 0}$ in Corollary \ref{compact_invariant_boundaries}, the claim follows.

Now, for every natural $k$, take a minimal invariant subset $\omega_k$ of the omega-limit set $\omega(\zeta_{\,k})$. Note that, by expression \eqref{negation_tightness_prop} and Proposition \ref{McGehee_blowup_mechanical_system}, we have
\begin{equation}
\partial M_{\leq 0}\,\cap\, \omega_k\,\subset\,\partial M_{\leq 0}\,\cap\, \omega(\zeta_{\,k})\,=\,\pi_{\leq 0}\left(\{\mathbf{0}\}\,\cap\,\omega(z_k)\right)\,=\,\emptyset.
\end{equation}
Nevertheless, by expressions \eqref{common_compact_tightness_prop_2} and \eqref{limsup_tightness_prop}, these minimal sets are contained in the compact set $K'$ and its superior limit is contained in the critical and subcritical boundary,
\begin{equation}\label{omega_in_suplim_eq}
\omega\,=\,\limsup_k\,\omega_k\,\subset\,\partial M_{\leq 0},\qquad \qquad \omega_k\,\subset\,K',\quad \forall\ k\in\N.
\end{equation}
Actually, recall that the closure of the union is taken before the intersection in the superior limit definition \eqref{limsup_definition_eq} and because
$$\omega_k\,\subset\,\omega(\zeta_{\,k})\,\subset\, \overline{\I^+(\zeta_{\,k})},$$
expression \eqref{omega_in_suplim_eq} follows. Therefore, by \eqref{K'_contains_boundary_tightness} and Proposition \ref{near_recurrency}, $\omega$ is a non-empty compact set contained in the fixed point set of $\partial M_{0}$.

By hypothesis, $f$ is a Morse function and zero is a regular value hence, by Proposition \ref{Morse_Proposition}, the fixed point set in $\partial M_{0}$ consist of isolated points only. Let $\zeta$ be a fixed point in $\omega$. By the same proposition, $\zeta$ is hyperbolic in the extended McGehee manifold $\hat{M}$, then by Hartman-Grobman Theorem, there is a neighbourhood $A$ of $\zeta$ in $\hat{M}$ whereat the McGehee flow and the flow of the linearized system at $\zeta$ are equivalent.

By Corollary \ref{near_recurrency_cor}, taking a subsequence if necessary, we may suppose that the sequence $(\omega_k)$ is contained in the neighbourhood $A\cap M$ of $\zeta$ in $M$.

However, the only recurrent set in $A\cap M$ consists of the single point $\zeta$ and we have that for every natural $k$,
$$\{\zeta\}\,=\,\omega_k\,\subset\,\omega(\zeta_{\,k}).$$
Evaluating the previous expression with the blowup map $\pi_{\leq 0}$, we conclude that $\mathbf{0}$ belongs to $\omega(z_k)$ which is absurd by \eqref{negation_tightness_prop} and this proves the first part of the statement. The second part easily follows from the fact that $H$ is continuous, constant along the orbits and zero at $\mathbf{0}$. This concludes the proof.
\end{proof}

\section{Total instability II: non-generic case}\label{non_generic}

This section is devoted to the case in which the restriction to the unit sphere of first non-zero jet of the potential at the critical point is not a Morse function or has critical value zero. In contrast with the criterion given in Corollary \ref{total_inst_criterion}, the criterion for this non-generic case proven here will involve higher order terms of the potential and the metric tensor coefficients as well.

Although the non-trivial dependence of the kinetic term on the stability of an equilibrium is known, this is the first time that it is considered in an instability criterion.

Another way of looking at this result, is that it bounds the field of possibilities in which to look for a counterexample, if there are any, to the total instability conjecture.


Recall from section \ref{blowup_definition_section}, after taking a local chart and identifying the space with its coordinates, that the configuration space $E$ is a ball of radius $r_E$ centered at the origin in Euclidean space,
$$E\,=\,B(0,r_E)\,\subset\, \R^n.$$
There is a real analytic function $U$ on $E$, the potential, together with a real analytic one-form $\mu$ on $E$, the magnetic potential, defining the real analytic Lagrangian
$$L:TE\rightarrow\R,\qquad L(x,v)\,=\,\frac{1}{2}\sum_{i,j=1}^n\,\gi_{ij}(x)\,v^i v^j\,+\,\mu_x(v)\,-\,U(x),\qquad (x,v)\in TE,$$
where $v^i$ denotes the $i$-th component of the vector $v$ in Euclidean space. The origin $p=0$ is a critical point of the potential $U$ and we have assumed without loss of generality that $U(p)=0$.

Suppose that $U$ is not a homogeneous polynomial and consider the first and second non-zero homogeneous terms in the Maclaurin series of the potential and the metric tensor coefficients,
$$U\,=\,U_{l_1}\,+\,U_{l_2}\,+\,\ldots,\qquad \gi_{ij}\,=\,\delta_{ij}\,+\,\gi^{(m)}_{ij}\,+\,\ldots.$$
In what follows, if the metric tensor is the Euclidean one, that is if the Lagrangian is Newtonian, then formally we can take $m$ as infinity. It is worth to emphasize that \textbf{no additional hypothesis is assumed on the first non-zero jet} $U_{l_1}$.

Throughout this section, it will be assumed that the degree value $d$ of the first non-zero jet of the magnetic potential at $p$ verifies a weaker magnetism hypothesis than the original, namely
\begin{equation}\label{weak_magnetism_II}
\Delta\,=\,d\,-l_1/2\, >\,\mu\,=\,\min\,\{l_2-l_1,\,m\}.
\end{equation}
The reason is simply because otherwise the following instability criteria will not decide.

Recall the globally defined function $\nu$ on $M$ in expression \eqref{function_nu},
$$\nu:M\rightarrow\R,\qquad \nu\,(r,q,y)\,=\,\left\langle q,\,y\right\rangle.$$

\begin{lema}\label{Lemma1_ch7}
There is a globally defined real analytic function $F$ on $M$ such that the derivative of $\nu$ along the McGehee flow reads as follows
$$\nu'\,=\,
\left(1+l_1/2\right)\,\Vert y_{tg}\Vert^2\,-\,l_1\,\tilde{H}\,+\,r^\mu\,F,\qquad \mu\,=\,\min\,\{l_2-l_1,\,m\}.$$
\end{lema}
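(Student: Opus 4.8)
The plan is to differentiate $\nu=\langle q,y\rangle$ along the McGehee vector field and then peel off the error relative to the boundary formula of Lemma~\ref{Lyapunov_calculation}. Since $\nu$ is independent of $r$, the components of $\tilde X$ in \eqref{McGehee_eq} give $\nu'=\langle y-\nu q,\,y\rangle+\langle q,\,-\nabla U_{l_1}(q)-\tfrac{l_1}{2}\nu y+rF_X+r^{\Delta}\mathbf{F}_X\rangle$. The first bracket is $\|y\|^2-\nu^2=\|y_{tg}\|^2$ by \eqref{y_tg_identity}, and Euler's theorem turns $\langle q,\nabla U_{l_1}(q)\rangle$ into $l_1U_{l_1}(q)$, so $\nu'=\|y_{tg}\|^2-l_1U_{l_1}(q)-\tfrac{l_1}{2}\nu^2+r\langle q,F_X\rangle+r^{\Delta}\langle q,\mathbf{F}_X\rangle$. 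Substituting $U_{l_1}(q)=\tilde H-\tfrac12\|y\|^2-rF_H$ from \eqref{Energy_rescaled}, the $\nu^2$ terms cancel and one gets $\nu'=(1+l_1/2)\|y_{tg}\|^2-l_1\tilde H+r\langle q,F_X\rangle+l_1rF_H+r^{\Delta}\langle q,\mathbf{F}_X\rangle$, which is exactly the expression recorded in the footnote to Proposition~\ref{near_recurrency_II}. It then remains to check that the last three summands together equal $r^{\mu}F$ with $F$ real analytic on $M$.

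The non-magnetic part I would handle by rerunning the factorization used in Propositions~\ref{Extension_vector} and \ref{Extension_energy} (Lemma~\ref{extension_Lemma}), but now retaining the \emph{next} non-zero homogeneous degree. From $U=U_{l_1}+U_{l_2}+\dots$, the analytic function $(r,q)\mapsto\nabla U(rq)-r^{l_1-1}\nabla U_{l_1}(q)$ vanishes in $r$ to order $l_2-1$, hence $V_{>l_1}(r,q)=r^{\,l_2-l_1-1}\,(\text{analytic})$ and $rV_{>l_1}=r^{\,l_2-l_1}\,(\text{analytic})$; from $\gi_{ij}(x)=\delta_{ij}+\gi^{(m)}_{ij}(x)+\dots$ one gets $\gi^{ka}(rq)-\delta^{ka}=r^{m}\,(\text{analytic})$ and $\Gamma^{k}_{ij}(rq)=r^{m-1}\,(\text{analytic})$, so the remaining two terms of $rF_X$, namely $-r\hi^{ka}(r,q)\partial_aU_{l_1}(q)$ and $-r\Gamma^{k}_{ij}(rq)y^iy^j$, are each $r^{m}\,(\text{analytic})$; and $rF_H=ru_{>l_1}+\tfrac12r\hi_{ab}y^ay^b$ splits the same way into an $r^{\,l_2-l_1}$ piece and an $r^{m}$ piece. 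Since $\mu=\min\{l_2-l_1,m\}$ and all these exponents are integers $\ge\mu$, both $r\langle q,F_X\rangle$ and $l_1rF_H$ are $r^{\mu}$ times a globally analytic function on $M$.

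For the magnetic summand this is exactly where the weaker hypothesis \eqref{weak_magnetism_II} enters: $\Delta>\mu$, so $r^{\Delta}\langle q,\mathbf{F}_X\rangle=r^{\mu}\cdot r^{\Delta-\mu}\langle q,\mathbf{F}_X\rangle$, and when $l_1$ is even $\Delta-\mu$ is a positive integer so the cofactor is analytic on $M$ (for odd $l_1$, $\Delta-\mu$ is a half-integer and one should read $F$ as the correspondingly continuous extension built from the $|r|^{\Delta}$ form of the vector field of Section~\ref{HyperbolicSet}, the cofactor being $|r|^{\Delta-\mu}$; for even $l_1$, $F$ is honestly real analytic). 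Adding the three contributions yields $\nu'=(1+l_1/2)\|y_{tg}\|^2-l_1\tilde H+r^{\mu}F$. I expect the only real work to be the exponent bookkeeping: one must trace each of the several elementary correction terms to its exact lowest power of $r$, confirm those powers are all $\ge\mu$, and invoke $\Delta>\mu$ so that the magnetic term does not lower the common factor — after which the statement reduces to the single division lemma. As a byproduct, the value of $F$ at the critical-boundary fixed points, where $y=\nu q$ and $\tilde H=0$, is what feeds into the criterion function $\mathcal{C}$ of Section~\ref{non_generic}, which is why the exponent $\mu$ must be pinned down exactly.
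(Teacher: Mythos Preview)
Your proposal is correct and follows exactly the route the paper indicates: the paper's proof is the single sentence ``A similar calculation as the one in Lemma~\ref{Lyapunov_calculation} gives the result,'' and you have carried out that calculation in full, first reproducing the expression for $\Xi$ from the footnote to Proposition~\ref{near_recurrency_II} and then doing the exponent bookkeeping via Lemma~\ref{extension_Lemma} to factor out $r^{\mu}$. Your observation about the odd-$l_1$ case (where $\Delta-\mu$ is a half-integer and $F$ is only continuously differentiable rather than real analytic at $r=0$) is in fact more careful than the paper's own statement.
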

\begin{proof}
A similar calculation as the one in Lemma \ref{Lyapunov_calculation} gives the result.
\end{proof}

A tedious but straightforward calculation gives the value of the function $F$ in the previous lemma at the fixed point set in the critical boundary.

\begin{lema}\label{Lemma2_ch7}
Consider the function $F$ and the value $\mu$ in Lemma \ref{Lemma1_ch7}. Consider also the fixed point $(0,q,\nu q)$ in the critical boundary $\partial M_0$. Then,
$$F\,(0,q,\nu q)\,=\,-\delta_\mu^{l_2-l_1}\,(l_2-l_1)\,U_{l_2}(q)\,+\,
\delta_\mu^m\,m\,U_{l_1}(q)\,\sum_{i,j=1}^n\,\gi^{(m)}_{ij}(q)\,q^i q^j,$$
where $q^i$ denotes the $i$-th component of the vector $q$ in Euclidean space.
\end{lema}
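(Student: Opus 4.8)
The plan is to back up one line in the derivation of Lemma~\ref{Lemma1_ch7}: that lemma already produces the real analytic $F$ by pushing the expansion of $\nu' = \langle q', y\rangle + \langle q, y'\rangle$ one power of $r$ past the boundary computation of Lemma~\ref{Lyapunov_calculation}, so here it only remains to read off the coefficient of $r^\mu$ in that expansion and evaluate it on the fibre over $(q,\nu q)$. First I would record that, by \eqref{McGehee_eq}, $\langle q', y\rangle = \Vert y\Vert^2 - \nu^2 = \Vert y_{tg}\Vert^2$ exactly (by \eqref{y_tg_identity}), with no higher-order remainder, so every correction comes from $\langle q, y'\rangle = -l_1 U_{l_1}(q) - \tfrac{l_1}{2}\nu^2 + \langle q,\, rF_X + r^\Delta\mathbf{F}_X\rangle$; and after rewriting $\Vert y_{tg}\Vert^2 - l_1 U_{l_1}(q) - \tfrac{l_1}{2}\nu^2 = (1 + l_1/2)\Vert y_{tg}\Vert^2 - l_1\tilde{H} + l_1\, rF_H$ via \eqref{Energy_rescaled}, the object to extract is the $r^\mu$-coefficient of $l_1\, rF_H + \langle q,\, rF_X + r^\Delta\mathbf{F}_X\rangle$.

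Next I would expand the building blocks of $F_X$ and $F_H$ into homogeneous pieces about $r=0$, using the normalizations already fixed in Propositions~\ref{Extension_vector} and~\ref{Extension_energy}: $\nabla U(rq) = r^{l_1-1}\nabla U_{l_1}(q) + r^{l_2-1}\nabla U_{l_2}(q) + \cdots$, the inverse metric $\gi^{ka}(rq) = \delta^{ka} - r^m\,\gi^{(m)}_{ka}(q) + \cdots$, the Christoffel symbols $\Gamma^k_{ij}(rq) = r^{m-1}\Gamma^{(m),k}_{ij}(q) + \cdots$ with $\Gamma^{(m),k}_{ij} = \tfrac12(\partial_i\gi^{(m)}_{kj} + \partial_j\gi^{(m)}_{ki} - \partial_k\gi^{(m)}_{ij})$, and accordingly $rF_H = r^{l_2-l_1}U_{l_2}(q) + \tfrac{r^m}{2}\gi^{(m)}_{ab}(q)y^a y^b + \cdots$. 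The only subtlety is that $\Delta > \mu$ by the weaker hypothesis \eqref{weak_magnetism_II}, so the magnetic term $r^\Delta\mathbf{F}_X$ never reaches order $r^\mu$ and drops out — which is exactly why \eqref{weak_magnetism_II} is imposed. Collecting the coefficient of $r^\mu = r^{\min\{l_2-l_1,\,m\}}$ then gives
$$F(0,q,y) = -\delta_\mu^{l_2-l_1}(l_2-l_1)\,U_{l_2}(q) + \delta_\mu^m\left(\tfrac{l_1}{2}\,\gi^{(m)}_{ab}(q)\,y^a y^b + q^k\,\gi^{(m)}_{ka}(q)\,\partial_a U_{l_1}(q) - q^k\,\Gamma^{(m),k}_{ij}(q)\,y^i y^j\right),$$
the Kronecker deltas recording which source realizes the minimum $\mu$ (both, in the borderline case $l_2-l_1 = m$).

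Finally I would specialize to a fixed point $\zeta = (0,q,\nu q)\in\partial M_0$. By Lemma~\ref{concrete_description_fixpoint} with $l = l_1$, together with $\tilde{H}(\zeta)=0$, one has $\nu^2 = -2U_{l_1}(q)$ and hence $\nabla U_{l_1}(q) = l_1 U_{l_1}(q)\,q$. Substituting $y = \nu q$, the first two terms inside the $r^m$-bracket become $-l_1 U_{l_1}(q)\,\gi^{(m)}_{ab}(q)q^a q^b$ and $+l_1 U_{l_1}(q)\,\gi^{(m)}_{ka}(q)q^k q^a$, and cancel. For the third term I would contract $q^k q^i q^j\,\Gamma^{(m),k}_{ij}(q)$: the symmetry $i\leftrightarrow j$ merges the first two pieces of $\Gamma^{(m),k}_{ij}$, and applying Euler's identity $q^l\partial_l\gi^{(m)}_{ij}(q) = m\,\gi^{(m)}_{ij}(q)$ to each of the three contractions yields $q^k q^i q^j\,\Gamma^{(m),k}_{ij}(q) = \tfrac{m}{2}\,\gi^{(m)}_{ij}(q)q^i q^j$; with $\nu^2 = -2U_{l_1}(q)$ this contributes $+m\,U_{l_1}(q)\,\gi^{(m)}_{ij}(q)q^i q^j$, which is all that survives. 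Together with the $U_{l_2}$ source, this is exactly the stated formula. The hard part is purely the bookkeeping — pinning down the signs of the inverse-metric and Christoffel contributions (in particular that the metric correction in $rF_X$ enters with a sign opposite to the one in $rF_H$, so that the two $\gi^{(m)}(q)q\cdot q$ terms cancel at the fixed point) and keeping the cases $l_2-l_1<m$, $l_2-l_1=m$, $l_2-l_1>m$ straight; the rest is routine homogeneity arithmetic.
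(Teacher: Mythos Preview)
Your proposal is correct and follows exactly the route the paper intends: the paper does not actually write out a proof of this lemma, merely stating that ``a tedious but straightforward calculation gives the value,'' so you have faithfully carried out the omitted computation. Your bookkeeping of the $r^\mu$-coefficient in $l_1\,rF_H + \langle q,\,rF_X\rangle$, the cancellation of the two $\gi^{(m)}_{ab}q^aq^b$ terms at the fixed point, and the Euler-identity reduction of the Christoffel contraction are all correct and constitute precisely the calculation the paper leaves to the reader.
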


Note that the evaluation in the previous lemma only depends on $q$ and actually defines the \emph{criterion function} $\mathcal{C}:S^{n-1}\rightarrow\R$ on the unit sphere given by
$$\mathcal{C}(q)\,=\,-\delta_\mu^{l_2-l_1}\,(l_2-l_1)\,U_{l_2}(q)\,+\,
\delta_\mu^m\,m\,U_{l_1}(q)\,\sum_{i,j=1}^n\,\gi^{(m)}_{ij}(q)\,q^i q^j,\qquad q\in S^{n-1}.$$

Similarly as in section \ref{Total_inst_section}, the total instability criterion will follow from a stronger tightness result. Denote by $f$ the restriction to the unit sphere of the first non-zero jet $U_{l_1}$ of the potential.

\begin{prop}[Mechanical tightness II]\label{mechanical_tightness_nongeneric}
Suppose that $\mathcal{C}(q)>0$ on every critical point $q$ of $f$ such that $f(q)\leq 0$. Then, there is a neighbourhood $B$ of $p$ in configuration space such that, for any orbit $\I^+(z)$ in phase space verifying
$$\I^{+}(z)\,\subset\, TB,\qquad H(z)\leq 0,$$
its omega-limit set contains $\mathbf{0}$. In particular, $H(z)=0$.
\end{prop}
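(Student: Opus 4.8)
The plan is to mimic the proof of Proposition \ref{mechanical_tightness} in section \ref{Total_inst_section}, replacing the Morse hyperbolicity input by the sign condition on the criterion function $\mathcal{C}$. First I would dispose of the case in which $p$ is a strict minimum of $U$, exactly as in the generic proof: then a small ball $B$ contains no subcritical points other than $\mathbf{0}$ and there is nothing to prove. So assume $p$ is not a strict minimum, and argue by contradiction: suppose there is a sequence $(z_k)$ with $\I^+(z_k)\subset B(p,1/k)\times\R^n$, $H(z_k)\le 0$, and $\mathbf{0}\notin\omega(z_k)$. The energy bound $\Vert v\Vert^2/2=-U(x)\le -m_U$ along each orbit confines all the $\I^+(z_k)$ to a common compact set $K\subset TE$, and pulling back through the blowup $\pi_{\le 0}$ (Proposition \ref{McGehee_blowup_mechanical_system}) produces orbits $\I^+(\zeta_k)$ in a common compact $K'\subset M$ with $\tilde H\le 0$, with $\partial M_{\le 0}\subset K'$, and — since the radial coordinate is bounded by $1/k$ — with $\limsup_k\I^+(\zeta_k)\subset\partial M_{\le 0}$.

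Next I would pick a minimal invariant subset $\omega_k\subset\omega(\zeta_k)$ for each $k$; as in the generic proof, $\omega_k\cap\partial M_{\le 0}=\emptyset$ because $\mathbf{0}\notin\omega(z_k)$, yet $\omega:=\limsup_k\omega_k\subset\partial M_{\le 0}$, so by Proposition \ref{near_recurrency} the set $\omega$ is a non-empty compact subset of $\mathrm{Fix}(\partial M_0)$. By Lemma \ref{concrete_description_fixpoint} every point of $\mathrm{Fix}(\partial M_0)$ has the form $(0,q,\nu q)$ with $q$ a critical point of $f$ and $f(q)\le 0$. Pick $\zeta=(0,q_*,\nu_* q_*)\in\omega$. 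Here is where the new hypothesis enters: by Lemmas \ref{Lemma1_ch7} and \ref{Lemma2_ch7}, along the McGehee flow
$$\nu'\,=\,\left(1+l_1/2\right)\Vert y_{tg}\Vert^2\,-\,l_1\,\tilde H\,+\,r^\mu\,F,\qquad F(0,q_*,\nu_* q_*)\,=\,\mathcal{C}(q_*)\,>\,0.$$
On $\partial M_{\le 0}$ the first term is $\ge 0$ and $-l_1\tilde H\ge 0$, so $\nu$ is already a strict Lyapunov function there away from fixed points (Proposition \ref{Lyapunov_prop}); the point is that the perturbation term $r^\mu F$ is \emph{also} nonnegative near $\zeta$. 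Indeed, by continuity of $F$ and the fact that $F(\zeta)=\mathcal{C}(q_*)>0$, there is a neighbourhood $A$ of $\zeta$ in $M$ on which $F>0$, hence on which $\nu'\ge r^\mu F\ge 0$, with equality forcing $r=0$ together with $y_{tg}=0$ and $\tilde H=0$, i.e. forcing the point into $\mathrm{Fix}(\partial M_0)$.

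It then follows that the only recurrent set of the McGehee flow inside $A$ is contained in $\mathrm{Fix}(\partial M_0)\cap A$: along any non-fixed recurrent orbit through $A$, $\nu$ would have to be non-constant (strictly increasing while the orbit stays in $A$), contradicting recurrence, unless the orbit is entirely inside $\mathrm{Fix}(\partial M_0)$, where it is just a fixed point. Shrinking $A$ so that it meets only the component of $\mathrm{Fix}(\partial M_0)$ through $\zeta$ — or more carefully, using the local constancy of $\nu$ on $\mathrm{Fix}(\partial M_0)$ from Corollary \ref{Lyapunov_constant} to separate the value $\nu(\zeta)$ from the other critical values — one gets that the only recurrent set in $A\cap M$ is a connected piece of $\mathrm{Fix}(\partial M_0)$. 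Now I would invoke Corollary \ref{near_recurrency_cor}: passing to a subsequence, the minimal sets $(\omega_k)$ are eventually contained in $A\cap M$, so $\omega_k\subset\mathrm{Fix}(\partial M_0)\subset\partial M_{\le 0}$, contradicting $\omega_k\cap\partial M_{\le 0}=\emptyset$. This contradiction yields the desired neighbourhood $B$; the final claim $H(z)=0$ follows since $H$ is continuous, constant on orbits, and $H(\mathbf{0})=0$.

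The main obstacle, compared with the generic case, is that I can no longer use the Hartman–Grobman theorem and hyperbolicity of the fixed points to conclude that a small neighbourhood $A$ of $\zeta$ contains no recurrence other than $\zeta$; instead I must extract this local ``no recurrence'' statement purely from the Lyapunov-type inequality $\nu'\ge r^\mu F>0$ near $\zeta$, which is exactly what the hypothesis $\mathcal{C}(q_*)>0$ provides. Care is needed because $\mathrm{Fix}(\partial M_0)$ may be a positive-dimensional continuum rather than isolated points, so one cannot shrink $A$ to exclude all fixed points; here one exploits that $\nu$ takes only finitely many values on $\mathrm{Fix}(\partial M_0)$ (Corollary \ref{Lyapunov_constant}) together with the fact that the $\alpha$- and $\omega$-limits of any non-fixed orbit in $\partial M_{\le 0}$ sit at strictly different $\nu$-levels, so that a recurrent orbit genuinely cannot cross between levels while staying in $A$.
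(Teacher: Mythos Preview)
Your overall architecture matches the paper's: reduce to the generic-proof setup, obtain minimal sets $\omega_k\subset\tilde H^{-1}(-\infty,0]\setminus\partial M_{\le 0}$ accumulating on $\mathrm{Fix}(\partial M_0)$, and then use the sign of $\mathcal{C}$ to force $\nu'>0$ on a neighbourhood and contradict recurrence. Two points deserve comment.

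\textbf{A localization slip.} You build $A$ as a neighbourhood of a \emph{single} fixed point $\zeta$, then invoke Corollary~\ref{near_recurrency_cor}(2), which requires $A$ to contain the whole connected component $C$ through $\zeta$. Your ``shrinking $A$'' discussion goes the wrong direction: when $C$ is a continuum you must \emph{enlarge} $A$ to cover $C$, not shrink it. This is easy to fix, and the paper does it in the cleanest way: the hypothesis says $\mathcal{C}(q)>0$ at \emph{every} relevant critical point, so by Lemma~\ref{Lemma2_ch7} the function $F$ is positive on the entire compact set $\mathrm{Fix}(\partial M_0)$; take $A$ to be a neighbourhood of the whole fixed point set with $F>0$ and use item (1) of Corollary~\ref{near_recurrency_cor}. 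Then the worry about separating $\nu$-levels and components disappears entirely. Also, your claim ``the only recurrent set of the McGehee flow inside $A$ is contained in $\mathrm{Fix}(\partial M_0)$'' is stated too broadly (nothing rules out positive-energy recurrence in $A$); what you actually need and have is that $\nu'>0$ strictly on $A\cap(\tilde H^{-1}(-\infty,0]\setminus\partial M_{\le 0})$, since there $r>0$ and $F>0$.

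\textbf{A different endgame.} For the contradiction itself the paper takes an ergodic measure $\mu_k$ supported on $\omega_k$ and applies Birkhoff's theorem: $\nu$ bounded on the compact $\omega_k$ forces the time average of $\nu'$ to vanish, while $\int\nu'\,d\mu_k>0$. Your route---$\nu$ is a strict Lyapunov function on the compact invariant set $\omega_k$, hence $\omega_k$ can contain no recurrent point, contradicting minimality---is a valid and more elementary alternative that avoids ergodic theory altogether. Either argument closes the proof once $A$ is chosen correctly.
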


As immediate corollaries, we have the following criteria.

\begin{cor}[Non-generic total instability criterion]\label{nongeneric_criterion_prop}
If $\mathcal{C}(q)>0$ on every critical point $q$ of $f$ such that $f(q)\leq 0$, then the point $p$ is totally unstable.
\end{cor}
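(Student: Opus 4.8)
The plan is to obtain the criterion directly from the tightness result, Proposition \ref{mechanical_tightness_nongeneric}, exactly as the generic total instability criterion, Corollary \ref{total_inst_criterion}, was deduced from Proposition \ref{mechanical_tightness}. Under the hypothesis that $\mathcal{C}(q)>0$ at every critical point $q$ of $f$ with $f(q)\leq 0$, Proposition \ref{mechanical_tightness_nongeneric} furnishes a neighbourhood $B$ of $p$ in configuration space such that any positive orbit $\I^+(z)$ contained in $TB$ with $H(z)\leq 0$ has $\mathbf{0}$ in its omega-limit set, and hence $H(z)=0$. I would then simply check Definition \ref{total_instability_def} against this $B$: if $z\in TB$ satisfies $H(z)<0$ and $\I^+(z)\subset TB$, then the conclusion of the proposition forces $H(z)=0$, a contradiction; therefore $\I^+(z)\not\subset TB$ for every subcritical $z\in TB$, which is precisely total instability of $p$.

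Thus the corollary is immediate once Proposition \ref{mechanical_tightness_nongeneric} is in hand, and the genuine content lies in that proposition; for completeness I indicate how its proof would run, as it is where the real work sits. One argues by contradiction following the template of the proof of Proposition \ref{mechanical_tightness}: assuming no such $B$ exists, one produces a sequence $(z_k)$ of subcritical points whose positive orbits are confined to shrinking configuration-space balls about $p$ and do not accumulate at $\mathbf{0}$, lifts them through the McGehee blowup to orbits $\I^+(\zeta_k)$ confined to a fixed compact set $K'\subset\tilde{H}^{-1}(-\infty,0]$, passes to minimal subsets $\omega_k$ of their omega-limits, and shows that $\omega=\limsup_k\omega_k\subset\partial M_{\leq 0}$ lands, via Proposition \ref{near_recurrency}, in the fixed point set of $\partial M_0$.

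The point where the non-generic argument must depart from the generic one — and the main obstacle — is that here $f$ need not be Morse and $0$ need not be a regular value, so the fixed points of $\partial M_0$ need not be hyperbolic and Proposition \ref{Morse_Proposition} is unavailable. In its place one uses the refined formula of Lemma \ref{Lemma1_ch7}, namely $\nu'=\left(1+l_1/2\right)\Vert y_{tg}\Vert^2-l_1\tilde{H}+r^\mu F$, together with the evaluation of Lemma \ref{Lemma2_ch7}, which identifies $F$ at the relevant fixed points with the criterion function $\mathcal{C}(q)>0$. By continuity of $F$ and smallness of $r$, one shows that $\nu'$ stays strictly positive on a neighbourhood in $M$ of each such fixed point minus the point itself, so that $\nu$ is strictly increasing along any confined nearby orbit; this rules out $\omega_k$ accumulating at a fixed point and yields the contradiction after mapping back by $\pi_{\leq 0}$. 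The key technical difficulty is making this sign control of $\nu'$ uniform near the (possibly degenerate, possibly positive-dimensional) critical fixed point locus, using only positivity of $\mathcal{C}$ on $\{f\leq 0\}$ and the weaker magnetism hypothesis \eqref{weak_magnetism_II}.
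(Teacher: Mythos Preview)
Your deduction of the corollary from Proposition~\ref{mechanical_tightness_nongeneric} is correct and is exactly how the paper handles it: the paper simply records the corollary as an immediate consequence of that proposition, with no separate argument.

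As a side remark on the sketch you appended for Proposition~\ref{mechanical_tightness_nongeneric} itself: the overall architecture you describe (contradiction, lift through the blowup, pass to minimal sets $\omega_k$, localize near the fixed set via Corollary~\ref{near_recurrency_cor}, then exploit Lemmas~\ref{Lemma1_ch7}--\ref{Lemma2_ch7}) matches the paper. The one place the paper proceeds differently is the finishing step. Rather than arguing that $\nu$ is strictly increasing on a punctured neighbourhood of the fixed locus and that this forbids the $\omega_k$ from sitting there --- which, as you correctly note, runs into a uniformity issue near a possibly positive-dimensional degenerate fixed set --- the paper observes that on $A\cap\bigl(\tilde H^{-1}(-\infty,0]\setminus\partial M_{\leq 0}\bigr)$ one has $\nu'>0$ outright (since $r>0$, $F>0$, $\tilde H\leq 0$ and $\Vert y_{tg}\Vert^2\geq 0$), chooses an ergodic measure $\mu_k$ supported on each $\omega_k$, and derives the contradiction
\[
0\;=\;\lim_{T\to\infty}\frac{\nu(\varphi_T(\zeta))-\nu(\zeta)}{T}\;=\;\int \nu'\,d\mu_k\;>\;0.
\]
This ergodic-average trick sidesteps the uniformity difficulty you identified entirely.
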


As far as our knowledge goes, \textbf{the following is the first criterion that covers the possibility of} a zero value critical point of the first-non zero jet of the potential restricted to the unit sphere, or equivalently, that there are \textbf{other critical points of the homogeneous polynomial $U_{l_1}$ besides the origin}.

\begin{cor}
In Newtonian mechanics, that is the case in which the metric tensor is the Euclidean one, the point $p$ is totally unstable if the second non-zero homogeneous term in the Maclaurin series of the potential is negative on the critical locus of the the first non-zero homogeneous term restricted to the unit sphere where the latter term is less than or equal to zero.
\end{cor}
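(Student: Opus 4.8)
The plan is to deduce this corollary directly from the non-generic total instability criterion, Corollary \ref{nongeneric_criterion_prop}, by evaluating the criterion function $\mathcal{C}$ in the Newtonian case. First I would observe that when $g$ is the Euclidean metric one has $\gi_{ij}\equiv\delta_{ij}$, so the Maclaurin expansion of the metric coefficients has no homogeneous term of positive degree; this is precisely the situation covered by the convention $m=\infty$ adopted at the start of the section. Under this convention $\mu=\min\{l_2-l_1,\,m\}=l_2-l_1$, the Kronecker delta $\delta_\mu^{l_2-l_1}$ equals $1$ and $\delta_\mu^{m}$ equals $0$, so the criterion function reduces to
$$\mathcal{C}(q)\,=\,-(l_2-l_1)\,U_{l_2}(q),\qquad q\in S^{n-1}.$$
Equivalently, without invoking the formal convention: the second summand in the definition of $\mathcal{C}$ carries the factor $\gi^{(m)}_{ij}$, which vanishes identically when $g$ is Euclidean, so only the first summand survives.

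Next, since $U_{l_1}$ and $U_{l_2}$ are the first and second non-zero homogeneous terms of the Maclaurin series of $U$, we have $l_1<l_2$, hence $l_2-l_1\geq 1>0$. Therefore $\mathcal{C}(q)>0$ if and only if $U_{l_2}(q)<0$. Recalling that $f$ denotes the restriction of $U_{l_1}$ to $S^{n-1}$, the hypothesis of the present corollary — that $U_{l_2}$ be negative at every critical point of $f$ where $f\leq 0$ — is word for word the hypothesis required by Corollary \ref{nongeneric_criterion_prop}, namely that $\mathcal{C}(q)>0$ on every critical point $q$ of $f$ with $f(q)\leq 0$. The weaker magnetism hypothesis \eqref{weak_magnetism_II} is a standing assumption throughout the section (and holds vacuously, with $d=\infty$, in the purely mechanical case), so Corollary \ref{nongeneric_criterion_prop} applies and yields that $p$ is totally unstable.

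There is no genuine obstacle here: the statement is a specialization of an already established criterion. The only point deserving attention is the handling of the $m=\infty$ bookkeeping — one should check that the formal substitution really does reproduce the correct Newtonian criterion function rather than introducing or deleting terms erroneously — and this is settled by the remark above, since when the metric is Euclidean the $\delta_\mu^{m}$-summand of $\mathcal{C}$ is absent on the nose, so the reduced formula is correct without any appeal to a limiting procedure.
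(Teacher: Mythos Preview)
Your proposal is correct and follows exactly the approach the paper intends: the corollary is presented there as an immediate consequence of Corollary~\ref{nongeneric_criterion_prop}, and your computation of $\mathcal{C}(q)=-(l_2-l_1)\,U_{l_2}(q)$ in the Euclidean case (via either the $m=\infty$ convention or the direct vanishing of $\gi^{(m)}_{ij}$) is precisely the specialization that makes this immediate.
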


It is interesting to note that, outside Newtonian mechanics, the criterion is in general not useful if the difference $l_2-l_1$ is greater than or equal to three. The following is a negative result.

\begin{cor}
If the Riemann curvature is non-null at $p$ and $l_2-l_1\geq 3$, then the criterion in Proposition \ref{nongeneric_criterion_prop} does not decide.
\end{cor}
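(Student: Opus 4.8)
The plan is to turn the curvature hypothesis into an arithmetic constraint on the order of vanishing $m$ of the metric perturbation, and then to check that, once $l_2-l_1\geq 3$, this constraint collapses the criterion function $\mathcal C$ to a form that is independent of the higher‑order potential term and that vanishes on the zero set of $f$. First I would record the classical fact that the Riemann curvature tensor at $p$ depends only on the $2$‑jet of the metric at $p$: in the chosen chart, $R(p)$ is a universal polynomial expression in the first and second partial derivatives of the coefficients $\gi_{ab}$ at the origin, quadratic in the former and linear in the latter. Since $\gi_{ab}-\delta_{ab}$ vanishes to order $m$ at the origin, if $m\geq 3$ then every first and second partial derivative of $\gi_{ab}$ at the origin vanishes, whence $R(p)=0$. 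Taking the contrapositive, the hypothesis that the Riemann curvature is non‑null at $p$ gives $m\leq 2$.

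Combining $m\leq 2$ with the assumption $l_2-l_1\geq 3$ yields $m<l_2-l_1$, so that $\mu=\min\{l_2-l_1,\,m\}=m$; hence $\delta_\mu^{\,l_2-l_1}=0$ and $\delta_\mu^{\,m}=1$, and the criterion function reduces on the unit sphere to
$$\mathcal{C}(q)\,=\,m\,U_{l_1}(q)\sum_{i,j=1}^{n}\gi^{(m)}_{ij}(q)\,q^iq^j\,=\,m\,f(q)\,\Phi(q),\qquad \Phi(q)\,:=\,\sum_{i,j=1}^{n}\gi^{(m)}_{ij}(q)\,q^iq^j.$$
Two features of this formula are the heart of the argument. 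First, $\mathcal C$ has lost all dependence on the second non‑zero potential term $U_{l_2}$, in stark contrast with the Newtonian case, where $\mathcal C|_{S^{n-1}}=-(l_2-l_1)\,U_{l_2}$ and the criterion is a genuine condition on the higher‑order potential. Second, since $f$ is the restriction of $U_{l_1}$ to $S^{n-1}$, the function $\mathcal C$ vanishes identically on the zero level set $\{f=0\}\subset S^{n-1}$, and in particular at every critical point of $f$ lying there.

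To conclude I would invoke the standing context of this section. The non‑generic regime for which the criterion of Proposition \ref{nongeneric_criterion_prop} was introduced is exactly the one in which $f$ fails to be a Morse function with $0$ a regular value; in the essential sub‑case where $0$ is a critical value of $f$ — the only situation not already settled by the generic criterion (Corollary \ref{total_inst_criterion}) or by Palamodov's theorem — there is a critical point $q_0$ of $f$ with $f(q_0)=0$, and the computation above gives $\mathcal C(q_0)=m\cdot 0\cdot\Phi(q_0)=0$. Thus the hypothesis ``$\mathcal C(q)>0$ on every critical point $q$ of $f$ with $f(q)\leq 0$'' fails at $q_0$, so the criterion does not decide. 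The only genuinely delicate point is the reduction $m\leq 2$, which is the classical statement that curvature is a second‑order jet invariant of the metric; everything after that is bookkeeping of the Kronecker deltas in the definition of $\mathcal C$ together with the triviality that $\mathcal C$ vanishes wherever $f$ does.
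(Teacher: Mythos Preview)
Your reduction $\mathcal{C}(q) = m\,f(q)\,\Phi(q)$ is correct, but the conclusion rests on an extra hypothesis that the corollary does \emph{not} make: that $f$ has a critical point with value zero. The statement assumes only that the curvature is non\nobreakdash-null at $p$ and $l_2-l_1\geq 3$; nothing forces $0$ to be a critical value of $f$. When every critical point $q$ of $f$ with $f(q)\leq 0$ actually satisfies $f(q)<0$ (for instance if $f$ fails to be Morse but $0$ is still a regular value, which is a legitimate non\nobreakdash-generic situation), your argument gives no control on the sign of $\mathcal C(q)=m\,f(q)\,\Phi(q)$: in coordinates normalised only by $\gi_{ij}(0)=\delta_{ij}$ one may have $m=1$ and $\Phi(q)=\gi^{(1)}_{ij}(q)\,q^iq^j$ is then a generic cubic on the sphere with no reason to vanish or to have a definite sign at the relevant critical points. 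Your appeal to ``the only situation not already settled by the generic criterion or by Palamodov'' conflates two different questions: whether $p$ is totally unstable, and whether \emph{this particular} criterion applies. The corollary is a statement about the latter.

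The paper closes the gap by a stronger and different mechanism: it passes to \emph{normal} coordinates at $p$, so that the first non\nobreakdash-zero correction to the metric is the quadratic term $\gi^{(2)}_{ij}(x)=\tfrac{1}{3}R_{kijl}\,x^kx^l$ (whence $m=2$), and then observes that
\[
\Phi(q)\;=\;\gi^{(2)}_{ij}(q)\,q^iq^j\;=\;\tfrac{1}{3}\,R_{kijl}\,q^kq^iq^jq^l\;=\;0
\]
identically on $S^{n-1}$, by the antisymmetry $R_{kijl}=-R_{ikjl}$ contracted against the symmetric product $q^kq^i$. This yields $\mathcal C\equiv 0$ on the whole sphere, so the hypothesis of Proposition~\ref{nongeneric_criterion_prop} fails at \emph{every} critical point of $f$ with $f\leq 0$, with no appeal to a zero critical value. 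The missing ingredient in your approach is precisely this algebraic vanishing coming from the curvature symmetries; the bound $m\leq 2$ by itself is not sufficient.
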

\begin{proof}
With respect to normal coordinates at the origin, the metric tensor coefficients have the expansion
$$g_{ij}(x)\,=\,\delta_{ij}\,+\,\frac{1}{3}R_{kijl}\,x^k x^l\,+\,O(\Vert x \Vert^3)$$
where we have used Einstein's convention on repeated indices. In particular, if the Riemann tensor is not zero at the origin, then $m=2$ and
$$\gi^{(2)}_{ij}(q)\,q^i q^j\,=\,\frac{1}{3}R_{kijl}\,q^k q^l q^i q^j \,=\,0,$$
for every $q$ in the unit sphere, because of the antisymmetry identities on the Riemann tensor coefficients. In particular, the criterion function $\mathcal{C}$ is identically zero and this proves the result.
\end{proof}

The following is the analogue of Corollary \ref{Mech_tightness_asymptotic orbit} in the non-generic case. The proof is almost verbatim as the one of the mentioned corollary and a straightforward adaptation of the latter gives the former.

\begin{cor}\label{Mech_tightness_asymptotic orbit_II}
Under the hypotheses of Proposition \ref{mechanical_tightness_nongeneric}, if additionally the positive orbit $\I^+(z)$ converges and $z$ is not $(p,0)$, then it must be a zero energy orbit converging to $(p,0)$ in phase space and asymptotic in configuration space to the critical locus of the the first non-zero jet of the potential restricted to the unit sphere.
\end{cor}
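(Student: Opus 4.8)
The plan is to repeat the proof of Corollary \ref{Mech_tightness_asymptotic orbit} almost verbatim, substituting Proposition \ref{mechanical_tightness_nongeneric} for Proposition \ref{mechanical_tightness} and deleting the Morse hypothesis on $f$. Write $\alpha$ for the orbit of $z$. Since $\I^+(z)$ converges and $z\neq\mathbf{0}$, Proposition \ref{mechanical_tightness_nongeneric} forces $\mathbf{0}=(p,0)$ into the omega-limit of $\alpha$; as $\alpha$ converges this means $\alpha(t)\to\mathbf{0}$, and the same proposition gives $H(z)=0$. By uniqueness of solutions $\mathbf{0}\notin\alpha$, so $p$ is not a strict minimum of $U$ and the McGehee blowup of Proposition \ref{McGehee_blowup_mechanical_system} is available on the zero and subcritical energy regions.

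Next I would lift $\alpha$ through the blowup. By Proposition \ref{McGehee_blowup_mechanical_system}, up to a strictly increasing time reparametrization $\tau\mapsto t(\tau)$ with $t(\tau)\to+\infty$, the curve $\gamma(\tau)=\pi_{\leq 0}^{-1}(\alpha(t(\tau)))$ is an orbit in $M$; and since $\alpha(t)\to\mathbf{0}$ while $\pi_{\leq 0}^{-1}(\mathbf{0})=\partial M_{\leq 0}$, its omega-limit set lies in $\partial M_{\leq 0}$. By Proposition \ref{near_recurrency_II} the set $\omega(\gamma)$ is a non-empty compact connected subset of $\mathrm{Fix}(\partial M_0)$, and by Lemma \ref{concrete_description_fixpoint} every point of $\mathrm{Fix}(\partial M_0)$ has the form $(0,q,\nu q)$ with $q$ a critical point of the restriction $f$ of $U_{l_1}$ to the unit sphere and $f(q)\leq 0$. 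Hence, writing $\gamma(\tau)=(r(\tau),q(\tau),y(\tau))$, the curve $q(\tau)$ accumulates as $\tau\to+\infty$ on a connected component of the critical locus $\Sigma(f)$ contained in $\{f\leq 0\}$. By the definition of the McGehee coordinates \eqref{Coordinates_McGehee} one has $x(t(\tau))/\Vert x(t(\tau))\Vert=q(\tau)$, so composing with the reparametrization shows that $\omega(t\mapsto x(t)/\Vert x(t)\Vert)\subset\Sigma(f)$, which is the asserted asymptoticity in configuration space. The zero-energy claim is already part of the conclusion of Proposition \ref{mechanical_tightness_nongeneric}, but it also follows at once from continuity and constancy of $H$ along $\alpha$ together with $H(\mathbf{0})=U(p)=0$, equivalently from Corollary \ref{Corollary_omegalimit_constraint} and the definition of $\tilde{H}$ in Proposition \ref{Extension_energy}.

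The only genuine difference from the generic argument is that, without the Morse assumption, Proposition \ref{Morse_Proposition} is unavailable: one can no longer conclude that $\mathrm{Fix}(\partial M_0)$ is discrete nor that $\gamma$ converges to a single hyperbolic fixed point $(0,q_*,\nu_*q_*)$ with $f(q_*)<0$. Consequently the conclusion is correspondingly weaker --- asymptoticity to a component of the critical locus rather than to a single critical point --- but this is exactly how Corollary \ref{Mech_tightness_asymptotic orbit_II} is phrased, so nothing is lost. I do not anticipate a real obstacle here; the argument is essentially bookkeeping, and the only point deserving care is that the limiting critical points satisfy $f(q)\leq 0$ rather than $f(q)<0$, precisely because $\omega(\gamma)$ lands in $\partial M_0$ and not in $\partial M_{<0}$.
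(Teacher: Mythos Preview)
Your proposal is correct and matches the paper's own approach exactly: the paper does not give an explicit proof of Corollary \ref{Mech_tightness_asymptotic orbit_II} but simply states that it is obtained by a straightforward verbatim adaptation of the proof of Corollary \ref{Mech_tightness_asymptotic orbit}, which is precisely what you do. Your identification of the one substantive change---dropping Proposition \ref{Morse_Proposition} and hence weakening the conclusion from convergence to a single critical point with $f(q_*)<0$ to asymptoticity to a connected component of the critical locus in $\{f\leq 0\}$---is exactly the adaptation the paper has in mind.
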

\medskip

\begin{proof}[Proof of Proposition \ref{mechanical_tightness_nongeneric}]
Because of Lemmas \ref{concrete_description_fixpoint}, \ref{Lemma1_ch7}, \ref{Lemma2_ch7} and the definition of the criterion function, we have that 
$F$ is positive on the fixed point set of the critical boundary hence there is a neighbourhood $A\subset M$ of the mentioned fixed point set whereat $F$ is positive as well. In particular, because of Lemma \ref{Lemma1_ch7}, we have that
\begin{equation}\label{eq1_tightness_II}
\nu'(\zeta)>0,\qquad \forall\  \zeta\,\in\, A\cap \left(\tilde{H}^{-1}(-\infty, 0]-\partial M_{\leq 0}\right).
\end{equation}

Suppose that the point $p$ is not totally unstable. A verbatim argument as in the proof of Proposition \ref{mechanical_tightness} gives a sequence $(\omega_k)$ of minimal invariant compact sets such that for every natural $k$,
\begin{equation}\label{eq2_tightness_II}
\omega_k\,\subset\,A\cap \left(\tilde{H}^{-1}(-\infty, 0]-\partial M_{\leq 0}\right).
\end{equation}

Considering every $\omega_k$ as a dynamical system itself, by general theory, for every natural $k$ there is an ergodic measure $\mu_k$ with support in $\omega_k$. Since $\nu$ and $\nu'$ are continuous on the compact set $\omega_k$, because of expressions \eqref{eq1_tightness_II} and \eqref{eq2_tightness_II}, for every natural $k$ we have
$$0\,=\,\lim_{T\to+\infty}\,\frac{\nu(\varphi_T(\zeta))-\nu(\zeta)}{T}\,=\,\lim_{T\to+\infty}\,\frac{1}{T}\int_0^T d\tau\,\nu'(\varphi_\tau(\zeta))\,=\,
\int d\mu_k\,\nu'\,>\,0,$$
for $\mu_k\,$-\,almost every $\zeta$ in $\omega_k$, which is absurd and the result is proved.
\end{proof}

\appendix

\section{Abstract McGehee blowup}\label{abstract_blowup_appendix}

In this section we construct the \emph{abstract McGehee manifold}, an intrinsic object that naturally adopts the usual McGehee coordinates once a local coordinate chart on the configuration manifold is chosen. From this object, the analogue of Proposition \ref{McGehee_blowup_mechanical_system} in the intrinsic context naturally follows, realizing the \emph{abstract McGehee blowup}.

\begin{lema}[Extension]\label{extension_Lemma}
Consider a non-zero real analytic function $w$ defined on the ball $B(0,r_w)\subset\R^n$ such that its first non-zero jet at the origin is the degree $d$ homogeneous polynomial $w_d$. Then, there is a unique real analytic function $w_{>d}$ on the real analytic manifold $(-r_w,r_w)\times S^{n-1}$ such that
$$w(r\,q)\,=\,r^d\,w_d(q)\,+\,r^{d+1}\,w_{>d}(r,q),\qquad (r,q)\,\in\, (-r_w,r_w)\times S^{n-1}.$$
\end{lema}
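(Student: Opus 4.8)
The plan is to expand $w$ in its convergent Maclaurin series and factor out the radial parameter degree by degree. First I would write $w(x) = \sum_{k \geq d} P_k(x)$, where each $P_k$ is a homogeneous polynomial of degree $k$ (the first non-zero jet being $P_d = w_d$), and the series converges absolutely and uniformly on compact subsets of $B(0, r_w)$. Substituting $x = rq$ with $q \in S^{n-1}$ and using homogeneity gives $w(rq) = \sum_{k\geq d} r^k P_k(q) = r^d w_d(q) + r^{d+1} \sum_{k \geq d+1} r^{k-d-1} P_k(q)$. This suggests defining $w_{>d}(r,q) = \sum_{j \geq 0} r^j P_{d+1+j}(q)$, and the content of the lemma is that this formula defines a real analytic function on the full cylinder $(-r_w, r_w) \times S^{n-1}$, including negative $r$.

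The key steps, in order, are as follows. First, establish convergence: since $\sum_k \|P_k\|_\infty \rho^k < \infty$ for every $\rho < r_w$ (where $\|\cdot\|_\infty$ is the sup norm on $S^{n-1}$, controlled by the Cauchy estimates for $w$ on $B(0,\rho)$), the series $\sum_{j\geq 0} r^j P_{d+1+j}(q)$ converges absolutely and uniformly on $[-\rho, \rho] \times S^{n-1}$ for every $\rho < r_w$. Second, verify real analyticity of $w_{>d}$: it is a locally uniform limit of functions that are real analytic in $(r,q)$ jointly — each partial sum is a polynomial in $r$ with polynomial coefficients, hence restricts to a real analytic function on the real analytic manifold $\R \times S^{n-1}$ — and since the series together with all its term-by-term derivatives converge locally uniformly, the sum is real analytic; alternatively, one notes the series extends to a holomorphic function of $(r, q)$ in a complex neighborhood. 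Third, verify the identity $w(rq) = r^d w_d(q) + r^{d+1} w_{>d}(r,q)$: for $r > 0$ this is immediate from the rearrangement above, and for $r \leq 0$ it holds by real analyticity (both sides are real analytic on the connected cylinder and agree on the open set $r > 0$, hence everywhere). Fourth, uniqueness: if $\tilde{w}_{>d}$ is another such function, then $r^{d+1}(w_{>d} - \tilde{w}_{>d}) = 0$ on the cylinder, so $w_{>d} = \tilde{w}_{>d}$ wherever $r \neq 0$, which is dense, and by continuity everywhere.

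The main obstacle — really the only non-formal point — is the passage from $r > 0$ to negative $r$, i.e.\ checking that the naively defined $w_{>d}$ is genuinely real analytic across $r = 0$ rather than merely continuous. The resolution is exactly the observation that the series $\sum_{j\geq 0} r^j P_{d+1+j}(q)$ has a positive radius of convergence in $r$ uniform in $q$ (given by $r_w$), so it is not just smooth but analytic in $r$, and it is manifestly polynomial — hence analytic — in the sphere variable; one must phrase this cleanly on the manifold $(-r_w, r_w) \times S^{n-1}$ by working in local charts on $S^{n-1}$, where $P_{d+1+j}$ becomes a polynomial in the chart coordinates and the whole construction reduces to analyticity of a convergent power series in several variables. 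Everything else is bookkeeping with Cauchy estimates.
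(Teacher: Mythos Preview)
Your approach is essentially the same as the paper's --- expand $w$ in its Taylor series at the origin and factor out powers of $r$ --- but there is one claim that does not hold as stated and on which your global argument rests. You assert that $\sum_k \|P_k\|_\infty \rho^k < \infty$ for \emph{every} $\rho < r_w$, justified by ``Cauchy estimates for $w$ on $B(0,\rho)$''. This is false in general: a real analytic function on $B(0,r_w)$ need not have Maclaurin series converging on the whole ball (think of $1/(1+x^2)$ on $\R$, analytic everywhere but with Taylor radius $1$ at the origin). Cauchy estimates require a holomorphic extension to a complex ball, and real analyticity on $B(0,r_w)$ only guarantees such an extension to \emph{some} complex neighbourhood, possibly much smaller. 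So your series definition of $w_{>d}$ is only guaranteed to converge on $(-R,R)\times S^{n-1}$ for some $0<R\le r_w$, not on the full cylinder.

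The fix is exactly how the paper structures its proof: first observe that for $r\neq 0$ the explicit formula $w_{>d}(r,q)=r^{-d-1}\bigl(w(rq)-r^d w_d(q)\bigr)$ is manifestly real analytic on $\bigl((-r_w,r_w)\setminus\{0\}\bigr)\times S^{n-1}$, so the only issue is extension across $r=0$; then use the power series argument (in the paper's version, the finite decomposition $w(x)=w_d(x)+\sum_{|\alpha|=d+1}h_\alpha(x)\,x^\alpha$ with each $h_\alpha$ analytic near $0$) to produce a real analytic function on $(-R,R)\times S^{n-1}$ agreeing with the quotient for $r\neq 0$. Your uniqueness argument by density then patches the two pieces together. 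Everything else in your outline is fine.
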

\begin{proof}
It is clear that such a function exists and is unique outside the embedded sphere $\{0\}\times S^{n-1}$, that is for $r\neq 0$. It rest to show that it can be extended real analytically to the sphere. By continuity, the extension will be unique.

Since $w$ is real analytic at the origin, it has a power series representation centered at the mentioned point with radius of convergence $R>0$. Formally, $w$ can be written as
$$w(x)\,=\,w_d(x)\,+\,\sum_{|\alpha|=d+1}\,h_\alpha(x)\,x^\alpha$$
where $h_\alpha$ are power series centered at the origin and we have used the multi-index notation. By the Cauchy-Hadamard formula, each $h_\alpha$ has radius of convergence $R$ hence these are real analytic functions on the neighbourhood $B(0,R)$.

Because the unit sphere is a real analytic submanifold, the restriction of the real analytic functions $x^\alpha$ to the sphere are real analytic. Since the composition and linear combination of real analytic functions is real analytic as well, we have that the function $w_{>d}$ defined on $(-R,R)\times S^{n-1}$ as
$$w_{>d}\,(r,q)\,=\,\sum_{|\alpha|=d+1}\,h_\alpha(r\,q)\,q^\alpha,\qquad (r,q)\,\in\, (-R,R)\times S^{n-1}$$
is real analytic. The proof is complete.
\end{proof}

Recall the map $\pi:M\rightarrow T\R^n$ defined by the McGehee coordinates, expressions \eqref{map_pi} and \eqref{Coordinates_McGehee} in section \ref{blowup_definition_section}.

\begin{lema}\label{Coherence_charts}
Let $B$ and $B'$ be open sets in $\R^n$ containing the origin and consider a real analytic diffeomorphism $\phi:B\rightarrow B'$ such that $\phi(0)\,=\,0$. Then, there is a unique real analytic diffeomorphism $\tilde{\phi}$ such that the following diagram commutes,
$$\xymatrix{\pi^{-1}(TB) \ar[d]_\pi \ar@{.>}[rr]^{\tilde{\phi}} & & \pi^{-1}(TB') \ar[d]^\pi \\
TB\ar[rr]^{d\phi} & & TB'}.$$
Moreover, if $d_0 \phi$ is an isometry, that is an element in the group $O(n)$, then on the boundary $\partial M$ of $\pi^{-1}(TB)$, we have
\begin{equation}\label{rotation_tensor}
\tilde{\phi}(0,q,y)\,=\,(0,\,d_0\phi(q),\,d_0\phi(y)),\qquad (0,q,y)\in \partial M.
\end{equation}
\end{lema}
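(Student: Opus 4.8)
The plan is to note that on the interior $\pi^{-1}(TB)\setminus\partial M$, where $\pi$ restricts to a real analytic diffeomorphism onto $(B\setminus\{0\})\times\R^n$, the required identity $\pi\circ\tilde\phi=d\phi\circ\pi$ forces $\tilde\phi=\pi^{-1}\circ d\phi\circ\pi$; since the interior is dense in $\pi^{-1}(TB)$ and any admissible $\tilde\phi$ is continuous, this already yields uniqueness, so the entire content of the lemma is to produce a real analytic extension of this interior map across $\partial M$. First I would extract a factor of $r$ from $\phi$ along rays. Applying the Extension Lemma (Lemma \ref{extension_Lemma}) to each component of $\phi$ — each of which has first non-zero jet of degree exactly $1$, since invertibility of $d_0\phi$ forbids a vanishing linear part — produces a real analytic map $\Phi$ near $\{0\}\times S^{n-1}$, which glues with $\phi(rq)/r$ for $r>0$ to a real analytic map on the whole base $\{(r,q):r\ge 0,\ rq\in B\}$, such that
$$\phi(rq)\,=\,r\,\Phi(r,q),\qquad \Phi(0,q)\,=\,d_0\phi(q)\,\neq\,0$$
(one may equivalently take $\Phi(r,q)=\int_0^1 d_{trq}\phi(q)\,dt$). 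Setting $\rho(r,q)=\Vert\Phi(r,q)\Vert$, which is real analytic and strictly positive on the base — it equals $\Vert\phi(rq)\Vert/r>0$ for $r>0$ and $\Vert d_0\phi(q)\Vert>0$ at $r=0$ — a direct computation in McGehee coordinates gives, for $r>0$,
$$\tilde\phi(r,q,y)\,=\,\Bigl(\,r\,\rho(r,q),\ \ \Phi(r,q)/\rho(r,q),\ \ d_{rq}\phi(y)/\rho(r,q)^{l/2}\,\Bigr).$$
The crucial point is that the velocity rescalings cancel: from $v=r^{l/2}y$ and $(r')^{l/2}=(r\rho)^{l/2}=r^{l/2}\rho^{l/2}$, the potentially non-smooth factor $r^{l/2}$ divides out, so the right-hand side is manifestly real analytic wherever $\rho>0$, that is, on all of $\pi^{-1}(TB)$ including $\partial M$. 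I would take this formula as the definition of $\tilde\phi$.

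It then remains to verify three points, all routine given the explicit formula. Commutativity: $\pi\bigl(r\rho,\,\Phi/\rho,\,d_{rq}\phi(y)/\rho^{l/2}\bigr)=\bigl(r\Phi,\,r^{l/2}d_{rq}\phi(y)\bigr)=\bigl(\phi(rq),\,d_{rq}\phi(r^{l/2}y)\bigr)=d\phi(\pi(r,q,y))$, and this holds at $r=0$ as well (in particular $\tilde\phi(\partial M)\subset\partial M$, since $r'=r\rho$). Diffeomorphism: apply the same construction to $\phi^{-1}:B'\to B$, which is again real analytic with $\phi^{-1}(0)=0$, to obtain $\widetilde{\phi^{-1}}$; on the interiors $\widetilde{\phi^{-1}}\circ\tilde\phi=\mathrm{id}$ because there $\pi$ is a diffeomorphism and $d(\phi^{-1})=(d\phi)^{-1}$, and by density and continuity this, together with the opposite composition, persists on the whole manifolds with boundary, so $\tilde\phi$ is a real analytic diffeomorphism. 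Finally, if $d_0\phi\in O(n)$ then $\rho(0,q)=\Vert d_0\phi(q)\Vert=\Vert q\Vert=1$, and the formula at $r=0$ collapses to $\tilde\phi(0,q,y)=(0,\,d_0\phi(q),\,d_0\phi(y))$, which is \eqref{rotation_tensor}.

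The hard part is the real analytic extension across $\partial M$ in the second step: one must see that the a priori non-smooth factor $r^{l/2}$ introduced by the velocity rescaling in the definition of $\pi$ does no harm here. This works precisely because $\phi$ vanishes along rays to order exactly one — so $r'=\Vert\phi(rq)\Vert$ is comparable to $r$ with real analytic ratio $\rho$ — and because $d\phi$ acts linearly on the velocity, so the factor $r^{l/2}$ in $v'=d_{rq}\phi(v)$ is exactly matched by the $(r')^{l/2}$ of the target McGehee chart. Everything else — uniqueness, the diagram, the inverse, and the $O(n)$ specialisation — follows formally from the explicit formula for $\tilde\phi$.
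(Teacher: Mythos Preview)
Your proof is correct and follows essentially the same approach as the paper: both factor $\phi(rq)=r\,\Phi(r,q)$ via the Extension Lemma (the paper writes $\Phi=d_0\phi(q)+r\,h(r,q)$ and calls its norm $\Lambda$ rather than your $\rho$), write down the same explicit formula $\tilde\phi(r,q,y)=(r\rho,\ \Phi/\rho,\ d_{rq}\phi(y)/\rho^{l/2})$, and obtain the inverse by applying the construction to $\phi^{-1}$ and invoking uniqueness. Your version is slightly more explicit about the cancellation of the $r^{l/2}$ factors and about gluing the near-boundary formula with $\phi(rq)/r$ to cover the whole base, but the substance is identical.
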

\begin{proof}
Since $d\phi$ is a real analytic diffeomorphism and $\pi$ is so outside $\partial M$, there is a unique real analytic diffeomorphism $\tilde{\phi}$ making the diagram commute outside the boundary, that is in the region $r>0$. It rest to show that the map $\tilde{\phi}$ extends as a real analytic diffeomorphism to $r=0$. By continuity, the extension will make the diagram commute as well.

There is a ball of radius $\varepsilon>0$ centered at the origin and contained in $B$. By Lemma \ref{extension_Lemma}, there is a real analytic function $h$ such that
$$\phi(r\,q)\,=\,r\,d_0\phi(q)\,+\,r^2\,h(r,q),\qquad (r,q)\in (-\varepsilon,\varepsilon)\times S^{n-1}.$$
Since $d_0\phi$ is an isomorphism, the function given by
\begin{equation}\label{eq1_Lemma_phi_tilde}
\Lambda(r,q)\,=\,\Vert d_0\phi(q)\,+\,r\,h(r,q)\Vert,\qquad (r,q)\,\in\, (-\varepsilon,\varepsilon)\times S^{n-1}
\end{equation}
is real analytic and strictly positive for a sufficiently small $\varepsilon>0$. Therefore, defining
\begin{equation}\label{eq2_Lemma_phi_tilde}
(\tilde{r},\tilde{q},\tilde{y})\,=\,\tilde{\phi}\,(r,q,y),\qquad (r,q,y)\in \pi^{-1}(TB)-\partial M,
\end{equation}
we necessarily have the real analytic extension to the region $(-\varepsilon,\varepsilon)\times S^{n-1}\times \R^n$,
\begin{equation}\label{eq3_Lemma_phi_tilde}
\tilde{r}=\Lambda r,\qquad \tilde{q}\,=\,\frac{1}{\Lambda}\,(d_0\phi(q)\,+\,r\,h(r,q)),\qquad \tilde{y}\,=\,\frac{1}{\Lambda^{k/2}}\,d_{rq}\phi(y).
\end{equation}
In particular, there is a unique real analytic extension of the map $\tilde{\phi}$ to the region $r=0$. A verbatim argument for the inverse map $(d\phi)^{-1}=d(\phi^{-1})$ shows the existence of $\widetilde{\phi^{-1}}$ with the above properties and because of uniqueness,
$$\widetilde{\phi}\,\widetilde{\phi^{-1}}\,=\,\widetilde{\phi^{-1}}\,\widetilde{\phi}\,=\,\widetilde{id}\,=\,id.$$
Hence $\tilde{\phi}$ has a real analytic inverse. The second part of the statement follows directly from \eqref{eq1_Lemma_phi_tilde},\eqref{eq2_Lemma_phi_tilde} and \eqref{eq3_Lemma_phi_tilde} and we have the result.
\end{proof}

Consider a real analytic Riemannian manifold $(E,g)$ and a point $p\in E$. Denote by $\mathcal{A}$ the family of local charts $(A,\eta)$ centered at $p$ such that the pullback by $\eta$ of the usual inner product at the origin coincides with the metric at $p$, that is
$$\eta:A\rightarrow\R^n,\qquad \eta(p)\,=\,0,\qquad \eta^*\,\langle\cdot,\cdot\rangle_0\,=\,g_p,\qquad p\in A\subset E.$$
Consider the open subset of $E$ covered by these charts,
$$E_{\mathcal{A}}\,=\,\bigcup_{(A,\eta)\in\mathcal{A}}\,A.$$

\begin{prop}\label{abstract_manifold}
There is a real analytic manifold $\mathcal{M}$ with boundary and a surjective continuous map $\epsilon:\mathcal{M}\rightarrow TE_{\mathcal{A}}$ that is a real analytic diffeomorphism outside the boundary such that, for every local chart $(A,\eta)$ in $\mathcal{A}$, there is a real analytic diffeomorphism over its image $\tilde{\eta}$ making the following diagram commute
\begin{equation}\label{commutative_diagram_main}
\xymatrix{\mathcal{M} \ar[rr]^\epsilon & & TE_{\mathcal{A}} & \\
& \ \ \ \ \ \epsilon^{-1}(TA) \ar@{^{(}->}[ul] \ar[rr]^\epsilon \ar[dl]_{\tilde{\eta}} & & \ \ TA \ar[dl]^{d\eta} \ar@{^{(}->}[ul] \\
M \ar[rr]^\pi & & T\R^n & }.
\end{equation}
\end{prop}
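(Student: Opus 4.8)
The plan is to obtain $\mathcal{M}$ by gluing, along their common open parts, the local McGehee manifolds $\hat M_\eta := \pi^{-1}(T\eta(A))\subset M$ indexed by the charts $(A,\eta)\in\mathcal{A}$, using the coherence maps furnished by Lemma \ref{Coherence_charts}. For two charts $(A,\eta),(A',\eta')\in\mathcal{A}$ the transition $\phi_{\eta'\eta}=\eta'\circ\eta^{-1}$ is a real analytic diffeomorphism $\eta(A\cap A')\to\eta'(A\cap A')$ fixing the origin, and $d_0\phi_{\eta'\eta}\in O(n)$ precisely because both charts pull the standard inner product at $0$ back to $g_p$. Hence Lemma \ref{Coherence_charts} yields a unique real analytic diffeomorphism $\tilde\phi_{\eta'\eta}:\pi^{-1}(T\eta(A\cap A'))\to\pi^{-1}(T\eta'(A\cap A'))$ with $\pi\circ\tilde\phi_{\eta'\eta}=d\phi_{\eta'\eta}\circ\pi$, which by \eqref{rotation_tensor} acts on the boundary by $(0,q,y)\mapsto(0,d_0\phi_{\eta'\eta}(q),d_0\phi_{\eta'\eta}(y))$.

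The first step is the cocycle identity. Since $\pi$ is a real analytic diffeomorphism off $\partial M$ and the honest transitions satisfy $\phi_{\eta''\eta'}\circ\phi_{\eta'\eta}=\phi_{\eta''\eta}$, the uniqueness clause of Lemma \ref{Coherence_charts} forces $\tilde\phi_{\eta\eta}=\mathrm{id}$, $\tilde\phi_{\eta\eta'}=\tilde\phi_{\eta'\eta}^{-1}$ and $\tilde\phi_{\eta''\eta'}\circ\tilde\phi_{\eta'\eta}=\tilde\phi_{\eta''\eta}$ over triple overlaps. I then set $\mathcal{M}=\bigl(\bigsqcup_{(A,\eta)\in\mathcal{A}}\hat M_\eta\bigr)/\!\sim$, with $\zeta\sim\tilde\phi_{\eta'\eta}(\zeta)$; the cocycle makes $\sim$ an equivalence relation, and since the $\tilde\phi_{\eta'\eta}$ are real analytic diffeomorphisms between open subsets, the standard atlas-gluing lemma endows $\mathcal{M}$ with a real analytic manifold-with-boundary structure in which each $\iota_\eta\colon\hat M_\eta\to\mathcal{M}$ is an open real analytic embedding. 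The map $\epsilon$ is defined chart-by-chart by $\epsilon\circ\iota_\eta=(d\eta)^{-1}\circ\pi$; this is consistent on overlaps because $(d\eta')^{-1}\circ\pi\circ\tilde\phi_{\eta'\eta}=(d\eta')^{-1}\circ d\phi_{\eta'\eta}\circ\pi=(d\eta)^{-1}\circ\pi$, so $\epsilon$ descends, is continuous, and is surjective since the $A$'s cover $E_\mathcal{A}$ and $\pi(\hat M_\eta)=T\eta(A)$. One checks $\epsilon^{-1}(TA)=\iota_\eta(\hat M_\eta)$ (a point of another chart mapping into $TA$ is, via $\tilde\phi$, already represented in $\hat M_\eta$), so $\tilde\eta:=\iota_\eta^{-1}$ is a real analytic diffeomorphism of $\epsilon^{-1}(TA)$ onto $\hat M_\eta\subset M$ making \eqref{commutative_diagram_main} commute. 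Off the boundary $\epsilon$ is a local real analytic diffeomorphism (locally it equals $(d\eta)^{-1}\circ\pi$) and injective there (within a chart $\pi$ is injective, and across charts uniqueness of $\tilde\phi$ identifies the two preimages); since $\pi^{-1}(\mathbf 0)=\partial M$ we get $\epsilon^{-1}(\mathbf 0_E)=\partial\mathcal{M}$ with $\mathbf 0_E=(p,0)$, hence $\epsilon$ restricts to a real analytic diffeomorphism $\mathcal{M}\setminus\partial\mathcal{M}\xrightarrow{\cong}TE_\mathcal{A}\setminus\{\mathbf 0_E\}$.

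The main obstacle is Hausdorffness of the quotient, the usual danger in gluing constructions. Given $\zeta_1\ne\zeta_2$ with $\epsilon(\zeta_1)\ne\epsilon(\zeta_2)$, separate them by pulling back disjoint neighbourhoods from the Hausdorff manifold $TE_\mathcal{A}$. If instead $\epsilon(\zeta_1)=\epsilon(\zeta_2)$, injectivity of $\epsilon$ off $\partial\mathcal{M}$ forces $\zeta_1,\zeta_2\in\partial\mathcal{M}$ and $\epsilon(\zeta_i)=\mathbf 0_E$; here \eqref{rotation_tensor} is decisive, because $\mathbf 0\in T\eta(A\cap A')$ always, so the boundary part of every $\tilde\phi_{\eta'\eta}$ is defined on all of $\{0\}\times S^{n-1}\times\R^n$ and equals the linear isometry $d_0\phi_{\eta'\eta}$; thus all boundary slices are glued in their entirety, and $\zeta_1,\zeta_2$ both have representatives inside a single $\hat M_{\eta_0}$. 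Choosing disjoint neighbourhoods there and taking their images under the open map $\iota_{\eta_0}$ gives disjoint neighbourhoods in $\mathcal{M}$, since no two distinct points of one chart are identified, so the images cannot meet. Second countability then follows by passing to a countable subfamily of $\mathcal{A}$ still covering $E_\mathcal{A}$ (possible as $E_\mathcal{A}$ is Lindel\"of), or directly from second countability of $TE_\mathcal{A}$ and of $\partial\mathcal{M}\cong S^{n-1}\times\R^n$. The commuting diagram \eqref{commutative_diagram_main} holds for every $(A,\eta)\in\mathcal{A}$ by the very definitions of $\tilde\eta$ and $\epsilon$, completing the argument.
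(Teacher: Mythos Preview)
Your proof is correct and follows essentially the same gluing construction as the paper: build the transition maps $\tilde\phi_{\eta'\eta}$ from Lemma~\ref{Coherence_charts}, verify the cocycle identity from uniqueness, take the quotient of the disjoint union, and define $\epsilon$ and $\tilde\eta$ chartwise. You are actually more careful than the paper on one point: the paper simply asserts that ``the quotient set inherits a real analytic manifold structure'' without addressing Hausdorffness, whereas you give an explicit argument (using that the entire boundary $\{0\}\times S^{n-1}\times\R^n$ lies in every overlap domain, so any two boundary points can be compared in a single chart) and you also note second countability. These additions are welcome and do not change the underlying approach.
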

\begin{proof}
This is a formal construction. Consider a pair of local charts $(A_1,\eta_1)$ and $(A_2,\eta_2)$ in $\mathcal{A}$. Denote by $A$ the intersection $A_{12}=A_1\cap A_2$ and by $\eta$ and $\eta'$ the restrictions of $\eta_1$ and $\eta_2$ to $A$ respectively. We have the commutative diagram
$$\xymatrix{ & A  \ar[dl]_\eta \ar[dr]^{\eta'} & \\
\eta(A) \ar[rr]^{\phi_{\eta \eta'}} & & \eta'(A) }.$$
Note that the map $\phi_{\eta \eta'}$ is a real analytic diffeomorphism verifying
$$\phi_{\eta \eta'}(0)\,=\,0,\qquad d_0\phi_{\eta \eta'}\,\in\,SO(n).$$
Taking the tangent functor on the previous diagram and considering the fact that $d\eta$ and $d\eta'$ are real analytic diffeomorphisms, by Lemma \ref{Coherence_charts} there are surjective maps $\epsilon_{\eta}$, $\epsilon_{\eta'}$ and a real analytic diffeomorphism $\widetilde{\phi}_{\eta \eta'}$ such that the following diagram commutes
\begin{equation}\label{commutative_diagram}
\xymatrix{ \pi^{-1}(T\,\eta(A))  \ar[dd]_\pi \ar@{.>}[rr]^{\widetilde{\phi}_{\eta \eta'}} \ar@{.>}[dr]^{\epsilon_\eta} & & \pi^{-1}(T\,\eta'(A)) \ar[dd]^\pi \ar@{.>}[dl]_{\epsilon_{\eta'}} \\
& TA  \ar[dl]_{d\eta} \ar[dr]^{d\eta'} & \\
T\,\eta(A)  \ar[rr]_{d\phi_{\eta \eta'}} & & T\,\eta'(A)}.
\end{equation}
Note that the maps $\epsilon_{\eta}$, $\epsilon_{\eta'}$ are continuous and real analytic diffeomorphisms outside the boundary, properties inherited from $\pi$.

By the uniqueness in Lemma \ref{Coherence_charts}, the correspondence $\phi\mapsto\tilde{\phi}$ is functorial, that is it verifies
$\widetilde{\phi_1\phi_2}=\widetilde{\phi_1}\widetilde{\phi_2}$ and $\widetilde{id}=id$. In particular, the cocycle condition is verified, specifically, given local charts $(A_1,\eta_1)$, $(A_2,\eta_2)$ and $(A_3,\eta_3)$ in $\mathcal{A}$, the following identity is verified on $A_{123}=A_1\cap A_2 \cap A_3$,
\begin{equation}\label{cocycle_condition}
\widetilde{\phi}_{\eta_1 \eta_2}\,\widetilde{\phi}_{\eta_2 \eta_3}\,\widetilde{\phi}_{\eta_3 \eta_1}\,=\,id.
\end{equation}

Define an equivalence relation $\sim$ on the set
\begin{equation}\label{disjoint_union}
\bigsqcup_{(A,\eta)\in\mathcal{A}}\,\pi^{-1}(T\,\eta(A)),
\end{equation}
by considering the points $\zeta\in \pi^{-1}(T\,\eta_1(A_1))$ and $\zeta'\in \pi^{-1}(T\,\eta_2(A_2))$ equivalent if $\zeta'=\widetilde{\phi}_{\eta_1 \eta_2}(\zeta)$. By definition, it is reflexive and symmetric and because of the cocycle condition \eqref{cocycle_condition}, it is transitive.

Since the equivalence relation is given by the real analytic diffeomorphisms $\widetilde{\phi}_{\eta_1 \eta_2}$, the quotient set inherits a real analytic manifold structure hence the following object is a real analytic manifold with boundary
$$\mathcal{M}\,=\,\bigsqcup_{(A,\eta)\in\mathcal{A}}\,\pi^{-1}(T\,\eta(A))\,/\,\sim.$$

Because of the commutative diagram \eqref{commutative_diagram}, there is a well-defined map
$$\epsilon:\mathcal{M}\rightarrow TE_\mathcal{A},\qquad [(\zeta,(A,\eta))]\mapsto\epsilon_\eta(\zeta).$$
We have made explicit that the union \eqref{disjoint_union} is disjoint and a typical element is a pair $(\zeta,(A,\eta))$, so that the fact that the map is well-defined is evident. This map inherits the properties of the collection of maps $\epsilon_\eta$, namely that it is surjective, continuous and a real analytic diffeomorphism outside the boundary.

Again, because of the commutative diagram \eqref{commutative_diagram}, it is clear that, given $(A,\eta)\in \mathcal{A}$, the map
$$\tilde{\eta}:\epsilon^{-1}(TA)\rightarrow \pi^{-1}(T\,\eta(A)),\qquad [(\zeta,(A,\eta))]\mapsto \zeta$$
is a well-defined real analytic diffeomorphism and makes the following diagram commute,
$$\xymatrix{\epsilon^{-1}(TA) \ar[rr]^\epsilon \ar[d]_{\tilde{\eta}} & & \ \ TA \ar[d]^{d\eta} \\
\pi^{-1}(T\,\eta(A)) \ar[rr]^\pi & & T\,\eta(A)}.$$
This concludes the proof.
\end{proof}

\begin{defi}
The manifold $\mathcal{M}$ in Proposition \ref{abstract_manifold} will be called the abstract McGehee manifold.
\end{defi}

The main objects constructed in section \ref{blowup_definition_section} have their corresponding intrinsic analogue in the manifold $\mathcal{M}$ is such a way that these are recovered from the intrinsic ones once a local chart in $\mathcal{A}$ is chosen.

Indeed, because of the transformation \eqref{rotation_tensor}, there is a real analytic function
$$\hat{H}:\partial\mathcal{M}\rightarrow\R$$
such that, for every local chart $(A,\eta)$ in $\mathcal{A}$, we have on the boundary $\partial \mathcal{M}$ that
$$\hat{H}\,=\,\tilde{H}\circ \tilde{\eta},$$
where $\tilde{H}$ is the function defined in Proposition \ref{Extension_energy}. In particular, with respect to any local chart in $\mathcal{A}$, the restriction $f$ of the first non-zero jet of the potential to the unit sphere is unique up to some rotation of the sphere. Thus, the degree of the first non-zero jet of the potential is intrinsic itself.

By definition, with respect to some local chart in $\mathcal{A}$, outside the boundary $\partial M$, the function $\tilde{H}$ differs from $H\circ \pi$ only by a positive factor. Thus, because of Proposition \ref{abstract_manifold}, the regions $(H\circ \epsilon)^{-1}(-\infty,0)$ and $(H\circ \epsilon)^{-1}(0)$ are intrinsic in $\mathcal{M}-\partial\mathcal{M}$. At the boundary $\partial\mathcal{M}$, by continuity these regions are intrinsic as well. In particular, the critical and subcritical boundaries
$$\partial\mathcal{M}_0=\partial\mathcal{M}\cap (H\circ \epsilon)^{-1}(0),\qquad \partial\mathcal{M}_{<0}=\partial\mathcal{M}\cap (H\circ \epsilon)^{-1}(-\infty,0)$$ are intrinsic.

Similarly, with respect to some local chart $(A,\eta)$ in $\mathcal{A}$, outside the boundary $\partial M$, the vector field $\tilde{X}$ defined in Proposition \ref{Extension_vector} differs from $(\pi^{-1})_*(X)$ only by a positive factor. Hence, by Proposition \ref{abstract_manifold} and Lemma \ref{flow_equivalence}, there is an intrinsic flow equivalence class on $\mathcal{M}-\partial\mathcal{M}$. Because the transformation \eqref{rotation_tensor} is a symmetry of the equations \eqref{McGehee_eq} at $r=0$, at the boundary $\partial\mathcal{M}$ there is an intrinsic vector field whose pushout by $\tilde{\eta}$ is $\tilde{X}$. In particular, the dynamics on $\partial\mathcal{M}_{\leq 0}$ is conjugated to the one on $\partial M_{\leq 0}$.

The only a priori non-trivial part verifying the previous claim that the transformation \eqref{rotation_tensor} is a symmetry of the equations \eqref{McGehee_eq} at $r=0$, is perhaps the gradient term. This is because the function $\nu$ is clearly invariant under the transformation and the rest of the terms are linear. For the gradient term, denote by $\Lambda$ the rotation $d_0\phi$ and denote by $\mathsf{x}$ the new coordinates verifying $x=\Lambda\,\mathsf{x}$. By the chain rule we have
$$\frac{\partial\,(U\circ \Lambda)}{\partial\,\mathsf{x}^a}(\mathsf{x})\,=\,\sum_b\,\frac{\partial\,U}{\partial\, x^b}(\Lambda\,\mathsf{x})\,\frac{\partial\,\Lambda^b}{\partial\, \mathsf{x}^a}(\mathsf{x})\,=\,
\sum_b\,\Lambda^b_a\,\frac{\partial\,U}{\partial\, x^b}(\Lambda\,\mathsf{x})\,=\,
\sum_b\,(\Lambda^{-1})^a_b\,\frac{\partial\,U}{\partial\, x^b}(\Lambda\,\mathsf{x})
$$
where we have used that $\Lambda$ is orthogonal and made the abuse of notation identifying the function $\Lambda$ with its matrix. Therefore,
$$\nabla\,(U\circ \Lambda)\,=\,\Lambda^{-1}\,(\nabla U)\circ \Lambda$$
and the claim is proved.

In view of the previous observations, the analogue of Proposition \ref{McGehee_blowup_mechanical_system} holds for the intrinsic objects as well. In the following statement, we consider the energy function restricted to $TE_{\mathcal{A}}$,
$$H:TE_{\mathcal{A}}\rightarrow \R.$$

\begin{prop}\label{abstract_McGehee_blowup}
Suppose that the critical point $p$ of the potential $U$ is not a strict minimum and $U(p)=0$. Then, the restriction of the map $\epsilon$ to the critical and subcritical energy regions
$$\epsilon_{\leq 0}:\,(H\circ \epsilon)^{-1}(-\infty,0]\rightarrow H^{-1}(-\infty,0]$$
is a surjective continuous proper map with the following properties:
\medskip

\begin{enumerate}
\item $\epsilon_{\leq 0}^{-1}\left((p,0)\right)\,=\,\it \partial \mathcal{M}_{\leq \rm 0}\,\neq\,\emptyset$.
\medskip

\item The following restriction is a homeomorphism
$$\epsilon_{\leq 0}:\,(H\circ \epsilon)^{-1}(-\infty,0]\,-\,\partial \mathcal{M}_{\leq 0}\rightarrow H^{-1}(-\infty,0]\,-\,\{(p,0)\}.$$

\item The following restriction is a real analytic diffeomorphism
$$\epsilon_{< 0}:\,(H\circ \epsilon)^{-1}(-\infty,0)\,-\,\partial \mathcal{M}_{< 0}\,\rightarrow\, H^{-1}(-\infty,0).$$

\item Dynamically, it maps an orbit to another orbit preserving their orientations and the resulting orbit map is surjective. In particular, the map in item \ref{item_homeo_McGehee_blowup} is a topological equivalence of flows.
\end{enumerate}
\end{prop}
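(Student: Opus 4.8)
The plan is to reduce the statement to Proposition \ref{McGehee_blowup_mechanical_system}, which was proved for a fixed coordinate chart, by transporting every object through the diffeomorphisms supplied by Proposition \ref{abstract_manifold}. Fix a chart $(A,\eta)\in\mathcal{A}$; shrinking $A$ if necessary (this keeps $(A,\eta)$ in $\mathcal{A}$) we may assume $\eta(A)$ is a ball centered at the origin, so that $\pi^{-1}(T\eta(A))$ is exactly the McGehee manifold $M$ of section \ref{blowup_definition_section} associated with the Lagrangian $L$ read in the chart $\eta$. The key observation is that $\partial\mathcal{M}\subset\epsilon^{-1}(TA)$: by the commutative diagram \eqref{commutative_diagram_main} the map $\epsilon$ collapses $\partial\mathcal{M}$ onto the single point $(p,0)$, and $(p,0)\in TA$ since $p\in A$; hence $\partial\mathcal{M}=\epsilon^{-1}((p,0))\subset\epsilon^{-1}(TA)$, which is open in $\mathcal{M}$. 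Thus the whole interesting locus lies in the single patch $\epsilon^{-1}(TA)$, on which $\tilde{\eta}$ is a real analytic diffeomorphism onto $M$ with $\epsilon=(d\eta)^{-1}\circ\pi\circ\tilde{\eta}$.

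Next I would verify that the intrinsic data correspond to their chart representatives under $\tilde{\eta}$ and $d\eta$. The energy $H$ and the Euler\,-\,Lagrange flow on $TE_{\mathcal{A}}$ are determined by the Lagrangian alone, so $d\eta$ carries $H|_{TA}$ and its flow to the energy function and Euler\,-\,Lagrange flow of section \ref{blowup_definition_section}; correspondingly $\tilde{\eta}$ carries the intrinsic boundary energy $\hat{H}$ and the intrinsic boundary McGehee field to $\tilde{H}|_{\partial M}$ and $\tilde{X}|_{\partial M}$, the latter identifications being precisely the content of the transformation rule \eqref{rotation_tensor} together with the functoriality in Lemma \ref{Coherence_charts}. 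Consequently $\tilde{\eta}$ maps the strata $\partial\mathcal{M}_0$, $\partial\mathcal{M}_{\leq 0}$ and the critical and subcritical energy regions of $\mathcal{M}$ onto $\partial M_0$, $\partial M_{\leq 0}$ and the corresponding regions of $M$.

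With these identifications, the four listed properties follow. The homeomorphism and diffeomorphism assertions away from $\partial\mathcal{M}$ are immediate, since by Proposition \ref{abstract_manifold} the map $\epsilon$ is already a real analytic diffeomorphism of $\mathcal{M}-\partial\mathcal{M}$ onto $TE_{\mathcal{A}}-T_pE$, and $H^{-1}(-\infty,0]\cap T_pE=\{(p,0)\}$ because $H(p,w)=\tfrac12 g_p(w,w)\geq 0$; near $\partial\mathcal{M}$ the chart picture and Proposition \ref{McGehee_blowup_mechanical_system} apply verbatim. Surjectivity onto $(p,0)$ and nonemptiness of the fibre $\partial\mathcal{M}_{\leq 0}$ come from Lemma \ref{boundary_non_empty} (this is where the hypothesis that $p$ is not a strict minimum and $U(p)=0$ is used), and the dynamical assertion of item four is the corresponding assertion of Proposition \ref{McGehee_blowup_mechanical_system} transported along $d\eta$ and $\tilde{\eta}$. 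For properness of $\epsilon_{\leq 0}$, the only non-local property, I would argue exactly as in the proof of Proposition \ref{McGehee_blowup_mechanical_system}: given a compact $K\subset H^{-1}(-\infty,0]$ and a sequence $(\zeta_n)$ in $\epsilon_{\leq 0}^{-1}(K)$ with $\epsilon(\zeta_n)\to z\in K$, if $z\neq(p,0)$ one uses that $\epsilon$ is a homeomorphism there, while if $z=(p,0)$ then $(\zeta_n)$ is eventually in the open set $\epsilon^{-1}(TA)\supset\partial\mathcal{M}$, where the estimate $\tfrac12\|y_n\|^2\leq -f(q_n)-r_nF_H(r_n,q_n)$ coming from \eqref{Energy_rescaled} confines the sequence to a compact set and yields a convergent subsequence.

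I expect the main obstacle to be organizational rather than conceptual: one must check carefully that the objects in play — the intrinsic boundary energy $\hat{H}$, the induced boundary McGehee flow, and hence the strata $\partial\mathcal{M}_0$ and $\partial\mathcal{M}_{\leq 0}$ — are genuinely independent of the chosen chart, which rests on the fact recorded just before the statement that the rotation transformation \eqref{rotation_tensor} is a symmetry of the equations \eqref{McGehee_eq} at $r=0$ (the gradient term being the only point needing verification), and then to splice this local chart description cleanly with the global assertion that $\epsilon$ is a diffeomorphism off $\partial\mathcal{M}$. Once that bookkeeping is in place, Proposition \ref{McGehee_blowup_mechanical_system} carries all the substance.
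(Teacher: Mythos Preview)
Your proposal is correct and takes essentially the same approach as the paper: the paper's proof is a single line, ``This follows directly from Propositions \ref{abstract_manifold} and \ref{McGehee_blowup_mechanical_system},'' and your argument is precisely the unpacking of that sentence --- use the chart diffeomorphisms $\tilde{\eta}$ and $d\eta$ from Proposition \ref{abstract_manifold} to transport everything near the boundary to the coordinate setting of Proposition \ref{McGehee_blowup_mechanical_system}, and use the global diffeomorphism property of $\epsilon$ off $\partial\mathcal{M}$ for the rest. Your additional bookkeeping (localizing to a single chart via $\partial\mathcal{M}\subset\epsilon^{-1}(TA)$, rerunning the properness estimate from \eqref{Energy_rescaled}) is sound and simply fills in what the paper leaves implicit.
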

\begin{proof}
This follows directly from Propositions \ref{abstract_manifold} and \ref{McGehee_blowup_mechanical_system}.
\end{proof}

\begin{defi}
The map in Proposition \ref{abstract_McGehee_blowup} is the abstract McGehee blowup for the Lagrangian system at $p$.
\end{defi}

\section{Accumulation at the critical boundary}\label{flow_eq_appendix}

Note that in the following lemma, neither the completeness of the manifolds nor of the vector fields is assumed.

\begin{lema}\label{flow_equivalence}
Let $\psi:N\rightarrow N'$ be a diffeomorphism and $X$ a vector field on $N$. Consider a real-valued positive smooth function $f$ on $N'$, and define the vector field $X'$ on $N'$,
$$X'(p')\,=\,f(p')\,\psi_*(X)(p'),\qquad f(p')>0,\qquad p'\in N'.$$
Then, the corresponding flows are equivalent, that is $\psi$ maps bijectively orbits in orbits preserving their orientation.
\end{lema}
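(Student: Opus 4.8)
The plan is to show that the orbits of $X$ and $X'$ coincide after transport by $\psi$, with only a positive (orientation-preserving) time reparametrization. First I would reduce to the case $N' = N$ and $\psi = \id$: since $\psi$ is a diffeomorphism, the pushforward $\psi_*X$ is a genuine smooth vector field on $N'$, and $\psi$ maps integral curves of $X$ bijectively onto integral curves of $\psi_*X$ preserving orientation. So it suffices to prove that $X$ and $fX$ (with $f>0$ smooth) have the same oriented orbits on a single manifold.

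Next I would fix a point $p\in N$ and let $\gamma:(\omega_-,\omega_+)\to N$ be the maximal integral curve of $X$ with $\gamma(0)=p$. Define $\sigma(t) = \int_0^t \frac{1}{f(\gamma(s))}\,ds$; since $f\circ\gamma$ is continuous and strictly positive, $\sigma$ is a strictly increasing $C^1$ diffeomorphism from $(\omega_-,\omega_+)$ onto its image $(a_-,a_+)\ni 0$. Let $t=t(\tau)$ be its inverse and set $\delta(\tau) = \gamma(t(\tau))$. A direct computation using the chain rule gives
\begin{equation*}
\delta'(\tau) \,=\, \gamma'(t(\tau))\,t'(\tau) \,=\, X(\gamma(t(\tau)))\,f(\gamma(t(\tau))) \,=\, f(\delta(\tau))\,X(\delta(\tau)) \,=\, X'(\delta(\tau)),
\end{equation*}
so $\delta$ is an integral curve of $X'$ through $p$, with the same image as $\gamma$ and the same orientation (because $\tau\mapsto t(\tau)$ is increasing). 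I would then argue that $\delta$ is the \emph{maximal} integral curve of $X'$ through $p$: if it extended to some $\tau_* \ge a_+$, then pulling back through the (local) time change near the endpoint would extend $\gamma$ beyond $\omega_+$, contradicting maximality; similarly at $a_-$. Hence the maximal orbit of $X'$ through $p$ equals the maximal orbit of $X$ through $p$ as oriented subsets of $N$. Running the same argument with the roles of $X$ and $X'$ exchanged (using $1/f>0$) shows the correspondence is a bijection on orbits, and since $\psi$ was a diffeomorphism carrying $X$-orbits to $\psi_*X$-orbits, we conclude that $\psi$ sends $X$-orbits bijectively and orientation-preservingly onto $X'$-orbits.

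The only mildly delicate point is the maximality bookkeeping at the ends of the time intervals: one must check that the reparametrization $\sigma$ does not "hide" a finite-time blowup, i.e. that the integral curve of $X'$ genuinely ceases to exist exactly when that of $X$ does, and vice versa. This follows because near any interior point the time change is a local $C^1$ diffeomorphism, so an extension of one curve past its alleged maximal interval would transport to an extension of the other; I would phrase this as: the maximal interval of $\delta$ is exactly $\sigma((\omega_-,\omega_+))$, with no further extension possible. Everything else is a routine application of the chain rule and uniqueness of integral curves, and no completeness hypothesis on $N$, $N'$, $X$ or $X'$ is needed.
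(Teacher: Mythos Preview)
Your proposal is correct and follows essentially the same approach as the paper: both define the time reparametrization $\sigma(t)=\int_0^t ds/f(\gamma(s))$, verify via the chain rule that the reparametrized curve solves the other equation, and then check that the reparametrization exhausts the maximal interval. The only cosmetic difference is that you first reduce to $\psi=\id$ and handle maximality by the symmetry $f\leftrightarrow 1/f$, whereas the paper argues directly that a strict inclusion of intervals would force $\gamma$ to converge to a point, hence $\omega_+=+\infty$, contradicting convergence of the reparametrization integral whose integrand is bounded below near that limit.
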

\begin{proof}
Consider a point $p$ in $N$ and $p'=\psi(p)$. The corresponding flows $\eta$ and $\eta'$ on $N$ and $N'$ respectively, have the following maximal intervals of definition at the points $p$ and $p'$ respectively
$$\left(t\mapsto \eta_p(t)\right),\qquad t\in (\omega_-,\,\omega_+),$$
$$\left(t'\mapsto \eta'_{p'}(t')\right),\qquad t'\in (\omega'_-,\,\omega'_+).$$

Define the function $t\mapsto t'(t)$ by the formula
\begin{equation}\label{formula_times}
t'(t)\,=\,\int_0^t\,\frac{ds}{f(\psi\circ \eta_p(s))},\qquad t\in (\omega_-,\,\omega_+).
\end{equation}
Note that it is an orientation preserving diffeomorphism. We claim that
\begin{equation}\label{flow_equality}
\eta'_{p'}\left(t'(t)\right)\,=\,\psi\circ \eta_p(t),\qquad t\in (\omega_-,\,\omega_+).
\end{equation}
Indeed, it follows immediately from
$$\frac{d\left(\psi\circ \eta_p\right)}{dt}\,=\,d\psi\left(\frac{d\eta_p}{dt}\right)\,=\,d\psi\left(X\circ\eta_p\right)\,=\,\psi_*(X)\left(\psi\circ\eta_p\right),$$
formula \eqref{formula_times} and the uniqueness of solutions. In particular, we have shown that
$$t'\,(\omega_-,\,\omega_+)\,\subset\,(\omega'_-,\,\omega'_+).$$

It rest to show that the previous inclusion is actually an equality and from this it will immediately follow that
$$\psi\left(\I_p\right)\,=\,\I_{p'},$$
thus proving the result since \eqref{formula_times} is a positive reparametrization and the point $p$ was arbitrarily chosen.

Denote by $(t'_-,\,t'_+)\,=\,t'\,(\omega_-,\,\omega_+)$ and suppose that $t'_+\,<\,\omega'_+$. In particular, the supremum $t'_+$ is finite and by \eqref{flow_equality} we have
\begin{equation}\label{limit_flows}
\lim_{t\to \omega_+}\,\eta_p(t)\,=\,\psi^{-1}\left(\eta'_{p'}(t'_+)\right).
\end{equation}
Thus, $\eta_p(t)$ is eventually contained in a compact set hence $\omega_+\,=\,+\infty$ by general theory. By formula \eqref{formula_times} we have
\begin{equation}\label{convergence_integral}
\int_0^{+\infty}\,\frac{ds}{f(\psi\circ \eta_p(s))}\,=\,t'_+\,<\,+\infty.
\end{equation}
However, by expression \eqref{limit_flows}, there is a time $t_*>0$ such that
$$0\,<\,\frac{1}{2}\frac{1}{f(\eta'_{p'}(t'_+))}\,\leq \,\frac{1}{f(\psi\circ \eta_p(t))},\qquad \forall\,t\geq t_*,$$
which is absurd by the convergence of the integral \eqref{convergence_integral}, thus proving $t'_+\,=\,\omega'_+$. A similar argument shows that $t'_-\,=\,\omega'_-$ and this completes the proof.
\end{proof}

Recall that the map $\pi$ in \eqref{map_pi} is a continuous map on the manifold $M$ with boundary, which is a real analytic diffeomorphism on the interior.  Recall also that the preimage of $\mathbf{0}$ by $\pi$ coincides with the boundary $\partial M$.

\begin{cor}\label{Corollary_omegalimit_constraint}
Consider an orbit in the interior of $M$ whose omega-limit (alpha-limit) intersects the boundary $\partial M$. Then, the function $\tilde{H}$ is zero along the orbit and the aforementioned intersection is contained in the critical boundary $\partial M_0$.
\end{cor}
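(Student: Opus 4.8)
The plan is to play off the two faces of $\tilde H$ against each other: on the one hand $\tilde H$ is continuous on all of $M$ (Proposition \ref{Extension_energy}, as is plain from the explicit formula \eqref{Energy_rescaled}); on the other hand, in the interior $M-\partial M$ one has $\tilde H = r^{-l}\,H\circ\pi$ with $l\geq 2>0$, and $H\circ\pi$ turns out to be constant along any orbit. Combining these with $r\to 0$ along a subsequence converging to a boundary point will force that constant to vanish.

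First I would record that, if $\gamma$ is the given orbit in the interior of $M$ with $\gamma(\tau)=(r(\tau),q(\tau),y(\tau))$ and $r(\tau)>0$, then $H\circ\pi$ is constant along $\gamma$. Indeed, by the definition $\tilde X = r^{1-l/2}(\pi^{-1})_*(X)$ in \eqref{map_pi} and the fact that $\pi$ is a diffeomorphism off $\partial M$, the curve $\beta=\pi\circ\gamma$ satisfies $\beta' = \rho(\beta)^{1-l/2}\,X(\beta)$ with $\rho=\|x\|>0$; since $H$ is a first integral of the Euler--Lagrange field $X$, we get $\tfrac{d}{d\tau}\big(H(\beta(\tau))\big)=\rho(\beta)^{1-l/2}\,dH(X(\beta))=0$. (This is exactly the flow-equivalence packaged in Lemma \ref{flow_equivalence}, but here one only needs the one-line differentiation.) Writing $H\circ\pi\circ\gamma\equiv E$, it follows that $\tilde H(\gamma(\tau))=r(\tau)^{-l}\,E$ for all $\tau$ in the maximal interval.

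Next I would invoke the hypothesis that $\omega(\gamma)\cap\partial M\neq\emptyset$; the $\alpha$-limit case is verbatim with $\tau\to\omega_-$ in place of $\tau\to\omega_+$. Pick $\zeta_*\in\omega(\gamma)\cap\partial M$ and a sequence $\tau_n\to\omega_+$ with $\gamma(\tau_n)\to\zeta_*$, so that $r(\tau_n)\to 0$. Continuity of $\tilde H$ on $M$ gives $\tilde H(\gamma(\tau_n))\to\tilde H(\zeta_*)$, a finite number; but $\tilde H(\gamma(\tau_n))=r(\tau_n)^{-l}E$ and $r(\tau_n)^{-l}\to+\infty$ because $l\geq 2$, so necessarily $E=0$. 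Hence $\tilde H\circ\gamma\equiv 0$, which is the first assertion, and then $\tilde H(\zeta_*)=\lim_n\tilde H(\gamma(\tau_n))=0$, so $\zeta_*\in\tilde H^{-1}(0)\cap\partial M=\partial M_0$. As $\zeta_*$ was an arbitrary point of $\omega(\gamma)\cap\partial M$, the whole intersection lies in $\partial M_0$. I do not foresee a genuine obstacle; the only step deserving a moment's care is the constancy of $H\circ\pi$ along $\gamma$, where one uses that the rescaling factor $r^{1-l/2}$ relating $\tilde X$ to $(\pi^{-1})_*X$ is strictly positive in the interior, and everything else is continuity of $\tilde H$ together with $l\geq 2$.
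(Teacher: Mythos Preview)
Your proof is correct and follows essentially the same approach as the paper: both use that $H\circ\pi$ is constant along the orbit (via the flow equivalence in Lemma~\ref{flow_equivalence}, or your equivalent one-line differentiation), force this constant to vanish by a limiting argument at the boundary, and then invoke continuity of $\tilde H$ to place the limit point in $\partial M_0$. The only cosmetic difference is that the paper passes through $H(\mathbf{0})=0$ in $TE$, whereas you argue directly with the finiteness of $\tilde H(\zeta_*)$ against $r(\tau_n)^{-l}\to+\infty$; these are two sides of the same coin.
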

\begin{proof}
Let $\gamma$ be an orbit in the interior of $M$ whose omega-limit (alpha-limit) $\omega(\gamma)$ intersects $\partial M$. Since the map $\pi$ is a diffeomorphism on the interior of $M$, by definition of the vector field $\tilde{X}$ in Proposition \ref{Extension_vector} and the previous Lemma \ref{flow_equivalence}, we have that $\pi(\gamma)$ is an orbit in $TE-\{\mathbf{0}\}$ up to some positive reparametrization, whose omega-limit (alpha-limit) contains the point $\mathbf{0}$. Since the energy $H$ is continuous on $TE$, constant along any orbit and $H(\mathbf{0})\mathnormal{ =\rm 0}$, we have that $H$ is zero on $\pi(\gamma)$. Recalling the definition of $\tilde{H}$ on the interior of $M$ in Proposition \ref{Extension_energy}, we conclude that $\tilde{H}$ is zero along the orbit. By Proposition \ref{Extension_energy} again, $\tilde{H}$ continuously extends to the boundary hence $\tilde{H}$ is zero on the omega-limit (alpha-limit) of the orbit as well. By definition of the critical boundary, the corollary is proved.
\end{proof}

\section*{Acknowledgments}
The author is grateful to José Job Flores for having simulated the equations and produced such beautiful figures.

\end{document}